\newtheorem*{thm1}{Theorem 1}
\newtheorem*{thm2}{Theorem 2}
\newtheorem*{thm3}{Theorem 3}
\begin{document}

\begin{titlepage}

\title{\textbf{Tannaka duality over ring spectra}}
\author{James Wallbridge\\
Kavli IPMU (WPI), UTIAS, University of Tokyo\\
5-1-5 Kashiwanoha, Kashiwa, 277-8583, Japan\\
james.wallbridge@ipmu.jp}
\date{}
\maketitle

\begin{center}
\textbf{Abstract}
\end{center}

We prove a Tannaka duality theorem for $(\infty,1)$-categories.  This is a duality between certain derived group stacks, or more generally certain derived gerbes, and symmetric monoidal $(\infty,1)$-categories endowed with particular structure.  This duality theorem is defined over commutative ring spectra and subsumes the classical statement.  We show how the classical theory arises as a special case of our more general theory.  The application to perfect complexes is explored.

\end{titlepage}

\tableofcontents


\section*{Introduction}

Classical Tannaka duality is a duality between certain groups and certain monoidal categories endowed with particular structure.  Tannaka duality for $(\infty,1)$-categories refers to a duality between certain derived group stacks and certain monoidal $(\infty,1)$-categories endowed with particular structure.  This duality theorem subsumes the classical statement.  Our starting point is the philosophy developed by Grothendieck which is to consider fundamental groupoids (ie. 1-truncated homotopy types) as automorphism groupoids of certain ``fiber" functors.  

This philosophy began with Grothendieck's study of Galois theory axiomatically using purely categorical methods \cite{SGA}.  He introduced the notion of a \textit{Galois category}, that is, a category $C$ endowed with a fiber functor satisfying conditions such that $C$ is equivalent to the category of representations of a profinite group.  More precisely, let $(C,\omega)$ be a Galois category and define the fundamental group of $C$ at the base point $\omega$ to be the collection
\[ \pi_1(C,\omega):=\Aut(\omega) \]
of automorphisms of the fiber functor.  Then $\pi_1(C,\omega)$ is a profinite group and the functor 
\[     C\ra \pi_1(C,\omega)\text{-}\tu{FSet}       \]
from $C$ to the category $\tu{FSet}$ of finite sets endowed with an action of $\pi_1(C,\omega)$, is an equivalence of categories.  This is the \textit{Galois duality} statement.  By looking at the problem categorically Grothendieck was able to transfer the study of $1$-truncated homotopy types to contexts where such a notion was previously difficult to define.  In this way he defined a new topological invariant - the \'etale fundamental group.

An analogous notion in the case of compact topological groups was initiated much earlier by Tannaka \cite{Ta} who showed that a compact group can be reconstructed from its category of representations.  The group arises as the group of tensor preserving automorphisms of the forgetful fiber functor from the category of representations to its underlying category of vector spaces.  In \cite{Kr}, Krein characterised those categories of the form $\Rep(G)$ which arise in this way.

The passage from Galois theory to Tannaka theory is the linearization process of replacing sets by vector spaces.  Following the Galois philosophy of Grothendieck above, Saavedra developed a Tannaka duality theory for affine group schemes where the abstract category dual is termed a \textit{neutralized Tannakian  category} \cite{Sa}.  The \textit{neutralized Tannaka duality} statement is then that the automorphism group of fiber functors is an affine group scheme and the Tannakian category is equivalent to the category of representations of this affine group scheme.  

More precisely, let $k$ be a field.  Then 
\[  \Rep_*:\tu{AffGp}_k^{op}\ra(\Tan_k)_*  \]
is an equivalence of categories where $(\Tan_{k})_*$ is the category of pairs $(T,\omega)$ where $T$ is a $k$-Tannakian category and $\omega$ is a fiber functor.  The category $\tu{AffGp}_k$ on the left hand side is the category of affine $k$-group schemes.  

Let $(T,\omega)$ be a neutralized Tannakian category and define the algebraic fundamental group of $T$ at the base point $\omega$ to be the collection
\[ \pi_1(T,\omega)^{alg}:=\Aut^{\otimes}(\omega) \]
of tensor-perserving automorphisms of the fiber functor.  Then $\pi_1(T,\omega)^{alg}$ is an affine group scheme and the functor 
\[    T\ra \Rep(\pi(T,\omega)^{alg})     \]
is an equivalence of categories.  These affine group schemes are considered as algebraic versions of $1$-truncated homotopy types and lead to Deligne's definition of the algebraic fundamental group \cite{D1}. 

More generally, Saavedra wrote a non-neutral Tannaka statement which characterises those categories $T$ which are equivalent to the category of representations of the stack $\Fib(T)$ of fiber functors on $T$.  In \cite{D2}, Deligne completed the proof that this stack is an affine gerbe (in the $ffqc$-topology).  More precisely they showed that the functor
\[  \Rep:\Ger(k,ffqc)^{op}\ra\Tan_k  \]
is an equivalence of categories where $\Tan_{k}$ is the category of $k$-Tannakian categories and $\Ger(k,ffqc)$ is the category of affine gerbes over $\Spec(k)$ in the $ffqc$-topology.  When $\Fib(T)$ is the neutral gerbe of $G$-torsors, for $G$ an affine group scheme, we recover the neutral Tannaka statement.  

In order to study \textit{higher} homotopy types it is necessary to move to a higher categorical generalisation of the above ideas \cite{Gr}.  Work in this direction began in \cite{T1} by To\"en.  It involves the use of $(\infty,1)$-categorical techniques developed in work by Joyal \cite{Jo} and Lurie \cite{Lu} and in the theory of derived algebraic geometry by Lurie \cite{L1} and To\"en and Vezzosi \cite{TVI,TVII}.  

Informally, the passage from categories to $(\infty,1)$-categories involves replacing the category of sets by the $(\infty,1)$-category of spaces (topological spaces, Kan complexes or one such equivalent model).  Indeed, one possible (although not so convenient) model for an $(\infty,1)$-category is a simplical category.  This forces generalisations of other familiar categorical concepts.  The category of abelian groups is replaced by the $(\infty,1)$-category of spectra, an abelian category is replaced by a stable $(\infty,1)$-category, commutative rings are replaced by commutative ring spectra (called $\Einfty$-rings) and a rigid category is replaced by a rigid $(\infty,1)$-category.  More examples are discussed througout the text.   

In the spirit of the above, we prove the following pointed or neutralized Tannaka duality statement for $(\infty,1)$-categories.   

\begin{thm1}[Neutralized $(\infty,1)$-Tannaka duality~: see Theorem~\ref{pointedinftytannakaduality}]
Let $R$ be a (connective, bounded connective) $\Einfty$-ring and $\tau$ a subcanonical topology.  Then the map
\[    \Perf_{*}:\TGp(R,\tau)^{op}\ra(\Tens^{\rig}_R)_*   \]
is fully faithful.  Moreover, the adjunction $\Fib_*\dashv\Perf_*$ induces the following~:
\begin{enumerate}
\item Let $R$ be a $\Einfty$-ring.  Then $(T,\omega)$ is a pointed finite $R$-Tannakian $\infty$-category if and only if it is of the form $\Perf_*(G)$ for $G$ a finite $R$-Tannakian group stack.
\item Let $R$ be a connective $\Einfty$-ring.  Then $(T,\omega)$ is a pointed flat $R$-Tannakian $\infty$-category if it is of the form $\Perf_*(G)$ for $G$ a flat $R$-Tannakian group stack.
\item Let $R$ be a bounded connective $\Einfty$-ring.  Then $(T,\omega)$ is a pointed positive $R$-Tannakian $\infty$-category if it is of the form $\Perf_*(G)$ for $G$ a positive $R$-Tannakian group stack.
\end{enumerate}
\end{thm1}

We will briefly explain the objects in this statement.  The category $\TGp(R,\tau)$ is the $(\infty,1)$-category of $R$-Tannakian group stacks.  These are affine group stacks which are weakly rigid in an appropriate sense.  The category $(\Tens_R^\rig)_*$ is the $(\infty,1)$-category of pointed rigid $R$-tensor $(\infty,1)$-categories.  The objects in this category are rigid stable $R$-linear symmetric monoidal $(\infty,1)$-categories together with an $R$-linear symmetric monoidal functor to the $(\infty,1)$-category of rigid $R$-modules.  

We will introduce three topologies on the $(\infty,1)$-category of $R$-algebras for an $\Einfty$-ring $R$ called the finite, flat and positive topologies (denoted by $fin$, $fl$ and $\geq 0$ respectively).  A rigid $R$-tensor $(\infty,1)$-category will be called pointed Tannakian with respect to one of these topologies if it is equipped with a fiber functor satisfying certain properties that reflect this topology (see Definition~\ref{pointeddfn}).  A $\tau$-$R$-Tannakian group stack for $\tau\in\{fin,fl,\geq 0\}$ is an $R$-Tannakian group stack such that its associated Hopf $R$-algebra reflects the topology $\tau$ (see Definition~\ref{tannakiangroupstack}).  Note that we prove an equivalence of $(\infty,1)$-categories in the finite case but we rest on showing that the functor $\Perf_*$ on Tannakian objects is fully faithful in the flat and positive cases.

The upshot of the above theorem is that we can define full homotopy types categorically as follows.  Let $(T,\omega)$ be a pointed $\tau$-$R$-Tannakian $\infty$-category and define the \textit{algebraic homotopy type} of $T$ at the base point $\omega$ to be the collection
\[ \pi(T,\omega)^{alg}:=\Aut^{\otimes}(\omega) \]
of $\tau$-fiber functors.  Then $\pi(T,\omega)^{alg}$ is a $\tau$-$R$-Tannakian group stack and the functor 
\[    T\ra \Rep(\pi(T,\omega)^{alg})     \]
is an equivalence of $(\infty,1)$-categories.  

We also have the more general neutral Tannaka duality statement for $(\infty,1)$-categories.

\begin{thm2}[Neutral $(\infty,1)$-Tannaka duality~: see Theorem~\ref{inftytannakaduality}]
Let $R$ be a (connective, bounded connective) $\Einfty$-ring and $\tau$ a subcanonical topology.  Then the map
\[    \Perf:\TGer(R,\tau)^{op}\ra\Tens^{\rig}_R  \]
is fully faithful.  Moreover, the adjunction $\Fib\dashv\Perf$ induces the following~:
\begin{enumerate}
\item Let $R$ be a $\Einfty$-ring.  Then $T$ is a finite $R$-Tannakian $(\infty,1)$-category if and only if it is of the form $\Perf(G)$ for $G$ a neutral finite $R$-Tannakian gerbe.
\item Let $R$ be a connective $\Einfty$-ring.  Then $T$ is a flat $R$-Tannakian $(\infty,1)$-category if it is of the form $\Perf(G)$ for $G$ a neutral flat $R$-Tannakian gerbe.
\item Let $R$ be a bounded connective $\Einfty$-ring.  If $(T,\omega_1)$ and $(T,\omega_2)$ are two pointed positive $R$-Tannakian $(\infty,1)$-categories then there exists a positive cover $R\ra Q$ such that
\[      \omega_1\otimes_{R}Q\ra\omega_2\otimes_{R}Q   \]
is an equivalence.
\end{enumerate}
\end{thm2}

An $(\infty,1)$-category is $R$-Tannakian with respect to a topology $\tau\in\{fin,fl,\geq 0\}$ if it is a rigid $R$-tensor $(\infty,1)$-category such there exists a fiber functor with respect to $\tau$ (see Definition~\ref{nonpointeddfn}).  A $\tau$-Tannakian gerbe is a stack on the site of $R$-algebras with respect to $\tau$ which is locally equivalent to the classifying stack of a $\tau$-$R$-Tannakian group stack.  It is said to be a neutral if there exists a global point (see Definition~\ref{tannakiangerbe}).  

Here $\TGer(R,\tau)$ is the $(\infty,1)$-category of neutral $\tau$-$R$-Tannakian gerbes.  Since the positive topology is not subcanonical we rest with the weaker statement of (3).  There may also exist a reasonable notion of \textit{non-neutral} $(\infty,1)$-Tannaka duality where the duality holds over an extension of the base $\Einfty$-ring but we will not consider this more general case here.

One application of our main theorem is to the study of homotopy types of complex algebraic varieties.  Let $\bb{C}$ be the field of complex numbers.  To a complex algebraic variety $X$ we can associate its algebraic de Rham cohomology $H^*_{dR}(X)$ which are finite dimensional complex vector spaces.  An important property of these vector spaces is that they are endowed with a pure Hodge structure
\[     H^n_{dR}(X,\bb{Z})\otimes_{\bb{Z}}\bb{C}=\bigoplus_{p+q=n}H^{p,q}_{dR}(X)     \]
reflecting the arithmetic properties of the base field $\bb{C}$.  This pure Hodge structure is encoded in the Tannakian formalism by the action of a pro-algebraic group $\pi_1(X,x)^{alg}$ on the space $H^*_{dR}(X)$ which is the dual of the Tannakian category of pure Hodge structures.  This idea extends to the case of mixed Hodge structures and reflects the general principle in algebraic geometry that cohomology theories take their values in the category of linear representations of a pro-algebraic group.

If we extend our scope beyond cohomology and consider possible full homotopy types of our variety $X$ we arrive at the following question.  Can one produce a functorial geometric object $\cH(X)$ which can be thought of as the homotopy type of $X$, together with a cohomology functor 
\[  \cH\mapsto H^*(\cH(X))  \] 
in such a way that the action of the pro-algebraic group $\pi_1(X,x)^{alg}$ on $H^*(\cH(X))$ recovers the linear representation $H^*_{dR}(X)$.  In fact this has been done in \cite{KPT} using To\"en's theory of schematic homotopy types \cite{T4}.  A special example of a schematic homotopy type is the schematization of a space.   The fundamental group of this schematization is isomorphic to the pro-algebraic completion of the fundamental group of the underlying space.  Thus one way to interpret the schematization of a space is as a generalization of the pro-algebraic completion functor.

The final task in this paper is then to supply the missing Tannakian interpretation of the homotopy type of a complex variety, already conjectured in \cite{T4} and \cite{habilitation}.  More precisely we prove the following in Theorem~\ref{perfistannakian}.

\begin{thm3}
Let $k$ be a commutative ring and $X$ a finite CW complex.  Then $(\Perf(X,k),\omega_{x})$ is a pointed $k$-Tannakian $\infty$-category with respect to the positive topology.  When $k$ is a field of characteristic zero, the dual of this pointed positive $k$-Tannakian $\infty$-category is the schematization of $X$. 
\end{thm3}

This theorem, apart from providing an interpretation of the dual of the Tannakian $\infty$-category of perfect complexes of a space, extends the schematization functor over any base ring $k$.  These results have an application to non-abelian Hodge theory and in particular, to the question of derived non-abelian mixed Hodge structures.

\begin{center}
\bf{Outline}
\end{center}

We include here a brief overview of the contents in this paper.  We begin in Section~\ref{inftycategories} by recalling the basic theory of $(\infty,1)$-categories \cite{Lu}\cite{HTHC}.  All the objects in our Tannakian statements are either $(\infty,1)$-categories (with extra structure) or form $(\infty,1)$-categories in a natural way.  Due to the foundational work of Simpson and Hirschowitz in \cite{HS}, there exists a Quillen model structure on the category of $(\infty,1)$-precategories which is cartesian closed.  The existence of this model structure facilitates the study of all the standard categorical notions as applied to $(\infty,1)$-categories such as the theory of limits, colimits and Kan extensions.  A particularly important role will be played by the category of ind-objects of a $(\infty,1)$-category.

One side of our Tannaka duality statement describes certain symmetric monoidal $(\infty,1)$-categories.  In Section~\ref{monoidal} we introduce the general theory of  symmetric monoidal $(\infty,1)$-categories.  A key theorem (see \cite{L1}\cite{TV3}) is an equivalence between a symmetric monoidal $(\infty,1)$-category considered either as a functor from the category of pointed finite ordinals $\Gamma$ to the $(\infty,1)$-category of $(\infty,1)$-categories or as a cofibered $(\infty,1)$-category over $\Gamma$.  We have corresponding notions of commutative monoid objects, module objects and algebra objects in a  symmetric monoidal $(\infty,1)$-category.  An important example is the symmetric monoidal $(\infty,1)$-category of presentable $(\infty,1)$-categories.  This will allow us to define the notion of linearity using module objects and the $(\infty,1)$-category of linear symmetric monoidal $(\infty,1)$-categories with respect to an arbitrary commutative monoid object.

In Section~\ref{spectralalgebra} we recall the natural extensions of abelian categories and commutative rings to the $(\infty,1)$-categorical realm in the form of stable $(\infty,1)$-categories and $E_{\infty}$-rings.  The category of abelian groups in the familiar setting is replaced by the $(\infty,1)$-category of spectra and every stable $(\infty,1)$-category is naturally enriched over spectra.  A $E_\infty$-ring is then simply a commuative monoid object in the $(\infty,1)$-category of spectra.  We then consider t-structures on a stable $(\infty,1)$-category and recall how our t-structured spectral algebraic objects collapse to their classical counterparts by taking the heart construction.  

The other side of our Tannaka duality statement describes derived group stacks, or more generally, derived gerbes.  In Section~\ref{stacksgerbesandtopologies} we introduce and study these objects with a particular interest in those on the sites of $R$-algebras, for $R$ an $E_{\infty}$-ring.  These are the homotopical analogues of (affine) group schemes and (affine) gerbes.  They arise from the $(\infty,1)$-category of derived stacks which are simply sheaves of spaces on the $(\infty,1)$-category of ``derived rings", the $E_\infty$-rings of Section~\ref{spectralalgebra}, endowed with a (Grothendieck) topology.  We also include the theory of Hopf $R$-algebras and Hopf $R$-algebras with respect to a site.  We conclude the section with a couple of technical results for cosimplicial objects needed in the proof of our main theorem.

In Section~\ref{topologies} we include the study of the topologies of interest to us~: the positive, flat and finite topologies on the $(\infty,1)$-category of $E_\infty$-rings.  The latter two are shown to be subcanonical meaning that every prestack on their site is automatically a stack.  This means that we will be composing three seperate Tannaka statements depending on the topology chosen.  In Section~\ref{rigid} we introduce a necessary finiteness condition on a symmetric monoidal $(\infty,1)$-category.  This $(\infty,1)$-category of \textit{dualizable} objects is the $(\infty,1)$-category analogue of a rigid category in the standard theory.  Also included is the study of ind-rigidity and some of its consequences in the theory at hand.

We embark on the study of the duality theorem proper in Section~\ref{tannakadualityforinftycategories} by introducing $R$-tensor $(\infty,1)$-categories for an $E_\infty$-ring $R$.  These are $(\infty,1)$-categories which are stable, presentable and $R$-linear.  The full subcategory of $R$-tensor $(\infty,1)$-categories spanned by dualizable objects is the $(\infty,1)$-category of rigid $R$-tensor $(\infty,1)$-categories.  Those that are \textit{pointed} admit a $R$-tensor functor to the $(\infty,1)$-category of rigid $R$-modules.  The aim of the neutralized Tannaka duality theorem is to understand under what conditions the adjunction, whose left adjoint sends a pointed rigid $R$-tensor $(\infty,1)$-category to its derived group stack of tensor endomorphisms of its $R$-tensor functor and whose right adjoint sends a derived group stack to its pointed rigid $R$-tensor $(\infty,1)$-category of representations, is an equivalence of $(\infty,1)$-categories.  For this reason, the notion of a pointed $R$-Tannakian $(\infty,1)$-category and $R$-Tannakian group stack is introduced.  Both notions depend on the topology chosen, in particular the fiber functor is topology dependent, which gives rise to the three seperate cases.  The neutralized $(\infty,1)$-Tannaka duality theorem then describes how this equivalence is acheived, or how close one can get, in these cases.

The proof of the neutralized $(\infty,1)$-Tannaka duality theorem is deferred to Section~\ref{dualitytheorems} where care is taken to consider each case, one for each of the topologies in Section~\ref{topologies}, in turn.  In Section~\ref{neutral} the neutral version of the theory is stated and proven.  In this case only the existence of a fiber functor is assumed.  A comparison of our work with the classical results of Saavedra and Deligne is covered in Section~\ref{classicalcomparison} using the flat topology.  The classical theory embeds fully faithfully into our theory.  In particular, the analogue of a $k$-Tannakian category $T$ for the ffqc topology in our theory is the $Hk$-Tannakian $(\infty,1)$-category of bounded complexes in $T$ in the flat topology for the Eilenberg-Mac Lane ring spectrum $Hk$.  

Finally, Section~\ref{overfields} addresses some applications of the theory when $k$ is a field.  We prove that the $(\infty,1)$-category of representations of a pointed schematic homotopy type of finite cohomological dimension is a pointed $Hk$-Tannakian $(\infty,1)$-category for the positive topology.  Moreover, the $(\infty,1)$-category of perfect complexes over a finite CW complex has the structure of a pointed positive $Hk$-Tannakian $(\infty,1)$-category and when $k$ is of characteristic zero.  Its dual is precisely the schematization of the CW complex.

\begin{center}
\bf{Relations to other work}
\end{center}

The first article which discusses a theory of higher Tannaka duality is the paper \cite{T1} by To\"en.  Here, the theory is motivated and many of the key ingredients are introduced.  Several conjectures are then made.  These ideas are then refined and the conjectures stated clearly in the authors influential habilitation memoir \cite{habilitation}.  Tannakian $(\infty,1)$-categories over fields are also discussed in \textit{loc. cit.} and the present paper can be seen as one approach to answering the conjectures posed in that paper.  In order for our proofs to be realised, we rely much on the foundational work on $(\infty,1)$-categories developed in \cite{L1} by Lurie.  We also mention the references \cite{FI} and \cite{LVIII} for other approaches to derived Tannaka duality.

\subsection*{Acknowledgements}

This work is part of the authors doctorate thesis \cite{W1} completed in Toulouse and Montpellier (connected by the now infamous TEOZ 4765).  Therefore I would like to give special thanks to Bertrand To\"en.  I would also like to thank Mathai Varghese, Jacob Lurie and Carlos Simpson.  This research was supported by the University of Adelaide, Universit\'e Paul Sabatier, World Premier International Research Center Initiative (WPI) MEXT Japan and the Agence Nationale de la Recherche grant ANR-09-BLAN-0151 (HODAG).  I thank the Institut des Hautes \'Etudes Scientifiques for hospitality during the completion of this work.

\newpage

\section{$(\infty,1)$-categories}\label{inftycategories}

In this section we will give a brief review of the theory of $(\infty,1)$-categories.  All the results in this article are stated and proved in this setting so it will be worthwhile recalling the relevant definitions.  We will henceforth refer to a $(\infty,1)$-category as simply an $\infty$-category.

There are several different equivalent approaches to defining $\infty$-categories.  Knowing that certain models are equivalent enables us to move between one model or another depending on the given context or calculation.  The nature of the equivalence is a Quillen equivalence between certain model categories of $\infty$-categories~: we take the point of view that we are ultimately interested in the objects of the homotopy category.  

We will concentrate on two models which are known to be Quillen equivalent \cite{Three}~:
\[     \PC(\sK)\rightleftarrows\Cat(\sK).  \]
The category $\sK$ is the model category of simplicial sets with the Kan model structure.  We define an $\inftyz$-category to be a Kan complex.  Thus $\sK$ will be the model category of $\inftyz$-categories.  The category on the left-hand side is the category of $\sK$-enriched precategories (see \cite{HTHC} for a self contained account of the theory) with the injective or Reedy model structure.  This will play the principal role for our model category of $\infty$-categories.  

We recall that a $\sK$-enriched precategory is a functor
\[    A:\Dop\ra\sK     \]
such that $A_0$ is discrete.  An $\infty$-category is then a $\sK$-enriched precategory, which we will call an $\infty$-precategory, satisfying the Segal condition \cite{HS}.  The category on the right-hand side is the model category of simplicial categories, that is, the model category of categories enriched over $\sK$, see \cite{B2}, and is often useful when one would like to choose a strict model.  For this paper the reader can substitute freely any choice of model category of $\infty$-categories which is equivalent to the model category of simplicial categories (see for example those reviewed in \cite{B1}).  

Every $\sK$-enriched category is an $\infty$-precategory with the same set of objects and where composition of maps is strictly defined.  This induces a fully faithful functor
\[      \fG:\Cat(\sK)\ra\PC(\sK).      \]
We will very often consider a $\sK$-enriched category $C$ as an $\infty$-precategory by identifying $C$ with $\fG(C)$.  The advantage of empolying the model category $\PC(\sK)$ is that it is a cartesian closed model category.  Therefore, for any two objects $A$ and $B$ in $\PC(\sK)$, there exists an internal Hom object $\uHom(A,B)$ in $\PC(\sK)$ and thus an internal Hom object $\RHom(A,B)$ in the homotopy category of $\PC(\sK)$.  This is in contrast to the model category of simplicial categories which does not form an internal theory (analogous to the the theory of model categories itself).  Secondly, in contrast to some other approaches, the definition of an $\infty$-precategory is based on an inductive procedure which allows one to construct a cartesian closed model category of $(\infty,n)$-categories paving the way for further extensions of the theory presented here.  

We will denote by $\h\sK$ the homotopy category of $\sK$ obtained from $\sK$ by formally adjoining inverses to all weak equivalences.  If $C$ is an $\infty$-category, the \textit{homotopy category} of $C$ is the $\h\sK$-enriched category $\h C$ with the same set of objects and such that for any $x,y\in C$,  
\[ \Map_{\h C}(x,y)=[C(x,y)] \] 
where $[\bullet]:\sK\ra\h\sK$.  For objects $x_{0},\ldots,x_{n}\in C$, composition $\Map_{\h C}(x_{0},x_{1})\times\ldots\times\Map_{\h C}(x_{n-1},x_{n})\ra\Map_{\h C}(x_{0},x_{n})$ is given by composing the inverse of the weak equivalence $C(x_{0},\ldots,x_{n})\ra C(x_{0},x_{1})\times\ldots\times C(x_{n-1},x_{n})$ with the map $C(x_{0},\ldots,x_{n})\ra C(x_{0},x_{n})$ and applying the functor $[\bullet]$.  We obtain in this way a functor $\h:\PC(\sK)\ra\Cat(\h\sK)$.  

A map $F:C\ra D$ between $\infty$-categories is said to be an equivalence if the induced functor $\h F:\h C\ra\h D$ is an equivalence of $\h\sK$-enriched categories, ie.
\begin{itemize}
\item For every $x,y\in C$, the map $C(x,y)\ra D(F(x),F(y))$ is a weak equivalence in $\sK$.
\item Every $y\in D$ is equivalent to $F(x)$ in the homotopy category $\h D$ for some $x\in C$.
\end{itemize}
A functor between $\infty$-categories satisfying these two conditions is said to be \textit{fully faithful} and \textit{essentially surjective} respectively.  

Model categories provide a very powerful tool for proving results in the theory of $\infty$-categories.  Apart from being the natural setting to undertake comparison results as mentioned above, model categories themselves can be used to model $\infty$-categories.  The construction taking a model category to an $\infty$-category is called \textit{localisation}.  In fact any $\infty$-category which is presentable in an appropriately defined sense is equivalent to the localisation of a combinatorial simplicial model category.  Moreover, any $\infty$-category can be fully embedded into the localisation of a model category.  

\begin{dfn}\label{localisationdfn}
Let $C$ be an $\infty$-category and $S$ a set of arrows in $C$.  A \textit{localisation} of $C$ along $S$ is a pair $(L_{S}C,l)$ where $L_{S}C$ is an $\infty$-category and $l:C\ra L_{S}C$ is a functor such that the following universal property is satisfied: for any $\infty$-category $D$, the induced map
\[   \RHom(L_{S}C,D)\ra\RHom(C,D)  \]
is fully faithful and its essential image consists of those functors $F:C\ra D$ which send each arrow in $S$ to an equivalence in $D$.
\end{dfn}

We will often refer to a localisation $(L_{S}C,l)$ of $C$ along $S$ as simply $L_{S}C$.  An explicit model for the localisation $({L}_{S}C,l)$  is given by composing the homotopy pushout diagram 
\[  (S\times\widetilde{[1]}) \coprod_{S\times [1]}^h  C  \] 
in $\PC(\sK)$ with a fibrant replacement functor where $\widetilde{[1]}$ is the groupoid generated by one isomorphism $\{0\xras 1\}$ (see Section 8.2 of \cite{tdg} for this existence result).  It follows that 
\[  \h(L_{S}C)\ra S^{-1}(\h C)  \]
is an equivalence of categories where $S^{-1}(\h C)$ is the category obtained by formally inverting the elements of $S$.  

Every category $C$ can be regarded as an $\infty$-category by considering the set $C(x,y)$ for two objects $x,y\in C$ as a discrete simplicial set (a simplicial category and thus an $\infty$-category).  More generally we may consider a pair $(C,S)$ consisting of a category $C$ together with a set of morphisms $S$ of $C$ and construct the localised category $S^{-1}C$.  This procedure can be refined using the simplicial localisation construction of Dwyer and Kan \cite{DK}.  The \textit{simplicial localisation} $L^{DK}_{S}(C)$ of the pair $(C,S)$ has the property that there exists a natural isomorphism $\pi_{0}L^{DK}_{S}(C)\simeq S^{-1}C$ showing that in general, $L^{DK}_{S}(C)$ contains higher homotopical information not encoded in $S^{-1}C$.  If $C$ is a category, then $L_{S}C$ is an $\infty$-category and $L_{S}C\ra L^{DK}_{S}(C)$ is an equivalence of $\infty$-categories \cite{DK}.  When $\sM$ is a model category we will let $L(\sM):=L_{W}\sM$ be the localisation of $\sM$ along the set of weak equivalences $W$ of $\sM$.  Thus $\h L(\sM)\ra\h\sM$ is an equivalence of categories.  

Let $\sM$ be an excellent model category (Definition A.3.2.16 of \cite{Lu}) and $\sA$ a combinatorial $\sM$-enriched model category.  Let $C$ be a $\sM$-enriched category and $\sA^{C}$ be endowed with the projective model structure.  Then there exists an equivalence
\[  \RHom(C,L(\sA))\ra L(\sA^{C})  \]
of $\sM$-enriched categories.  This is called the \textit{strictification theorem}.

\begin{ex}\label{strictification}
Let $A$ be an $\infty$-precategory, $D$ a simplicial category and $\fF(A)\ra D$ an equivalence where $\fF$ is the left adjoint to the fully faithful functor $\fG$.  Then the induced map
\[   \RHom(A,L(\PC(\sK)))\ra L(\PC(\sK)^{D})  \]
is an equivalence of $\infty$-categories.
\end{ex}

\begin{dfn}\label{inftyncatdef}
We denote by $\Catinf:=L(\PC(\sK))$ the $\infty$-category of $\infty$-categories and $\cS:=L(\sK)$ the $\infty$-category of $\inftyz$-categories.
\end{dfn}

The $\infty$-category $\cS$ of spaces will play an important role in the remainder of the text fulfilling an analogous role as the category of sets does in ordinary category theory.  Let $X$ be an $\infty$-category and consider the endofunctor
\[  \Pr_X:   A    \mapsto\RHom(A^{op},X)      \]            
between the $\infty$-category of $\infty$-categories.  The $\infty$-category $\Pr_{X}(A)$ will be called the $\infty$-category of \textit{$X$-valued prestacks} on $A$.  When $A$ is an $\infty$-precategory and $X$ is the $\infty$-category $\cS$ of $(\infty,0)$-categories, we write $\Pr(A)$ for $\Pr_{\cS}(A)$ and refer to $\Pr(A)$ as the $\infty$-category of \textit{prestacks} on $A$.  This $\infty$-category will also be denoted $A^{\wedge}$.  

Let $A$ be an $\infty$-precategory.  Then we can replace $A$ by a simplicial category $C:=\fF(A)$.  Let $C^{op}\times C\ra\sK$ be the natural $\sK$-enriched bifunctor.  By adjunction this gives a map $C\ra\sK^{C^{op}}$ where the right hand side is equivalent to $A^{\wedge}$ by the strictification theorem.  We will refer to the composition
\[    A\simeq C\ra\sK^{C^{op}}\simeq A^{\wedge},  \]
which is well defined in $\h\PC(\sK)$, as the \textit{Yoneda embedding}.  The $\infty$-categorical Yoneda Lemma then states that the Yoneda embedding $A\ra\Pr(A)$ is fully faithful.  By Proposition 5.1.3.2 of \cite{Lu}, the Yoneda embedding preserves small limits.  If $C$ be an $\infty$-category, a prestack $F$ in $\Pr(C)$ is said to be \textit{representable} if it lies in the essential image of the Yoneda embedding $C\ra\Pr(C)$.  

\begin{prop}\label{inftyintomodel}
Let $C$ be an $\infty$-category.  Then there exists a simplicial model category $\sA$ and a fully faithful map
$C\ra L\sA$
of $\infty$-categories.
\end{prop}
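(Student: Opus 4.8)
The plan is to embed $C$ into its $\infty$-category of prestacks and then recognise the latter as the localisation of a simplicial model category. First I would apply the $\infty$-categorical Yoneda Lemma: the Yoneda embedding
\[ C\ra\Pr(C)=\RHom(C^{op},\cS) \]
is fully faithful. It therefore suffices to produce a simplicial model category $\sA$ together with an equivalence $\Pr(C)\simeq L\sA$, since the composite of the Yoneda embedding with this equivalence will be the desired fully faithful map.

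To identify $\Pr(C)$ in this way I would strictify the source of the mapping object. Using $\cS=L\sK$ we may write $\Pr(C)=\RHom(C^{op},L\sK)$. Let $\fF$ denote the left adjoint to the fully faithful functor $\fG:\Cat(\sK)\ra\PC(\sK)$ and set $D:=\fF(C^{op})$, a simplicial category. Because $\fF\dashv\fG$ is a Quillen equivalence between the two models of $(\infty,1)$-categories, the derived unit exhibits $C^{op}$ and $\fG(D)$ as the same object of $\h\PC(\sK)$, whence
\[ \RHom(C^{op},L\sK)\simeq\RHom(D,L\sK). \]

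Now I would invoke the strictification theorem with $\sM=\sA=\sK$. The model category $\sK$ is excellent and is itself a combinatorial $\sK$-enriched model category, and $D$ is a $\sK$-enriched category, so the theorem yields an equivalence
\[ \RHom(D,L\sK)\simeq L(\sK^{D}), \]
where $\sK^{D}$ carries the projective model structure. The functor category $\sK^{D}$ is combinatorial and, being enriched over $\sK$, simplicial; setting $\sA:=\sK^{D}$ therefore gives $\Pr(C)\simeq L\sA$ with $\sA$ a simplicial model category, completing the argument.

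The main obstacle is not the production of the embedding but the bookkeeping required to apply the strictification theorem cleanly. One must verify that $\sK$ meets the excellence hypothesis and that the coefficient $\sK$ is a combinatorial $\sK$-enriched model category, and, most delicately, that replacing the $\infty$-category $C^{op}$ by the strict simplicial category $D=\fF(C^{op})$ induces an equivalence on $\RHom(-,L\sK)$. This last point is precisely where the Quillen equivalence $\fF\dashv\fG$ is essential, as it guarantees that the strictification identification $\RHom(D,L\sK)\simeq L(\sK^{D})$ transports back to the prestack $\infty$-category $\Pr(C)$ we began with.
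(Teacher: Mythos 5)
Your proposal is correct and follows essentially the same route as the paper: the paper's proof is precisely the composite $C\xra{y}\RHom(C^{op},L\sK)\xras\RHom(D^{op},L\sK)\xras L(\sK^{D^{op}})$ with $D:=\fF(C)$, invoking the $\infty$-Yoneda lemma and the strictification theorem, and then setting $\sA:=\sK^{D^{op}}$. The only cosmetic difference is that you strictify $C^{op}$ directly rather than strictifying $C$ and passing to the opposite, and you spell out the hypothesis-checking for the strictification theorem that the paper leaves implicit.
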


\begin{proof}
Let $D:=\fF(C)$ be a strict model for $C$.  Then the proposition follows from the composition
\[   C\xra{y}\RHom(C^{op},L(\sK))\xras\RHom(D^{op},L(\sK))\xras L(\sK^{D^{op}})  \]
using the fully faithful $\infty$-Yoneda lemma and the strictification theorem.  We conclude by setting $\sA:=\sK^{D^{op}}$.
\end{proof}

We will use this property to characterise $\infty$-categories having special properties by placing natural conditions on the model category $\sA$ and asking that the fully faithful map $C\ra L(\sA)$ be an equivalence.  Our first example is the following.

\begin{dfn}
An $\infty$-category $C$ is said to be \textit{presentable} if it is equivalent to the localisation of a combinatorial simplicial model category.
\end{dfn}
  
Let $\Catinf^{p}$ denote the full subcategory of $\Catinf$ spanned by the presentable $\infty$-categories and colimit preserving functors.  If $C$ is a presentable $\infty$-category and $A$ is an $\infty$-precategory then the $\infty$-category $\RHom(A,C)$ is presentable.  In particular, the $\infty$-category of prestacks $\Pr(A)$ is presentable.  

Let $C$ be an $\infty$-category and $S$ a set of arrows of $S$.  In the setting of presentable $\infty$-categories, the theory of localisations has a simple characterisation~: $L_{S}C$ can be identified with a full subcategory of $C$.  An object $x$ in $C$ is said to be \textit{$S$-local} if for every arrow $f:y\ra z$ in $S$, the induced map $C(z,x)\ra C(y,z)$ is an equivalence in $\cS$.  An arrow $f:x\ra y$ in $C$ is said to be an \textit{$S$-equivalence} if for every $S$-local object $z$ in $C$, the induced map $C(y,z)\ra C(x,z)$ is an equivalence in $\cS$.  

Let $C$ be a presentable $\infty$-category and $(L_{S}C,l)$ a localisation of $C$ in the $\infty$-category $\Catinf^{p}$.  Then an object $x$ in $C$ is $S$-local if and only if it belongs to $L_{S}C$.  Furthermore, every element of $S$ is an $S$-equivalence in $C$.  A localisation in the setting of presentable $\infty$-categories will be called a \textit{Bousfield localisation} to distinguish from the more general localisation of Definition~\ref{localisationdfn}.

An $\infty$-category admits all finite limits if and only if it admits pullbacks and a final object.  A functor between $\infty$-categories preserves finite limits if and only if it preserves pullbacks and final objects.  An analogous statement holds for finite colimits by passing to the opposite $\infty$-category.  Let $F:C\ra D$ be a functor between $\infty$-categories.  If $C$ admits finite limits then $F$ is said to be \textit{left exact} if it preserves finite limits.  If $C$ admits finite colimits then $F$ is said to be \textit{right exact} if it preserves finite colimits.  It is said to be \textit{exact} if it is both left and right exact.  We denote by $\uHom^{\lex}(C,D)$ (resp. $\uHom^{\rex}(C,D)$) the full subcategory of $\uHom(C,D)$ spanned by the left exact (resp. right exact) functors.

When $\sA$ is a combinatorial simplicial model category, then the existence of homotopy limits and colimits in $\sA$ ensures the existence of limits and colimits in the $\infty$-category $L(\sA)$.  An $\infty$-category is said to be \textit{complete} if it admits all (small) limits and \textit{cocomplete} if it admits all (small) colimits.  It is said to be \textit{bicomplete} if it is both complete and cocomplete.  Let $F:C\ra D$ be a functor between $\infty$-categories.  If $C$ admits small limits then $F$ is said to be \textit{continuous} if it preserves small limits.  If $C$ admits small colimits then $F$ is said to be \textit{cocontinuous} if it preserves small colimits.  It is said to be \textit{bicontinuous} if it is both continuous and cocontinuous.  We denote by $\uHom^{\tu{ct}}(C,D)$ (resp. $\uHom^{\tu{coct}}(C,D)$) the full subcategory of $\uHom(C,D)$ spanned by the continuous (resp. cocontinuous) functors.

The following general theory of limits and colimits in an $\infty$-category is contained in \cite{Lu}.  Let $\cA$ be a collection of $\infty$-precategories.  An $\infty$-category $C$ is said to admit $\cA$-indexed colimits if it admits colimits along all diagrams indexed by elements in $\cA$.  A functor $F:C\ra D$ is said to preserve $\cA$-indexed colimits if it preserves colimits along all diagrams indexed by elements in $\cA$.  We denote by $\RHom_{\cA}(C,D)$ the full subcategory of $\RHom(C,D)$ spanned by those functors which preserve $\cA$-indexed colimits.  If $C$ is an $\infty$-category and $D$ an $\infty$-category which admits $\cA$-indexed colimits then any functor functor $F:C\ra D$ can be extended, essentially uniquely, to an $\cA$-colimit preserving functor $G:\Pr^{\cA}(C)\ra D$ where $\Pr^{\cA}(C)$ is an $\infty$-category admitting $\cA$-indexed colimits. 

More precisely, by Proposition 5.3.6.2 of \cite{Lu}, there exists an $\infty$-category $\Pr^{\cA}(C)$ admitting $\cA$-indexed colimits and a fully faithful functor $y:C\ra\Pr^{\cA}(C)$ satisfying the following universal property~: for any $\infty$-category $D$ admitting $\cA$-indexed colimits, composition with $y$ induces an equivalence 
 \[  \RHom_{\cA}(\Pr^{\cA}(C),D)\ra\RHom(C,D)  \]
of $\infty$-categories.

When $\cA$ is the collection of all $\infty$-precategories, the $\infty$-category $\Pr^{\cA}(C)$ is identified with the $\infty$-category of prestacks.  Thus composition with the Yoneda embedding $A\ra\Pr(A)$ induces an equivalence 
\[   \RHom^{\tu{coct}}(\Pr(A),C)\ra\RHom(A,C)  \] 
of $\infty$-categories.  

Given an $\infty$-category $C$, we may also define other $\infty$-categories, thought of informally as the $\infty$-categories associated to $C$ by formally adding colimits of type $\cA$, using this universal property.  If $C$ is an $\infty$-category then the $\infty$-category of prestacks on $C$ is freely generated under small colimits by the image of the Yoneda embedding.  The $\infty$-category of ind-objects of $C$ is then the smallest full subcategory of this $\infty$-category of prestacks which contains the image of the Yoneda embedding and is stable under filtered colimits.  Thus it is freely generated under filtered colimits by $C$.   

\begin{dfn}
Let $C$ be an $\infty$-category and $\cA$ the class of all small filtered $\infty$-precategories.  Then the $\infty$-category of \textit{ind-objects} of $C$ is given by $\Ind(C):=\Pr^{\cA}(C)$.
\end{dfn}

Let $\cA$ be the class of all small $\kappa$-filtered $\infty$-precategories.  The $\infty$-category of ind-objects of $C$ admits the following characterisation~: the objects of $\Ind(C)$ are functors $I\ra C$ where $I\in\cA$ and given two objects $F:I\ra C$ and $G:J\ra C$ in $\Ind(C)$, the mapping space is given by 
\[      \Map(F,G)=\underset{i\in I}{\lim}~\underset{j\in J}{\colim}~C(F(i),G(j)).  \]
By Proposition 5.3.5.14 of \cite{Lu}, the Yoneda embedding $y:C\ra\Ind(C)$ taking $x$ to the functor $x:*\ra C$ preserves all small colimits which exist in $C$.  The essential image of $y$ consists of objects $x$ in $C$ such that the corepresentable functor
\[  C(x,\bullet):C\ra\cS   \] 
preserves filtered colimits.  Such objects are said to be \textit{compact} in $C$.  Let $C^{\tu{cpt}}$ denote the full subcategory of $C$ spanned by the compact objects.  

An $\infty$-category $C$ is said to be \textit{closed} if every diagram in $C$ indexed by a small $\infty$-precategory admits a colimit in $C$.  Clearly, 
an $\infty$-category $C$ is equivalent to $\Ind(D)$ for some small $\infty$-category $D$ if and only if the $\infty$-category $C$ is closed and has a small subcategory $D$ consisting of compact objects such that every object of $C$ is a filtered colimit of objects of $D$.  This motivates the following.

\begin{dfn}
An $\infty$-category $C$ is said to be \textit{accessible} if there exists a small $\infty$-category $D$ such that
\[   C\ra\Ind(D)  \]
is an equivalence of $\infty$-categories.  
\end{dfn}

Let $C$ be a presentable $\infty$-category.  By Proposition 5.5.2.2 of \cite{Lu}, a functor $F:C^{op}\ra\cS$ is representable if and only if it preserves small limits.   Similarly, a functor $F:C\ra\cS$ is corepresentable if and only if it is accessible and preserves small limits (Proposition 5.5.2.7 of \cite{Lu}).  An important ramification of this is the adjoint functor theorem which states that if $C$ and $D$ are presentable $\infty$-categories then a functor $F:C\ra D$ admits a right adjoint if and only if it preserves small colimits and admits a left adjoint if and only if it is accessible and preserves small limits (see Corollary 5.5.2.9 of \cite{Lu}).

\section{Symmetric monoidal structures}\label{monoidal}

In this section we define symmetric monoidal structures in the setting of $\infty$-categories.  References include \cite{L1} and \cite{TV3}.  In particular we define the $\infty$-category of presentable symmetric monoidal $\infty$-categories and the symmetric monoidal $\infty$-category of modules over a commutative monoid object in a symmetric monoidal $\infty$-category.  These notions are used to define linear $\infty$-categories which are necessary for the definition of a Tannakian $\infty$-category in Section~\ref{tannakadualityforinftycategories}.

We define a symmetric monoidal $\infty$-category using the language of cofibered $\infty$-categories (see Section 3.2 of \cite{Lu} and Section 1.3 of \cite{TV3}). Let $\Gamma$ denote the category of pointed finite ordinals and point preserving maps.  This is equivalent to the category of all linearly ordered finite sets with a distinguished point $*$.  We denote the pointed ordinal $n\coprod \{*\}$ by $\<n\>$.  The category $\Gamma$ is a monoidal category with monoidal structure $(\Gamma,\vee,\<0\>)$.  

An arrow $f:\<n\>\ra\<m\>$ in $\Gamma$ is said to be \textit{inert} (resp. \textit{semi-inert}) if $f^{-1}\{j\}=\{i\}$ (resp. $f^{-1}\{j\}\in\{\emptyset,\{i\}\}$) for all $j\in\<m\>{-}*$.  It is said to be \textit{null} if $f(i)=*$ for all $i\in\<n\>$.  It is said to be \textit{active} if $f^{-1}\{*\}=\{*\}$.  Every arrow $f$ in $\Gamma$ admits a factorisation $f=f''\circ f'$ by an inert arrow $f'$ followed by an active arrow $f''$.  This factorisation is unique up to (unique) isomorphism. 

Consider an object $p:A\ra\Gamma$ in the category $\PC(\sK)_{/\Gamma}$.  An arrow $f$ in $A(a,b)$ is said to be \textit{p-cocartesian} if for all $c\in A$, the induced morphism
\[ A(b,c)\ra A(a,c)\times_{I(p(a),p(c))}I(p(b),p(c)) \]
is a weak equivalence in $\sK$.  An object $p:A\ra\Gamma$ in the category $\PC(\sK)_{/\Gamma}$ is said to be a \textit{cofibered} $\infty$-category if for every arrow $u:\<n\>\ra \<m\>$ in $\Gamma$ and every object $a$ in $A$ with $p(a)=\<n\>$, there exists a $p$-cocartesian arrow $f$ such that $p(f)$ is isomorphic to $u$ in the undercategory $\Gamma_{\<n\>/}$.  A morphism in the homotopy category of $\PC(\sK)_{/\Gamma}$ is said to be \textit{cocartesian} if it preserves cocartesian arrows.  The (non-full) subcategory of $\h(\PC(\sK)_{/I})$ consisting of cofibered objects and cocartesian morphisms will be denoted by $\h(\PC(\sK)_{/I})^{cc}$.  An important observation is that the condition to be cofibered is stable by equivalences in $\PC(\sK)$.  

Let $A$ be an $\infty$-precategory and $A_{\<n\>}$ the fiber of the map $p:A\ra\Gamma$ at $\<n\>\in\Gamma$.  For each $n\geq 1$ and $0< i\leq n$, consider the $n$ pointed maps $p_i:\<n\>\ra \<1\>$ in $\Gamma$ given by $p_i(j)=\{j\}$ if $i=j$ and $p_i(j)=*$ otherwise.  These induce natural maps $(p_i)_!:A_{\<n\>}\ra A_{\<1\>}$.

\begin{dfn}
Let $A$ be a $\infty$-precategory.  A \textit{symmetric monoidal $\infty$-category} is a cofibered object $p:A\ra\Gamma$ in $\PC(\sK)_{/\Gamma}$ such that 
\[     A_{\<n\>}\xra{\prod_{i}(p_{i})_{!}}(A_{\<1\>})^n      \] 
is an equivalence for each $n\geq 0$.
\end{dfn}

We let $A_{\<1\>}$ be the underlying $\infty$-category of $A$.  We will often abuse notation by referring to a symmetric monoidal $\infty$-category $p:A\ra\Gamma$ as simply $A$.  By Proposition 1.4 of \cite{TV3} there exists an equivalence 
\[  \h(\PC(\sK)^I)\ra \h(\PC(\sK)_{/I})^{cc}  \]
of categories.  Thus a symmetric monoidal $\infty$-category coincides with the definition as a functor into $\PC(\sK)$.  

Let $p:A\ra\Gamma$ be a symmetric monoidal $\infty$-category.  Then an arrow $f$ in $A$ is said to be \textit{$p$-inert} if $f$ is a cocartesian arrow in $A$ such that $p(f)$ is inert in $\Gamma$.

\begin{dfn}
Let $p:A\ra\Gamma$ and $q:B\ra\Gamma$ be two symmetric monoidal $\infty$-categories.  A functor $F:A\ra B$ is said to be \textit{symmetric monoidal} if the diagram
\begin{diagram}
A& &\rTo{F}& & B\\
 &\rdTo_{p}&         &\ldTo_{q}&\\
      && \Gamma &&
\end{diagram}
commutes and $F$ carries $p$-cocartesian arrows to $q$-cocartesian arrows.  It is said to be \textit{lax symmetric monoidal} if $F$ carries $p$-inert arrows to $q$-cocartesian arrows.  
\end{dfn}

Let $\RHom^{\otimes}_{\Gamma}(A,B)$ denote the full subcategory of $\RHom_{\Gamma}(A,B)$ spanned by the symmetric monoidal functors.  Let $\Catinf^{\otimes}$ denote the $\infty$-category consisting of symmetric monoidal $\infty$-categories and symmetric monoidal functors.  Likewise, let $\RHom^{\lax}_{\Gamma}(A,B)$ denote the full subcategory of $\RHom_{\Gamma}(A,B)$ spanned by the lax symmetric monoidal functors.

\begin{dfn}
Let $p:C\ra\Gamma$ be a symmetric monoidal $\infty$-category.  A \textit{commutative monoid object} in $C$ is a lax symmetric monoidal section of $p$ (where the identity map $\id_\Gamma$ endows the trivial category $\<0\>$ with a symmetric monoidal structure).  
\end{dfn}

The $\infty$-category of commutative monoid objects in $C$ will be denoted 
\[   \CMon(C):=\RHom_{\Gamma}^{\lax}(\Gamma,C). \]  
A \textit{commutative comonoid object} in $C$ is a commutative monoid object in $C^{op}$.  The $\infty$-category $\CMon(C)$ has an initial object $A$ such that the unit map $1_{C_{\<1\>}}\ra A(\<1\>)$ is an equivalence in $C_{\<1\>}$ (Corollary 3.2.1.9 of \cite{L1}).  For a commutative (co)monoid object $A$, we will sometimes use the notation $A_{n}:=A(\<n\>)$.  

\begin{ex}\label{pointwisemonoidalstructure}
Let $p:C\ra\Gamma$ be a symmetric monoidal $\infty$-category and $A$ an $\infty$-precategory.  Then the $\infty$-category $\RHom(A,C_{\<1\>})$ inherits the structure of a symmetric monoidal $\infty$-category ${\RHom}(A,C)\ra\Gamma$ called the \textit{pointwise symmetric monoidal structure} where we define
\[     {\RHom}(A,C)_{\<n\>}:=\RHom(A,C)\times_{\RHom(A,\Gamma)}\{\<n\>\}   \] 
giving ${\RHom}(A,C)_{\<n\>}\simeq\RHom(A,C_{\<n\>})$.  As a result, there exists an equivalence 
\[   \CMon({\RHom}(A,C))\ra\RHom(A,\CMon(C)) \] 
of $\infty$-categories.
\end{ex}

\begin{ex}\label{subsymmmonoidal}
Let $C$ be a symmetric monoidal $\infty$-category and $D_{\<1\>}$ a full subcategory of $C_{\<1\>}$.  Assume that for every equivalence $x\ra y$ in $C_{\<1\>}$, if $y\in D_{\<1\>}$ then $x\in D_{\<1\>}$.  Define a subcategory $D$ of $C$ by letting an object $x\in C_{\<n\>}$ belong to $D$ if and only if the image under $\prod_{i}(p_{i})_{*}:C_{\<n\>}\ra (C_{\<1\>})^n$ belongs to $(D_{\<1\>})^n$.  Then it is clear that the restriction map $D\ra\Gamma$ is a symmetric monoidal $\infty$-category if $D$ is closed under tensor products and contains the unit object of $C$.
\end{ex}

\begin{ex}\label{cmonsymmetricmonoidal}
Let $p:C\ra\Gamma$ be a symmetric monoidal $\infty$-category.  Then by Example 3.2.2.4 of \cite{L1}, there exists a symmetric monoidal structure on the $\infty$-category $\CMon(C)$ of commutative monoid objects in $C$ induced by that on $C$.  Moreover, by Proposition 3.2.4.7 of \textit{loc. cit.}, the tensor product of commutative monoid objects corresponds to the coproduct.  
\end{ex}

\begin{ex}
If $C$ admits finite colimits and the tensor product bifunctor preserves finite colimits seperately in each variable then the $\infty$-category $\Ind(C)$ of ind-objects of $C$ admits a symmetric monoidal structure which is characterised, up to symmetric monoidal equivalence, by the properties that the tensor product bifunctor $\otimes:\Ind(C)\times\Ind(C)\ra\Ind(C)$ preserves small colimits seperately in each variable and that the Yoneda embedding $C\ra\Ind(C)$ can be extended to a symmetric monoidal functor.  This statement follows from a more general statement for a symmetric monoidal $\infty$-category admitting colimits indexed by an arbitrary collection of $\infty$-precategories.  See Proposition 6.3.1.10 of \cite{L1} for a precise statement.
\end{ex}

Let $C$ be an $\infty$-category.  There exists a natural analogue of the localisation of $C$ as defined in Definition~\ref{localisationdfn} when $C$ is endowed with a symmetric monoidal structure whose underlying $\infty$-category is $L_SC$.  We will abuse notation by denoting the resulting symmetric monoidal $\infty$-category by $L_SC$.  The existence result follows from Proposition 4.1.3.4 of \cite{L1}.  When $\sM$ is a symmetric monoidal model category we will define $L(\sM):=L_W(\sM^c)$ (the monoidal product on $\sM^c$ preserves the weak equivalences $W$ in $\sM^c$).

\begin{ex}\label{cartesianstructureoncatinfty}
The model category $\PC(\sK)$ of $\infty$-categories is a symmetric monoidal simplicial model category for the cartesian product.  Thus ${\Cat}_{\infty}:=L(\PC(\sK))$ is a symmetric monoidal $\infty$-category.  Explicitly, the cofibered $\infty$-category ${\Cat}_{\infty}$ is given as follows~:
\begin{itemize}
\item The objects are pairs $(\<n\>,(C_0,\ldots,C_n))$ where $\<n\>$ is an object of $\Gamma$ and each $C_i$ is a fibrant $\infty$-precategory.
\item A map between two objects $(\<n\>,C_\bullet)$ and $(\<m\>,D_\bullet)$ is a map $u:\<n\>\ra\<m\>$ in $\Gamma$ together with a collection of functors $\prod_{u(i)=j}C_{i}\ra D_{j}$.
\end{itemize}
\end{ex}

By Proposition 4.4.4.6 and Theorem 4.4.4.7 and of \cite{L1}, if $\sM$ is a combinatorial symmetric monoidal model category with the conditions that $\sM$ is left proper, the class of cofibrations in $\sM$ is generated by cofibrations between cofibrant objects, $\sM$ satisfies the monoid axiom and every cofibration in $\sM$ is a power cofibration (see Definition 4.4.4.2 of \textit{loc. cit.}) then $\CMon(\sM)$ admits a combinatorial model structure such that the map
\[  L(\CMon(\sM))\ra\CMon(L(\sM))  \]
is an equivalence of $\infty$-categories.  When the $\infty$-category $\Catinf$ is equipped with the cartesian monoidal structure then
\[   \CMon({\Cat}_{\infty})\ra\Catinf^{\otimes} \]
is an equivalence of $\infty$-categories.  This follows from the more general equivalence of Proposition 2.4.2.5 of \cite{L1}.

Let $C$ be a symmetric monoidal $\infty$-category.  A symmetric monoidal structure on $C$ is said to be \textit{compatible with countable colimits} if for any simplicial set $A$ with only countably many simplices, the $\infty$-category $C$ admits $A$-indexed colimits and for any $x$ in $C$, the functor $\bullet\otimes x:C\ra C$ preserves these colimits.  If $C$ is compatible with countable colimits then the forgetful functor $\CMon(C)\ra C$ admits a left adjoint 
\[     \tu{Fr}:C\ra\CMon(C)  \] 
which we refer to as the \textit{free functor}.  A precise statement can be found in Corollary 3.1.3.5 of \cite{L1}.  If $C$ is equivalent to $L(\sM)$ for $\sM$ a symmetric monoidal model category then $\tu{Fr}$ is equivalent to a functor $\tu{Fr}:L(\sM)\ra L(\CMon(\sM))$ so   
\[       \tu{Fr}(x)=\bb{L}\Sym(x)      \]
where $\Sym(x):=\coprod_{n\geq 0}x^{\otimes n}/\Sigma_{n}$.

\begin{dfn}\label{otensored}
Let $p:D\ra\Gamma$ be a symmetric monoidal $\infty$-category.  An $\infty$-category $C$ is said to be \textit{tensored} over $D$ if there exists a map $F:C\ra D$ in $\PC(\sK)$ such that: 
\begin{enumerate}
\item The composition $(p\circ F):C\ra\Gamma$ is cofibered in $\PC(\sK)_{/\Gamma}$.
\item The map $F$ carries $(p\circ F)$-cocartesian arrows of $C$ to $p$-cocartesian arrows of $D$.
\item For each $n\geq 0$, the inclusion $\{n\}\subseteq \<n\>$ induces an equivalence $C_{\<n\>}\ra D_{\<n\>}\times C_{\{n\}}$ of $\infty$-categories.
\end{enumerate}
\end{dfn}

Let $C$ be an $\infty$-category tensored over $D$.  We will refer to the fiber $C_{\<0\>}$ as the underlying $\infty$-category of $C$ and by abuse, also denote it by $C$.  We obtain a natural diagram
\[  C_{\{0\}}\leftarrow C_{\<1\>}\xras D_{\<1\>}\times C_{\{1\}} \] 
which induces a symmetric monoidal bifunctor $\otimes:D_{\<1\>}\times C_{\<0\>}\ra C_{\<0\>}$, together with its higher symmetric monoidal structure, which is well defined up to homotopy.  

\begin{dfn}\label{morphexp}
Let $A$ be a symmetric monoidal $\infty$-category and $C$ a $\infty$-category tensored over $A$.  Let $a\in A$ and $x,y\in C$.  Then $C$ is said to be \textit{enriched} over $A$ if the functor $A^{op}\ra\cS$ given by  
\[ a\mapsto C(a\otimes x,y)  \]
is representable for all $x,y\in C$.  The representing object will be denoted $\Mor(x,y)$ and called the \textit{morphism object} of $x$ and $y$.  
\end{dfn}            

It follows directly from Definition~\ref{morphexp} that morphism objects are characterised by the following universal property: there exists a map ${ev}:\Mor(x,y)\otimes x\ra y$ such that composition with $ev$ yields an equivalence
\[         A(a,\Mor(x,y))\ra C(a\otimes x,y)    \]
of $\inftyz$-categories.  The composition $\Mor(y,z)\otimes\Mor(x,y)\otimes x\xra{ev}\Mor(y,z)\otimes y\xra{ev} z$ yields a composition map
\[   \Mor(y,z)\otimes\Mor(x,y)\ra\Mor(x,z)   \]
and the chain of equivalences 
\[   A(b,\Mor(a,\Mor(x,y)))\simeq A(b\otimes a,\Mor(x,y))\simeq C(b\otimes a\otimes x,y)\simeq A(b,\Mor(a\otimes x,y))  \] 
yields an equivalence
\[     \Mor(a,\Mor(x,y))\ra\Mor(a\otimes x,y)   \]
of morphism objects.

\begin{ex}
Let $C$ be a symmetric monoidal $\infty$-category.  Then the symmetric monoidal product $\otimes:C\times C\ra C$ endows $C$ with the structure of an $\infty$-category tensored over itself.  If it is furthermore enriched, then the morphism object $\Mor(c,d)$ is just the internal Hom object $\uHom(c,d)$ in $C$.
\end{ex}

Let $A$ be a monoidal $\infty$-category and $C$ a $\infty$-category tensored over $A$.  Suppose further that $C$ and $A$ are presentable $\infty$-categories.  Let $a=\colim_{i}a_{i}$ be an object in $A$.  A prestack is representable if and only if it preserves small limits.  Therefore, the $\infty$-category $C$ is enriched over $A$ if $C(\colim_{i}a_{i}\otimes x,y)\simeq\lim_{i}C(a_{i}\otimes x,y)$.   By assumption, $C(\colim_{i}a_{i}\otimes x,y)\simeq C(\colim_{i}(a_{i}\otimes x),y)$ which is naturally equivalent to $\lim_{i}C(a_{i}\otimes x,y)$.  Thus the $\infty$-category $C$ is enriched over $A$ if the functor 
\[     \bullet\otimes x:A\ra C     \] 
preserves small colimits for all $x\in C$.               
  
We will now define the $\infty$-category of module objects in a symmetric monoidal $\infty$-category.  A more detailed and general exposition can be found in Section 3.3.3 of \cite{L1}.  

\begin{notn}
Let $K^{si}$ denote the full subcategory of $\RHom(\<1\>,\Gamma)$ spanned by the semi-inert arrows.  Let $K^{null}$ denote the full subcategory of $K^{si}$ spanned by the null arrows.  There are two natural maps $ev_i:K^{\bullet}\ra\Gamma$ given by evaluation on $i\in\{0,1\}$.  A morphism in $K^{\bullet}$ is said to be \textit{inert} if its images under $e_0$ and $e_1$ are inert in $\Gamma$.  
\end{notn}

Let $p:C\ra\Gamma$ be a symmetric monoidal $\infty$-category.  We define an $\infty$-precategory $\cM(C)$ through the following universal property: for every $\infty$-precategory $A$ equipped with a map $A\ra\Gamma$ the map
\[ \RHom_{\Gamma}(A,\cM(C))\ra\RHom_{\RHom(\{1\},\Gamma)}(A\times_{\RHom(\{0\},\Gamma)}K^{si},C) \]
is an equivalence.  Thus 
\[        \cM(C)_{\<n\>}\ra\RHom_{\RHom(\{1\},\Gamma)}(K^{si}_{\<n\>>},C)  \]
is an equivalence where $K^{si}_{\<n\>>}$ denotes the homotopy fiber of $K^{si}\ra\RHom(\{0\},\Gamma)$ at $\<n\>$.  An object of $\cM(C)_{\<1\>}$ is then given by a commutative diagram
\begin{diagram}
K^{si}_{\<1\>>}& &\rTo{F}& & C\\
 &\rdTo_{ev_{1}}&         &\ldTo_{p}&\\
                             &&\Gamma. &&
\end{diagram}
Let $\overline{\cM}(C)$ denote the full subcategory of $\cM(C)$ spanned by those vertices for which the functor $F$ preserves inert morphisms.  

Similarly we define an $\infty$-precategory $\cA(C)$ through the following universal property~: for every $\infty$-precategory $A$ equipped with a map $A\ra\Gamma$ the map
\[ \RHom_{\Gamma}(A,\cA(C))\ra\RHom_{\RHom(\{1\},\Gamma)}(A\times_{\RHom(\{0\},\Gamma)}K^{null},C) \]
is an equivalence.  Thus 
\[        \cA(C)_{\<n\>}\ra\RHom_{\RHom(\{1\},\Gamma)}(K^{null}_{\<n\>>},C)  \]
is an equivalence where $K^{null}_{\<n\>>}$ denotes the homotopy fiber of $K^{null}\ra\RHom(\{0\},\Gamma)$ at $\<n\>$.  Let $\overline{\cA}(C)$ denote the full subcategory of $\cA(C)$ spanned by those vertices for which the functor $F$ preserves inert morphisms.  We define
\[     {\Mod}_{R}(C):=\overline{\cM}(C)\times_{\overline{\cA}(C)}\{R\}.  \]
By Theorem 4.4.2.1 of \cite{L1}, if $C$ admits colimits of simplicial objects such that for every object $x$ in $C$ the functor $\bullet\otimes x$ preserves these colimits, then the projection 
\[  p:{\Mod}_{R}(C)\ra\Gamma  \]
is a symmetric monoidal $\infty$-category.  The unit object of $p$ is canonically equivalent to $R$.  The symmetric monoidal product is called the \textit{relative tensor product} and for two $R$-modules $M$ and $N$ will be denoted $M\otimes_{R}N$.  See Theorem 4.4.2.1 \cite{L1} for a detailed discussion of the relative tensor product functor.  

The above construction is functorial in $p$ and hence we obtain a functor
\[ \Mod(C):\CMon(C)\ra\Catinf^\otimes  \]
sending a commutative monoid object $R$ to ${\Mod}_R(C)$ and a map $f:R\ra Q$ to the functor $\bullet\otimes_{R}Q$.  Here $\bullet\otimes_{R}Q$ is the symmetric monoidal base change functor left adjoint to the forgetful functor $\Mod_Q(C)\ra\Mod_R(C)$.  

Let $C$ be a symmetric monoidal $\infty$-category and $R$ a commutative comonoid object in $C$.  By definition, the \textit{$\infty$-category of comodules} in $C$ over $R$ is
\[    \Comod_{R}(C):=\Mod_{R}(C^{op})^{op}.      \]

\begin{ex}\label{lmodmodl}
Many examples of $\infty$-categories of modules arise from the localisation of model categories of modules.  More precisely, let $\sM$ be a combinatorial symmetric monoidal model category and $R$ a commutative monoid object of $\sM$.  Assume that $\sM$ satisfies the monoid axiom \cite{SS2}.  Then the category $\Mod_{R}(\sM)$ of $R$-modules admits a combinatorial model structure where a map is a fibration if and only if it is a fibration in $\sM$ and a weak equivalence if and only if it is a weak equivalence in $\sM$.  If $\sM$ is endowed with a simplicial model structure, then $\Mod_{R}(\sM)$ is a simplicial model category.  If we further assume that $\sM$ satisfies the conditions furnishing an equivalence $s:L\CMon(\sM)\ra\CMon(L\sM)$, then the natural map
\[    L\Mod_{R}(\sM)\ra\Mod_{s(R)}(L\sM)  \]
is an equivalence of $\infty$-categories.  This follows from Theorem 4.3.3.17 of \cite{L1}.
\end{ex}

\begin{dfn}
Let $C$ be a symmetric monoidal $\infty$-category and $R$ a commutative monoid object in $C$.  A \textit{commutative $R$-algebra object} in $C$ is a commutative monoid object in the symmetric monoidal $\infty$-category ${\Mod}_{R}(C)$ of $R$-modules.
\end{dfn}

Let $\CAlg_{R}(C)$, or simply $\CAlg_{R}$, denote the $\infty$-category $\CMon({\Mod}_{R}(C))$ of commutative $R$-algebra objects in $C$.  It follows from Corollary 3.4.1.7 of \cite{L1} that if $C$ be a symmetric monoidal $\infty$-category and  $R$ is a commutative monoid object in $C$, then there exists an equivalence
\[ \theta_R:\CAlg_R(C)\ra\CMon(C)_{R/}  \]
of $\infty$-categories.  Furthermore, by Corollary 6.3.5.16 of \cite{L1}, there exists a natural fully faithful map 
\[  \CMon(C)\ra\CMon(\Mod_C({\Cat}_\infty))   \] 
and so the map
\[  \CMon(C)\ra\CMon({\Cat}_\infty)_{C/}  \]
given by composing this fully faithful map with $\theta_C$ is fully faithful. 

The $\infty$-category $\CAlg_{R}(C)$ inherits the structure of a symmetric monoidal $\infty$-category where the tensor product is given by the tensor product in $C$ (see Example~\ref{cmonsymmetricmonoidal}).  Furthermore, this tensor product coincides with the coproduct in the $\infty$-category of commutative $R$-algebras.  

Let $C$ be a symmetric monoidal $\infty$-category such that the symmetric monoidal product preserves (small) colimits separately in each variable and the fiber $C_{\<n\>}$ is a presentable $\infty$-category for all $n>0$.  In this case we will say that $C$ is a \textit{presentable} symmetric monoidal $\infty$-category.  If $C$ is a presentable symmetric monoidal $\infty$-category then $\CMon(C)$ is a presentable $\infty$-category.  This follows from Corollary 3.2.3.5 of \cite{L1}.  Moreover, if $C$ is a presentable symmetric monoidal $\infty$-category then the $\infty$-category ${\Mod}_R(C)$ is a presentable symmetric monoidal $\infty$-category by Theorem 3.4.4.2 of \cite{L1}.  Combining these two results, the $\infty$-category $\CAlg_{R}(C)$ is a presentable $\infty$-category.

There exists a symmetric monoidal structure on the $\infty$-category $\Catinf^{p}$ of presentable $\infty$-categories by Proposition 6.3.1.14 of \cite{L1}.  This is a subcategory of the symmetric monoidal $\infty$-category ${\Cat}_\infty$ which can be explicitly described as follows~: 
\begin{itemize}
\item The objects are pairs $(\<n\>,(C_0,\ldots,C_n))$ where $\<n\>$ is an object of $\Gamma$ and each $C_i$ is a presentable fibrant $\infty$-precategory.
\item A map between two objects $(\<n\>,C_\bullet)$ and $(\<m\>,D_\bullet)$ is a map $u:\<n\>\ra\<m\>$ in $\Gamma$ together with a collection of functors $\prod_{u(i)=j}C_{i}\ra D_{j}$ which preserve colimits seperately in each variable.
\end{itemize}

One can show (see \textit{loc. cit.}) that the unit object of $\Catinf^{p}$ with this symmetric monoidal structure is the $\infty$-category $\cS$ of spaces.  Let $\Catinf^{p,\otimes}$ denote the subcategory of $\Catinf^{\otimes}$ spanned by presentable symmetric monoidal $\infty$-categories whose monoidal bifunctor preserves colimits seperately in each variable and whose morphisms are colimit preserving symmetric monoidal functors.   Then we have an equivalence
\[     \CMon({\Cat}_\infty^{p})\ra\Catinf^{p,\otimes}  \]
of $\infty$-categories.  Thus a symmetric monoidal $\infty$-category $C$ belongs to $\CMon({\Cat}_\infty^{p})$ if and only if $C$ is presentable and the tensor product bifunctor $\otimes:C_{\<1\>}\times C_{\<1\>}\ra C_{\<1\>}$ preserves (small) colimits seperately in each variable.  

Let $C$ be a presentable symmetric monoidal $\infty$-category.  Then the $\infty$-category ${\Mod}_{R}(C)$ of $R$-modules in $C$ is a presentable symmetric monoidal $\infty$-category with a bicontinuous monoidal product.  Thus ${\Mod}_{R}(D)$ belongs to $\CMon({\Cat}_\infty^{p})$.  We can then make the following definition.

\begin{dfn}\label{rlinear}
Let $D$ be a presentable symmetric monoidal $\infty$-category.  A presentable $\infty$-category is said to be \textit{$R$-linear} if it is endowed with the structure of a ${\Mod}_{R}(D)$-module object in the symmetric monoidal $\infty$-category ${\Cat}_\infty^{p}$ of presentable $\infty$-categories.
\end{dfn}

The $\infty$-category of $R$-linear $\infty$-categories is given by $\Mod_{\Mod_{R}(D)}({\Cat}_{\infty}^{p})$.  Note that an $R$-linear $\infty$-category is a presentable $\infty$-category $C$ which is tensored over the $\infty$-category $\Mod_{R}(D)$.  Then the functor $\bullet\otimes x:\Mod_{R}(D)\ra C$ preserves colimits for all $x\in C$ owing to the monoidal structure on $\Catinf^{p}$.  Thus the presentable $\infty$-category $C$ is enriched over $\Mod_{R}(D)$ as expected.  Therefore there exists an equivalence
\[   \CMon({\Mod}_{\Mod_{R}}({\Cat}_{\infty}^{p}))\simeq\CMon({\Cat}_{\infty}^{p})_{\Mod_{R}/}. \]
of $\infty$-categories.  The term on the left hand side is the $\infty$-category of $R$-linear presentable symmetric monoidal $\infty$-categories and $R$-linear symmetric monoidal functors.   

\begin{ex}
Let $D$ be a presentable symmetric monoidal $\infty$-category.  Then ${\Mod}_{R}(D)$ is an $R$-linear $\infty$-category.
\end{ex}

\begin{ex}\label{klinear}
Let $k$ be a commutative ring and $C$ a presentable $k$-linear symmetric monoidal category (ie. a presentable symmetric monoidal category with a $\Mod_{k}(\Ab)$-module structure).  Then $L(C)$ is a $k$-linear $\infty$-category, ie. if the map $\Mod_{k}\ra C$ of categories endows $C$ with a $k$-linear structure then the map of $\infty$-categories $L(\Mod_{k})\ra L(C)$ endows $L(C)$ with a $k$-linear structure.  More generally, if $\sM$ is a $k$-linear symmetric monoidal model category, ie. a $\Mod_{k}(\Ab)$-enriched symmetric monoidal model category, then $L(\sM)$ is a $k$-linear $\infty$-category.
\end{ex}

\section{Spectral algebra}\label{spectralalgebra}

In this section we address the algebraic structures appearing in our Tannaka duality theorem.  We begin by reviewing the basic theory of stable $\infty$-categories.  A more detailed account can be found in \cite{L1}.  Stable $\infty$-categories replace the role of abelian categories in the $\infty$-categorical realm.  Indeed, after defining the $\infty$-category of spectra, together with its natural symmetric monoidal structure, we will see that every stable $\infty$-category is naturally enriched over spectra.  Endowing the $\infty$-category of spectra with its natural t-structure we see that its heart is equivalent to the category of abelian groups.  Since a commutative ring is simply a commutative monoid object in the category of abelian groups, it is natural to define a ``commutative derived ring", or $\Einfty$-ring, as a commutative monoid object in the $\infty$-category of spectra.  
  
Let $C$ be an $\infty$-category.  We call an object which is both initial and terminal in $C$ a \textit{zero object} and denote it by $0\in C$.  An $\infty$-category is said to be \textit{pointed} if it contains a zero object.

\begin{dfn}\label{stableinftycategory}
An $\infty$-category $C$ is said to be \textit{stable} if it is pointed, admits finite limits and colimits and pullback and pushout squares coincide.
\end{dfn}   

Note that if a functor between stable $\infty$-categories is left or right exact it is automatically exact.  Let $\Catinf^{\perp}$ denote the full subcategory of $\Catinf$ spanned by stable $\infty$-categories and exact functors.  

Let $C$ be a pointed $\infty$-category and $f:x\ra y$ an arrow in $C$.  A \textit{kernel} of $f$ is a pullback $x\times_y0$ and a \textit{cokernel} of $f$ is a pushout $y\coprod_x0$.  They are uniquely determined up to equivalence in $C$.  A full subcategory of a stable $\infty$-category is said to be a \textit{stable subcategory} if it contains a zero object and is closed under the formation of kernels and cokernels.

The $\infty$-category $\Catinf^{\perp}$ admits all (small) limits and all (small) filtered colimits (Theorem 1.1.4.4 and Proposition 1.1.4.6 of \cite{L1}).  The structure of a stable $\infty$-category induces a heavy simplification of the nature of its limits and colimits~: if $\kappa$ is a regular cardinal, then a stable $\infty$-category has all $\kappa$-small limits (resp. colimits) if and only if it has $\kappa$-small products (resp. coproducts).  Furthermore, an exact functor between stable $\infty$-categories preserves $\kappa$-small limits (resp. colimits) if and only if it preserves $\kappa$-small products (resp. coproducts).

Let $C$ be a pointed $\infty$-category with finite limits.  The \textit{loop functor} $\Omega$ of $C$ is the endomorphism of $C$ given by 
\[  \Omega:x\mapsto 0\underset{x}{\times} 0.  \]
This functor admits a left adjoint 
\[   \Sigma:x\mapsto 0\coprod_{x} 0  \]
called the \textit{suspension functor}.  A pointed $\infty$-category is stable if and only if it admits finite limits and the loop functor is an equivalence of $\infty$-categories.  Likewise, it is stable if and only if it admits finite colimits and the suspension functor is an equivalence of $\infty$-categories.  This follows from Proposition 1.1.3.4 and Corollary 1.4.2.20 of \cite{L1}.  If $C$ is a stable $\infty$-category and $A$ is a $\infty$-precategory then the $\infty$-category $\RHom(A,C)$ is stable.  In particular, $\Ind_{\kappa}(C)$ is a stable $\infty$-category (see Proposition 1.1.3.6 of \cite{L1}).

\begin{ex}\label{stablemodelcategory}
A pointed, closed model category $\sM$ is said to be \textit{stable} if the adjunction $\Sigma\dashv\Omega$ is an equivalence in the homotopy category $\h\sM$.  Thus for any stable model category $\sM$, the $\infty$-category $L(\sM)$ is stable.  Moreover, if $\sM$ is a cofibrantly generated, proper, stable, simplicial model category with a set $P$ of compact generators, the authors in \cite{SS1} prove an equivalence between $\sM$ and the model category $\Mod_{\cE(P)}$ of modules over a certain spectral endomorphism category $\cE(P)$ (see Definition 3.7.5 of \textit{loc. cit.}).  We thus obtain an equivalence of stable $\infty$-categories $L(\sM)\ra L(\Mod_{\cE(P)})$.
\end{ex}

\begin{ex}\label{derivedinftycategory}
Let $A$ be an abelian category with enough projective objects and $C(A)$ the simplicial model category of chain complexes in $A$.  Then the \textit{derived} $\infty$-category $L(C(A))$ of $A$ is a stable $\infty$-category.  The homotopy category $\h L(C(A))$ can be identified with the derived category $D(A)$ of $A$.  Likewise, one can define the bounded (resp. bounded above, bounded below) derived $\infty$-category of $A$.  See Section 1.3.1 of \cite{L1} for more details.
\end{ex} 

Let $C$ be an $\infty$-category with finite limits and $\bb{Z}$ the linearly ordered set of integers which we consider as a filtered category.  Let $T$ be an endofunctor on $C$.  We construct the following endofunctor 
\[  \phi:\RHom(\bb{Z},C)\ra\RHom(\bb{Z},C)  \]
defined by $\phi(F)(n):=T(F(n+1))$. 

\begin{dfn}\label{spectrumobject}
Let $C$ be an $\infty$-category with finite limits and $T$ an endofunctor on $C$.  A \textit{$T$-spectrum object} of $C$ is a functor $F:\bb{Z}\ra C$ such that 
$F\ra\phi(F)$ is an equivalence in $\RHom(\bb{Z},C)$.
\end{dfn}

The $\infty$-category of $T$-spectrum objects in $C$, denoted $\Sp_{T}(C)$, is given by the homotopy pullback
\begin{diagram}
\Sp_{T}(C) &\rTo &\RHom(\bb{Z},C) \\
\dTo           &       &\dTo_{(\phi,\id)}\\
\RHom(\bb{Z},C) &\rTo^{d} &\RHom(\bb{Z},C)\times\RHom(\bb{Z},C)
\end{diagram}  
where $d$ denotes the diagonal map.  The equivalence $d(G)\simeq(\phi,\id)(F)$ induces the equivalences 
\[  F\xleftarrow{\sim} G\xras\phi(F)  \] 
whose composition gives the equivalence required in Definition~\ref{spectrumobject}.  The $\infty$-category of $T$-spectrum objects in $C$ comes naturally equipped with an evaluation functor $\Ev_{n}:\Sp_{T}(C)\ra C$ for every $n\in\bb{Z}$ which acts on a spectrum $F$ and picks out its $n$-{th} term $F(n)$.  If $C$ is a presentable $\infty$-category then this evaluation functor admits a left adjoint $\Fr_{n}:C\ra\Sp_{T}(C)$.  

We will be particularly interested in the case where the endofunctor $T$ is the loop functor.  In this case, if $C$ is a pointed $\infty$-category with finite limits then the $\infty$-category $\Sp_{\Omega}(C)$ is a stable $\infty$-category (see Proposition 1.4.2.18 of \cite{L1}).  This defines a natural functor 
$\Sp_{\Omega}$ from the $\infty$-category of pointed $\infty$-categories with finite limits and left exact functors to the $\infty$-category $\Catinf^{\perp}$ of stable $\infty$-categories whose right adjoint is the forgetful functor.  

Let $C_*$  denote the full subcategory of $\RHom([1],C)$ spanned by those morphisms $x\ra y$ for which $x$ is a terminal object of $C$.  We call $C_*$ the $\infty$-category of \textit{pointed objects} of $C$.  If $C$ is pointed, then the forgetful functor $C_{*}\ra C$ is an equivalence of $\infty$-categories.  

\begin{dfn}
A \textit{spectrum} is a $\Omega$-spectrum object of the $\infty$-category $\cS_{*}$ of pointed spaces. 
\end{dfn}

Let $\Sp:=\Sp_{\Omega}(\cS_{*})$ denote the $\infty$-category of spectra.  The $\infty$-category $\Sp$ is stable and presentable.  It follows from Definition~\ref{spectrumobject} that the $\infty$-category $\Sp$ of spectra can be identified with the homotopy limit of the tower 
\[     \{\ldots\ra\cS_{*}\xra{\Omega}\cS_{*}\xra{\Omega}\cS_{*}\}.  \]
of $\infty$-categories.

Let $\cS^{fin}$ denote the smallest full subcategory of $\cS$ which contains the final object and is stable under finite colimits.  Then $\Ind(\cS^{fin}_{*})\ra\cS_{*}$ is an equivalence of $\infty$-categories and thus $\cS_{*}$ is compactly generated.  Moreover, let 
\[   \cS_{\infty}^{fin}:=\colim~\{  \cS_{*}^{fin}\xra{\Sigma}\cS_{*}^{fin}\xra{\Sigma}\ldots  \}  \] 
in the $\infty$-category of $\infty$-categories and exact functors.  Then the $\infty$-category of spectra is compactly generated and 
\[  \Ind(\cS^{fin}_{\infty})\ra\Sp   \] 
is an equivalence of $\infty$-categories.

Let $C$ be a presentable $\infty$-category.  Then the $\infty$-category $\Sp_\Omega(C_{*})$ is presentable and the natural functor $\Ev_{n}:\Sp_\Omega(C_{*})\ra C$ admits a left adjoint $\Fr_{n}:C\ra\Sp_\Omega(C_{*})$.  Let $C=\cS$ and $*$ be the final object of $\cS$.  The object $\Fr_{0}(*)$ of $\Sp$ will be called the \textit{sphere spectrum} and will be denoted by $\bb{S}$.  Recall the homotopy group functor on spectra $\pi_{n}:\Sp\ra\Ab$ which takes a spectrum $A$ to the abelian group $\Hom_{\h\Sp}(\bb{S}[n],A)$.  A map $f:A\ra B$ of spectra is an equivalence if and only if it induces isomorphisms $\pi_{n}A\ra\pi_{n}B$ for all $n\in\bb{Z}$.

The $\infty$-category $\Sp$ admits a symmetric monoidal structure which is uniquely characterized by the property that the unit object of $\Sp$ is the sphere spectrum $\bb{S}$ and the bifunctor $\otimes:\Sp\times\Sp\ra\Sp$ preserves colimits seperately in each variable (Corollary 6.3.2.16 of \cite{L1}).  If $\sS p$ is the category of symmetric spectra endowed with the $\bb{S}$-model structure \cite{Sh} and smash product symmetric monoidal structure then $\sS p$ can be lifted to a simplicial symmetric monoidal model category and there exists an equivalence $L(\sS p)\ra\Sp$ of symmetric monoidal $\infty$-categories.

\begin{notn}
Let $C$ be a stable $\infty$-category and $x$ be any object of $C$.  Let
\[   x[n]:= 
\begin{cases}
\Sigma^{n}x       &\text{if $n\geq 0$}, \\ 
\Omega^{-n}x &\text{if $n\leq 0$}, 
\end{cases} \]
given by taking the $n$th power of the suspension and loop functors.  We use the same notation for the corresponding object in $\h C$.  
\end{notn}

The homotopy category of a stable $\infty$-category $C$ is a triangulated category where the suspension functor $\Sigma:x\mapsto x[1]$ denotes the translation functor.  See Theorem 1.1.2.13 of \cite{L1} for the proof.  Furthermore, there exists a spectrum of maps between any two objects in $C$.  For all $x,y\in C$, since $x\simeq 0\times_{x[1]}0$, the space $\Map_{C}(x,y)$ (pointed by the zero map) is the zeroth space of the spectrum
\[  \ldots\xra{\Omega}\Map_{C}(x,y[2])\xra{\Omega}\Map_{C}(x,y[1])\xra{\Omega}\Map_{C}(x,y)\xra{\Omega}\Map_{C}(x[1],y)\xra{\Omega}\Map_{C}(x[2],y)\xra{\Omega}\ldots.  \]
More precisely, let $\Catinf^{\perp,p}$ denote the full subcategory of $\Catinf^p$ spanned by stable, presentable objects.  

\begin{prop}\label{stableisenrichedoverspectra}
Let $C$ be a stable, presentable $\infty$-category.  Then $C$ is tensored and enriched over the $\infty$-category $\Sp$ of spectra. 
\end{prop}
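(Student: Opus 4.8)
The plan is to exhibit $C$ as a module object over $\Sp$ in the symmetric monoidal $\infty$-category $\Catinf^p$ of presentable $\infty$-categories, equivalently as an $\bb{S}$-linear $\infty$-category for $\bb{S}$ the sphere spectrum, and then to read off the tensoring and enrichment from the general principle already recorded after Definition~\ref{rlinear}. Taking the base to be $D=\Sp$ and $R=\bb{S}$, one has $\Mod_{\bb{S}}(\Sp)\simeq\Sp$ since $\bb{S}$ is the unit object, so an $\Sp$-module object in $\Catinf^p$ is precisely an $\bb{S}$-linear $\infty$-category in the sense of Definition~\ref{rlinear}. By that discussion, such an object is automatically tensored over $\Sp$ in the sense of Definition~\ref{otensored}, the action being a bifunctor $\Sp\times C\ra C$ that preserves small colimits separately in each variable.

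The substantive step is therefore to identify the stable presentable $\infty$-categories with the $\Sp$-module objects in $\Catinf^p$, that is, to establish an equivalence $\Catinf^{\perp,p}\simeq\Mod_{\Sp}(\Catinf^p)$. Here I recall that $\Sp$ is a presentable symmetric monoidal $\infty$-category with unit $\bb{S}$, hence a commutative algebra object of $\Catinf^p$. The key computation is that, for any presentable $D$, the tensor product $\Sp\otimes D$ in $\Catinf^p$ is canonically the stabilization $\Sp_\Omega(D_*)$; both corepresent, among stable presentable targets, the functor of colimit-preserving maps out of $D$. Since $\Sp$ is itself stable, the multiplication $\Sp\otimes\Sp\ra\Sp$ is an equivalence, so $\Sp$ is an idempotent algebra, and $\Mod_{\Sp}(\Catinf^p)$ is identified with the full subcategory of $\Catinf^p$ spanned by those $D$ whose unit map $D\ra\Sp\otimes D\simeq\Sp_\Omega(D_*)$ is an equivalence. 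By the characterization that a presentable $\infty$-category is stable precisely when its loop functor is invertible (Definition~\ref{stableinftycategory} and the surrounding discussion), equivalently when the comparison $D\ra\Sp_\Omega(D_*)$ to its stabilization is an equivalence, this full subcategory is exactly $\Catinf^{\perp,p}$. Thus the stable presentable $C$ acquires a canonical $\Sp$-module structure.

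It remains only to extract the enrichment and the morphism objects. Because the module structure lives in $\Catinf^p$, the functor $\bullet\otimes x:\Sp\ra C$ preserves small colimits for every $x\in C$; as both $\Sp$ and $C$ are presentable, the criterion following Definition~\ref{morphexp} shows that $a\mapsto C(a\otimes x,y)$ preserves small limits and is hence representable, so $C$ is enriched over $\Sp$. The resulting morphism object $\Mor(x,y)$ is the mapping spectrum whose zeroth space recovers $\Map_C(x,y)$, matching the explicit tower displayed before the statement. The main obstacle is the identification $\Sp\otimes D\simeq\Sp_\Omega(D_*)$ together with the resulting idempotence of $\Sp$: this is the single place where genuine input from the theory of stabilization and from the symmetric monoidal structure on $\Catinf^p$ is required, everything downstream being a formal consequence of machinery already assembled above.
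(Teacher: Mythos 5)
Your proposal is correct and follows essentially the same route as the paper: both identify the stable presentable $C$ with an $\Sp$-module object in $\Catinf^{p}$ via the equivalence $\Mod_{\Sp}({\Cat}_\infty^{p})\simeq\Catinf^{\perp,p}$, and then deduce the enrichment from the fact that $\bullet\otimes x:\Sp\ra C$ preserves small colimits, using the representability criterion following Definition~\ref{morphexp}. The only divergence is that the paper quotes Proposition 6.3.2.15 of \cite{L1} as a black box for that equivalence, whereas you unfold its proof via the identification $\Sp\otimes D\simeq\Sp_{\Omega}(D_{*})$ and the resulting idempotence of $\Sp$ in $\Catinf^{p}$ -- which is in fact how the cited result is established, so your extra work reproves the input rather than changing the argument.
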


\begin{proof}
By Proposition 6.3.2.15 of \cite{L1} there exists an equivalence
\[     \Mod_{\Sp}({\Cat}_\infty^{p})\ra\Catinf^{\perp,p}  \]
of $\infty$-categories.  A presentable $\infty$-category which is tensored over $\Sp$ where the tensored structure preserves (small) colimits is automatically stable and since the functor $\bullet\otimes x:\Sp\ra C$ preserves (small) colimits for all $x\in C$, the result follows.
\end{proof}

\begin{dfn}
A spectrum $A$ is said to be \textit{connective} if $\pi_{n}A\simeq 0$ for all $n<0$.  It is said to be \textit{discrete} if it is connective and $0$-truncated.
\end{dfn}

We denote by $\Sp^{c}$ (resp. $\Sp^{d}$) the full subcategory of $\Sp$ spanned by the connective (resp. discrete) spectra.  The $\infty$-category of connective spectra is the smallest full subcategory of $\Sp$ closed under colimits and extensions which contains the sphere spectrum $\bb{S}$.   It is projectively generated with the sphere spectrum being a compact projective generator.  

\begin{dfn}
A \textit{commutative ring spectrum} is a commutative monoid object in the $\infty$-category $\Sp$ of spectra with respect to the smash product monoidal structure.
\end{dfn}

A commutative ring spectrum will be referred to as an $\Einfty$-ring.  The $\infty$-category of $\Einfty$-rings will be denoted $\fE:=\CMon(\Sp)$.  The $\infty$-category $\fE$ will play the role of our generalised theory of rings.  If $R$ is an $\Einfty$-ring then the $\infty$-category of \textit{commutative $R$-algebras} 
\[  \CAlg_{R}:=\CMon(\Mod_{R})  \] 
in $\Sp$ is equivalent to $\fE_{R/}$.  

\begin{ex}
Let $S^{n}$ be the simplicial $n$-sphere.  For any commutative ring $k$, one can associate an $\Einfty$-ring spectrum $Hk$ called the \textit{Eilenberg Mac Lane} ring spectrum which is the sequence of simplicial abelian groups $k\otimes S^{n}$ where $(k\otimes S^{n})_{m}$ is the free abelian group on the non-basepoint $m$-simplices of $S^{n}$.  This construction defines a fully faithful functor $H:\Rng\ra\fE$ from the $\infty$-category of commutative rings to the $\infty$-category of $\Einfty$-rings with $\pi_0(Hk)=0$ and $\pi_i(Hk)\neq 0$ for all $i>0$.
\end{ex}

For $R\in\fE$ and $n\in\bb{Z}$, let $\pi_n R$ denote the $n$th homotopy group of the underlying spectrum of $R$.  We can identify $\pi_n R$ with the set $\pi_0\Map_{\Sp}(\bb{S}[n],R)$.  In particular, $\pi_0 R$ is a discrete commutative ring and $\pi_{n}R$ has the natural structure of a $\pi_{0}(R)$-module.  An $\Einfty$-ring $R$ is said to be \textit{connective} if $\pi_{n}R\simeq 0$ for all $n<0$.  The full subcategory of commutative ring spectra spanned by the connective objects, denoted $\fE^{c}$, is equivalent to the $\infty$-category $\CMon(\Sp^{c})$ of commutative monoid objects in the $\infty$-category of connective spectra.  

We can think of connective $\Einfty$-rings as simply spaces endowed with an addition and multiplication satisfying the axioms for a commutative ring up to coherent homotopy.  More precisely, let $\Ev_0:\fE^{c}\ra\cS$ denote the composition $\fE^{c}\ra\Sp^{c}\ra\cS$.  Then
\[      \theta:\fE^{c}\ra\Mod_{T}(\cS)       \]
is an equivalence of $\infty$-categories for the monad $T=\Ev_0\circ\Fr_0$ on $\cS$ (see Remark 7.1.1.8 of \cite{L1}).

An $\Einfty$-ring $R$ is said to be \textit{bounded} if $\pi_i(R)=0$ for $i>n$ for some $n$.  It is said to be \textit{discrete} if it is connective and $0$-truncated.  We let $\fE^{d}$ denote the full subcategory of $\fE$ spanned by the discrete objects.  A connective $\Einfty$-ring $R$ is discrete if and only if for all $n>0$ the homotopy group $\pi_{n}R$ is trivial.  The $\infty$-category $\fE^{d}$ is equivalent to the $\infty$-category $\CMon(\Sp^{d})$ of commutative monoid objects in the $\infty$-category of discrete spectra.    

A convenient way to manipulate algebra in the context of ring spectra is to utilise its model categorical interpretation.  Let $\sS p$ be the model category of symmetric spectra.  Then the category $\CMon(\sS p)$ inherits the structure of a simplicial model category again by \textit{loc. cit.} and by Section~\ref{monoidal} there exists an equivalence
\[   s:L(\CMon(\sS p))\ra\fE   \]
of $\infty$-categories.  Furthermore, by Example~\ref{lmodmodl}, there exists an equivalence
\[   L(\Mod_R(\sS p))\ra\Mod_{s(R)}(\Sp) \]
of $\infty$-categories.

\begin{lem}
Let $R$ be an $\Einfty$-ring.  The $\infty$-category of modules $\Mod_{R}$ is a stable $\infty$-category.
\end{lem}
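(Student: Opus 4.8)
The plan is to avoid checking Definition~\ref{stableinftycategory} by hand and instead to exhibit $\Mod_R$ as a module object over $\Sp$ inside the symmetric monoidal $\infty$-category $\Catinf^p$, so that stability follows from the equivalence $\Mod_{\Sp}(\Catinf^p)\simeq\Catinf^{\perp,p}$ already invoked in the proof of Proposition~\ref{stableisenrichedoverspectra}. Throughout, $\Mod_R$ means $\Mod_R(\Sp)$. First I would record that $\Mod_R$ is a presentable symmetric monoidal $\infty$-category whose tensor product is bicontinuous; this was established above for $\Mod_R(C)$ whenever $C$ is a presentable symmetric monoidal $\infty$-category, and applied to $C=\Sp$ it places $\Mod_R$ in $\CMon(\Catinf^p)\simeq\Catinf^{p,\otimes}$.

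Next, the unit map $\bb{S}\ra R$ is a morphism in $\fE=\CMon(\Sp)$, so applying the functor $\Mod(\Sp):\CMon(\Sp)\ra\Catinf^{\otimes}$ described above yields a symmetric monoidal base-change functor $\Sp\simeq\Mod_{\bb{S}}\ra\Mod_R$. Since base change is the left adjoint to the forgetful functor it preserves colimits, so this functor is a morphism in $\Catinf^{p,\otimes}$, that is, in $\CMon(\Catinf^p)$, and it exhibits $\Mod_R$ as an object of $\CMon(\Catinf^p)_{\Sp/}$. I would then run through the chain of equivalences from Section~\ref{monoidal}: the equivalence $\theta_{\Sp}:\CAlg_{\Sp}(\Catinf^p)\ra\CMon(\Catinf^p)_{\Sp/}$ lets me regard $\Mod_R$ as a commutative $\Sp$-algebra object in $\Catinf^p$, and forgetting its algebra structure gives an object of $\Mod_{\Sp}(\Catinf^p)$. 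Applying the equivalence $\Mod_{\Sp}(\Catinf^p)\ra\Catinf^{\perp,p}$ finally identifies $\Mod_R$ with a stable presentable $\infty$-category, which is the desired conclusion (and even yields presentability for free).

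The step that carries the genuine content is the construction of the morphism $\Sp\ra\Mod_R$ in $\CMon(\Catinf^p)$: one must know that base change along $\bb{S}\ra R$ is a colimit-preserving symmetric monoidal functor rather than merely an object of $\Catinf^{\otimes}$. This is exactly the content of the definition of $\Mod(\Sp)$ recalled above, where base change was introduced as the left adjoint to the forgetful functor, so the remaining identifications are formal once this is in place. As a fallback I could give a hands-on argument using the criterion that a pointed $\infty$-category with finite limits is stable precisely when its loop functor $\Omega$ is an equivalence: the forgetful functor $\Mod_R\ra\Sp$ is conservative and, being a right adjoint, commutes with $\Omega$, so since $\Omega$ is an equivalence on the stable $\infty$-category $\Sp$ one concludes the same for $\Mod_R$. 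I prefer the module-theoretic route since it reuses machinery already in place and produces presentability simultaneously.
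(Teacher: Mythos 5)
Your main argument is correct, but it takes a genuinely different route from the paper. The paper's proof is model-categorical and very short: by Example~\ref{lmodmodl} and the discussion preceding the lemma, $\Mod_R$ is equivalent to the localisation $L\Mod_{R}(\bSp)$ of the model category of modules in symmetric spectra, which is a stable model category, and the localisation of a stable model category is stable (Example~\ref{stablemodelcategory}). You instead stay entirely inside the $\infty$-categorical framework: you exhibit $\Mod_R$ as the underlying object of an $\Sp$-module in $\Catinf^{p}$ via the colimit-preserving symmetric monoidal base-change functor $\Sp\simeq\Mod_{\bb{S}}\ra\Mod_R$ along the unit map $\bb{S}\ra R$, and then invoke the equivalence $\Mod_{\Sp}(\Catinf^{p})\simeq\Catinf^{\perp,p}$ that the paper itself uses in Proposition~\ref{stableisenrichedoverspectra}. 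Both routes rest on an imported theorem of \cite{L1} (the paper on Theorem 4.3.3.17, giving $L\Mod_R(\bSp)\simeq\Mod_{s(R)}(\Sp)$; you on Proposition 6.3.2.15). What your route buys is independence from the strict model $\bSp$, and it delivers presentability of $\Mod_R$ simultaneously; what the paper's route buys is brevity, given that the model-categorical comparisons were already recorded in Section~\ref{spectralalgebra}. The only point you leave implicit is the identification $\Mod_{\bb{S}}\simeq\Sp$ (equivalently, that base change along $\bb{S}\ra R$ is the free $R$-module functor), which is standard and at the level of rigor of the rest of the paper.

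Your fallback argument, however, has a genuine gap as stated. Conservativity of the forgetful functor $U:\Mod_R\ra\Sp$ together with $U\Omega_{\Mod_R}\simeq\Omega_{\Sp}U$ and the fact that $\Omega_{\Sp}$ is an equivalence does \emph{not} formally imply that $\Omega_{\Mod_R}$ is an equivalence: conservativity lets $U$ reflect equivalences of morphisms, but gives no control over essential surjectivity of $\Omega_{\Mod_R}$. A counterexample to the naive inference: let $C\subseteq\Sp$ be the full subcategory of spectra $X$ with $\pi_nX=0$ for $n>0$. Then $C$ is pointed, closed under finite limits, the inclusion $C\ra\Sp$ is conservative and commutes with $\Omega$, and $\Omega_{\Sp}$ is an equivalence, yet $\Omega_C$ is not essentially surjective (its essential image consists of those $X$ with $\pi_nX=0$ for $n\geq 0$, so it misses $H\bb{Z}$), and indeed $C$ is not stable. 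To repair the fallback you must also use that $U$ preserves colimits, hence commutes with $\Sigma$; then one checks via conservativity that the unit and counit of $\Sigma\dashv\Omega$ on $\Mod_R$ become, after applying $U$, the unit and counit on $\Sp$ and are therefore equivalences. Alternatively, and most directly, since $U$ is conservative and preserves both finite limits and finite colimits, a square in $\Mod_R$ is a pullback (resp.\ pushout) if and only if its image under $U$ is, so the coincidence of pullback and pushout squares required by Definition~\ref{stableinftycategory} is inherited from $\Sp$.
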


\begin{proof}
By the above discussion, the $\infty$-category $\Mod_R$ is equivalent to the localisation of a stable model category.  The localisation of a stable model category is a stable $\infty$-category (Example~\ref{stablemodelcategory}).
\end{proof}

\begin{ex}
When $R\in\fE$ is discrete, ie. an ordinary commutative ring, the triangulated category $\h\tu{Mod}_{R}$ corresponds to the classical derived category of $\Mod_{R}$.
\end{ex}

\begin{ex}\label{cdgaca}
Let $k$ be a field of characteristic zero and  $\tu{cdga}_{k}$ the model category of commutative differential graded $k$-algebras where the weak equivalences are given by the quasi-isomorphisms (see Section 5 of \cite{SS2}), ie. the model category of commutative monoid objects in the symmetric monoidal model category $C(k):=C(\Mod_{k}(\Ab))$ of chain complexes of $k$-modules.  By Theorem 5.1.6 of \cite{SS1} there exists a Quillen equivalence $\Mod_{Hk}(\sS p)\ra C(k)$ where the model category $\sS p$ of symmetric spectra is endowed with the $\bb{S}$-model structure.  Thus we have a diagram
\begin{diagram}
\CMon(L(\Mod_{Hk}(\sS p))) &\rTo^{\sim}  &      \CMon(L(C(k)))   \\  
\dTo      &              &     \dTo \\
\CAlg_{Hk}(\Sp)  &\rTo       &  L(\tu{cdga}_{k})  
\end{diagram}
of $\infty$-categories.  The left vertical arrow is an equivalence by Example~\ref{lmodmodl} and the right vertical arrow is an equivalence by the discussion following Example~\ref{cartesianstructureoncatinfty}.  Therefore the $\infty$-category of $Hk$-algebras in $\Sp$ can be identified with the localisation of the model category of commutative differential $k$-algebras.
\end{ex}

Let $R$ be a connective $\Einfty$-ring and $\CAlg_{R}^{c}:=\CMon(\Mod_{R}^{c})$ the $\infty$-category of \textit{connective commutative $R$-algebras}.  The $\infty$-category $\CAlg_{R}^{c}$ is projectively generated with the compact projective objects being identified with those connective commutative $R$-algebras that are retracts of a finitely generated free commutative $R$-algebra.  If  $(\CAlg_{R}^{c})^{fp}$ denotes the smallest full subcategory of $\CAlg_{R}^{c}$ which contains all finitely generated free $R$-algebras and is stable under finite colimits then
\[   \Ind((\CAlg_{R}^{c})^{fp})\ra\CAlg_{R}^{c}  \]
is an equivalence of $\infty$-categories.  Moreover, the $\infty$-category $\CAlg_{R}^{c}$ is compactly generated with the compact objects being the finitely presented $R$-algebras.

Consider the $\infty$-category $\Mod_{R}$ of $R$-modules for $R$ an $\Einfty$-ring.  If $M$ is an $R$-module we will denote by $\pi_nM$ the homotopy group of its underlying spectrum.  The $\infty$-category $(\Mod_{R})_{\geq 0}$ is the smallest full subcategory of $\Mod_{R}$ which contains $R$ and is stable under small colimits.  A module $M$ in $\Mod_{R}$ is said to be \textit{connective} if $\pi_nM=0$ for all $n<0$ and we call $(\Mod_{R})_{\geq 0}$ the $\infty$-category of \textit{connective} $R$-modules.  Likewise, a module $M$ in $\Mod_R$ is said to be \textit{anti-connective} if $\pi_nM=0$ for all $n>0$.  An $R$-module is said to be \textit{free} if it is equivalent to a coproduct of copies of $R$ and \textit{finitely generated} if it can be written as a finite coproduct of copies of $R$.

Let $R$ be a connective $\Einfty$-ring.  An $R$-module $M$ is said to be \textit{projective} if it is a projective object of the $\infty$-category $(\Mod_{R})_{\geq 0}$ of connective $R$-modules (note that the $\infty$-category $\Mod_{R}$ has no nonzero projective objects).  The $R$-module $M$ is projective if and only if there exists a free $R$-module $N$ such that $M$ is a retract of $N$.  If $N$ is moreover finitely generated, then $M$ is a compact projective object of $(\Mod_{R})_{\geq 0}$.  This is equivalent to $M$ being projective and $\pi_{0}M$ being finitely generated as a $\pi_{0}R$-module.  The $\infty$-category of of connective modules over a connective $\Einfty$-ring is projectively generated.

Let $R$ be a connective $\Einfty$-ring.  Then the inclusion $\CAlg_{R}^{c}\hookrightarrow\CAlg_{R}$ commutes with colimits so there exists an adjunction
\[      i:\CAlg_{R}^{c}\rightleftarrows\CAlg_{R}:(\bullet)^{c}  \]
where the right adjoint is called the \textit{connective cover}.  Explicitly, a connective cover of an $R$-algebra $A$ is a map $f:A\ra A^{c}$ such that $A^{c}$ is connective and for any connective $R$-algebra $B$, there exists an equivalence
\[   \Map_{\CAlg_{R}}(B,A)\ra\Map_{\CAlg_{R}^{c}}(B,A^{c})   \]
of $\inftyz$-categories.

Let $C$ be an $\infty$-category and $x,y\in C$.  Then $y$ is said to be a retract of $x$ if it is a retract of $x$ in $\h C$, ie. there exists a diagram
$y\xra{i}x\xra{p}y$
which coincides with the identity $\id_y$ in $\h C$.

\begin{dfn}
Let $C$ be a stable $\infty$-category.  Let $C^{\tu{perf}}$ denote the smallest stable subcategory of $C$ which contains the unit object and is closed under retracts.  An object $x\in C$ is said to be \textit{perfect} if it is an object of $C^{\tu{perf}}$.
\end{dfn}

Let $R$ be an $\Einfty$-ring.  Then there exists an equivalence $\ModRperf\ra\Mod_{R}^{\tu{cpt}}$ between the $\infty$-category of perfect $R$-modules and the full subcategory of $\Mod_R$ spanned by the compact objects.  Furthermore if $R$ is a connective $\Einfty$-ring and $\Mod_{R}^{\tu{fgp}}$ denotes the smallest stable subcategory of $\Mod_{R}$ which contains all finitely generated projective modules then there exists an equivalence $\Mod_{R}^{\tu{fgp}}\ra\ModRperf$ of $\infty$-categories.

\begin{dfn}
Let $C$ be a stable $\infty$-category.  Then $C$ is said to admit a \textit{t-structure} if there exists a t-structure on the homotopy category $\h C$.
\end{dfn}

Let $C$ be a stable $\infty$-category.  The full subcategory of $C$ spanned by the objects of $(\h C)_{\leq n}$ and $(\h C)_{\geq n}$ will be denoted by $C_{\leq n}$ and $C_{\geq n}$ respectively.  

\begin{ex}
When $C$ is a presentable stable $\infty$-category, any small collection of objects $\{x_{\alpha}\}$ determines a t-structure on $C$.  The construction is as follows (see Proposition 1.4.5.11 of \cite{L1}).  One builds a subcategory $C'$ of $C$ as the smallest full subcategory of $C$ containing $\{x_{\alpha}\}$ which is closed under small colimits and such that for every distinguished triangle
\[     x\ra y\ra z\ra x[1]  \]
for which $x$ and $z$ are in $C'$, then $y$ is in $C'$.  In this case, there exists a t-structure on $C$ such that $C'=C_{\geq 0}$ and $C_{\geq 0}$ is presentable.  
\end{ex}

We call a t-structure on a presentable stable $\infty$-category $C$ \textit{accessible} if the subcategory $C_{\geq 0}$ is presentable.  It follows that if $C$ admits an accessible t-structure then $C_{\leq 0}$ is also presentable.  The t-structures that we will be concerned with in this paper are the following accessible t-structures on the $\infty$-category of spectra and the $\infty$-category of $R$-modules for $R$ a connective $\Einfty$-ring.

\begin{ex}\label{modrtstructure}
Let $R$ be a connective $\Einfty$-ring.  Then there exists an accessible t-structure on the $\infty$-category $\Mod_{R}$ of $R$-modules where 
\begin{itemize}
\item $(\Mod_{R})_{\leq 0}$ is the full subcategory of $\Mod_{R}$ spanned by the objects $\{M\in\Mod_R|\forall n>0,\pi_{n}M\simeq0\}$.
\item $(\Mod_{R})_{\geq 0}$ is the full subcategory of $\Mod_{R}$ spanned by the objects $\{M\in\Mod_R|\forall n<0,\pi_{n}M\simeq0\}$.  
\end{itemize}
This t-structure is left and right complete.  With this t-structure, the heart of $\Mod_{R}$ is equivalent to the abelian category of discrete modules over the ring $\pi_0(R)$.
\end{ex}

Let $C$ be a stable $\infty$-category.  A t-structure $(C_{\leq 0},C_{\geq 0})$ on $C$ is non-degenerate if and only if $\cup _{i}C_{\leq i}=\cup_{i}C_{\geq i}=C$ and $\cap _{i}C_{\leq i}=\cap_{i}C_{\geq i}=0$.  The t-structure of Example~\ref{modrtstructure} are non-degenerate.  The usefulness of this non-degeneracy is that it enables us to check equivalences in these $\infty$-categories on their corresponding truncations in the following sense.  The $\infty$-category $C_{\leq n}$ is stable under limits in $C$ and the $\infty$-category $C_{\geq n}$ is stable under colimits in $C$.  Hence there exists a left adjoint 
\[  \tau_{\leq n}:C\ra C_{\leq n} \]  
to the inclusion map $C_{\leq n}\hookrightarrow C$ and a right adjoint 
\[ \tau_{\geq n}:C\ra C_{\geq n}  \]  
to the inclusion map $C_{\geq n}\hookrightarrow C$.  By Proposition 1.2.1.10 of \cite{L1} there exists an equivalence $\tau_{\leq m}\circ\tau_{\geq n}\simeq\tau_{\geq n}\circ\tau_{\leq m}$ of functors from $C$ to $C_{\leq m}\cap C_{\geq n}$ which we will denote by $\tau_{n,m}:C\ra C_{[n,m]}$.

\begin{dfn}
Let $C$ be a stable $\infty$-category.  A t-structure on $C$ is said to be \textit{non-degenerate} if for all objects $x$ in $C$, if $\tau_{n,n}x=0$ for all $n$ then $x=0$.
\end{dfn}

\begin{dfn}
Let $C$ and $D$ be stable $\infty$-categories admitting t-structures.  A functor $F:C\ra D$ is said to be \textit{left} (resp. \textit{right}) \textit{t-exact} if it is exact and sends $C_{\leq 0}$ into $D_{\leq 0}$ (resp. sends $C_{\geq 0}$ into $D_{\geq 0}$).  It is said to be \textit{t-exact} if it is both left and right t-exact.
\end{dfn}

\begin{lem}\label{texactlemma}
Let $C$ and $D$ be stable $\infty$-categories admitting t-structures and let $F\dashv G$ be an adjunction between them.  Then $F$ is right t-exact if and only if $G$ is left t-exact.
\end{lem}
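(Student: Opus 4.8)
The plan is to reduce the lemma to a statement purely about the t-structure subcategories and then to play the orthogonality relations defining the two t-structures off against the adjunction equivalence on mapping spaces. The first step is to dispose of the word ``exact'' in both conditions: it carries no extra content here. Indeed, $F$ is a left adjoint and therefore preserves all colimits that exist, in particular finite colimits, so $F$ is right exact; dually $G$ is a right adjoint, preserves finite limits, and is thus left exact. By the remark following Definition~\ref{stableinftycategory}, a left or right exact functor between stable $\infty$-categories is automatically exact, so both $F$ and $G$ are exact. It remains only to prove the equivalence of the two containments $F(C_{\geq 0})\subseteq D_{\geq 0}$ and $G(D_{\leq 0})\subseteq C_{\leq 0}$.

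The key tool I would record at the outset is the orthogonality characterisation of a t-structure: an object $X$ lies in $C_{\geq 0}$ if and only if $\Map_C(X,Z)\simeq *$ for every $Z\in C_{\leq -1}$, and an object $Z$ lies in $C_{\leq 0}$ if and only if $\Map_C(W,Z)\simeq *$ for every $W\in C_{\geq 1}$, and similarly for $D$. The one-sided vanishing $\Hom_{\h C}(X,Z)=0$ is the defining axiom of a t-structure on $\h C$; that the \emph{whole} mapping space is contractible, and not merely $\pi_0$, follows because $C_{\geq 0}$ is stable under the suspension functor, so that $\pi_n\Map_C(X,Z)=\Hom_{\h C}(X[n],Z)$ vanishes for every $n\geq 0$. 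The converse containment in each characterisation is supplied by the truncation functors $\tau_{\geq 0}$ and $\tau_{\leq 0}$.

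With these in hand, the argument is a symmetric chase using the adjunction equivalence $\Map_D(F(W),Y)\simeq\Map_C(W,G(Y))$ and the fact that exact functors commute with shifts. For the forward direction I would assume $F(C_{\geq 0})\subseteq D_{\geq 0}$ and take $Y\in D_{\leq 0}$; to conclude $G(Y)\in C_{\leq 0}$ it suffices to show $\Map_C(W,G(Y))\simeq *$ for all $W\in C_{\geq 1}$. By adjunction this space is $\Map_D(F(W),Y)$, and since $F$ commutes with shifts and $F(C_{\geq 0})\subseteq D_{\geq 0}$ we get $F(C_{\geq 1})\subseteq D_{\geq 1}$, so $F(W)\in D_{\geq 1}$; orthogonality in $D$ against $Y\in D_{\leq 0}$ then forces contractibility. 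The reverse direction is dual: assuming $G(D_{\leq 0})\subseteq C_{\leq 0}$ and given $X\in C_{\geq 0}$, I would test $F(X)\in D_{\geq 0}$ against $Y\in D_{\leq -1}$, rewrite $\Map_D(F(X),Y)\simeq\Map_C(X,G(Y))$, and observe that $G(Y)\in C_{\leq -1}$ since $G(Y[1])\simeq G(Y)[1]$ lies in $C_{\leq 0}$; orthogonality in $C$ against $X\in C_{\geq 0}$ gives the contractibility, whence $F(X)\in D_{\geq 0}$.

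The main obstacle is more a matter of care than of genuine difficulty: it is the passage from the homotopy-category orthogonality axiom to contractibility of the full mapping spaces, together with the correct bookkeeping of the single shifts relating $C_{\geq 1}$ to $C_{\geq 0}$ and $D_{\leq -1}$ to $D_{\leq 0}$. Getting these shifts to sit on the correct side of the adjunction is precisely what makes the two orthogonality conditions line up, and it is the asymmetry forced by this shift that explains why right t-exactness of $F$ matches \emph{left} t-exactness of $G$ rather than the other way around.
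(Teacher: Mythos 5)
Your proof is correct and follows essentially the same route as the paper's: combine the adjunction equivalence on mapping spaces with the orthogonality relations of the two t-structures, keeping track of a single shift. The paper's proof is a condensed version of your forward direction (it tests $y\in D_{\leq -1}$ directly and leaves the converse implication and the automatic exactness of adjoint functors implicit), so your write-up adds care but no genuinely different ideas.
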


\begin{proof}
Let $F:C\ra D$ be right t-exact.  Then for any $x\in C_{\geq 0}$ we have $F(x)\in D_{\geq 0}$.  Thus for any $y\in D_{\leq -1}$, $\Hom_{\h D}(F(x),y)\simeq\Hom_{\h C}(x,G(y))=0$.  Therefore $G(y)\in C_{\leq -1}$.
\end{proof}

Another consequence of having a non-degenerate t-structure on a stable $\infty$-category is the following.  We say that a stable $\infty$-category $C$ endowed with a t-structure is \textit{left t-complete} if the natural map 
\[  C\ra\holim_nC_{\leq n}:=\holim_n\{\ldots\ra C_{\leq 2}\xra{\tau_{\leq 1}}C_{\leq 1}\xra{\tau_{\leq 0}}C_{\leq 0}C\xra{\tau_{\leq -1}}\ldots\}   \]
is an equivalence of $\infty$-categories.  By Proposition 1.2.1.19 of \cite{L1}, if a stable $\infty$-category with a t-structure admits countable products such that $C_{\geq 0}$ is stable under countable products, then $C$ is left t-complete if and only if $\cap_iC_{\geq i}=0$.

\begin{dfn}
Let $C$ be a stable $\infty$-category and $D$ a stable $\infty$-category admitting a t-structure $(D_{\leq 0},D_{\geq 0})$.  A functor $F:C\ra D$ is said to \textit{create a t-structure} on $C$ if $C_{\leq 0}:=\{x\in C| F(x)\in D_{\leq 0}\}$ and $C_{\geq 0}:=\{x\in C| F(x)\in D_{\geq 0}\}$ define a t-structure on $C$.
\end{dfn}

\begin{dfn}
Let $C$ be a stable $\infty$-category with a t-structure $(C_{\leq 0},C_{\geq 0})$.  The \textit{heart} of $C$ is the full subcategory $C^\heartsuit:=C_{\leq 0}\cap C_{\geq 0}$.
\end{dfn}

Let $C$ be a stable $\infty$-category admitting a t-structure.  Then for any object $x\in C$ and $n\geq -1$, the object $x$ belongs to $C_{\leq n}$ if and only if the space $C(y,x)$ is $n$-truncated for all $y\in C_{\geq 0}$.  Thus for $x$ and $y$ in $C^{\heartsuit}$, the group $\pi_{n}C(x,y)$ vanishes for all $n>0$ and so there exists an equivalence  
$C^{\heartsuit}\ra\h C^{\heartsuit}$
of $\infty$-categories.

Let $C$ be a stable symmetric monoidal $\infty$-category.  A t-structure on $C$ is said to be \textit{compatible} with the symmetric monoidal structure if for all $x\in C$ the functor $x\otimes\bullet$ is exact and $C_{\geq 0}$ is closed under tensor products and contains the unit object.  The t-structure on the $\infty$-category $\Sp$ of spectra is compatible with the symmetric monoidal structure.  As a result, by Proposition~\ref{subsymmmonoidal}, the $\infty$-category $\Sp^{c}$ of connective spectra inherits the structure of a symmetric monoidal $\infty$-category.  

Let $R$ be a connective $\Einfty$-ring.  By extension, the $\infty$-category $\Mod_{R}^{c}$ of connective $R$-modules and connective $R$-algebras $\CAlg_{R}^{c}:=\CMon(\Mod_{R}^{c})$ inherit symmetric monoidal structures.  Let $C$ be a stable symmetric monoidal $\infty$-category which admits a t-structure that is compatible with the symmetric monoidal structure.  Then by Example 2.2.1.10 of \cite{L1}, the heart $C^{\heartsuit}$ of $C$ inherits the structure of a symmetric monoidal category.

\begin{dfn}
Let $C$ be a stable $\infty$-category admitting a t-structure and let $n\in\bb{Z}$.  We define a functor 
\[  \pi_n^t:C \ra C^{\heartsuit}   \] 
sending $x$ to $\tau_{0,0}(x[-n])$. 
\end{dfn}

Let $C$ be a stable $\infty$-category.  It follows from Theorem 1.3.6 of \cite{BBD} that the category $C^{\heartsuit}$ is abelian.  One can show that the heart of a presentable stable $\infty$-category equipped with an admissible $t$-struture is a presentable abelian category.  Let $\Ab$ denote the category of abelian groups.  When $C$ is the $\infty$-category $\Sp$ of spectra endowed with its natural t-structure then the functor 
\[     \Sp^{\heartsuit}\ra\Ab     \]
is an equivalence of $\infty$-categories.  This follows from Proposition 1.4.3.5 of \cite{L1}.  Furthermore, the functor 
\[    \pi_0^t:\fE^{d}\ra\Rng   \]
is an equivalence of $\infty$-categories (Proposition 7.1.3.18 of \cite{L1}) showing that the $\infty$-category of commutative ring spectra contains, as a fully subcategory, the ordinary theory of commutative rings $\Rng$.

To end this section, we prove an important lemma which provides conditions under which a functor between stable $\infty$-categories admitting non-degenerate t-structures commutes with limits of cosimplicial objects needed in the proof of our main duality theorem. 

\begin{lem}\label{tstructurelimitlemma}
Let $C$ and $D$ be stable $\infty$-categories admitting non-degenerate t-structures.  Let $f:C\ra D$ be a t-exact functor.  If $X$ is a cosimplicial object in $C$ such that there exists $k\geq 0$ with $\pi_{i}^t(X_n)=0$ for all $i>k$ and all $n$ then
\[    f(\underset{n\in\Delta}{\lim}~X_n)\ra\underset{n\in\Delta}{\lim}~f(X_n)  \]
is an equivalence in $D$.
\end{lem}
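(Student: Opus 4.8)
The plan is to compute the totalization $\underset{n\in\Delta}{\lim}\,X_n$ through its tower of partial totalizations and to commute $f$ with this tower, using the uniform upper bound on the t-structure homotopy. Write $\mathrm{Tot}\,X:=\underset{n\in\Delta}{\lim}\,X_n$ and $\mathrm{Tot}^m X:=\underset{\Delta_{\leq m}}{\lim}\,X$ for the limit over the full subcategory $\Delta_{\leq m}\subseteq\Delta$ on the objects $[0],\ldots,[m]$. Since $\Delta=\bigcup_m\Delta_{\leq m}$ we have $\mathrm{Tot}\,X\simeq\underset{m}{\lim}\,\mathrm{Tot}^m X$, and likewise for $fX$. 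Using the standard description of the $\mathrm{Tot}$-tower (see \cite{L1}), the fibre of $\mathrm{Tot}^m X\ra\mathrm{Tot}^{m-1}X$ is $\Omega^m N^m X\simeq N^m X[-m]$, where the conormalized object $N^m X$ is a finite limit of $X_0,\ldots,X_m$; inductively each $\mathrm{Tot}^m X$ is built from $X_0,\ldots,X_m$ by finitely many pullbacks, hence is a finite limit. As $f$ is t-exact it is in particular exact, so it preserves these finite limits and $f(\mathrm{Tot}^m X)\simeq\mathrm{Tot}^m(fX)$ for every $m$. The statement to be proved is therefore exactly that $f$ commutes with the tower limit $\underset{m}{\lim}\,\mathrm{Tot}^m X$.

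The key input is a connectivity estimate for the fibres of this tower. Finite limits in a stable $\infty$-category are built from fibres and finite products, and $C_{\leq k}$ is closed under both (from the long exact sequence of $\pi_i^t$, and from $\pi_i^t(x\oplus y)\simeq\pi_i^t(x)\oplus\pi_i^t(y)$); since $X_n\in C_{\leq k}$ for all $n$ by hypothesis, the conormalized object satisfies $N^m X\in C_{\leq k}$. Shifting, the fibre $F_m:=\mathrm{fib}(\mathrm{Tot}^m X\ra\mathrm{Tot}^{m-1}X)\simeq N^m X[-m]$ lies in $C_{\leq k-m}$. Now $\mathrm{fib}(\mathrm{Tot}\,X\ra\mathrm{Tot}^m X)\simeq\underset{m'\geq m}{\lim}\,\mathrm{fib}(\mathrm{Tot}^{m'}X\ra\mathrm{Tot}^m X)$, and each $\mathrm{fib}(\mathrm{Tot}^{m'}X\ra\mathrm{Tot}^m X)$ is an iterated extension of $F_{m+1},\ldots,F_{m'}$, all lying in $C_{\leq k-m-1}$. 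Since $C_{\leq k-m-1}$ is closed under extensions and, by the stability of $C_{\leq n}$ under limits recorded in the text, under the limit over $m'$, we obtain $\mathrm{fib}(\mathrm{Tot}\,X\ra\mathrm{Tot}^m X)\in C_{\leq k-m-1}$. Applying the t-exact functor $f$ (which sends $C_{\leq j}$ into $D_{\leq j}$ and commutes with $\mathrm{fib}$) gives $\mathrm{fib}\big(f(\mathrm{Tot}\,X)\ra f(\mathrm{Tot}^m X)\big)\in D_{\leq k-m-1}$, while the identical argument applied directly to the tower $\{\mathrm{Tot}^m(fX)\}$ in $D$ yields $\mathrm{fib}\big(\mathrm{Tot}(fX)\ra\mathrm{Tot}^m(fX)\big)\in D_{\leq k-m-1}$.

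To conclude, set $E:=\mathrm{fib}\big(f(\mathrm{Tot}\,X)\ra\mathrm{Tot}(fX)\big)$, the fibre of the canonical comparison map. Composing this map with the projection $\mathrm{Tot}(fX)\ra\mathrm{Tot}^m(fX)$ recovers $f(\mathrm{Tot}\,X)\ra f(\mathrm{Tot}^m X)\simeq\mathrm{Tot}^m(fX)$, so the fibre-of-a-composite sequence produces
\[ E\ra\mathrm{fib}\big(f(\mathrm{Tot}\,X)\ra\mathrm{Tot}^m(fX)\big)\ra\mathrm{fib}\big(\mathrm{Tot}(fX)\ra\mathrm{Tot}^m(fX)\big). \]
By the previous paragraph both outer terms lie in $D_{\leq k-m-1}$, hence so does $E$ (again by the long exact sequence of $\pi_i^t$). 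As this holds for every $m$ and $k-m-1\to-\infty$, we get $E\in\bigcap_i D_{\leq i}$, which is $0$ because the t-structure on $D$ is non-degenerate. Therefore $E\simeq0$ and $f(\underset{n\in\Delta}{\lim}\,X_n)\ra\underset{n\in\Delta}{\lim}\,f(X_n)$ is an equivalence.

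The main obstacle is the bookkeeping that pins down exactly where the uniform bound $\pi_i^t(X_n)=0$ for $i>k$ enters: one must check that the subcategories $C_{\leq n}$ are preserved by the fibres, finite products, extensions and cofiltered limits occurring in the $\mathrm{Tot}$-tower, so that the fibres $\mathrm{fib}(\mathrm{Tot}\,X\ra\mathrm{Tot}^m X)$ become arbitrarily coconnective at the rate $k-m$. Once this is secured, non-degeneracy of the target t-structure finishes the argument; notably neither left- nor right-completeness is required, only that $\bigcap_i D_{\leq i}=0$.
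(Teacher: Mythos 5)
Your proof is correct, but it takes a genuinely different route from the paper's. The paper argues by truncation: since the t-structure on $D$ is non-degenerate, it suffices to check the map after applying $\tau_{\geq -N}$ for every $N$; because $\tau_{\geq -N}$ is a right adjoint it commutes with the limits, so one is reduced to a cosimplicial object in $C_{[-N,0]}$ (after normalizing $k=0$), and there the key input is that a limit along $\Delta$ in an $\infty$-category with $N$-truncated mapping spaces is a \emph{finite} limit, which the exact functor $f$ preserves. You instead never truncate the diagram: you filter the totalization by the tower of partial totalizations $\tu{Tot}^m$, use exactness of $f$ to commute it past each finite stage, and then track coconnectivity — the identification of the tower fibres with the shifted conormalizations $N^mX[-m]$ gives $\tu{fib}(\tu{Tot}\,X\ra\tu{Tot}^mX)\in C_{\leq k-m-1}$, so the fibre of the comparison map lies in $D_{\leq k-m-1}$ for every $m$ and vanishes by non-degeneracy of $D$. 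The trade-off: the paper's argument is shorter but leans on the cofinality fact about $\Delta$-limits in truncated $\infty$-categories and on the slightly delicate step of commuting truncation past the limit; yours replaces these with the $\tu{Tot}$-tower fibre identification (Lurie's Higher Algebra) and elementary closure properties of $C_{\leq j}$ under fibres, extensions and limits, and it isolates precisely what is needed of the hypotheses — only $\bigcap_i D_{\leq i}=0$ on the target side, with non-degeneracy of $C$ entering solely to convert $\pi_i^t(X_n)=0$ for $i>k$ into $X_n\in C_{\leq k}$ (a small implicit step you should make explicit). Both proofs use the same three pillars — exactness of $f$ on finite limits, the uniform bound $k$, and non-degeneracy — but organize them differently, and yours has the merit of making visible the linear rate $k-m$ at which the tower becomes coconnective.
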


\begin{proof}
We can choose $k=0$.  Since the t-structure on $D$ is non-degenerate, we can check the equivalence on the truncation
\[   \tau_{\geq-N}(f(\underset{n}{\lim}~X_n))\ra\tau_{\geq-N}(\underset{n}{\lim}~f(X_n)). \]
The functor $f$ is t-exact and the truncation commutes with limits so we are reduced to proving
\[     f(\underset{n}{\lim}(\tau_{\geq-N}X_n))\ra\underset{n}{\lim}~f(\tau_{\geq-N}X_n)). \]
The limits in $C_{[-N,0]}$ are considered as limits in $C_{\leq 0}$.  Note that $C_{[-N,0]}$ is a subcategory of $C$ where the mapping spaces are $N$-truncated.  Any limit along $\Delta$ in an $\infty$-category whose mapping spaces are $N$-truncated is a finite limit.  Since $f$ is t-exact it commutes with finite limits and truncations so the induced functor $f:C_{[-N,0]}\ra D_{[-N,0]}$ preserves finite limits and the result follows.
\end{proof}

\section{Stacks, gerbes and Hopf algebras}\label{stacksgerbesandtopologies}

This section is devoted to the group side of the correspondence in our Tannaka duality theorem.  The first main result gives conditions under which the $\infty$-category of sheaves valued in some presentable symmetric monoidal $\infty$-category is tensored and enriched, both over itself and the $\infty$-category which it is valued in.  If these sheaves are merely presheaves then we give conditions for the presheaf of maps between two of these objects to be a sheaf.  The main example of interest is when the presentable symmetric monoidal $\infty$-category is the $\infty$-category of $\infty$-categories.  In this case, we provide sufficient conditions for a presheaf valued in $\infty$-categories to be a sheaf of $\infty$-categories.  

After defining group stacks and gerbes we show that gerbes are exactly those stacks which are locally equivalent to the classifying stack of a group stack.  Furthermore, affine group stacks correspond to Hopf algebras in a symmetric monoidal $\infty$-category.  We finish the section with some technical results concerning Hopf algebras needed in the proof of our duality theorem. 

Let $(C,\tau)$ be a site, ie. an $\infty$-category endowed with a topology.  A topology on an $\infty$-category $C$ is equivalent to a Grothendieck topology on the homotopy category $\h(C)$ (see Remark 6.2.2.3 of \cite{Lu}).  If $C$ is an $\infty$-category with pullbacks, to prove the existence of a topology $\tau$ on $C$ it suffices to prove the existence of a pretopology on $C$.  That is, a function $cov_{\tau}$ which assigns to each object $x$ in $C$ a collection $cov_{\tau}(x)$ of subsets of objects in $C_{/x}$ called \textit{covering families} of $x$ satisfying the three axioms~:
\begin{itemize}
\item Stability~: If $f:y\ra x$ is an equivalence in $C$ then the singleton $\{f:y\ra x\}$ is in $cov_{\tau}(x)$.
\item Composition~: If $\{f_{i}:y_{i}\ra x\}_{i\in I}$ is in $cov_{\tau}(x)$ and if for each $i\in I$ one has a family $\{g_{ij}:z_{ij}\ra y_{i}\}_{j\in J_{i}}$ in $cov_{\tau}(y_{i})$ then the family $\{f_{i}\circ g_{ij}:z_{ij}\ra x\}_{i\in I,j\in J_{i}}$ is in $cov_{\tau}(x)$.
\item Base change~: If $\{f_{i}:y_{i}\ra x\}_{i\in I}$ is in $cov_{\tau}(x)$ then for any morphism $g:z\ra x$, the pullbacks $z\times_{x}y_{i}$ exist and the family $\{z\times_{x}y_{i}\ra z\}_{i\in I}$ is in $cov_{\tau}(z)$.
\end{itemize}
A pretopology on $C$ determines a topology on $C$~: a sieve $R$ on an object $x$ in $C$ is in $\tau(x)$ if and only if there exists a covering family $J$ in $cov_\tau(x)$ such that $J$ is a subset of $R$.

Let $(C,\tau)$ be a site and $\{u_{i}\ra x\}_{i\in I}$ a covering family of $x\in C$.  Let $u=\coprod_{i}u_{i}$.  We will say that the map $u\ra x$ is a \textit{covering} of $x$.  A \textit{cover} of $x$ (associated to $u$) is the simplicial prestack $u_{*}\in s\Pr(C)_{/x}$ given by
\[   u_{*}: [n]  \mapsto u\underset{x}{\times}\hdots\underset{x}{\times}u.   \]
                
\begin{dfn}
Let $C$ be a site and $X$ an $\infty$-category with limits.  An $X$-valued prestack $F:C^{op}\ra X$ on $C$ is said to be an \textit{$X$-valued stack} if for all $x\in C$ and all coverings $u_{*}$ in $s\Pr(C)_{/x}$ the map
\[   F(x)\ra\underset{\Delta}{\lim}~F(u_{*})  \]
is an equivalence in $X$.
\end{dfn}

The full subcategory of $\Pr_X(C)$ spanned by the $X$-valued stacks on the site $(C,\tau)$ will be denoted $\St_{X}(C,\tau)$.  A $\cS$-valued stack will simply be called a \textit{stack} and we will denote the $\infty$-category of stacks $\St_{\cS}(C,\tau)$ by $\St(C,\tau)$.  A topology $\tau$ on $C$ is said to be \textit{subcanonical} if every representable functor on $C$ is a stack with respect to $\tau$.

\begin{rmk}\label{inclusionofstacks}
Note that the adjunction $\prod:\sK\rightleftarrows\PC(\sK):\fK$ between model categories (see \cite{HS}) induces an adjunction $\cS\dashv\Catinf$ between $\infty$-categories and thus an adjunction 
\[       \St_{\cS}(C,\tau)\rightleftarrows\St_{\Catinf}(C,\tau)   \]
where $(\fK^{*}F)(x)=\fK F(x)$ for a stack $F$ in $\St_{\Catinf}(C,\tau)$.
\end{rmk}

\begin{ex}\label{commastack}
Let $(C,\tau)$ be a site for a subcanonical topology $\tau$ and $h_x$ the representable prestack on an object $x$ in $C$.  Then there exists an equivalence 
$\St(C,\tau)_{/h_x}\ra\St(C_{/x},\tau)$ 
of $\infty$-categories.
\end{ex}

Let $C$ be a symmetric monoidal $\infty$-category.  We give the opposite $\infty$-category of commutative monoid objects in $C$ the following special notation~:
\[ \Aff_{C}:=\CMon(C)^{op}.  \]
When $C$ is the symmetric monoidal $\infty$-category ${\Mod}_{R}(D)$ of $R$-modules in a symmetric monoidal $\infty$-category $D$, we will write $\Aff_{R}:=\Aff_{{\Mod}_{R}(D)}$.  In other words, the $\infty$-category $\Aff_{R}$ is the opposite of the $\infty$-category of commutative $R$-algebras in $D$.  The Yoneda embedding $\Aff_{C}\ra(\Aff_{C})^{\wedge}$ will be denoted by $\Spec$.

\begin{ex}
Let $C$ be a symmetric monoidal $\infty$-category and $R$ a commutative monoid object of $C$.  Then we will denote by $\St(R,\tau):=\St(\Aff_{R},\tau)$ the $\infty$-category of stacks with respect to the site $(\Aff_{R},\tau)$ of $R$-algebras in $C$.  By Example~\ref{commastack} we have an equivalence $\St(R,\tau)\simeq\St(\Aff_{C},\tau)_{/\Spec R}$ of $\infty$-categories.
\end{ex}

The $\infty$-category of stacks on a site can be obtained by the localisation (in the sense of Definition~\ref{localisationdfn}) of the $\infty$-category of prestacks.  The following proposition gives two possible choices for the set of maps from which to localise.  

\begin{prop}\label{stacklocalisation}
Let $(C,\tau)$ be a site.  The following classes of maps give the same localisation of $\Pr(C)$~:
\begin{enumerate}
\item The set of all covering sieves on $x$.
\item The set of maps $u_{*}\ra x$.
\end{enumerate}
\end{prop}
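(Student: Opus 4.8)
The plan is to recast the statement as an equality of Bousfield localisations and to reduce it to a comparison of local objects. Since $\Pr(C)$ is presentable, localising at a set of arrows produces a reflective localisation that is completely determined by its class of local objects, and two sets of arrows define the same localisation precisely when they determine the same local objects, equivalently the same strongly saturated class. Write $S_{1}$ for the set of covering sieve inclusions $R\hookrightarrow h_{x}$ and $S_{2}$ for the set of augmentations $|u_{*}|\ra h_{x}$, where $|u_{*}|$ denotes the colimit over $\Delta^{op}$ of a cover $u_{*}$ associated to a covering family $\{u_{i}\ra x\}$ with $u=\coprod_{i}u_{i}$. Using the $\infty$-Yoneda lemma to identify $\Map(h_{x},F)\simeq F(x)$, to identify $\Map(R,F)$ with the limit of $F$ over the sieve $R$, and to identify $\Map(|u_{*}|,F)\simeq\lim_{\Delta}F(u_{*})$, an $S_{1}$-local object is exactly a stack in the sieve formulation of descent, while an $S_{2}$-local object is exactly a stack in the \v{C}ech formulation used in the definition above. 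So it suffices to prove that these two descent conditions cut out the same objects.

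The first and cleanest step is an identification that gives one inclusion for free. In the presheaf $\infty$-category $\Pr(C)$ the effective epimorphism--monomorphism factorisation is available, and the geometric realisation $|u_{*}|$ of the \v{C}ech nerve of $u\ra x$ computes the image of $u\ra h_{x}$. As $u=\coprod_{i}u_{i}$ is a coproduct of representables, this image is precisely the subobject of $h_{x}$ determined by the covering sieve $R_{0}$ generated by the family $\{u_{i}\ra x\}$. Thus every arrow of $S_{2}$ is, up to equivalence, a covering sieve inclusion, so $S_{2}\subseteq S_{1}$ and hence $\overline{S_{2}}\subseteq\overline{S_{1}}$; in particular every stack in the sieve sense satisfies \v{C}ech descent.

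For the reverse inclusion I would show that an arbitrary covering sieve inclusion $R\hookrightarrow h_{x}$ lies in $\overline{S_{2}}$. Since $R$ is covering, the passage from the pretopology to the topology guarantees a covering family $\{u_{i}\ra x\}$ contained in $R$; let $R_{0}\subseteq R$ be the sieve it generates, so that $R_{0}\hookrightarrow h_{x}$ belongs to $S_{2}$ by the previous step. By the two-out-of-three property of strongly saturated classes it then remains to prove that the inclusion $R_{0}\hookrightarrow R$ of covering sieves is an $S_{2}$-local equivalence. Here I would use the base change axiom of the topology: pulling $R_{0}$ back along each object $(y\ra x)$ of $R$ yields the covering sieve on $y$ generated by $\{u_{i}\times_{x}y\ra y\}$, which is again an element of $S_{2}$; by universality of colimits in $\Pr(C)$ one has $R=\colim_{(y\ra x)\in R}h_{y}$ and $R_{0}=\colim_{(y\ra x)\in R}(R_{0}\times_{x}y)$, so $R_{0}\hookrightarrow R$ is a colimit of arrows of $S_{2}$ and therefore lies in $\overline{S_{2}}$ by closure under colimits. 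Combining the two inclusions gives $\overline{S_{1}}=\overline{S_{2}}$, and the two sets define the same localisation.

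I expect the refinement step --- that the inclusion $R_{0}\hookrightarrow R$ of a basic covering sieve into an arbitrary covering sieve is a local equivalence --- to be the main obstacle, since it is exactly where the composition and base change axioms of the Grothendieck topology are consumed and where one must organise the restricted sieves into a diagram whose colimit recovers $R_{0}\hookrightarrow R$. The supporting input, namely the identification of $|u_{*}|$ with the generated covering sieve via the image factorisation in $\Pr(C)$, is standard but should be stated carefully, as it is what makes the otherwise weaker \v{C}ech descent condition coincide with the full sieve-theoretic descent rather than with hypercomplete descent.
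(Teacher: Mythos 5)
Your argument is correct, but it is not the route the paper takes: the paper's entire proof is the observation that $\Pr(C)$ is presentable, so that both localisations are Bousfield localisations, followed by a citation of Proposition A.1 of \cite{DHI}, which is precisely this comparison in the model-categorical language of simplicial presheaves. What you have done is reprove that imported result internally to $\Pr(C)$: you characterise both localisations by their strongly saturated classes, identify the \v{C}ech map $|u_*|\to h_x$ with the inclusion of the generated sieve via the effective-epimorphism/monomorphism (image) factorisation --- this is where the $\infty$-topos structure of $\Pr(C)$ enters, through Section 6.2.3 of \cite{Lu} --- and then reduce an arbitrary covering sieve to a generated one using the canonical colimit decomposition of a sieve, universality of colimits, two-out-of-three for strongly saturated classes, and the base-change axiom of the pretopology. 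The trade-off is clear: the paper's proof is two lines and leans on the literature, but it silently requires the translation between localisation of the projective model structure on simplicial presheaves and Bousfield localisation of the $\infty$-category $\Pr(C)$; your proof is self-contained, stays inside $\Pr(C)$, and makes explicit exactly where each axiom of the topology is consumed (base change for the refinement step, the generation of the topology by the pretopology for producing $R_0\subseteq R$), which is arguably more natural in a paper whose ambient formalism is already $\infty$-categorical. Two small attributions you should correct: the identity $R\simeq\colim_{(y\to x)\in R}h_y$ is Yoneda density (every prestack is the canonical colimit of representables over its category of elements, restricted here to the full subcategory of $C_{/x}$ spanned by the sieve), not universality of colimits --- universality is what you then invoke to pull this decomposition back along $R_0\hookrightarrow R$; and in that step you should note that $R_0\times_{R}h_y\simeq R_0\times_{h_x}h_y$ precisely because $R\hookrightarrow h_x$ is a monomorphism, which is what lets you identify the pulled-back pieces with the pullback sieves $R_0\times_{h_x}h_y\to h_y$ handled by your first step.
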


\begin{proof}
We note that $\Pr(C)$ is a presentable $\infty$-category.  Thus the localisation is a Bousfield localisation in the sense of Section~\ref{inftycategories}.  The result now follows from Proposition A1 of \cite{DHI}.
\end{proof}

Let $S$ denote the equivalent set of maps of Proposition~\ref{stacklocalisation}.  Then $\St(C,\tau)\simeq L_{S}(\Pr(C))$.  

\begin{prop}\label{xstacksareenrichedinx}
Let $(C,\tau)$ be a site and $X$ a presentable symmetric monoidal $\infty$-category whose tensor product preserves colimits seperately in each variable.  Then the $\infty$-category $\xstacks$ is tensored and enriched over itself.  Moreover, it is tensored and enriched over $X$.
\end{prop}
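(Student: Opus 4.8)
The plan is to exhibit $\xstacks=\St^{\tau}_X(C)$ as a commutative algebra object in the symmetric monoidal $\infty$-category $\Catinf^{p}$ of presentable $\infty$-categories, i.e. as an object of $\CMon(\Catinf^p)\simeq\Catinf^{p,\otimes}$, and then to deduce both enrichment statements formally from the criterion for enrichment of presentable categories recorded after Definition~\ref{morphexp} (a presentable category tensored over a presentable category, with cocontinuous action, is automatically enriched over it). First I would equip the prestack category $\Pr_X(C)=\RHom(C^{op},X)$ with the pointwise symmetric monoidal structure of Example~\ref{pointwisemonoidalstructure}. Since $X$ is presentable, $\RHom(C^{op},X)$ is presentable; and as colimits in a functor category are computed pointwise while the tensor product of $X$ preserves colimits separately in each variable, the pointwise tensor product on $\Pr_X(C)$ preserves colimits separately in each variable. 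Thus $\Pr_X(C)$ already lies in $\CMon(\Catinf^{p})$.

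Next, following Proposition~\ref{stacklocalisation}, the category $\xstacks$ is the Bousfield localisation $L_S\Pr_X(C)$ at the set $S$ of covering maps $u_*\to x$ (copowered by a set of generators of $X$, so that the $S$-local objects are exactly the $X$-valued stacks), hence it is again presentable. The crux is to promote this to a \emph{symmetric monoidal} localisation. I would verify that the strongly saturated class $W$ of $S$-local equivalences is stable under tensoring with an arbitrary object. Because the pointwise tensor is cocontinuous in each variable and $W$ is generated under colimits, cobase change and the two-out-of-three property by $S$, it suffices to check that each generator $u_*\to x$ remains a local equivalence after tensoring with a representable; this is exactly the base-change axiom of the pretopology $\tau$, which asserts that covering families pull back to covering families. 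Granting this, Proposition 4.1.3.4 of \cite{L1} endows $\xstacks$ with a symmetric monoidal structure for which $L_S$ is symmetric monoidal, and its tensor product, being $L_S$ applied to the pointwise one, still preserves colimits separately in each variable. Therefore $\xstacks\in\CMon(\Catinf^{p})$.

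With this in hand both assertions are formal. As a commutative algebra object of $\Catinf^{p}$, $\xstacks$ is a module over itself and so is tensored over itself; since its tensor product preserves colimits separately, the self-action $\bullet\otimes F$ is cocontinuous, and the criterion following Definition~\ref{morphexp} shows $\xstacks$ is enriched over itself. For the statement over $X$, the constant-prestack functor $X\to\Pr_X(C)$ is symmetric monoidal for the pointwise structure and, being the left adjoint to the limit functor, is cocontinuous; composing it with $L_S$ yields a cocontinuous symmetric monoidal functor $\phi\colon X\to\xstacks$, that is, a map of commutative algebra objects in $\Catinf^{p}$. Restriction along $\phi$ exhibits $\xstacks$ as a module over $X$ in $\Catinf^{p}$, hence tensored over $X$ with action $x\otimes F:=\phi(x)\otimes F$; this action is cocontinuous in $x$, so the same criterion gives the enrichment over $X$.

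The main obstacle is the middle step, namely showing that the topological localisation is compatible with the pointwise monoidal structure, equivalently that the $S$-local equivalences are stable under tensor product. This is the only place where the geometry of the topology genuinely enters (through the base-change axiom); everything else is a formal manipulation of presentable symmetric monoidal $\infty$-categories and their modules. As a sanity check one may instead note the identification $\xstacks\simeq\St^{\tau}(C)\otimes X$ of tensor products in $\Catinf^{p}$, whereby the same conclusion follows from the compatibility of the $\cS$-valued localisation with the cartesian structure together with the fact that a tensor product of commutative algebra objects in $\Catinf^{p}$ is again such an object.
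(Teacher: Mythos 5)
Your proposal is correct and follows the same overall route as the paper: pointwise symmetric monoidal structure on prestacks, the cocontinuity criterion for enrichment of presentable categories (the remark following Definition~\ref{morphexp}), and the constant-prestack functor composed with stackification $L_S$ to produce the colimit-preserving symmetric monoidal functor $X\ra\St_X(C)$ that induces the $X$-action. The genuine difference is in how the symmetric monoidal structure on $\St_X(C)$ itself is obtained. The paper simply asserts that the pointwise structure of Example~\ref{pointwisemonoidalstructure} restricts to stacks, writing ${\St}_X(C)_{\<n\>}\simeq\St_{X_{\<n\>}}(C)$; this elides the fact that the pointwise tensor product of two stacks need not be a stack (tensor products do not commute with the limits appearing in the descent condition), so the monoidal structure on $\St_X(C)$ must really be the localized one, and its existence via Proposition 4.1.3.4 of \cite{L1} requires checking that $S$-local equivalences are stable under tensoring. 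You identify this as the crux and sketch the verification correctly: reduce by cocontinuity and strong saturation to the generating covers tensored with generators of $\Pr_X(C)$, where pullback-stability of covering families (the base-change axiom) does the work. So your proof buys rigor exactly where the paper is terse, at the cost of length; the paper's version buys brevity by leaning implicitly on the monoidal localization machinery it set up in Section~\ref{monoidal} without verifying its hypothesis. Your closing remark identifying $\St_X(C)\simeq\St^{\tau}(C)\otimes X$ in $\Catinf^{p}$ is a legitimate alternative argument not present in the paper, and would reduce the compatibility question to the $\cS$-valued (left exact, hence automatically monoidal) case.
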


\begin{proof}
We first observe that the $\infty$-category $\St_X(C,\tau)$ of $X$-valued stacks on $C$ is naturally a symmetric monoidal $\infty$-category with respect to the pointwise symmetric monoidal structure of Example~\ref{pointwisemonoidalstructure}.  Explicitly, ${\St}_X(C,\tau)_{\<n\>}\simeq\St_{X_{\<n\>}}(C,\tau)$.  To show that it is enriched over itself it remains to show that $\bullet\otimes F:\St_X(C,\tau)\ra\St_X(C,\tau)$ preserves colimits (see Section~\ref{monoidal}).  This follows by the assumption that the tensor product on $X$ preserves colimits seperately in each variable (colimits are calculated pointwise in functor categories).

To show that $\St_X(C,\tau)$ is tensored and enriched over $X$ it suffices to show that there exists a colimit preserving symmetric monoidal functor $X\ra{\St}_X(C,\tau)$.  First consider the $\infty$-category $\Pr_X(C)$ of $X$-valued prestacks endowed with its pointwise symmetric monoidal structure.  The constant prestack functor induces a symmetric monoidal functor $X\simeq{\Pr}_X(*)\ra{\Pr}_X(C)$.  This functor preserves colimits seperately in each variable owing to the assumption that they do in $X$.  Finally, the stackification functor $L_S$ preserves colimits (it is a left adjoint) so the composition $X\ra{\Pr}_X(C)\ra L_S{\Pr}_X(C)\simeq{\St}_X(C,\tau)$ preserves colimits seperately in each variable.
\end{proof}              

The internal Hom provided by Proposition~\ref{xstacksareenrichedinx}, and more generally for $X$-valued prestacks, will be denoted by $\uHom(F,G)$.  We will now demonstrate that $\uHom(F,G)$ is an $X$-valued stack under the weaker condition that $F$ is only an $X$-valued prestack when the conditions of Proposition~\ref{xstacksareenrichedinx} are satisfied.

\begin{prop}\label{homstack}
Let $(C,\tau)$ be a site, $X$ a presentable symmetric monoidal $\infty$-category whose tensor product preserves colimits separately in each variable and $F$ and $G$ two $X$-valued prestacks on $C$.  If $G$ is an $X$-valued stack then $\uHom(F,G)$ is an $X$-valued stack on $C$.
\end{prop}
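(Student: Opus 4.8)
The plan is to recognise $\uHom(F,G)$ as a stack by showing it is local with respect to the covering maps. Recall from Proposition~\ref{stacklocalisation} and the discussion following it that $\St_X^\tau(C)\simeq L_S\Pr_X(C)$, where $S$ is the set of covering maps $\{u_*\ra x\}$; thus an $X$-valued prestack is a stack exactly when it is $S$-local. By Proposition~\ref{xstacksareenrichedinx} the functor $\bullet\otimes F$ on $\Pr_X(C)$, taken for the pointwise symmetric monoidal structure of Example~\ref{pointwisemonoidalstructure}, preserves colimits, so it admits a right adjoint, which is precisely $\uHom(F,\bullet)$. Writing $\sigma$ for a covering map $u_*\ra x$ in $S$, this adjunction identifies the restriction map
\[ \Map(h_x,\uHom(F,G))\ra\Map(u_*,\uHom(F,G)) \]
with the map $\Map(h_x\otimes F,G)\ra\Map(u_*\otimes F,G)$ induced by $\sigma\otimes F$. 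Since $G$ is a stack, hence $S$-local, the functor $\Map(\bullet,G)$ sends every $S$-equivalence to an equivalence; so it suffices to prove that $\sigma\otimes F$ is an $S$-equivalence for every covering map $\sigma$ and every prestack $F$.

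I would establish this compatibility of the pointwise tensor with stackification by reduction to generators. The class of $S$-equivalences is strongly saturated, and $\bullet\otimes F$ preserves colimits, so the full subcategory of those $F$ for which $\sigma\otimes F$ is an $S$-equivalence for all covering $\sigma$ is closed under colimits. As $\Pr_X(C)$ is generated under colimits by the copowers $h_c\odot v$ of representables by objects $v\in X$, it is enough to treat $F$ of this form. A pointwise calculation then identifies $\sigma\otimes F$ with $(\sigma\times h_c)\odot v$, the copower by $v$ of the base change $\sigma\times h_c$ of $\sigma$ along the projection $h_x\times h_c\ra h_x$ computed in $\cS$-valued prestacks.

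The final step, which is the crux, is to see that this last map is an $S$-equivalence. Because the topology is given by a pretopology satisfying the base-change axiom, the localisation $L_S$ is a left exact (topological) localisation, so its class of $S$-equivalences is stable under base change; this handles the factor $\sigma\times h_c$. Copowering by $v$ then preserves $S$-equivalences, as one sees from the adjunction between $\bullet\odot v$ and the functor $G'\mapsto(d\mapsto\Map(v,G'(d)))$, the latter carrying stacks to stacks because $\Map(v,\bullet)$ preserves limits and hence the descent limits. This completes the argument. The main obstacle is exactly this compatibility between the pointwise monoidal structure and stackification: the adjunction and Yoneda manipulations are formal, whereas the stability of $S$-equivalences under the pointwise tensor is where the geometry of the topology, through its base-change axiom, genuinely enters.
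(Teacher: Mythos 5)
Your proposal is correct and takes essentially the same approach as the paper: the paper likewise uses the tensor--Hom adjunction to reduce the claim to showing that $\colim_{n}(u_{*}\otimes F)\ra h_x\otimes F$ is a local equivalence, verifies this first when $F$ is a copower of a representable by a generator $B$ of $X$ (using that $\Map(B,\bullet)$ preserves the descent limits of the stack $G$), and then concludes by writing an arbitrary $X$-valued prestack as a colimit of such copowers. The only substantive difference is expository: you make explicit the base-change step for the representable factor and the saturation/colimit-stability of local equivalences, both of which the paper leaves implicit in its final chain of equivalences.
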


\begin{proof}
Let $F$ be an $X$-valued prestack and $G$ an $X$-valued stack.  Then $\uHom(F,G)$ is an $X$-valued stack if and only if the map
\[  \Mor(x,\uHom(F,G))\ra\Mor(\underset{n}{\colim}~u_{*},\uHom(F,G))   \]
is an equivalence in $X$.  This is equivalent to the condition that $\Mor(x\otimes F,G)\ra\Mor(\colim_{n}(u_{*}\otimes F),G)$ is an equivalence in $X$ and subsequently to $\colim_{n}(u_{*}\otimes F)\ra x\otimes F$ being an equivalence of $X$-valued stacks.  Let $B$ be an object of $X$.  Since $G$ is an $X$-valued stack, the map $\Map(B,G(x))\ra\lim_n\Map(B,G(u_*))$ is an equivalence and so $\Map(x\otimes B,G)\ra\Map(\colim_{n}(u_*\otimes B),G)$ is an equivalence for any $B\in X$.  

Now recall that any $X$-valued prestack can be written as a colimit given by $\colim_{\alpha}(v_{\alpha}\otimes B_{\alpha})$ for $v_{\alpha}$ a set of prestacks and $B_{\alpha}$ a set of generators for the presentable $\infty$-category $X$.  Therefore, $x\otimes F\simeq x\otimes\colim_\alpha(v_{\alpha}\otimes B_{\alpha})\simeq\colim_{\alpha}(x\otimes v_{\alpha})\otimes B_{\alpha}\simeq\colim_{n}(\colim_{\alpha}(u_*\otimes v_{\alpha})\otimes B_\alpha)\simeq\colim_{n}(u_*\otimes\colim_{\alpha}(v_\alpha\otimes B_\alpha))\simeq\colim_{n}(u_*\otimes F)$ and the result follows.
\end{proof}

Let $n\geq 0$ be an integer.  Recall that a $\inftyz$-category $A$ is said to be \textit{$n$-truncated} (resp. \textit{$n$-connective}) if for every $i>n$ (resp. $i<n$)
\[  \pi_i(A,a)\simeq *  \]
for all objects $a\in A$.  An $\inftyz$-category which is $1$-connective will be called \textit{connected}.  A map of $\inftyz$-categories $f:A\ra B$ is said to be $n$-truncated (resp. $n$-connective) if the homotopy fibers of $f$ taken over any base point of $B$ is $n$-truncated (resp. $n$-connective).  A prestack $F:C^{op}\ra\cS$ is said to be $n$-truncated (resp. $n$-connective) if $F(x)$ is $n$-truncated (resp. $n$-connective) for all $x\in C$.  A map of prestacks $F\ra G$ is said to be $n$-truncated (resp. $n$-connective) if $F(x)\ra G(x)$ is $n$-truncated (resp. $n$-connective) for all $x\in C$.

Let $C$ be an $\infty$-category and $n\geq 0$ an integer.  An object $x$ in $C$ is said to be \textit{$n$-truncated} if the representable prestack $C(\bullet,x)$ is $n$-truncated.  An arrow $f:x\ra y$ in $C$ is said to be \textit{$n$-truncated} if the map of prestacks $C(\bullet,f)$ is $n$-truncated.  Let $\tau_{\leq n}C$ denote the full subcategory of $C$ spanned by the $n$-truncated objects.  Then there exists an equivalence
$\tau_{\leq 0}C\ra\h(\tau_{\leq 0}C)$
of $\infty$-categories.  Note that objects in the $\infty$-category $\tau_{\leq 0}C$ are objects $x\in C$ such that for all $y\in C$, the $\inftyz$-category $C(y,x)$ is homotopy equivalent to a discrete space, ie. $C(y,x)\ra\pi_0C(y,x)$ is a homotopy equivalence for all $y\in C$.  We can construct a left adjoint to the inclusion functor $i:\tau_{\leq n}C\ra C$ given by
\[  t_{n}:C\ra\tau_{\leq n}C.   \]

\begin{prop}\label{stackcharacterisation}
Let $(C,\tau)$ be a site and 
\[     F:C^{op}\ra\Catinf    \]
a prestack of $\infty$-categories satisfying the following conditions~:
\begin{enumerate}
\item For each object $x$ in $C$, the $\infty$-category $F(x)$ admits limits.
\item For each object $x$ in $C$ and for any covering $\{u_i\ra x\}_{i\in I}$ in $cov^{\tau}(x)$, the functor $F(x)\ra F(u_{i})$ preserves limits. 
\item For each object $x$ in $C$ and for any covering $\{u_{i}\ra x\}_{i\in I}$ in $cov^{\tau}(x)$, the functor $F(x)\ra \prod_{i}F(u_{i})$ is conservative.
\item For any map $f:y\ra x$ in $C$, the functor $f^{*}:=F(f):F(x)\ra F(y)$ admits a right adjoint $f_{*}:F(y)\ra F(x)$.
\item For each pullback square
\begin{diagram}
y  &\rTo^{f}   &  x  \\
\uTo^{p}  &      &\uTo_{g}\\
z    &\rTo^{q}    & y'
\end{diagram}
in $C$, the natural morphism  $g^{*}f_{*}\Rightarrow q_{*}p^{*}$ is an equivalence in the $\infty$-category $\RHom(F(y),F(y'))$.
\end{enumerate}
Then $F$ is a stack of $\infty$-categories.
\end{prop}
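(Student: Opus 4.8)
The plan is to verify the descent equivalence
\[   F(x)\ra\underset{\Delta}{\lim}~F(u_{*})   \]
for an arbitrary covering $u\ra x$ (where $u=\coprod_i u_i$ and $u_*$ is its \v{C}ech nerve) by exhibiting it as a comonadic comparison functor and then invoking the $\infty$-categorical Barr--Beck theorem. First I would fix such a covering and write $p:u\ra x$ for the associated map. Applying condition (4) to $p$ produces a right adjoint $p_*$ to the restriction functor $p^{*}=F(p):F(x)\ra F(u)$, so that $\Theta:=p^{*}p_{*}$ is a comonad on $F(u)$ and $p^{*}$ is comonadic precisely when the comparison functor from $F(x)$ to the $\infty$-category $\Comod_{\Theta}(F(u))$ of $\Theta$-comodules is an equivalence.

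The key geometric input is the identification of the cosimplicial $\infty$-category $F(u_{*})$ with the cobar construction of $\Theta$. Here I would use that $u_*$ is the \v{C}ech nerve of $p$, so that each $u_{n}=u\times_{x}\cdots\times_{x}u$ fits into pullback squares governed by condition (5); the Beck--Chevalley equivalences furnished by that condition assemble the iterated projections and diagonals of $u_*$ into the comultiplication and counit of $\Theta$. This yields an equivalence $\lim_{\Delta}F(u_{*})\simeq\Comod_{\Theta}(F(u))$ under which the descent map $F(x)\ra\lim_{\Delta}F(u_{*})$ is carried to the comonadic comparison functor of the adjunction $p^{*}\dashv p_{*}$.

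It then remains to check the hypotheses of the comonadic Barr--Beck--Lurie theorem for $p^{*}$. Conservativity of $p^{*}$ is immediate from condition (3): the conservative functor $F(x)\ra\prod_i F(u_i)$ factors through $p^{*}$ via restriction along the inclusions $u_i\hookrightarrow u$, so $p^{*}$ is itself conservative. Condition (1) guarantees that $F(x)$ admits totalizations, and condition (2) guarantees that $p^{*}$ preserves them (indeed all limits), which is more than enough for the totalizations of $p^{*}$-split cosimplicial objects demanded by the theorem. By the comonadic form of Barr--Beck (\cite{L1}) the functor $p^{*}$ is comonadic, so the comparison functor, and hence the descent map, is an equivalence. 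Since this holds for every covering, $F$ is a stack.

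The main obstacle is the identification in the second paragraph: turning the \v{C}ech cosimplicial diagram $F(u_{*})$ into the cobar resolution of $\Theta$ is not a single equation but a homotopy-coherent comparison of cosimplicial $\infty$-categories, and organizing the base-change equivalences of condition (5) coherently across all simplicial degrees is the delicate step. Everything else reduces to reading the adjunction, conservativity, and limit-preservation hypotheses off the five conditions directly.
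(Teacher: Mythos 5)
Your proposal is correct in substance, but it takes a genuinely different route from the paper. You package descent along $p:u\ra x$ as comonadicity of $p^{*}$: identify $\underset{\Delta}{\lim}~F(u_{*})$ with comodules over the comonad $\Theta=p^{*}p_{*}$ using the Beck--Chevalley equivalences of condition (5), and then verify the hypotheses of the comonadic Barr--Beck--Lurie theorem from conditions (1)--(3). The step you flag as the main obstacle --- coherently assembling the base-change equivalences into an identification of the \v{C}ech cosimplicial diagram with the cobar construction --- is exactly what the Beck--Chevalley descent criterion of Section 4.7.5 of \cite{L1} provides, so your argument can be closed by citation; in fact that criterion subsumes both of your steps at once, and this is how \cite{LVIII} proves descent for quasi-coherent sheaves. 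The paper instead avoids Barr--Beck entirely: it pulls the covering back along itself, so that the induced covering $q:u\times_{x}u\ra u$ of $u$ admits a section (the diagonal $\delta$), and descent along a covering with a section is automatic because the relevant augmented cosimplicial object is split (Proposition~\ref{splitislimit}); conditions (2), (4) and (5) are used only to construct a right adjoint $B$ to the comparison functor $A$ compatibly with restriction to $u$, and the conservativity hypothesis (3) then transports the invertibility of the unit and counit of $A\dashv B$ down from the split case. What each approach buys: the paper's bootstrap is elementary and self-contained, localizing the homotopy-coherence issue to a single adjoint square; your route leans on heavier machinery but is the standard modern argument and yields the extra structural conclusion that $F(x)$ is equivalent to the $\infty$-category of $\Theta$-comodules on $F(u)$, i.e. that descent data are comodules over the descent comonad.
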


\begin{proof}
We need to show that for any covering $u\ra x$ in $cov^{\tau}(x)$ the map
\[  F(x)\ra\underset{\Delta}{\lim}~F(u_{n})  \] 
is an equivalence of $\infty$-categories.  Consider the pullback diagram
\begin{diagram}
u  &\rTo^{f}   & x \\
\uTo^{p}  &      &\uTo_{f}\\
v    &\rTo^{q}    & u
\end{diagram}
in $C$ where $f\in cov^{\tau}(x)$.  Construct the section $\delta:u\ra u\times_{x}u$ of the map $q$ where $q\circ\delta=\id_{u}$.  Taking the nerve of the maps $f$ and $q$ and using (1) we obtain a homotopy commutative diagram
\begin{diagram}
F(x)  &\rTo^{A} &\underset{\Delta}{\lim}~F(u_{n}) \\
\dTo  &     &\dTo \\
F(u)  &\rTo^{A'}    &\underset{\Delta}{\lim}~F(v_{n}) 
\end{diagram}  
in $\Catinf$.  Using (2), (4) and (5) we obtain an adjoint homotopy commutative diagram
\begin{diagram}
F(x)  &\lTo^{B} &\underset{\Delta}{\lim}~F(u_{n}) \\
\dTo  &     &\dTo \\
F(u)  &\lTo^{B'}    &\underset{\Delta}{\lim}~F(v_{n})
\end{diagram}  
in $\Catinf$.  To complete the proof it will suffice to show that the unit and counit of the adjunction $A\dashv B$ are equivalences.  By (3) and the fact that both squares commute, we are able to check the corresponding statement for the adjunction $A'\dashv B'$.  This adjunction is an equivalence owing to the fact that for any covering admitting a section, the prestack $F$ satisfies descent.
\end{proof}

We now define the notion of a group object in an $\infty$-category.  We start with the more general notion of a groupoid object.  Let $C$ be an $\infty$-category with pullbacks.  A \textit{category object} in $C$ is a functor $F:\Dop\ra C$ such that for all $n\geq 0$, the canonical map
\[        F([n])\ra F([1])\times_{F([0])}\times\ldots\times_{F([0])} F([1])    \]
is an equivalence in $C$.  Let $\tu{Ct}(C)$ denote the full subcategory of $\RHom(\Dop,C)$ spanned by the category objects of $C$.  A category object $F$ in $C$ is said to be a \textit{groupoid object} in $C$ if it takes every partition $[2]=\{S\cup S'|S\cap S'=\{s\}, s\in S\}$ to a pullback square
\begin{diagram}
F([2])                  &\rTo             &F(S')\\
\dTo       &                         &\dTo\\
F(S)               &\rTo          &F(\{s\})
\end{diagram}
in $C$.  Let $\Gpd(C)$ denote the full subcategory of $\tu{Ct}(C)$ spanned by the groupoid objects of $C$.  We have an adjoint pair
\[     i:\Gpd(C)\rightleftarrows\tu{Ct}(C):j   \]
where $j(F)$ is the groupoid object of isomorphisms of a category object $F$ in $C$.  

\begin{dfn}
Let $C$ be an $\infty$-category and $G$ a groupoid object in $C$.  Then $G$ is said to be a \textit{group object} in $C$ if $G([0])$ is a terminal object in $C$.
\end{dfn}

Let $\Gp(C)$ denote the full subcategory of $\Gpd(C)$ spanned by the group objects of $C$.  The $\infty$-category $\Gp(\cS)$ will play the analogue of the category of groups in the $\infty$-categorical context.  If $C$ is an $\infty$-category then there exists an equivalence
\[  \Gp(\Pr(C))\ra\RHom(C^{op},\Gp(\cS))  \]
of $\infty$-categories (since limits in functor categories are computed pointwise).  If $(C,\tau)$ is a site then a $\Gp(\cS)$-valued stack will be called a \textit{group stack} on $C$.  The $\infty$-category $\Gp(\St(C,\tau))$ of group stacks on $C$ will be denoted $\Gp(C,\tau)$.  

\begin{ex}
If $C$ is the site $(\Aff_{R},\tau)$ of commutative $R$-algebras, we will denote the $\infty$-category $\Gp(\Aff_{R},\tau)$ by $\Gp(R,\tau)$.
\end{ex}

Let $(C,\tau)$ be a site.  A stack $F$ in $\St(C,\tau)$ is said to be \textit{locally non-empty} if for all $x\in C$ there exists a $\tau$-covering $u\ra x$ such that $F(u)$ is non-empty.  It is said to be \textit{locally connected} if $t_0(F)\ra *$ is an isomorphism (of sheaves of sets).  A morphism of prestacks $\phi:F\ra G$ is said to be a \textit{local equivalence} if it is \textit{fully faithful}, ie. $\phi_{x}:F(x)\ra G(x)$ is fully faithful for all $x\in C$, and \textit{locally essentially surjective}, ie. for all $x\in C$ and $a\in G(x)$ there exists a covering $\alpha:u\ra x$ such that $\alpha^{*}(a)$ is equivalent to $\alpha^{*}(\phi_{x}(b))$ (ie. an isomorphism in $\h G(u)$) for some $b\in F(x)$.  If $F$ and $G$ are stacks then $\phi$ is a local equivalence if and only if it is an equivalence of stacks.  

\begin{dfn}
Let $(C,\tau)$ be a site and $F$ a stack in $\St(C,\tau)$.  Then $F$ is said to be a \textit{gerbe} in $\St(C,\tau)$ if it is locally non-empty and locally connected.
\end{dfn}

The full subcategory of $\St(C,\tau)$ spanned by gerbes will be denoted by $\Ger(C,\tau)$.  A gerbe $G$ in $\Ger(C,\tau)$ is said to be \textit{neutral} if there exists a morphism $*\ra G$ in $\Ger(C,\tau)$.  

\begin{ex}
Let $C$ be a symmetric monoidal $\infty$-category and $R$ a commutative monoid object of $C$.  Then we will denote by $\Ger(R,\tau):=\Ger(\Aff_{R},\tau)$ the $\infty$-category of stacks with respect to the site $(\Aff_{R},\tau)$ of $R$-algebras in $C$.
\end{ex}

We now provide a requisite characterisation of a gerbe.  First we will need a small lemma.

\begin{lem}\label{mapofkancomplexes}
Let $f:C\ra D$ be a map of $\inftyz$-categories.  Then the following are equivalent.
\begin{enumerate}
\item The map $f$ is fully faithful, ie. for all $x,y\in C$, 
\[  x\times^{h}_{C}y=\Map_{C}(x,y)\xras\Map_{D}(fx,fy)=fx\times^{h}_{D}fy.  \]
\item The map $\delta:C\ra C\times_{D}^{h}C$ is an equivalence.
\item The map $\pi_{0}(C)\ra\pi_{0}(D)$ is a monomorphism and for all $x\in C$ and $i>0$, the map 
\[ \pi_{i}(C,x)\ra\pi_{i}(D,f(x))  \] 
is an equivalence.
\end{enumerate}
\end{lem}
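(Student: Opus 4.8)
The plan is to prove the three-way equivalence for a map of Kan complexes $f:C\ra D$ by establishing the implications $(1)\Leftrightarrow(2)$ directly from the definition of pullback, and then $(1)\Leftrightarrow(3)$ via the long exact sequence of homotopy groups. The statement concerns $\inftyz$-categories, which in this paper are Kan complexes, so all the mapping spaces in question are genuine homotopy types and the homotopy fibers behave as expected. The key observation is that fully faithfulness is precisely a statement about the homotopy fibers of $f$, and all three conditions are reformulations of the assertion that these homotopy fibers are contractible (where nonempty).

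\textbf{First I would prove $(1)\Leftrightarrow(2)$.} The diagonal $\delta:C\ra C\times^h_D C$ is an equivalence if and only if it induces an equivalence on all mapping spaces. For $x,y\in C$, the mapping space $\Map_{C\times^h_D C}(\delta x,\delta y)$ is computed as the homotopy pullback $\Map_C(x,y)\times^h_{\Map_D(fx,fy)}\Map_C(x,y)$, and the map from $\Map_C(x,y)$ is the diagonal into this pullback. A diagonal map $Z\ra Z\times^h_W Z$ is an equivalence exactly when $Z\ra W$ is a monomorphism of spaces in the appropriate sense, i.e. an inclusion of path components together with equivalences on all higher homotopy of each component. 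Applied with $Z=\Map_C(x,y)$ and $W=\Map_D(fx,fy)$, this is exactly condition (1), that $\Map_C(x,y)\ra\Map_D(fx,fy)$ is an equivalence. The essential content is the standard fact that $\delta:Z\ra Z\times^h_W Z$ being an equivalence is equivalent to $Z\ra W$ being ($-1$)-truncated, which here—since we demand (1) to be a full equivalence on mapping spaces—requires a small comparison argument.

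\textbf{Next I would prove $(1)\Leftrightarrow(3)$} using the long exact sequence in homotopy associated to the map $f:C\ra D$ and its homotopy fibers. Fully faithfulness in condition (1) is equivalent to the statement that for each $x\in C$ the homotopy fiber of $f$ over $f(x)$ is contractible, since $\Map_C(x,y)\simeq\Map_D(fx,fy)$ for all $y$ forces the fiber to be a point. Examining the long exact sequence
\[
\ldots\ra\pi_{i}(F,x)\ra\pi_i(C,x)\ra\pi_i(D,f(x))\ra\pi_{i-1}(F,x)\ra\ldots
\]
where $F$ is the homotopy fiber, one sees that contractibility of $F$ is equivalent to $\pi_i(C,x)\ra\pi_i(D,f(x))$ being an isomorphism for $i\geq 1$ together with injectivity at the level of $\pi_0$, which is exactly condition (3). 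The subtlety requiring care is the $\pi_0$ and $\pi_1$ boundary: injectivity of $\pi_0(C)\ra\pi_0(D)$ together with the higher isomorphisms must be reconciled with the exactness at the low end of the sequence, where $\pi_0$ is only a pointed set.

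\textbf{The main obstacle} will be handling the bookkeeping at the base of the homotopy long exact sequence, where $\pi_0$ and $\pi_1$ interact and one cannot simply quote exactness of abelian groups. Specifically, translating ``the map on $\pi_0$ is a monomorphism and the maps on all higher $\pi_i$ are isomorphisms'' into ``all homotopy fibers are contractible'' requires care because the fibration sequence is only exact as pointed sets near the bottom, and one must check that injectivity on $\pi_0$ plus isomorphisms on $\pi_1$ genuinely forces the fibers to be both nonempty-or-empty appropriately and simply connected. I expect the cleanest route is to fix a base point $x\in C$, work entirely with the fiber $F$ over $f(x)$, and note that conditions (1) and (3) both reduce to $F\simeq *$; the remaining work is then the routine verification that (3) as stated is the correct pointwise unwinding of this condition across all components, which I would dispatch by reducing to a single component and invoking the long exact sequence there.
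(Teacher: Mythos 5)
The gap is in your $(1)\Leftrightarrow(2)$ step, and it sits exactly where you defer to ``a small comparison argument.'' For maps of Kan complexes, being an equivalence means fully faithful \emph{and} essentially surjective, so your opening assertion that $\delta$ is an equivalence if and only if it induces equivalences on all mapping spaces is false as a general principle: the inclusion of one point into a two-point discrete space is fully faithful but not an equivalence. The same failure occurs for $\delta$ itself (take $C$ two discrete points, $D$ a point: $\delta$ is then fully faithful but not essentially surjective). Moreover, even on mapping spaces, your own cited fact only yields that full faithfulness of $\delta$ is equivalent to each $\Map_{C}(x,y)\ra\Map_{D}(fx,fy)$ being $(-1)$-truncated, which is strictly weaker than condition (1). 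These two defects are linked: the only way to promote ``$(-1)$-truncated'' to ``equivalence'' is to use surjectivity of $\delta$ on path components, i.e.\ the essential surjectivity you never establish or invoke. Concretely, given $\alpha:fx\ra fy$ in $D$, condition (2) guarantees that the point $(x,y,\alpha)$ of $C\times^{h}_{D}C$ is connected to some diagonal point $(z,z,\id)$, and unwinding such a path produces $\beta:x\ra y$ in $C$ with $f(\beta)\simeq\alpha$; this is the missing surjectivity of $\pi_{0}\Map_{C}(x,y)\ra\pi_{0}\Map_{D}(fx,fy)$. Conversely, for $(1)\Rightarrow(2)$ you must prove essential surjectivity of $\delta$, which is precisely the step the paper's proof spells out: any $(x,y,\alpha)$ is equivalent to $\delta(x)$ because full faithfulness of $f$ lets one lift $\alpha$ to $\beta:x\xras y$. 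As written, neither direction of your $(1)\Leftrightarrow(2)$ closes.

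Your $(1)\Leftrightarrow(3)$ argument via homotopy fibers and the long exact sequence is sound; it differs from the paper, which runs the cycle $(1)\Rightarrow(2)\Rightarrow(3)\Rightarrow(1)$ and deduces (3) from (2) by identifying the homotopy fibers of $\delta$ with path spaces inside the fibers of $f$. If you want to salvage your mapping-space approach to $(1)\Leftrightarrow(2)$, the clean fix is to work over the base $C\times C$: the diagonal $C\ra C\times C$ has homotopy fiber $\Map_{C}(x,y)$ over $(x,y)$, the projection $C\times^{h}_{D}C\ra C\times C$ has homotopy fiber $\Map_{D}(fx,fy)$, and $\delta$ is a map over $C\times C$ inducing $f$ on these fibers; since a map over a common base is an equivalence if and only if it induces equivalences on homotopy fibers over every point of the base---a criterion which builds in $\pi_{0}$-surjectivity automatically, an empty space never being equivalent to a nonempty one---this yields $(1)\Leftrightarrow(2)$ without any separate essential-surjectivity bookkeeping.
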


\begin{proof}
$(1)\Rightarrow(2)$.  Since $f$ is fully faithful, the map
\[    C(x,y)\xras C(x,y)\times^{h}_{D(fx,fy)}C(x,y)    \]
is an equivalence so $\delta:C\ra C\times_{D}^{h}C$ is fully faithful.  For essential surjectivity, we need to show that any object $(x,y,\alpha:fx\xras fy)$ in $C\ra C\times^{h}_{D}C$ is equivalent to an object $\delta(z)=(z,z,\id_{z})$ for some $z\in C$.  Let $z=x$.  Since $f$ is fully faithful, we set $\beta:x\xras y$ and $\alpha=f(\beta)$.  $(2)\Rightarrow(3)$.  Let $d\in D$ and $(x,y)\in C\times_D^{h}C$ such that $d\simeq f(x)\simeq f(y)$.  Then $\delta^{-1}(x,y)$ is equivalent to the path space between $x$ and $y$ in $f^{-1}(d)$.  Thus $f^{-1}(d)$ is empty or contractible.  Thus $\pi_{0}(C)\ra\pi_{0}(D)$ is a monomorphism.  $(3)\Rightarrow(1)$.  This statement is clear.
\end{proof}

Recall that the \textit{classifying space} functor $\B:\Gp(\cS)\ra\cS$ is given by
\[  G \mapsto \B G:[n]\mapsto G_{n,n}.  \]
It admits a right adjoint $\Omega$ which sends an $\inftyz$-category $A$ to $\Omega(A):[n]\mapsto A^{\Delta_{*}^{n}}$.  Let $(C,\tau)$ be a site.  We construct the following \textit{classifying prestack} functor $\overline{\B}:\Gp(\Pr(C))\ra\Pr(C)$ by sending
\[   G \mapsto\overline{\B}G:x\mapsto \B(G(x))  \]
together with its right adjoint $\overline{\Omega}$, where $\overline{\Omega}(F):x\mapsto\Omega(F(x))$.  Finally, the \textit{classifying stack} functor
\[      \widetilde{\B}:\Gp(C,\tau)\ra\St(C,\tau)      \]   
is the stackification of $\overline{\B}$ and admits the right adjoint $\widetilde{\Omega}:\St(C,\tau)\ra\Gp(C,\tau)$. 

\begin{prop}\label{gerbecharacterisation}
Let $(C,\tau)$ be a site and $F$ a stack on $C$.  The following are equivalent~:
\begin{enumerate}
\item The stack $F$ is a gerbe.
\item The stack $F$ is locally equivalent to $\widetilde{\B}G$ for $G$ a group stack in $\Gp(C,\tau)$.
\end{enumerate}
\end{prop}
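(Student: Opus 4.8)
The plan is to identify being a gerbe with being a \emph{connected} stack, and then to invoke the delooping equivalence between group objects and pointed connected objects available in any $\infty$-topos. First I would observe that $\St^{\tau}(C)$ is an $\infty$-topos, being the left exact localisation $L_S\Pr(C)$ of the presheaf $\infty$-topos at the covering sieves (Proposition~\ref{stacklocalisation}). I would also record the reformulation of the gerbe condition: since $t_0$ on $\St^{\tau}(C)$ is computed as the sheafification of the presheaf $x\mapsto\pi_0F(x)$, the condition $t_0(F)\simeq *$ says exactly that locally $\pi_0F$ is a singleton, i.e. that $F$ is both locally non-empty and locally connected. Thus $F$ is a gerbe if and only if $F$ is a connected object of $\St^{\tau}(C)$.

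For $(2)\Rightarrow(1)$ I would argue that $\widetilde{\B}G$ is always a gerbe. The prestack $\overline{\B}G$ sends $x$ to the classifying space $\B(G(x))$, which is pointed and connected, so the presheaf $x\mapsto\pi_0\B(G(x))$ is constant at $*$; sheafifying gives $t_0(\widetilde{\B}G)\simeq *$, so $\widetilde{\B}G$ is a connected stack, hence a gerbe. Since being a gerbe depends only on $t_0$ and is therefore invariant under local equivalence, any $F$ locally equivalent to some $\widetilde{\B}G$ is itself a gerbe.

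For $(1)\Rightarrow(2)$, suppose $F$ is a gerbe, so $t_0(F)\simeq *$. Local non-emptiness lets me choose, for each $x\in C$, a covering $u\ra x$ together with a point $s\in F(u)$; restricting $F$ along this covering I obtain a \emph{pointed} stack whose underlying stack is still connected. Now I would invoke the fundamental equivalence, in the $\infty$-topos $\St^{\tau}(C)$, between group objects and pointed connected objects realised by the adjunction $\widetilde{\B}\dashv\widetilde{\Omega}$ (the stack-theoretic form of the delooping theorem of \cite{Lu}): the counit $\widetilde{\B}\widetilde{\Omega}F\ra F$ is an equivalence on pointed connected objects and the unit $G\ra\widetilde{\Omega}\widetilde{\B}G$ is an equivalence on group objects. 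Applying this to the pointed connected restriction of $F$, and setting $G:=\widetilde{\Omega}F$ (the loop group stack at $s$, equivalently the automorphism group stack of the section $s$), gives a local equivalence $\widetilde{\B}G\simeq F$, exhibiting $F$ as locally of the form $\widetilde{\B}G$.

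The main obstacle is the delooping equivalence itself, namely that $\widetilde{\B}$ and $\widetilde{\Omega}$ restrict to mutually inverse equivalences between $\Gp^{\tau}(C)$ and the pointed connected stacks. One direction, $\widetilde{\Omega}\widetilde{\B}G\simeq G$, amounts to the Segal computation that the loops of a classifying object recover the group; the other, $\widetilde{\B}\widetilde{\Omega}F\simeq F$, is where connectivity is essential, since it is precisely effectivity of groupoid objects (descent) in the $\infty$-topos $\St^{\tau}(C)$ that forces the canonical map from the classifying object of the loop groupoid back to $F$ to be an equivalence. I would isolate this as the key lemma, deducing it from the corresponding statement for $\Gp(\cS)$ applied pointwise together with the fact that the stackification $L_S$ is left exact, so that it preserves the finite limits defining $\widetilde{\Omega}$ and commutes with the geometric realisation defining $\widetilde{\B}$.
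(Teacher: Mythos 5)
Your proposal is correct, but it reaches the statement by a genuinely different route than the paper. The paper's proof is hands-on: it models group objects by simplicial groups via $\Gp(\cS)\simeq L(s\tu{Gp})$ and the $W$-construction of \cite{GJ}, uses the resulting adjunction to produce, from a point $s\in F(*)$, a map of prestacks $\overline{\B}G\ra F$ with $G=\Aut(s)=\overline{\Omega}F$, upgrades it to a map of stacks $\phi:\widetilde{\B}G\ra F$ by the universal property of stackification, and then checks directly that $\phi$ is a local equivalence: fully faithfulness via Lemma~\ref{mapofkancomplexes} and the pointwise computation $\pi_i(\overline{\B}G(x),*)\simeq\pi_{i-1}(G(x),*)\simeq\pi_i(F(x),s)$, and local essential surjectivity from local non-emptiness and connectedness. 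You instead package exactly this content as the delooping equivalence between group objects and pointed connected objects in the $\infty$-topos $\St^{\tau}(C)$, resting on effectivity of groupoid objects, after identifying gerbes with connected stacks. Your approach buys three things: the key lemma is citable from \cite{Lu} rather than reproved; the direction $(2)\Rightarrow(1)$ is made explicit, whereas the paper's proof is silent on it; and by passing to slices over a covering $u\ra x$ you handle gerbes with no global point, while the paper's argument tacitly assumes $F(*)\neq\emptyset$ in order to form $G=\Aut(s)$. What the paper's approach buys is that its group stack is a single object of $\Gp^{\tau}(C)$ with an explicit comparison map, which is the literal form of the statement and the form used later in the Tannakian application where $G=\Aut^{\otimes}(\omega)$; in your general (non-pointed) case the group stack lives only in $\Gp^{\tau}(C_{/u})$ for each member of a cover, so matching the letter of the statement requires either a global point or reading ``locally equivalent'' slice by slice. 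One caution on your final reduction: pointwise, $\overline{\B}\overline{\Omega}F(x)\ra F(x)$ is only the inclusion of the basepoint component, not an equivalence, so the pointwise statement in $\cS$ plus left exactness of $L_S$ does not suffice by itself; it is sheaf-level connectivity that makes the stackified map an equivalence, and your remark tying this to effectivity/descent is precisely where that must be absorbed.
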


\begin{proof}
Let $\Kan_{0}$ be the category of Kan complexes with a single $0$-simplex.  We have a natural string of equivalences $\Gp(\cS)\simeq L(\Gp(\Kan))\simeq L(\Kan_{0})\simeq L(s\tu{Gp})$ of $\infty$-categories (see Corollary 6.4 of \cite{GJ}).  By Section 4 and 5 of Chapter 5 of \textit{loc. cit} there exists an adjunction  
\[      {W}:s\Gp\rightleftarrows\Kan_{0}:{\Omega}   \] 
where the construction ${W}G$ is a model for $\B G$ where $G\in s\tu{Gp}$.  Thus for a pointed stack $F$ and a $\Gp(\cS)$-valued prestack $G$, we have an equivalence 
\[   \Map_{\Pr(C)_{*}}(\overline{\B}G,F)\ra\Map_{\Gp(\Pr(C))}(G,\overline{\Omega}F)  \]
of $\inftyz$-categories where $\overline{\Omega}F:=\Aut(s)$ for $s\in F(*)$.  

We claim that if $F$ is a stack which is locally non-empty and locally connected with $F(*)\neq\emptyset$ then $F$ is locally equivalent to $\widetilde{\B} G$ for $G=\Aut(s)$, $s\in F(*)$.  By the equivalence above, the identity map $G\ra\Aut(s)$ corresponds to a map of prestacks
\[  \phi:\overline{\B}G\ra F      \]
sending $*$ to $s$.  But since $F$ is a stack, the universal property of stackification implies that $\phi$ is actually a map of stacks $\phi:\widetilde{\B}G\ra F$.  

It remains to show that $\phi$ is fully faithful and locally essentially surjective.  By Lemma~\ref{mapofkancomplexes}, fully faithfulness is equivalent to the condition that $\widetilde{\B}G\ra\widetilde{\B}G\times_{F}\widetilde{\B}G$ is an equivalence of stacks.  By the universal property if suffices to check it for a map of prestacks.  By Lemma~\ref{mapofkancomplexes} again, it suffices to check the two conditions of Lemma~\ref{mapofkancomplexes} part (3).  The first condition of (3) is clear.  The second condition follows from the fact that for all $x\in C$ we have $\pi_{i}(\overline{\B}G(x),*)\simeq\pi_{i-1}(G(x),*):=\pi_{i-1}(\overline{\Omega}F(x), s)\simeq\pi_{i}(F(x),s)$.  

Finally, since $F$ is locally non-empty and locally connected there always exists a $\tau$-covering $\alpha:u\ra x$ such that for $a\in F(x)$ the map $\alpha^{*}(a)\ra\alpha^{*}(\phi_{x}(*))$ is an equivalence. 
\end{proof}

\begin{dfn}\label{hopfdfn}
Let $C$ be a symmetric monoidal $\infty$-category and $R$ a commutative monoid object in $C$.  A \textit{Hopf $R$-algebra} in $C$ is a cogroup object $B$ in the symmetric monoidal $\infty$-category ${\CAlg}_{R}(C)$ of commutative $R$-algebras in $C$.
\end{dfn}

Let $\Hopf_R(C)$ denote the full subcategory of $\Comon({\CAlg}_{R}(C))$ spanned by the Hopf $R$-algebras in $C$.  We will call $B_{1}:=B([1])$ the \textit{underlying} $R$-algebra of $B$.  We have a well defined functor
\[  \Hopf:\CMon(C)\ra\Catinf    \]  
sending $R$ to $\Hopf_{R}(C)$.  When the $\infty$-category $C$ is clear from the context we will simply write $\Hopf(R)$ in place of $\Hopf_{R}(C)$.  By Example~\ref{cmonsymmetricmonoidal} the relative tensor product monoidal structure on $\CAlg_{R}(C)$ coincides with the coproduct.  

\begin{dfn}
Let $C$ be a symmetric monoidal $\infty$-category, $R$ a commutative monoid object in $C$ and $(\Aff_{R},\tau)$ a site.  A Hopf $R$-algebra in $C$ is said to be a \textit{$\tau$-Hopf $R$-algebra} if its underlying $R$-algebra is an element of $\tau(R)$.
\end{dfn}

We denote by $\Hopf(R,\tau)$ the $\infty$-category of $\tau$-Hopf $R$-algebras in $C$.  When $R$ is an $\Einfty$-ring, we will be primarily interested in the $\infty$-category of positive (resp. flat, finite) Hopf $R$-algebras in the $\infty$-category $\Sp$ of spectra.  See Section~\ref{topologies} for a definition of these topologies. 

\begin{prop}\label{specisfullyfaithful}
Let $C$ be a symmetric monoidal $\infty$-category, $R$ a commutative monoid object of $C$ and $(\Aff_{R},\tau)$ the site of $R$-algebras with respect to a subcanonical topology $\tau$.  Then the functor
\[    \Spec:\Hopf(R)\ra\Gp(R,\tau)   \]       
is fully faithful.
\end{prop}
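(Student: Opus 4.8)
The plan is to re-express both sides as $\infty$-categories of group objects and then deduce fully faithfulness from the $\infty$-categorical Yoneda lemma together with the hypothesis that $\tau$ is subcanonical. First I would unwind the definitions on each side. By Definition~\ref{hopfdfn} a Hopf $R$-algebra is a cogroup object in $\CAlg_R(C)$, that is, a group object in $\Aff_R=\CAlg_R(C)^{op}$. By Example~\ref{cmonsymmetricmonoidal} the tensor product on $\CAlg_R(C)$ coincides with the coproduct, so the monoidal structure on $\Aff_R$ underlying this cogroup structure is the cartesian product, and the resulting notion of group object is precisely the intrinsic one used to define $\Gp(-)$. This yields an identification $\Hopf_R\simeq\Gp(\Aff_R)$, realised as a full subcategory of $\RHom(\Dop,\Aff_R)$. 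On the other side $\Gp^\tau(R)=\Gp(\St^\tau(\Aff_R))$ sits inside $\RHom(\Dop,\St^\tau(\Aff_R))$, and the map $\Spec$ of the statement is induced by postcomposition with the Yoneda embedding $\Spec\colon\Aff_R\ra\Pr(\Aff_R)$.

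Next I would record the two properties of $\Spec$ that the argument uses. By the $\infty$-categorical Yoneda lemma it is fully faithful, and by Proposition 5.1.3.2 of \cite{Lu} it preserves small limits; in particular it preserves the finite products and pullbacks appearing in the Segal and groupoid conditions and carries the terminal object $R$ of $\Aff_R$ to the terminal stack. Since $\tau$ is subcanonical, every representable prestack is a stack, so $\Spec$ factors through the full subcategory $\St^\tau(\Aff_R)\subseteq\Pr(\Aff_R)$. Limits of stacks are computed as in prestacks, because the inclusion is right adjoint to stackification and the subcategory of stacks is closed under limits; hence the corestricted functor $\Spec\colon\Aff_R\ra\St^\tau(\Aff_R)$ remains fully faithful and limit-preserving.

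Finally I would apply the general principle that postcomposition with a fully faithful functor induces a fully faithful functor on the functor categories $\RHom(\Dop,-)$, since the mapping spaces there are ends of the pointwise mapping spaces and a fully faithful functor leaves each of these unchanged. Because $\Spec$ preserves the finite limits defining the group-object conditions, this postcomposition sends $\Gp(\Aff_R)$ into $\Gp(\St^\tau(\Aff_R))$; restricting to these full subcategories preserves fully faithfulness. Under the identifications above this is exactly $\Spec\colon\Hopf_R\ra\Gp^\tau(R)$, which is therefore fully faithful.

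I expect the step requiring the most care to be the passage from prestacks to stacks: subcanonicity must be used precisely to guarantee that $\Spec$ lands in $\St^\tau(\Aff_R)$ and that its corestriction still preserves the finite limits entering the Segal and groupoid conditions, so that group objects are carried to group objects and the full faithfulness is genuinely measured inside the $\infty$-category of stacks rather than merely inside prestacks.
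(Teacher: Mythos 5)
Your proposal is correct and follows essentially the same route as the paper: use subcanonicity to factor the Yoneda embedding through stacks, use that Yoneda is fully faithful and limit-preserving, identify the tensor product on $\CAlg_R$ with the coproduct via Example~\ref{cmonsymmetricmonoidal}, and then pass to (co)group objects, where fully faithfulness is inherited. The only cosmetic difference is that the paper phrases the intermediate step as an induced functor $\Comon(\CAlg_R)\ra\Mon(\St^{\tau}(R))$ on monoid objects before restricting to group-like objects, whereas you work directly with the Segal/groupoid formulation of group objects; the content is the same.
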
  

\begin{proof}
Since $\tau$ is subcanonical, the Yoneda embedding $\Aff_{R}\ra\Pr(\Aff_{R})$ factors through the subcategory of stacks and hence $\Spec:\Aff_{R}\ra\St(R,\tau)$ is fully faithful.  The tensor product in $\CAlg_{R}$ corresponds to the coproduct by Example~\ref{cmonsymmetricmonoidal} and the Yoneda lemma preserves limits so we have an induced fully faithful functor $\Spec:\Comon(\CAlg_{R})\ra\Mon(\St(R,\tau))$ on monoid objects.  Restricting to group-like objects we find that $\Spec:\Hopf(R)\ra\Gp(R,\tau)$ is fully faithful.
\end{proof}

Informally, we have a diagram
\begin{diagram}
\Aff_R   & \rTo          &\Hopf(R) \\          
   \dTo_{\Spec} &                     &\dTo_\Spec           \\  
\St(R,\tau)     & \rTo      &  \Gp(R,\tau)
\end{diagram}
of $\infty$-categories where the vertical arrows take an algebraic object to its corresponding affine geometric object and the horizontal arrows pass to the corresponding group objects.    

Let $C$ be an $\infty$-category with finite colimits and $X$ a cosimplicial object in $C$.  For any cosimplicial set $A$, we define a cosimplicial object in $C$ given by
\[     X\otimes A: [n]  \mapsto \underset{A_{n}}{\prod}X_{n}.          \]             
Let $h_{0},h_{1}:X\rightrightarrows Y$ be two arrows in $cC:=\RHom(\Delta,C)$.  A \textit{homotopy} between $h_{0}$ and $h_{1}$ is a map $h:X\otimes\Delta^{1}\ra Y$ such that 
\[  h\circ(\id_{X}\times i_{0})=h\circ(i_{0}\times\id_{X})=h_{0}  \]
\[  h\circ(\id_{X}\times i_{1})=h\circ(i_{1}\times\id_{X})=h_{1} \]
where $i_{0},i_{1}:\Delta^{0}\ra\Delta^{1}$ denote the inclusion maps.  A diagram 
\[     f:X\rightleftarrows Y:g  \] 
in $cC$ is said to be a \textit{homotopy equivalence} if there exists a map $k:X\otimes\Delta^{1}\ra X$ satisfying
\[  k\circ(\id_{X}\times i_{0})=k\circ(i_{0}\times\id_{X})=k_{0}:=\id \] 
\[   k\circ(\id_{X}\times i_{1})=k\circ(i_{1}\times\id_{X})=k_{1}:=g\circ f  \]
together with a map $l:Y\otimes\Delta^{1}\ra Y$ satisfying
\[  l\circ(\id_{Y}\times i_{0})=l\circ(i_{0}\times\id_{Y})=l_{0}:=\id \]     
\[  l\circ(\id_{Y}\times i_{1})=l\circ(i_{1}\times\id_{Y})=l_{1}:=f\circ g.  \]

\begin{lem}\label{hetoe}
Let $C$ be an $\infty$-category with finite limits.  Then the functor $\holim_{n}:cC\ra C$ takes homotopy equivalences in $cC$ to equivalences in $C$.
\end{lem}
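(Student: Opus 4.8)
The plan is to apply the limit functor $T:=\holim_n\colon cC\to C$ to the entire homotopy-equivalence datum and to read off that it sends $f$ and $g$ to mutually inverse equivalences. Write $j_0,j_1\colon X\to X\otimes\Delta^1$ for the two endpoint inclusions $\id_X\times i_0$ and $\id_X\times i_1$, and $m_0,m_1\colon Y\to Y\otimes\Delta^1$ for the corresponding maps on $Y$. Applying $T$ to the defining identities of the homotopy equivalence, and using that $T$, being a functor of $\infty$-categories, preserves composition up to coherent homotopy, I obtain
\[ T(k)\circ T(j_0)\simeq\id_{T(X)},\qquad T(k)\circ T(j_1)\simeq T(g)\circ T(f), \]
together with the analogous identities $T(l)\circ T(m_0)\simeq\id_{T(Y)}$ and $T(l)\circ T(m_1)\simeq T(f)\circ T(g)$ coming from $l$. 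Hence, the moment I know that $T(j_0)\simeq T(j_1)$ and $T(m_0)\simeq T(m_1)$, I may cancel to conclude $T(g)\circ T(f)\simeq\id_{T(X)}$ and $T(f)\circ T(g)\simeq\id_{T(Y)}$, so that $T(f)=\holim_n(f)$ is an equivalence with inverse $T(g)$. This reduces the lemma to the single assertion that the two endpoint inclusions into the cosimplicial cylinder become homotopic after $\holim_n$.

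To establish this assertion I would introduce the collapse $p\colon X\otimes\Delta^1\to X$ arising from the structural map $\Delta^1\to\Delta^0$ of cosimplicial sets. Since each composite $\Delta^0\xra{i_\epsilon}\Delta^1\to\Delta^0$ is the identity of the terminal cosimplicial set, functoriality of $X\otimes(-)$ gives $p\circ j_0\simeq\id_X\simeq p\circ j_1$. Applying $T$ yields $T(p)\circ T(j_0)\simeq\id_{T(X)}\simeq T(p)\circ T(j_1)$, so it is enough to prove that $T(p)$ is itself an equivalence; then $T(j_0)\simeq T(p)^{-1}\simeq T(j_1)$, and the same argument applies verbatim to $Y$.

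The remaining point, which is the technical heart, is therefore to show that
\[ \holim_n\bigl(X\otimes\Delta^1\bigr)\xra{T(p)}\holim_n(X) \]
is an equivalence. Here I would exploit the universal property of the limit together with the commutation of limits: for any object $W$ of $C$, applying $\Map_C(W,-)$ identifies the left-hand side with the totalization of the cosimplicial space $[n]\mapsto\Map_C(W,X_n)^{(\Delta^1)_n}$, and the claim becomes the statement that the contraction of the interval $\Delta^1$ onto a vertex collapses this totalization onto $\lim_{[n]}\Map_C(W,X_n)\simeq\Map_C(W,\holim_n X)$, compatibly with $p$. Thus the lemma ultimately rests on the contractibility of $\Delta^1$ being preserved by totalization.

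I expect this last step to be the main obstacle, precisely because it must be carried out without circularity, that is, without silently re-invoking the statement being proved. The clean route is to make the contraction of $\Delta^1$ explicit as an honest combinatorial contraction of the indexing sets, equipped with the attendant extra degeneracy, and then to verify directly that the endomorphism of the totalization it induces is homotopic to the identity, so that $T(p)$ acquires a genuine homotopy inverse. Once this explicit contraction is in hand, the formal reduction of the first two paragraphs completes the proof.
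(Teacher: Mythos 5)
Your first two paragraphs are correct, and they isolate exactly the right statement: by formal functoriality of $T=\holim_n$, everything reduces to showing that the collapse map relating $\holim_n(X\otimes\Delta^1)$ and $\holim_n(X)$ is an equivalence. The problem is that your proof stops there. The third and fourth paragraphs do not prove this claim; they translate it (correctly, via $\Map_C(W,-)$ and commutation of mapping spaces with limits) into a statement about totalizations of cosimplicial spaces, and then defer the verification: ``make the contraction of $\Delta^1$ explicit \ldots and verify directly that the endomorphism of the totalization it induces is homotopic to the identity.'' That deferred verification is not a routine detail --- it is the entire non-formal content of the lemma. A cosimplicial homotopy lives in the $\Delta$-direction, and limits over $\Delta$ do not automatically convert the levelwise-varying construction $[n]\mapsto X_n^{(\Delta^1)_n}$ into a genuine (co)tensor of $\holim_n X$ by a contractible space; that conversion is precisely what must be proven, and is what you acknowledge as ``the main obstacle.'' So, as written, the proposal is an accurate reduction of the lemma to its own essential content, plus a plan.

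For comparison, the paper's proof consists entirely of this missing step: it establishes the commutation formula $\holim_n(X\otimes A)\simeq\holim_n(X)\otimes A$ for an arbitrary simplicial set $A$, by checking it directly for constant cosimplicial objects $\oX$ and for objects of the form $\oX\otimes B$, and then invoking that objects $\oX\otimes\Delta^n$ generate $cC$ under homotopy limits (and that $\holim$ commutes with $\holim$); with $A=\Delta^1$ contractible, this yields exactly your claim about $T(p)$, and hence the lemma. If you prefer to keep your reduction, a clean way to close the gap without an explicit combinatorial contraction is cofinality: the levelwise product formula identifies $\holim_n(X\otimes\Delta^1)$ with the limit of $X$ restricted along the projection $\pi:\Delta_{/\Delta^1}\to\Delta$ from the category of simplices of $\Delta^1$ (this projection is a fibration in sets, so the fiberwise products compute the right Kan extension along $\pi$), and $\pi$ is initial because the relevant comma categories are categories of simplices of $\Delta^1\times\Delta^n$, which are weakly contractible; hence restriction along $\pi$ induces an equivalence on limits. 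Either route supplies the argument your proposal is missing.
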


\begin{proof}
Let $X$ be a cosimplicial object in $C$ and $A$ a simplicial set.  It will suffice to show that $\holim_n(X\otimes A)\simeq\holim_n(X)\otimes A$.  Let $\oX$ be a constant cosimplicial object in $C$.  We have that
\[   \holim_n(A\otimes\oX)\simeq\holim_n(A)\otimes\oX\simeq A\otimes\oX\simeq A\otimes\holim_n(\oX).  \]
Now let $X=\oX\otimes B$ for $B$ a simplicial set.  We have that
\[  \holim_n((\oX\otimes B)\otimes A)\simeq\holim_n(\oX\otimes(B\otimes A))\simeq\holim_n(\oX)\otimes(B\otimes A)\simeq\holim_n(\oX\otimes B)\otimes A.  \]
Finally, $\oX\otimes\Delta^{n}$ generates the $\infty$-category $cC$ by homotopy limits.  Therefore, setting $X\simeq\holim_\alpha(\oX_{\alpha}\otimes\Delta^{n_\alpha})$ we have
\begin{align*}
  \holim_n(X\otimes A)    &\simeq \holim_n(\holim_\alpha(\oX_{\alpha}\otimes\Delta^{n_\alpha})\otimes A) \\
                         &\simeq\holim_n(\holim_\alpha((\oX_{\alpha}\otimes\Delta^{n_\alpha})\otimes A)) \\
                          &\simeq\holim_\alpha(\holim_n(\oX_{\alpha}\otimes\Delta^{n_\alpha})\otimes A) \\
                          &\simeq\holim_\alpha(\holim_n(\oX_{\alpha}\otimes\Delta^{n_\alpha}))\otimes A\simeq\holim_n(X)\otimes A.  
\end{align*}
\end{proof}

\begin{notn}
Let $\Dplus$ be the category of augmented simplicial sets.  We define a categrory $\Delta_{-\infty}$ given as follows~:
\begin{itemize}
\item The set of objects $\Ob(\Delta_{-\infty})$ is given by $\Ob(\Dplus)$.
\item The set of maps $\Hom_{\Delta_{-\infty}}([n],[m])$ is the set of order preserving maps $f:\{-\infty\}\cup[n]\ra\{-\infty\}\cup[m]$ which preserve the base point $\{-\infty\}$ thought of as a least element of $\{-\infty\}\cup[p]$ for any $[p]\in\Delta_{-\infty}$.
\end{itemize}
\end{notn}

We have the natural sequence of inclusions $\Delta\subseteq\Dplus\subseteq\Delta_{-\infty}$ where $\Dplus$ is identified with the full subcategory of $\Delta_{-\infty}$ with the same set of objects and a where a map $f$ in $\Delta_{-\infty}$ belongs to $\Dplus$ if and only if $f^{-1}(-\infty)=\{-\infty\}$.  The forgetful functor $\Delta_{-\infty}\ra\Delta$ admits a left adjoint.  

Let $C$ be an $\infty$-category and let $c_{-\infty}C$ denote the $\infty$-category $\RHom(\Delta_{-\infty},C)$.  We have an induced adjunction
\[    c_{-\infty}C\rightleftarrows cC    \]
between $\infty$-categories.  Let $X$ be a cosimplicial object in $C$.  We let $\Dec_+$ denote the comonad $+\circ\Dec(X)$ on the $\infty$-category $cC$ of cosimplicial objects in $C$.

\begin{prop}\label{illusie}
Let $C$ be an $\infty$-category and $X$ a cosimplicial object in $C$.  Then there exists a homotopy equivalence $X_{0}\ra\Dec_{+}(X)$ between cosimplicial objects in $C$.
\end{prop}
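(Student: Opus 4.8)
The plan is to produce explicit homotopy equivalence data in $cC$, exploiting the fact that the auxiliary category $\Delta_{-\infty}$ has a \emph{zero object}. Write $U:\Delta_{-\infty}\ra\Delta$ for the forgetful functor, $F\dashv U$ for its left adjoint, and $(U^{*},F^{*})$ for the induced adjunction, so that $F^{*}:c_{-\infty}C\ra cC$ (precomposition with $F$) is the restriction back to $cC$ and, matching $\Dec_{+}=+\circ\Dec$, one has $\Dec_{+}=F^{*}U^{*}$. Setting $\tilde{X}:=U^{*}X\in c_{-\infty}C$ we get $\Dec_{+}(X)=F^{*}\tilde{X}$, with $\tilde{X}([n])\simeq X([n+1])$ and, crucially, $\tilde{X}([-1])\simeq X([0])=X_{0}$, since $U([-1])=[0]$. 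The whole point is that $\Dec_{+}(X)$ is the restriction to $\Delta$ of an object already defined on $\Delta_{-\infty}$, and it is this extra room that furnishes the contraction.

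First I would record that $[-1]$ is a zero object of $\Delta_{-\infty}$: the set $\{-\infty\}\cup[-1]=\{-\infty\}$ admits a unique base-point preserving order map to and from any $\{-\infty\}\cup[m]$, so $[-1]$ is simultaneously initial and terminal. Initiality and terminality give natural transformations $a:c_{[-1]}\Rightarrow\id_{\Delta_{-\infty}}$ and $b:\id_{\Delta_{-\infty}}\Rightarrow c_{[-1]}$ between endofunctors of $\Delta_{-\infty}$, where $c_{[-1]}$ is the constant functor at $[-1]$. Whiskering with $\tilde{X}$ and applying $F^{*}$ turns these into maps $f:X_{0}\ra\Dec_{+}(X)$ and $g:\Dec_{+}(X)\ra X_{0}$ of cosimplicial objects, using $\tilde{X}\circ c_{[-1]}=c_{X_{0}}$ and that $F^{*}$ carries constant diagrams to constant diagrams. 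Because any endomorphism of the zero object is the identity, the composite $b\circ a$ equals $\id_{c_{[-1]}}$, whence $g\circ f=\id_{X_{0}}$ strictly; the homotopy witnessing $g\circ f\simeq\id_{X_0}$ is then the degenerate one induced by $\Delta^{1}\ra\Delta^{0}$.

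It remains to produce the homotopy $l:\Dec_{+}(X)\otimes\Delta^{1}\ra\Dec_{+}(X)$ with $l_{0}=\id$ and $l_{1}=f\circ g$. The composite $a\circ b=:\kappa:\id_{\Delta_{-\infty}}\Rightarrow\id_{\Delta_{-\infty}}$ is the natural endomorphism collapsing every $\{-\infty\}\cup[m]$ onto its least element $-\infty$, and since $-\infty$ is least we have the pointwise inequality $\kappa\leq\id_{\Delta_{-\infty}}$. This inequality is precisely the extra codegeneracy supplied by the base point, and the crux of the proof is to convert it into the cosimplicial homotopy $l$: the interval $[1]$ and the cotensor $\bullet\otimes\Delta^{1}$ of the excerpt are designed so that a natural pointwise order relation between two self-maps of a $\Delta_{-\infty}$-valued diagram becomes, after applying $\tilde{X}$ and restricting along $F^{*}$, a homotopy between the induced endomorphisms of $\Dec_{+}(X)$, with the two ends $l_{0},l_{1}$ reading off $\id$ and $f\circ g$. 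I expect this translation — checking that the order datum assembles into a genuine map out of $\Dec_{+}(X)\otimes\Delta^{1}$ with the correct boundary behaviour — to be the only real work; everything else is formal manipulation of the zero object. Combining the two homotopies exhibits $f:X_{0}\ra\Dec_{+}(X)$ as a homotopy equivalence, as required.
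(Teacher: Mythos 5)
Your structural setup is correct, and it is in essence a conceptual repackaging of the argument in the source that the paper itself invokes: the paper's entire proof of Proposition~\ref{illusie} is the citation ``See Proposition 1.4 of \cite{Il}'', where the statement is proved by exhibiting an explicit contracting homotopy built from the extra codegeneracy. Your identification $\Dec_{+}=F^{*}U^{*}$, the observation that $[-1]$ is a zero object of $\Delta_{-\infty}$, the construction of $f$ and $g$ by whiskering $a$ and $b$ with $\tilde{X}=U^{*}X$ and restricting along $F$, and the conclusion $g\circ f\simeq\id_{X_{0}}$ from $b\circ a=\id_{c_{[-1]}}$ are all fine. The problem is that you stop exactly at the step that carries all of the mathematical content: the homotopy $l$ between $\id_{\Dec_{+}(X)}$ and $f\circ g$ is never constructed. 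You record the pointwise inequality $\kappa\leq\id$ and then assert that converting it into a map involving $\Dec_{+}(X)\otimes\Delta^{1}$ is ``the only real work'' and should go through. That conversion \emph{is} the proposition; everything preceding it is formal, and in Illusie's proof this step is precisely the system of extra codegeneracies, which must be written down.

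To close the gap, make the order datum combinatorial and strict. For each $\rho:[n]\ra[1]$ in $\Delta$ define $\tilde\rho:\{-\infty\}\cup[n]\ra\{-\infty\}\cup[n]$ to fix $-\infty$ and every element of $\rho^{-1}(1)$ and to collapse $\rho^{-1}(0)$ onto $-\infty$; this is order preserving precisely because $\rho^{-1}(0)$ is an initial segment of $[n]$. One then checks the strict identities
\[
\tilde\rho\circ(\id_{\{-\infty\}}\cup\phi)\;=\;(\id_{\{-\infty\}}\cup\phi)\circ\widetilde{\rho\circ\phi},
\qquad
\widetilde{\mathrm{const}_{1}}=\id,
\qquad
\widetilde{\mathrm{const}_{0}}=\kappa_{[n]},
\]
for every $\phi:[n]\ra[m]$ in $\Delta$ and $\rho:[m]\ra[1]$. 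The family $X(\tilde\rho):X_{n+1}\ra X_{n+1}$, indexed by the simplices $\rho$ of $\Delta^{1}$, is then exactly the datum of the homotopy $l$, its two ends being $\id$ and $f\circ g$ (with the paper's levelwise-product definition of $\bullet\otimes\Delta^{1}$ this datum is most naturally read as a map $\Dec_{+}(X)\ra\Dec_{+}(X)\otimes\Delta^{1}$, i.e.\ into the cotensor; that reading is the one under which the endpoint conditions typecheck). The reason this must be spelled out, and is not a formality, is that $C$ is an $\infty$-category: a map of cosimplicial objects is a coherent datum, so one cannot define $l$ level by level and verify the cosimplicial identities up to homotopy. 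What rescues the argument is that all the identities displayed above hold on the nose in $\Delta$, so that $l$ is obtained, just like your $f$, $g$ and $g\circ f$, by whiskering the single functor $X$ against strictly commuting index-level data. With this paragraph supplied your proof is complete, and it then agrees, formula for formula, with the extra-degeneracy proof of Proposition 1.4 of \cite{Il} to which the paper appeals.
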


\begin{proof}
See Proposition 1.4 of \cite{Il}.
\end{proof}

\begin{prop}\label{hopflimit}
Let $C$ be a symmetric monoidal $\infty$-category and $B$ a Hopf $R$-algebra in $C$.  Then there exists an equivalence
\[    R\ra\underset{n\in\Delta}{\holim}~B^{\otimes_{R}(n+1)}  \]
in the $\infty$-category $\CAlg_{R}$.
\end{prop}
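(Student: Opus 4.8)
The plan is to exhibit the cosimplicial $R$-algebra $[n]\mapsto B^{\otimes_R(n+1)}$ as the décalage of the cobar construction of the comonoid underlying $B$, and then to combine the décalage homotopy equivalence of Proposition~\ref{illusie} with Lemma~\ref{hetoe}.

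First I would extract from the cogroup structure on $B$ its underlying comonoid structure: a comultiplication $B\to B\otimes_R B$ and a counit $\varepsilon:B\to R$, dual to the multiplication and identity of the group stack $\Spec B$ (recall that $\otimes_R$ is the coproduct in $\CAlg_R$ by Example~\ref{cmonsymmetricmonoidal}, hence the product in $\Aff_R$). This data assembles $B$ into a cosimplicial object $N^\bullet$ of $\CAlg_R$ with $N^n=B^{\otimes_R n}$ and $N^0=R$, whose cofaces are built from the comultiplication together with the algebra unit $R\to B$ and whose codegeneracies are built from $\varepsilon$; concretely $N^\bullet$ is the cosimplicial algebra of functions on the nerve of the group stack $\Spec B$. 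Note that only the comonoid part of the Hopf structure is needed here, the antipode playing no role.

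Next I would identify the décalage $\Dec_+(N)$ with the cosimplicial object appearing in the statement. Since décalage raises degree by one, $\Dec_+(N)^n\simeq N^{n+1}=B^{\otimes_R(n+1)}$, and under this identification the induced structure maps recover the Amitsur cobar structure on $B^{\otimes_R(\bullet+1)}$ coaugmented by $R\to B$. I expect the main obstacle to be precisely this step: producing $N^\bullet$ as a genuine cosimplicial object of $\CAlg_R$ with all of its higher coherences, and matching its décalage with $B^{\otimes_R(\bullet+1)}$ at the level of the $\infty$-category rather than merely on homotopy categories. Everything before and after this identification is formal.

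With the identification in hand, Proposition~\ref{illusie} supplies a homotopy equivalence $R=N_0\to\Dec_+(N)\simeq B^{\otimes_R(\bullet+1)}$ of cosimplicial objects of $\CAlg_R$, where $N_0$ denotes the constant cosimplicial object at $N^0=R$. Finally, $\CAlg_R$ is presentable and so admits all limits; applying Lemma~\ref{hetoe} I would conclude that $\holim_n$ carries this homotopy equivalence to an equivalence $R\to\holim_{n\in\Delta}B^{\otimes_R(n+1)}$ in $\CAlg_R$, which is the desired statement.
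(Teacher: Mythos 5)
Your proposal matches the paper's proof: both combine the d\'ecalage homotopy equivalence of Proposition~\ref{illusie} with Lemma~\ref{hetoe} and the identification $\Dec_+(B)_n=B_{n+1}\simeq B^{\otimes_R(n+1)}$. The step you single out as the main obstacle---producing the cosimplicial object $N^\bullet$ with all its coherences and matching its d\'ecalage with $B^{\otimes_R(\bullet+1)}$---is vacuous in the paper's framework, since a Hopf $R$-algebra is by Definition~\ref{hopfdfn} a cogroup object, i.e.\ already a functor $\Delta\ra\CAlg_R$ whose Segal maps give $B_n\simeq B_1^{\otimes_R n}$ and $B_0\simeq R$, so one applies Proposition~\ref{illusie} directly to $B$ itself.
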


\begin{proof}
This follows from Lemma~\ref{hetoe}, Proposition~\ref{illusie} and the equivalence $\Dec_{+}(B)_{n}=B_{n+1}\simeq B^{\otimes_{R}(n+1)}$.
\end{proof}

\begin{dfn}
Let $C$ be an $\infty$-category.  An augmented cosimplicial object $X:\Dplus\ra C$ is said to be \textit{split} if there exists a map $X\ra\Dec(X)$.  A cosimplicial object is said to be \textit{split} if it extends to a split augmented cosimplicial object.  Let $F:C\ra D$ be a functor.  A (augmented) cosimplicial object $X$ of $C$ is said to be \textit{$F$-split} if $F\circ X$ is split as a (augmented) cosimplicial object of $D$.
\end{dfn}

\begin{prop}\label{splitislimit}
Let $C$ be an $\infty$-category and $X:\Dplus\ra C$ a split augmented cosimplicial object in $C$.  Then $X$ is a limit diagram.
\end{prop}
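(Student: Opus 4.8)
The plan is to show that the augmentation exhibits $X_{-1}:=X([-1])$ as the limit of the underlying cosimplicial object $X|_{\Delta}:\Delta\ra C$, the mechanism being that the splitting produces a contracting cosimplicial homotopy which Lemma~\ref{hetoe} converts into the desired equivalence. Since being a limit diagram is preserved and detected by corepresentable functors, I would first reduce to the case $C=\cS$: for every object $y$ the covariant functor $\Map_{C}(y,-)$ carries $X$ to an augmented cosimplicial space $\Map_{C}(y,X):\Dplus\ra\cS$, and $X$ is a limit diagram in $C$ if and only if each $\Map_{C}(y,X)$ is a limit diagram in $\cS$. Because $\Dec$ is given by reindexing along an endofunctor of $\Dplus$ (namely $[n]\mapsto[n+1]$), it commutes with the levelwise application of $\Map_{C}(y,-)$, so a splitting $X\ra\Dec(X)$ induces a map $\Map_{C}(y,X)\ra\Dec(\Map_{C}(y,X))$; thus each $\Map_{C}(y,X)$ is again split. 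This reduces the statement to $\cS$, where all limits exist and Lemma~\ref{hetoe} is available.

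Working now in $C=\cS$ (or any $\infty$-category with finite limits), the key step is to extract from the splitting a homotopy equivalence, in the sense defined just before Lemma~\ref{hetoe}, between $X|_{\Delta}$ and the constant cosimplicial object on $X_{-1}$. The coaugmentation provides a cosimplicial map $g$ from the constant object at $X_{-1}$ into $X|_{\Delta}$, while the splitting $X\ra\Dec(X)$ supplies the extra codegeneracies $X_{n}\ra X_{n-1}$ (and $X_{0}\ra X_{-1}$) satisfying the cosimplicial identities; these furnish a retraction $f$ with $f\circ g=\id_{X_{-1}}$ and a cosimplicial contracting homotopy $k:X|_{\Delta}\otimes\Delta^{1}\ra X|_{\Delta}$ from $\id$ to $g\circ f$. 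This is the standard ``extra degeneracy contracts the complex'' phenomenon dualized to the cosimplicial setting, and it is the same décalage mechanism underlying Proposition~\ref{illusie} for $\Dec_{+}$.

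Finally, I would invoke Lemma~\ref{hetoe}: since $\holim_{n}:cC\ra C$ sends homotopy equivalences to equivalences, the homotopy equivalence above yields $\holim_{\Delta}X|_{\Delta}\simeq X_{-1}$, where I use that the limit over $\Delta$ of a constant cosimplicial object is its value (as $\Delta$ has an initial object and hence weakly contractible nerve, the computation already used in the proof of Lemma~\ref{hetoe}). Unwinding, this equivalence is precisely the augmentation map, so each $\Map_{C}(y,X)$ is a limit diagram in $\cS$ and therefore $X$ is a limit diagram in $C$. The main obstacle is the middle step: converting the abstract splitting datum $X\ra\Dec(X)$ into the explicit contracting homotopy $k$, i.e.\ verifying the cosimplicial homotopy identities of the definition preceding Lemma~\ref{hetoe}; this is the genuine combinatorial content, whereas the reduction to $\cS$ and the closing application of Lemma~\ref{hetoe} are formal.
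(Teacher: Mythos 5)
Your proposal is correct, but it takes a genuinely different route from the paper. The paper's entire proof is a citation: the statement is (the dual of) Lemma 6.1.3.16 of \cite{Lu}, whose argument runs through cofinality — a splitting is an extension $\overline{X}:\Delta_{-\infty}\ra C$, the object $[-1]$ is initial in $\Delta_{-\infty}$, and the inclusion $\Delta\subseteq\Delta_{-\infty}$ is cofinal for limits, so $\lim_{\Delta}X|_{\Delta}\simeq\lim_{\Delta_{-\infty}}\overline{X}\simeq X_{-1}$, with no completeness hypothesis on $C$ and full $\infty$-categorical rigor outsourced to \cite{Lu}. Your argument instead stays inside the paper's own toolkit and mirrors exactly how Proposition~\ref{hopflimit} is deduced from Proposition~\ref{illusie} and Lemma~\ref{hetoe}: reduce to $\cS$ by corepresentability (a step you handle correctly, and which is genuinely needed, since $C$ is not assumed to admit limits while Lemma~\ref{hetoe} presupposes them), transport the splitting along $\Map_{C}(y,\bullet)$ (fine, since $\Dec$ is reindexing), extract a contracting cosimplicial homotopy, and finish with Lemma~\ref{hetoe} plus $\lim_{\Delta}$ of a constant diagram being its value. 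The step you flag but do not execute — converting the splitting into the homotopy equivalence — is indeed where the content sits, but it is unproblematic once the splitting is read as an extension $\overline{X}:\Delta_{-\infty}\ra C$ (this is the intended reading: note that a bare map $X\ra\Dec(X)$ exists for \emph{every} augmented cosimplicial object, supplied by the cofaces, so the paper's literal definition must be interpreted this way, and it is also what the cited lemma of \cite{Lu} uses). Concretely, $[-1]$ is a zero object of $\Delta_{-\infty}$, and for each $\alpha:[n]\ra[1]$ the map $h_{\alpha}:[n]\ra[n]$ in $\Delta_{-\infty}$ collapsing $\alpha^{-1}(0)$ to $-\infty$ is order-preserving and natural in $[n]$; applying $\overline{X}$ to the $h_{\alpha}$ yields precisely your homotopy $k$, with all identities holding strictly in $\Delta_{-\infty}$ before $\overline{X}$ is applied, so no coherence issues arise beyond those the paper already tolerates in Lemma~\ref{hetoe} and Proposition~\ref{illusie}. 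In exchange for this extra combinatorial work, your proof keeps the paper self-contained and makes transparent that Propositions~\ref{illusie}, \ref{hopflimit} and \ref{splitislimit} are all instances of one mechanism; the paper's citation is shorter and rigorous but leaves that kinship implicit.
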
 

\begin{proof}
This is essentially (the dual of) Lemma 6.1.3.16 of \cite{Lu}.
\end{proof}

\section{The positive, flat and finite topologies}\label{topologies}

In the introduction we stated three duality theorems in both the neutralized and neutral contexts depending on the topology chosen on the underlying site of our group stack or gerbe.  We introduce and study these topologies in this section which go under the names positive, flat and finite.  The two latter topologies are shown to be subcanonical, the proof of which uses the result that the presheaf of $\infty$-categories sending an algebra in spectra to its $\infty$-category of modules is a sheaf of $\infty$-categories.

Recall that a module $M$ over an ordinary ring $R$ is said to be \textit{flat} if the functor $\bullet\otimes_{R}M:\Mod_{R}(\Ab)\ra\Mod_{R}(\Ab)$ is exact (ie. preserves finite limits and colimits).

\begin{dfn}\label{flattopologies}
Let $R$ be an $\Einfty$-ring and $A$ an $R$-algebra.  An $A$-module $M$ is said to be 
\begin{enumerate}
\item \textit{Positive} if the functor $\bullet\otimes_{A}M:\Mod_{A}\ra\Mod_{A}$ preserves anti-connective objects.
\item \textit{Flat} if the abelian group $\pi_{0}M$ is flat as a module over the ordinary commutative algebra $\pi_{0}A$ and for each $n\in\bb{Z}$, the map 
$      \pi_{n}A\otimes_{\pi_{0}A}\pi_{0}M\ra\pi_{n}M      $
is an isomorphism of abelian groups.
\item \textit{Finite} if the functor $\bullet\otimes_{A}M:\Mod_{A}\ra\Mod_{A}$ preserves all (small) limits.
\end{enumerate}
\end{dfn}

A map $A\ra B$ of $R$-algebras is said to be \textit{positive} (resp. \textit{flat}, \textit{finite}) if $B$ is positive (resp. flat, finite) when considered as an $A$-module.  If $R$ is a connective $\Einfty$-ring then every flat $R$-module is also connective.  If $R$ is a discrete $\Einfty$-ring then every $R$-module $M$ is flat if and only if $M$ is discrete and $\pi_{0}(M)$ is flat over $\pi_{0}(R)$ in the classical sense.  A module $M$ is finite over an $\Einfty$-ring if and only if it is perfect (see Proposition~\ref{limitsthendualizable}).  

Let $k$ be a commutative ring and $M$ and $N$ be $k$-modules.  Recall the construction of the abelian groups $\tu{Tor}_{n}^{k}(M,N)$ (see for example \cite{We}).  Recall also that a $k$-module $M$ is flat if and only if for any $k$-module $N$, the group $\tu{Tor}^{k}_{1}(M,N)=0$.  Let $R$ be a discrete $\Einfty$-ring and $M$ and $N$ be two discrete $R$-modules.  Then the canonical map
\[      \pi_{n}(M\otimes_{R}N)\ra\tu{Tor}_{n}^{\pi_{0}R}(\pi_{0}M,\pi_{0}N)    \]
is an isomorphism.  

Let $\Mod_{R}^{\geq 0}$ (resp. $\Mod_{R}^{fl}$, $\Mod_{R}^{fin}$) denote the full subcategory of $\Mod_{R}$ spanned by the positive (resp. flat, finite) $R$-modules.  These full subcategories are closed under taking tensor products and contain the unit object $R$ of $\Mod_{R}$.  Hence by Example~\ref{subsymmmonoidal}, these $\infty$-categories inherit a symmetric monoidal structure.  We deduce that the functor $\CMon(\Mod_{R}^{\geq 0})\ra\fE_{R/}$ is fully faithful and its essential image consists of positive maps $R\ra R'$.  Similarly statements hold for the flat and finite examples.  

\begin{lem}\label{topologylemma}
Let $R$ be an $\Einfty$-ring.
\begin{enumerate}
\item Maps of positive, flat and finite $R$-algebras are stable under composition. 
\item Let
\begin{diagram}
A &\rTo^{f} &B\\
\dTo &  &\dTo\\
C  &\rTo^{g}  &D
\end{diagram}
be a pushout in $\fE_{R/}$ of $R$-algebras.  If $f$ is positive (resp. flat, finite) then $g$ is positive (resp. flat, finite).
\end{enumerate}
\end{lem}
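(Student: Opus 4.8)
The plan is to treat the positive and finite cases together, where each reduces to a formal consequence of the transitivity of base change, and to handle the flat case separately by explicit homotopy-group computations, since flatness is defined via homotopy groups rather than as a tensor-functor property. Throughout I would use the standard facts that the forgetful functors $\Mod_C\ra\Mod_B\ra\Mod_A\ra\Sp$ are conservative, preserve homotopy groups, and (being right adjoints to base change) preserve limits; hence anti-connectivity and limit diagrams are detected on underlying spectra.

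For part (1), the key input is the transitivity equivalence $N\otimes_A C\simeq(N\otimes_A B)\otimes_B C$ for $N\in\Mod_A$, where $C$ is an $A$-algebra via the composite. In the positive case: if $N$ is anti-connective then $N\otimes_A B$ is anti-connective by positivity of $f$, hence anti-connective as a $B$-module, and then $(N\otimes_A B)\otimes_B C\simeq N\otimes_A C$ is anti-connective by positivity of $g$. The finite case is identical with ``anti-connective'' replaced by ``limit diagram,'' using that restriction preserves and reflects limits. For the flat case I would compute directly: transitivity of classical flatness shows $\pi_0 C$ is flat over $\pi_0 A$, and then $\pi_n C\simeq\pi_n B\otimes_{\pi_0 B}\pi_0 C\simeq(\pi_n A\otimes_{\pi_0 A}\pi_0 B)\otimes_{\pi_0 B}\pi_0 C\simeq\pi_n A\otimes_{\pi_0 A}\pi_0 C$, which is exactly flatness of $C$ over $A$.

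For part (2), write $D\simeq B\otimes_A C$ and use the projection-type identification $N\otimes_C D\simeq\mathrm{res}(N)\otimes_A B$ for $N\in\Mod_C$, where $\mathrm{res}\colon\Mod_C\ra\Mod_A$ is restriction along the left vertical map; this follows from $B\otimes_A C\simeq C\otimes_A B$ as a $C$-module together with associativity of the relative tensor product. Positivity and finiteness of $g$ then follow as before: restriction preserves anti-connective objects and limits, while $\otimes_A B$ preserves them by the hypothesis on $f$, so the composite $\bullet\otimes_C D$ does too (checked on underlying spectra). For the flat case of part (2) I would argue through the spectral sequence $E_2^{s,t}=\tu{Tor}^{\pi_* A}_s(\pi_* B,\pi_* C)_t\Rightarrow\pi_{s+t}(B\otimes_A C)$: flatness of $f$ gives $\pi_* B\simeq\pi_* A\otimes_{\pi_0 A}\pi_0 B$ with $\pi_0 B$ flat over $\pi_0 A$, so $\pi_* B$ is a base change of a flat module and hence flat as a graded $\pi_* A$-module. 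The spectral sequence therefore collapses and $\pi_* D\simeq\pi_* B\otimes_{\pi_* A}\pi_* C\simeq\pi_* C\otimes_{\pi_0 A}\pi_0 B$. Reading off degree zero gives $\pi_0 D\simeq\pi_0 C\otimes_{\pi_0 A}\pi_0 B$, which is flat over $\pi_0 C$ as a base change of the flat module $\pi_0 B$, while $\pi_n D\simeq\pi_n C\otimes_{\pi_0 A}\pi_0 B\simeq\pi_n C\otimes_{\pi_0 C}\pi_0 D$, establishing flatness of $g$.

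The main obstacle I anticipate is the flat base-change statement: unlike the positive and finite cases it is not formal from a tensor-functor factorization but requires identifying the homotopy of the relative tensor product, so the delicate point is justifying the collapse of the $\tu{Tor}$ spectral sequence (equivalently, controlling all higher $\tu{Tor}$ groups) and keeping straight over which ring each tensor product is formed. The bookkeeping of module structures and restriction functors in part (2) is the other place demanding care, though it becomes routine once the projection formula $N\otimes_C D\simeq\mathrm{res}(N)\otimes_A B$ is in place.
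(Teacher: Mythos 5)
Your proposal is correct, and its overall skeleton coincides with the paper's: part (1) is reduced to the transitivity equivalence $C\otimes_{B}(B\otimes_{A}M)\simeq C\otimes_{A}M$, and part (2) to the identification $D\otimes_{C}M\simeq(B\otimes_{A}C)\otimes_{C}M\simeq B\otimes_{A}M$, after which the positive and finite cases are exactly the paper's argument (composition of functors preserving anti-connective objects, resp. limits). Where you genuinely diverge is the flat case. The paper disposes of it formally in one line -- ``the composition of two exact functors is exact and the above equivalence is an isomorphism on $\pi_{0}$ objects'' -- which is terse and implicitly leans on an equivalence between the homotopy-group definition of flatness (Definition~\ref{flattopologies}(2)) and a tensor-exactness characterisation (in the spirit of Theorem 7.2.2.15 of \cite{L1}, which the paper invokes elsewhere); in part (2) it only says ``following the argument above.'' You instead verify the definition directly: in part (1) by chaining the isomorphisms $\pi_n C\simeq\pi_n B\otimes_{\pi_0 B}\pi_0 C\simeq\pi_n A\otimes_{\pi_0 A}\pi_0 C$ together with transitivity of classical flatness, and in part (2) via the $\tu{Tor}$ spectral sequence for $\pi_*(B\otimes_A C)$, whose collapse you justify correctly (flatness of $\pi_0 B$ over $\pi_0 A$ makes $\pi_*B\simeq\pi_*A\otimes_{\pi_0 A}\pi_0 B$ flat as a graded $\pi_*A$-module, since $\bullet\otimes_{\pi_*A}\pi_*B\simeq\bullet\otimes_{\pi_0 A}\pi_0 B$). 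Your route costs more machinery (the spectral sequence, though its collapse here is just flat base change) but buys a complete verification from the stated definition, whereas the paper's route buys brevity and a uniform treatment of all three topologies at the price of leaving the flat case's justification essentially implicit.
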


\begin{proof}
For part (1), let $A\ra B\ra C$ be two maps of $R$-algebras.  For any $A$-module $M$, there exists a natural equivalence 
\[        C\otimes_{B}(B\otimes_{A}M)\simeq C\otimes_{A}M         \]
showing that the functor $C\otimes_{A}\bullet$ is equivalent to the composition $C\otimes_{B}(B\otimes_{A}\bullet)$ of functors.  Since the composition of two functors preserving connective objects is connective this proves the positive part.   Since the composition of two exact functors is exact and the above equivalence is an isomorphism on $\pi_{0}$ objects, the flat case is satisfied.  Finally, the composition of two functors preserving limits preserves limits which proves the finite case.  To prove (2), observe that there exists a natural equivalence $D\simeq B\otimes_{A}C$.  Thus for any $C$-module $M$, there exists an equivalence 
\[  D\otimes_{C}M\simeq(B\otimes_{A}C)\otimes_{C}M\simeq B\otimes_{A}M   \]
of $B$-modules.  Following the argument above, this shows that if $f$ is positive (flat, finite) then $g$ is also.
\end{proof}

Let $R$ be an $\Einfty$-ring and $A\ra B$ a map of $R$-algebras.  Consider the base change functor 
\begin{align*}
B\otimes_{A}\bullet:\Mod_{A}&\ra\Mod_{B}\\
M  &\mapsto B\otimes_{A}M.
\end{align*}
A map of $R$-algebras $A\ra B$ is said to be \textit{conservative} if the base change functor $B\otimes_{A}\bullet$ is conservative, ie. $B\otimes_{A}M\simeq 0$ if and only if $M\simeq 0$.

\begin{dfn}
Let $R$ be an $\Einfty$-ring.  A finite family of maps $\{A\ra B_{i}\}_{i\in I}$ of $R$-algebras is said to be a \textit{positive} (resp. \textit{flat}, \textit{finite}) \textit{covering} if $A\ra B_{i}$ is positive (resp. flat, finite) and conservative for each $i\in I$.
\end{dfn}

\begin{prop}
Let $R$ be an $\Einfty$-ring.  The positive, flat and finite coverings define a topology on the $\infty$-category $\Aff_{R}$.
\end{prop}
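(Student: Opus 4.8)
The plan is to apply the pretopology criterion recalled in Section~\ref{stacksgerbesandtopologies}: because $\Aff_R=\CMon(C)^{op}$ admits pullbacks (they are dual to the pushouts in $\CAlg_R$, so $\Spec B_i\times_{\Spec A}\Spec C\simeq\Spec(B_i\otimes_A C)$), it is enough to verify that the positive, flat and finite coverings satisfy the three pretopology axioms of stability, composition and base change. Since a covering family $\{A\to B_i\}_{i\in I}$ is by definition a finite family in which each individual map is both positive (resp. flat, finite) \emph{and} conservative, each axiom splits into two independent checks. For the positive/flat/finite half I would simply quote Lemma~\ref{topologylemma}; the conservativity half requires only a short computation with the standard base-change equivalences, so I expect no genuine obstacle here.

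First, for stability: if $A\to B$ is an equivalence then $B\otimes_A(-)$ is an equivalence of $\infty$-categories, hence preserves all limits and anti-connective objects and is conservative, so the singleton $\{A\to B\}$ is simultaneously a positive, flat and finite covering. Next, for composition, given a covering $\{A\to B_i\}$ and for each $i$ a covering $\{B_i\to C_{ij}\}$, Lemma~\ref{topologylemma}(1) shows each composite $A\to C_{ij}$ is again positive (resp. flat, finite); for conservativity I would use the equivalence $C_{ij}\otimes_A M\simeq C_{ij}\otimes_{B_i}(B_i\otimes_A M)$ from the proof of that lemma, so that vanishing of the left side forces $B_i\otimes_A M\simeq 0$ by conservativity of $B_i\to C_{ij}$, and then $M\simeq 0$ by conservativity of $A\to B_i$. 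The finite index set $\{(i,j)\}$ keeps the composite family finite.

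Finally, for base change, let $\{A\to B_i\}$ be a covering and $A\to C$ an arbitrary map. The pullbacks $\Spec(B_i\otimes_A C)$ exist, and Lemma~\ref{topologylemma}(2) applied to the pushout square with top arrow $A\to B_i$ shows $C\to B_i\otimes_A C$ is positive (resp. flat, finite). For conservativity I would invoke the equivalence $(B_i\otimes_A C)\otimes_C M\simeq B_i\otimes_A M$ for a $C$-module $M$, regarded as an $A$-module via $A\to C$: vanishing of the left side gives $B_i\otimes_A M\simeq 0$, whence $M\simeq 0$ as an $A$-module by conservativity of $A\to B_i$, and hence $M\simeq 0$ as a $C$-module since restriction of scalars $\Mod_C\to\Mod_A$ detects equivalences. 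This last point, that conservativity of $A\to B_i$ is applied to the underlying $A$-module of $M$ and then transported back along the conservative restriction functor, is the only step demanding a little care, and is the step I would expect to be the main (though modest) obstacle.
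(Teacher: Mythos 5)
Your proposal is correct and follows essentially the same route as the paper, whose entire proof reads: the conservative property is clearly stable under composition and pushouts, so the three cases follow from Lemma~\ref{topologylemma}. Your write-up simply makes explicit the base-change equivalences (e.g.\ $(B_i\otimes_A C)\otimes_C M\simeq B_i\otimes_A M$ and the conservativity of restriction of scalars) that the paper leaves as ``clear''.
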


\begin{proof}
The conservative property is clearly stable under composition and pushouts.  Thus the three cases can be deduced from Lemma~\ref{topologylemma}.  
\end{proof}

The positive, flat and finite topologies will be denoted by ``${\geq 0}$", ``$\tu{fl}$" and ``$\tu{fin}$" respectively.  The most important example of a sheaf with respect to these topologies in our context is the sheaf of $\infty$-categories sending an $R$-algebra to the $\infty$-category of modules over $R$.  We begin with the preasheaf
\[  \Mod:\Aff_{R}^{op}\ra\Catinf   \]
of $\infty$-categories sending $A$ to $\Mod_{A}$ and a map $f:A\ra B$ to $B\otimes_{A}\bullet$.                       

\begin{prop}\label{modisastack}
Let $R$ be an $\Einfty$-ring.  The functor $\Mod$ is a stack of $\infty$-categories over the site $\Aff_{R}$ with respect to the flat and finite topologies.
\end{prop}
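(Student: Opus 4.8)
The plan is to verify the five hypotheses of the recognition criterion in Proposition~\ref{stackcharacterisation} for the prestack $\Mod\colon\Aff_R^{op}\ra\Catinf$. Under the identification $C=\Aff_R$, a morphism $f\colon y\ra x$ in $C$ is a map of $R$-algebras $A\ra B$, the transition functor $f^{*}=\Mod(f)$ is the base change $B\otimes_{A}\bullet\colon\Mod_{A}\ra\Mod_{B}$, and its right adjoint $f_{*}$ is the forgetful functor $\Mod_{B}\ra\Mod_{A}$. With this dictionary the conditions (1), (3), (4) and (5) are formal and hold for both the flat and the finite topology; the entire weight of the flat and finite hypotheses falls on condition (2).

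I would dispatch the formal conditions first. Condition (1) holds because each $\Mod_{A}$ is presentable, hence admits all small limits. For (3), a $\tau$-covering $\{A\ra B_{i}\}_{i\in I}$ is by definition a conservative family, so the family of base change functors, and a fortiori $\Mod_{A}\ra\prod_{i}\Mod_{B_{i}}$, is conservative. For (4), the base change functor $B\otimes_{A}\bullet$ is always left adjoint to the forgetful functor, so $f_{*}$ exists for \emph{every} map of $R$-algebras. For (5), a pullback square in $\Aff_R$ corresponds to a pushout square of $R$-algebras, whose fourth corner is the relative tensor product $B\otimes_{A}B'$; the Beck--Chevalley transformation $g^{*}f_{*}\Rightarrow q_{*}p^{*}$ is then the canonical equivalence
\[   (B\otimes_{A}B')\otimes_{B}M\simeq B'\otimes_{A}M  \]
furnished by associativity of the relative tensor product, and this requires no flatness or finiteness hypothesis.

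The crux is condition (2), that base change along a covering map preserves limits. For the \textbf{finite} topology this is immediate: a finite map $A\ra B$ exhibits $B$ as a perfect $A$-module (Proposition~\ref{limitsthendualizable}), so $B\otimes_{A}\bullet$ preserves all small limits by the very definition of finiteness in Definition~\ref{flattopologies}(3); since the forgetful functor $\Mod_{B}\ra\Mod_{A}$ creates limits, $B\otimes_{A}\bullet\colon\Mod_{A}\ra\Mod_{B}$ preserves them as well, and Proposition~\ref{stackcharacterisation} applies verbatim. For the \textbf{flat} topology this is the main obstacle, since flat base change is only t-exact and genuinely fails to commute with arbitrary limits (for instance with infinite products). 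The resolution is that the only limits needed for descent are the totalizations $\lim_{\Delta}$ of the \v{C}ech-type cosimplicial objects $[n]\mapsto B^{\otimes_{A}(n+1)}\otimes_{A}M$. The homotopy-group formula of Definition~\ref{flattopologies}(2) shows that flat base change is t-exact and preserves boundedness above, so for bounded-above $M$ these cosimplicial objects are uniformly bounded above and Lemma~\ref{tstructurelimitlemma} gives exactly that a t-exact functor commutes with their totalization. The passage from bounded-above modules to arbitrary ones is the delicate point: one checks the comparison map on truncations and invokes the non-degeneracy together with the left and right completeness of the module t-structure (Example~\ref{modrtstructure}) to reduce to the case already settled. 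This completeness bootstrap is where I expect the real work to lie; granting it, condition (2) holds for the flat topology and Proposition~\ref{stackcharacterisation} yields the claim.
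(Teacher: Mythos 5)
Your handling of the formal conditions and of the finite case is essentially the paper's own proof: the paper also runs Proposition~\ref{stackcharacterisation}, citing presentability of each $\Mod_A$ for condition (1), the conservative forgetful functor as the right adjoint for (3) and (4), the identification of pushouts of algebras with relative tensor products for the Beck--Chevalley condition (5), and the definition of a finite map for condition (2). Where you diverge is the flat case: the paper offers no argument at all there, saying only that it ``can be extracted from Lemma 2.2.2.13 of \cite{TVII}''. Two things in your flat discussion deserve credit. First, your observation that condition (2) as literally stated \emph{fails} for flat coverings (flat base change does not commute with infinite products) is correct and is sharper than the paper's own wording, which blithely asserts that flat base change ``commutes with limits along $\Delta$''. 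Second, your bounded-above argument is sound: flatness gives $\pi_i(B^{\otimes_A(n+1)}\otimes_AM)\cong\pi_0(B^{\otimes_A(n+1)})\otimes_{\pi_0A}\pi_i(M)$, so the \v{C}ech cosimplicial object of a truncated module is uniformly truncated, and Lemma~\ref{tstructurelimitlemma} applies to the t-exact base change functor.

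The genuine gap is exactly the step you flag and then ask to be granted: the passage to unbounded modules. The bootstrap cannot be run the way you describe, because both available tools point the wrong way: $\tau_{\leq k}$ is a \emph{left} adjoint and does not commute with totalizations, while flat base change does not commute with the Postnikov limit $M\simeq\lim_k\tau_{\leq k}M$ (the same infinite-limit failure you already identified, now for towers). Concretely, setting $Y_k:=\lim_{\Delta}\bigl(B^{\otimes_A(\bullet+1)}\otimes_A\tau_{\leq k}M\bigr)$, left completeness of the t-structure of Example~\ref{modrtstructure} and the interchange of limits give $\lim_{\Delta}\bigl(B^{\otimes_A(\bullet+1)}\otimes_AM\bigr)\simeq\lim_kY_k$, and your bounded case computes each $u^*Y_k$; but nothing formal allows you to move $u^*$ past $\lim_k$, so ``checking on truncations'' does not reduce the unbounded comparison map to the bounded one. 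What closes this hole is an input your sketch never invokes: classical faithfully flat descent on homotopy groups, i.e.\ exactness of the Amitsur complex of the discrete $\pi_0A$-modules $\pi_i(M)$. That exactness is what forces the fibers of the tower $\{Y_k\}$ to be increasingly connective (equivalently, collapses the Bousfield--Kan spectral sequence of the totalization onto its zeroth column), so that the tower behaves as a Postnikov tower, each $\pi_i$ of its limit stabilizes at a finite stage, and flatness then identifies both sides of the comparison map. This classical input --- not non-degeneracy or completeness of the t-structure --- is the real content of the flat case, and it is precisely what the paper's citation of Lemma 2.2.2.13 of \cite{TVII} is carrying.
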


\begin{proof}
We begin with the finite topology.  We will show that $\Mod:\Aff_{R}^{op}\ra\Catinf$ satisfies each of the conditions of Proposition~\ref{stackcharacterisation}.  For any $A\in\CAlg_R$, the $\infty$-category $\Mod_{A}$ has limits since $\Mod_{A}$ is presentable (and presentable $\infty$-categories admit all limits).  Given any $u:B\ra A$ in $\Aff_{R}$ the base change functor $u^{*}:\Mod_{A}\ra\Mod_{B}$ commutes with limits along $\Delta$ by virtue of the flat and finite topologies.  Its right adjoint $u_{*}$ is given by the conservative forgetful functor.  For any pushout square
\begin{diagram}
A  &\rTo^{u}  &B\\
\dTo^{v}  &   &\dTo_{u'}\\
C  &\rTo^{v'}   &D
\end{diagram}
in $\CAlg_R$ we have, for $M\in\Mod_{B}$,
\[  (v')^{*}u_*'(M)=(v')^{*}(M\otimes_{B}D)\simeq (v')^{*}(M\otimes_{A}C)\simeq v_{*}(M\otimes_{B}A)=v_{*}u^{*}(M) \]
where the first equivalence follows from the natural equivalence $B\coprod C\simeq B\otimes_{A}C$ in $\CAlg_R$ of Example~\ref{cmonsymmetricmonoidal}.  The proof of the flat case can be extracted from Lemma 2.2.2.13 of \cite{TVII} (the same arguments hold here).
\end{proof}

\begin{prop}\label{subcanonical}
Let $R$ be an $\Einfty$-ring.  The flat and finite topologies on $\Aff_{R}$ are subcanonical.  
\end{prop}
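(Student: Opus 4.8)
The plan is to unwind the definition of subcanonical and reduce the assertion to an object-level descent statement for the unit module, which then follows from the fact that $\Mod$ is a stack (Proposition~\ref{modisastack}). By definition $\tau$ is subcanonical precisely when every representable prestack $\Spec A$, for $A$ an $R$-algebra, is a $\tau$-stack. Fix a $\tau$-covering family $\{C\ra D_i\}_{i\in I}$ of $x=\Spec C$ and set $D:=\prod_i D_i$, so that $\Spec D\simeq\coprod_i\Spec D_i=u$ and $C\ra D$ is again a flat (resp.\ finite) conservative cover. Using that the tensor product of $C$-algebras computes the coproduct (Example~\ref{cmonsymmetricmonoidal}), the levels of $u_*$ are $u_n=\Spec(D^{\otimes_{C}(n+1)})$, so the stack condition for $\Spec A$ reads
\[  \Map_{\CAlg_R}(A,C)\ra\lim_{n\in\Delta}\Map_{\CAlg_R}(A,D^{\otimes_{C}(n+1)}),  \]
and I must show this is an equivalence for every $A$.

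First I would reduce to a statement internal to commutative algebras. The corepresentable functor $\Map_{\CAlg_R}(A,-)$ preserves all limits, so it suffices to show that the coaugmentation exhibits
\[  C\ra\lim_{n\in\Delta}D^{\otimes_{C}(n+1)}  \]
as an equivalence in $\CAlg_R$. Since limits of commutative monoid objects are computed in the underlying $\infty$-category, the forgetful functor $\CMon(\Mod_R)\ra\Mod_R$ preserves limits, so it is enough to verify this after forgetting to $\Mod_C$. The whole statement is thereby reduced to the single assertion that the unit object $C$ of $\Mod_C$ is the totalization of the cosimplicial $C$-module $[n]\mapsto D^{\otimes_{C}(n+1)}$.

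This is exactly what module descent provides. By Proposition~\ref{modisastack}, $\Mod$ is a stack for both the flat and finite topologies, so base change yields an equivalence $\Mod_C\ra\lim_{n\in\Delta}\Mod_{D^{\otimes_{C}(n+1)}}$ of $\infty$-categories. Under this equivalence the unit $C$ corresponds to the cartesian section given by the units $D^{\otimes_{C}(n+1)}$, since base change is symmetric monoidal and hence unit-preserving; the inverse equivalence, built from the restriction-of-scalars right adjoints exactly as in the proof of Proposition~\ref{stackcharacterisation}, recovers $C$ as the cosimplicial limit of the $D^{\otimes_{C}(n+1)}$ in $\Mod_C$. Evaluating the descent equivalence on the unit therefore produces the required equivalence, and the two topologies are handled uniformly.

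The main obstacle is this passage from the categorical descent equivalence to the object-level totalization of the unit. After base change along the conservative cover $C\ra D$ the cosimplicial algebra acquires an augmentation split by the multiplication $D\otimes_C D\ra D$, hence is a limit diagram by Proposition~\ref{splitislimit}; since $C\ra D$ is conservative, it then suffices to know that $D\otimes_C\bullet$ commutes with the cosimplicial limit defining $\lim_{n\in\Delta}D^{\otimes_{C}(n+1)}$. In the finite topology this is immediate because finite base change preserves all limits, and one obtains the analogue for a general cover of the Hopf-algebra computation of Proposition~\ref{hopflimit}, the only difference being that an arbitrary cover is split merely after base change. In the flat topology, however, flat base change preserves only finite limits, so the interchange with the infinite totalization is genuinely delicate; this is precisely the content already secured in the flat case of Proposition~\ref{modisastack} (and is the reason its proof appeals to \cite{TVII}), so rather than re-derive the interchange by hand I would simply invoke module descent there.
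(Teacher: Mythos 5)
Your proof is correct and takes essentially the same route as the paper's: both reduce subcanonicality to the object-level assertion that the coaugmentation $C\ra\lim_{n\in\Delta}D^{\otimes_{C}(n+1)}$ is an equivalence, obtain this by evaluating the module-descent equivalence of Proposition~\ref{modisastack} on the unit object (via the restriction-of-scalars right adjoints), and conclude because $\Map_{\CAlg_R}(A,\bullet)$ preserves limits. Your closing discussion of the split-cosimplicial/conservativity route is supplementary and not needed for the argument to close.
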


\begin{proof}
This follows from the fact that if $\Mod$ is a stack on $\Aff_{R}$ with respect to a topology $\tau$, then $\tau$ is subcanonical.  This can be seen as follows.  Assume $\Mod$ is a stack.  Then by definition we have an equivalence $\Mod_{A}\ra\lim_{\Delta}\Mod_{B_{*}}$ for any covering $B\ra A$ of $A$.  Thus for all $M\in\Mod_{A}$, the unit map $M\ra\lim_{\Delta}(M\otimes_{A}B_{*})$ is an equivalence.  Take $M=A$.  Then $A\ra\lim_{\Delta}B_{*}$ is an equivalence and for all $C\in\CAlg_R$ the composition
\[     \Map(C,A)\ra\Map(C,\lim_{\Delta}B_{*})\ra\lim_{\Delta}\Map(C,B_{*})     \]
is an equivalence.  Thus the representable prestack $h_{C}$ is a stack.  The result now follows from Proposition~\ref{modisastack}.
\end{proof}

Let $\tau\in\{\tu{fl},\tu{fin}\}$.  Since $\tau$ is subcanonical, we have a fully faithful morphism 
\[     \Aff_{R}\ra\St(R,\tau)  \]
of $\infty$-categories given by the Yoneda embedding.  We denote a stack in the essential image of this functor by $\Spec A$ for an $R$-algebra $A$.  A stack $F$ in $\St(R,\tau)$ is said to be \textit{affine} if $F\ra\Spec A$ is an equivalence of stacks for some $R$-algebra $A$.  An affine stack is called an \textit{affine group stack} if the affine stack is a group stack.

\section{Rigid $(\infty,1)$-categories}\label{rigid}

Here we describe what it means for an $\infty$-category to be rigid.  This amounts to every object being dualizable and is a strong condition which gives much of the Tannakian theory its flavour.  The notion of a dual object in an ordinary category has its origins in the example of the category of vector spaces~:  a vector space admits a dual if and only if the vector space is finite dimensional.  Thus the rigidification of an $\infty$-category, that is, discarding all objects that do not admit duals, can be thought of as the implementation of a ``finiteness condition" on its objects.  

Many of the $\infty$-categories we work with are ind-rigid.  That is, they are equivalent to the $\infty$-category of ind-objects of its full subcategory of dualizable objects.  For example the $\infty$-category of modules over an $\Einfty$-ring is ind-rigid~: any module over an $\Einfty$-ring is a given by a filtered colimit of rigid objects.  We prove the very useful projection formula for $\infty$-categories and later in this section show that under certain conditions, the $\infty$-category of endomorphisms of a functor valued in $R$-modules is an affine group stack. 

Recall that an object $y$ in a symmetric monoidal category $C$ is said to be a \textit{dual} of an object $x$ in $C$ if there exists maps $ev_{x}:x\otimes y\ra 1$ and $coev_{x}:1\ra y\otimes x$ such that the compositions 
\[  x\xra{id_x\otimes coev_x}x\otimes y\otimes x\xra{ev_x\otimes id_x}x  \]
\[  y\xra{coev_x\otimes id_y} y\otimes x\otimes y\xra{id_y\otimes ev_x}y  \]
coincide with the identity maps of $x$ and $y$.  Let $C$ now be a symmetric monoidal $\infty$-category.  An object $x$ in $C$ is said to be $\textit{dualizable}$ if it admits a dual when considered as an object of the symmetric monoidal category $\h C$.  We will denote the dual of an object $x$ by $\xv$.

\begin{dfn}\label{rigiddfn}
A symmetric monoidal $\infty$-category is said to be \textit{rigid} if all objects are dualizable.
\end{dfn}

See also Proposition 2.6 of \cite{TV3} for more equivalent characterisations of rigidity.  If $C$ is a symmetric monoidal $\infty$-category then the unit object $1_{C}$ is dualizable.  Moreover, the dualizable objects are stable by isomorphism in $\h C$ and stable by the tensor product.  We denote the full subcategory of $C$ consisting of dualizable objects by $C^{\text{\rig}}$.  Let $\Catinf^{\rig}$ denote the full subcategory of $\Catinf^{\otimes}$ spanned by the rigid $\infty$-categories.  By Theorem 2.10 and Lemma 2.11 of \cite{TV3} we deduce that there exist adjunctions
\begin{diagram}
\Catinf^\otimes  &\pile{ \rTo^{\tu{Fr}^{\rig}}  \\  \lTo_{i} }  & \Catinf^{\rig}  &\pile{ \rTo^{i}  \\  \lTo_{(\bullet)^{\rig}} }  & \Catinf^\otimes
\end{diagram}
of $\infty$-categories where $\tu{Fr}^{\rig}(C)$ is the free rigid $\infty$-category generated by the symmetric monoidal $\infty$-category $C$.  The right adjoint $(\bullet)^{\rig}$ will be called the \textit{rigidification} functor and $C^\rig$ the rigidification of $C$.

\begin{prop}\label{rigidiff}
Let $C$ be a symmetric monoidal $\infty$-category.  Then $C$ is rigid if and only if it satisfies the following conditions~:
\begin{enumerate}
\item The $\infty$-category $C$ is enriched over itself.
\item The map $\uHom(x,1)\otimes y\ra\uHom(x,y)$ is an equivalence in $C$ for all $x,y\in C$.
\end{enumerate}
\end{prop}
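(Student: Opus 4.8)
The plan is to treat $\xv:=\uHom(x,1)$ as the candidate dual and to read both conditions as the single assertion that $\uHom(x,-)$ is the right adjoint of $-\otimes x$ and is computed by tensoring with $\xv$. Throughout I will use that, by the Example following Definition~\ref{morphexp}, enrichment over itself is exactly the representability of $a\mapsto C(a\otimes x,y)$, the representing object $\uHom(x,y)$ being characterised by $C(a,\uHom(x,y))\simeq C(a\otimes x,y)$, and that dualizability is by definition (Definition~\ref{rigiddfn} and the preceding paragraph) a condition on the ordinary symmetric monoidal category $\h C$.

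For the forward implication, suppose $C$ is rigid, so each $x$ admits a dual $\xv$ in $\h C$ with maps $ev$ and $coev$ satisfying the triangle identities. First I would promote this to an $\infty$-categorical adjunction $-\otimes x\dashv -\otimes\xv$: tensoring the fixed morphisms $ev$ and $coev$ yields candidate unit and counit natural transformations of $\infty$-functors, and the triangle identities, holding in $\h C$, say precisely that the round-trip endomorphisms $a\otimes x\ra a\otimes x$ and $a\ra a$ built from them are homotopic to identities. Since pre- and post-composition with a map homotopic to the identity is an equivalence of mapping spaces, the induced map $C(a\otimes x,y)\ra C(a,y\otimes\xv)$ is an equivalence of $\inftyz$-categories, natural in $a$. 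This exhibits $y\otimes\xv$ as a representing object, giving condition (1) with $\uHom(x,y)\simeq y\otimes\xv$ and in particular $\uHom(x,1)\simeq\xv$. Condition (2) then follows because, under this identification, the canonical map $\uHom(x,1)\otimes y\ra\uHom(x,y)$ is carried to the symmetry equivalence $\xv\otimes y\xras y\otimes\xv$.

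For the reverse implication I would work entirely inside $\h C$, which is legitimate since dualizability is defined there. Condition (1) supplies $\uHom(x,-)$ with $C(a,\uHom(x,y))\simeq C(a\otimes x,y)$; taking $\pi_0$ shows that $-\otimes x$ admits a right adjoint $[x,-]$ on the ordinary symmetric monoidal category $\h C$, and condition (2) descends to a natural isomorphism $[x,1]\otimes y\cong[x,y]$. I would then invoke the classical fact that an object of an ordinary symmetric monoidal category is dualizable whenever $-\otimes x$ has an internal hom satisfying this projection formula: set $\xv:=[x,1]$, take $ev:\xv\otimes x\ra 1$ to be the counit (the evaluation) at the unit, take $coev:1\ra\xv\otimes x$ to be the image of $\id_x$ under $\Hom_{\h C}(x,x)\cong\Hom_{\h C}(1,[x,x])\cong\Hom_{\h C}(1,\xv\otimes x)$ (the last isomorphism being the projection formula with $y=x$), and verify the two triangle identities. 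Since $x$ is then dualizable in $\h C$, it is dualizable in $C$; as $x$ was arbitrary, $C$ is rigid.

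The main obstacle is the lifting step in the forward direction: conditions (1) and (2) are statements about mapping \emph{spaces} and equivalences in $C$, whereas rigidity only provides duality data in $\h C$, so the real work lies in checking that the homotopy-level triangle identities genuinely produce an equivalence of mapping spaces $C(a\otimes x,y)\simeq C(a,y\otimes\xv)$ rather than merely a bijection on $\pi_0$. By contrast, the verification of the triangle identities in the reverse direction is routine once the symmetry isomorphisms are used to reconcile the left/right orderings of the tensor factors, and so presents no conceptual difficulty.
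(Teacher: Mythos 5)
Your proof is correct, and its mathematical core coincides with the paper's: the paper's entire proof consists of the two sentences ``It is enough to prove this statement in $\h C$; the classical statement can then be found in Section 2 of \cite{D2}'' --- i.e.\ reduce to the homotopy category and invoke Deligne's classical argument ($ev$, $coev$, triangle identities, projection formula), which is exactly what your reverse direction does (you reprove the classical fact rather than cite it, but the content is the same). Where you genuinely add something is the forward direction, and this is worth noting because the paper's blanket reduction to $\h C$ is not quite innocent there: condition (1), enrichment of $C$ over itself, is by Definition~\ref{morphexp} the representability of the \emph{space}-valued functor $a\mapsto C(a\otimes x,y)$, which is not a statement about $\h C$ --- closedness of $\h C$ is a priori weaker. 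Your lifting argument (tensoring fixed representatives of $ev$ and $coev$ to obtain natural transformations of $\infty$-functors, then using the $\h C$-level triangle identities to produce two-sided homotopy inverses of the maps $C(a\otimes x,y)\rightleftarrows C(a,y\otimes\xv)$, so that $y\otimes\xv$ represents the functor) is precisely what is needed to close this gap, and you correctly single it out as the one step requiring real work. In short, your route and the paper's are the same in substance, but yours is self-contained and strictly more careful in the direction ``rigid $\Rightarrow$ (1)+(2)'', where the paper's brevity leaves the passage from homotopy-category duality data to mapping-space equivalences implicit.
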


\begin{proof}
It is enough to prove this statement in $\h C$.  The classical statement can then be found for example in Section 2 of \cite{D2}.
\end{proof}

Let $C$ be a rigid $\infty$-category.  To any map $f:x\ra y$ in $C$ there corresponds a \textit{transpose} map given by the composition
\[   ^{t}f:x\xra{1\otimes coev_{x}}\yv\otimes x\otimes\xv\xra{1\otimes f\otimes 1}\yv\otimes y\otimes\xv\xra{ev_{y}\otimes 1}\xv.\]
Similarly, to any $f:\yv\ra\xv$ we associate a composition map
\[   f^{t}:x\xra{coev_{y}\otimes 1}y\otimes\yv\otimes x\xra{1\otimes f\otimes 1}y\otimes\xv\otimes x\xra{y\otimes ev_{x}}y.  \]
This induces an equivalence
\[    C(x,y)\ra C(\yv,\xv).   \]
of $\inftyz$-categories.  The following Lemma follows straightforwardly from Proposition~\ref{rigidiff}.

\begin{lem}
Let $C$ be a rigid $\infty$-category and $x,y,x',y'$ be objects in $C$.  Then the following hold. 
\begin{enumerate}
\item The map $\uHom(x,y)\otimes\uHom(x',y')\ra\uHom(x\otimes x',y\otimes y')$ is an equivalence in $C$.
\item The map $x\otimes y\ra\uHom(\xv,y)$ is an equivalence in $C$.  
\item The map $\uHom(x,y)^{\vee}\ra\uHom(y,x)$ is an equivalence in $C$.
\end{enumerate}  
\end{lem}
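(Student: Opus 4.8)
The plan is to reduce every assertion to the homotopy category, exactly as in the proof of Proposition~\ref{rigidiff}. Recall that a morphism in an $\infty$-category is an equivalence if and only if its image in the homotopy category is an isomorphism. Hence, to prove that each of the three maps is an equivalence in $C$, it suffices to check that its image in the symmetric monoidal category $\h C$ is an isomorphism. By Definition~\ref{rigiddfn} and the discussion preceding Proposition~\ref{rigidiff}, rigidity of $C$ means precisely that every object of $\h C$ is dualizable, so $\h C$ is a rigid symmetric monoidal category in the classical sense, and I may argue entirely within $\h C$ using the standard calculus of duals.

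The single computational input I would extract from Proposition~\ref{rigidiff} is the natural identification $\uHom(x,y)\cong\xv\otimes y$. Indeed, condition (2) of that proposition gives an isomorphism $\uHom(x,1)\otimes y\ra\uHom(x,y)$, while the internal hom into the unit is the dual, $\uHom(x,1)\cong\xv$. I would also record the two standard facts about duals valid in any rigid symmetric monoidal category, hence in $\h C$: the dual of a tensor product satisfies $(x\otimes x')^{\vee}\cong\xv\otimes(x')^{\vee}$, and the canonical biduality map $x\ra(\xv)^{\vee}$ is an isomorphism.

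With these in hand each item becomes a short manipulation. For (1) I would compute
\[ \uHom(x,y)\otimes\uHom(x',y')\cong\xv\otimes y\otimes(x')^{\vee}\otimes y'\cong\bigl(\xv\otimes(x')^{\vee}\bigr)\otimes(y\otimes y')\cong(x\otimes x')^{\vee}\otimes(y\otimes y')\cong\uHom(x\otimes x',y\otimes y'), \]
where the middle step uses the symmetry constraint and the third uses the duality of a tensor product. For (2) I would compute $\uHom(\xv,y)\cong(\xv)^{\vee}\otimes y\cong x\otimes y$ using biduality. For (3) I would compute
\[ \uHom(x,y)^{\vee}\cong(\xv\otimes y)^{\vee}\cong(\xv)^{\vee}\otimes\yv\cong x\otimes\yv\cong\yv\otimes x\cong\uHom(y,x), \]
again by the duality of tensor products, biduality, and symmetry.

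The only real subtlety, and the point I would take care over, is to verify that the specific natural maps written in the statement agree in $\h C$ with these abstract duality isomorphisms, rather than merely exhibiting isomorphic objects; this amounts to unwinding the definitions of the evaluation and composition maps of Section~\ref{rigid} and matching them against the classical coherence isomorphisms. Since these are precisely the comparison maps whose invertibility is the content of classical rigidity, this verification is routine, which is why the lemma follows straightforwardly from Proposition~\ref{rigidiff}.
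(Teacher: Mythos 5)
Your proposal is correct and takes essentially the same approach as the paper: both arguments reduce to the classical calculus of duals via the identification $\uHom(x,y)\simeq x^{\vee}\otimes y$ supplied by Proposition~\ref{rigidiff}, combined with duality of tensor products and the symmetry constraint, with equivalences checked in $\h C$. The only divergence is in part (2), where the paper runs a Yoneda-style chain of mapping-space equivalences functorial in a test object $z$ while you invoke biduality $x\simeq(x^{\vee})^{\vee}$ directly; these are interchangeable, since biduality is itself the special case $y=1$ of (2).
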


\begin{proof}
Set $\xv=\uHom(x,1)$.  For (1), we have a chain of equivalences 
\[  \uHom(x,y)\otimes\uHom(x',y')\simeq\xv\otimes(x')^\vee\otimes y\otimes y'\simeq\uHom(x\otimes x',y\otimes y')  \] 
since $\xv\otimes\yv$ is dual to $x\otimes y$.  For (2), we have a chain of equivalences 
\[    C(z,x\otimes y)\simeq C(\xv\otimes\yv,\zv)\simeq C(\xv,\uHom(\yv,\zv))\simeq C(\xv,\uHom(z,y))\simeq C(z,\uHom(\xv,y))   \]
which is functorial in $z$.  Finally, for (3), we have a chain of equivalences 
\[  \uHom(x,y)^{\vee}\simeq\uHom(\uHom(x,y),1)\simeq\uHom(\xv\otimes y,1)\simeq\yv\otimes x  \] 
due to (2).  Thus $\uHom(x,y)^\vee\simeq\uHom(y,x)$.
\end{proof}

\begin{prop}\label{rigidnaturaltransformation}
Let $C$ and $D$ be rigid $\infty$-categories and $F,G:C\ra D$ be two symmetric monoidal functors.  Then any map $\alpha:F\ra G$ is an equivalence.
\end{prop}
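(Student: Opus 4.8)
The plan is to reduce the statement to the classical fact that a monoidal natural transformation between strong symmetric monoidal functors with rigid target is automatically invertible, and then to exhibit the inverse componentwise by a mate (transpose) construction. First I would observe that a morphism $\alpha:F\ra G$ in the functor $\infty$-category $\RHom^{\otimes}_{\Gamma}(C,D)$ is an equivalence if and only if its underlying natural transformation is an equivalence in $\RHom_{\Gamma}(C,D)$, which — since equivalences in functor $\infty$-categories are detected objectwise — holds exactly when each component $\alpha_{x}:F(x)\ra G(x)$ is an equivalence in $D$. In turn, a morphism of $D$ is an equivalence precisely when it is an isomorphism in the homotopy category $\h D$. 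Thus, exactly as in the proof of Proposition~\ref{rigidiff}, it is enough to argue in the homotopy categories: $\h F,\h G\colon\h C\ra\h D$ are strong symmetric monoidal functors between the rigid symmetric monoidal categories $\h C$ and $\h D$, and $\h\alpha$ is a monoidal natural transformation between them.

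Working in $\h D$, I would build an explicit inverse $\beta_{x}:G(x)\ra F(x)$ to each $\alpha_{x}$ using duals. Since $\h F$ and $\h G$ are strong monoidal they preserve duality data, so $F(\xv)$ is a dual of $F(x)$ and $G(\xv)$ is a dual of $G(x)$; in particular $(F(x))^{\vee}\simeq F(\xv)$ and $(G(x))^{\vee}\simeq G(\xv)$. I would then define $\beta_{x}$ as the transpose of $\alpha_{\xv}\colon F(\xv)\ra G(\xv)$ under the transpose equivalence $D(F(\xv),G(\xv))\simeq D((G(\xv))^{\vee},(F(\xv))^{\vee})$ established just before this Lemma, together with the canonical identifications $(G(\xv))^{\vee}\simeq G(x)$ and $(F(\xv))^{\vee}\simeq F(x)$ coming from $x^{\vee\vee}\simeq x$. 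Concretely, $\beta_{x}$ is the composite
\[ G(x)\simeq G(x)\otimes 1\xra{\id\otimes coev_{F(x)}}G(x)\otimes F(\xv)\otimes F(x)\xra{\id\otimes\alpha_{\xv}\otimes\id}G(x)\otimes G(\xv)\otimes F(x)\xra{ev_{G(x)}\otimes\id}F(x), \]
built from the coevaluation of $F(x)$, the component $\alpha_{\xv}$, and the evaluation of $G(x)$.

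To finish, I would verify the triangle identities $\beta_{x}\circ\alpha_{x}=\id_{F(x)}$ and $\alpha_{x}\circ\beta_{x}=\id_{G(x)}$. The essential input is the monoidality of $\h\alpha$, which forces $\alpha_{1}=\id_{1}$ and renders $\alpha$ compatible with evaluation and coevaluation, i.e. $G(ev_{x})\circ(\alpha_{x}\otimes\alpha_{\xv})=F(ev_{x})$ and the dual relation for $coev_{x}$. Substituting these into the zig-zag identities for the dual pairs $(F(x),F(\xv))$ and $(G(x),G(\xv))$ collapses both composites to the identity. This verification is purely diagrammatic inside the rigid symmetric monoidal category $\h D$, so it is precisely the classical statement and may be cited (for instance Section~2 of \cite{D2}). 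The only genuinely higher-categorical point, and the step I expect to require the most care, is the reduction itself: confirming that a morphism in $\RHom^{\otimes}_{\Gamma}(C,D)$ induces, after applying $\h(-)$, an honest monoidal natural transformation of homotopy categories, so that $\alpha_{1}$ really is the identity and the evaluation/coevaluation compatibilities hold strictly in $\h D$.
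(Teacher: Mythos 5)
Your proof is correct and follows essentially the same route as the paper's: reduce to the classical statement in the homotopy categories and invoke the Saavedra--Deligne construction, defining $\beta_{x}$ as the transpose of $\alpha_{x^{\vee}}$ under the identifications $F(x)^{\vee}\simeq F(x^{\vee})$ and $G(x)^{\vee}\simeq G(x^{\vee})$ (the paper simply cites \cite{Sa} and records the defining commutative square, so your write-up is a fleshed-out version of the same argument). One small slip in your opening sentence: invertibility is forced by rigidity of the \emph{source} $C$, not the target --- your construction in fact only uses duals of objects of $C$ and their images under $F$ and $G$ --- but since both categories are rigid in the proposition this is harmless.
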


\begin{proof}
As shown in \cite{Sa}, an explicit inverse to $\alpha$ is given by the map $\beta:G\ra F$ making the following diagram 
\begin{diagram}
F(\xv)   & \rTo^{\alpha_{x^{\vee}}}          & G(\xv) \\          
   \dTo &                     &\dTo           \\  
F(x)^{\vee}     & \rTo^{^{t}\beta_{x}}         & G(x)^{\vee}
\end{diagram}
commute for all $x\in C$.
\end{proof}

\begin{dfn}
Let $C$ be a presentable symmetric monoidal $\infty$-category.  Then $C$ is said to be \textit{ind-rigid} if $\Ind(C^{\rig})\ra C$ is an equivalence of $\infty$-categories.
\end{dfn}

Let $R$ be an $\Einfty$-ring.  It follows from the proof of Proposition 7.2.5.2 of \cite{L1} that 
\[  \Ind(\ModRperf)\ra\Mod_{R}  \] 
is an equivalence of $\infty$-categories.  Combining this with the equivalence $\ModRperf\ra\Mod_R^\rig$ we find that the $\infty$-category $\Mod_{R}$ is ind-rigid.  Let $G=F\circ y:\ModRrig\ra\Mod_{R}$ denote the composition of $F:\Ind(\ModRrig)\ra\Mod_{R}$ with the Yoneda embedding.  It follows from Proposition 5.3.5.11 of \cite{Lu} that the set of objects $\{G(M)\}_{M\in\ModRrig}$ generate $\Mod_{R}$ under filtered colimits.

\begin{prop}\label{projectionformula} 
Let $C$ and $A$ be symmetric monoidal $\infty$-categories, $f:C\ra A$ a symmetric monoidal functor and $g$ its right adjoint.  Assume $C$ is ind-rigid.  Then for any object $x$ in $C$ and $a$ in $A$, the map 
\[      g(a)\otimes x\ra g(a\otimes f(x))     \] 
is an equivalence.
\end{prop}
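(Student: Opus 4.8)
The plan is to first pin down the projection morphism, then verify it is an equivalence on dualizable objects by a purely formal adjunction (mate) argument, and finally bootstrap to an arbitrary $x$ using ind-rigidity. Since $g$ is the right adjoint of the symmetric monoidal functor $f$ it carries a canonical lax symmetric monoidal structure, and the map in question is the composite
\[ g(a)\otimes x\xra{\id\otimes\eta_x}g(a)\otimes gf(x)\ra g(a\otimes f(x)), \]
where $\eta$ is the unit of $f\dashv g$ and the second arrow is the lax structure map of $g$ evaluated at $(a,f(x))$. I would fix this as the definition of the projection morphism throughout.

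For dualizable $x$ I would argue entirely with adjoints. The functor $\bullet\otimes x:C\ra C$ has left adjoint $\bullet\otimes\xv$, the functor $\bullet\otimes f(x):A\ra A$ has left adjoint $\bullet\otimes f(x)^{\vee}$ (here $f$ symmetric monoidal sends the dualizable object $x$ to the dualizable object $f(x)$, with $f(\xv)\simeq f(x)^{\vee}$), and $g$ has left adjoint $f$. Hence the two functors $(\bullet\otimes x)\circ g$ and $g\circ(\bullet\otimes f(x))$ from $A$ to $C$ admit left adjoints $f\circ(\bullet\otimes\xv)$ and $(\bullet\otimes f(x)^{\vee})\circ f$ respectively. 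These left adjoints are identified by the symmetric monoidality of $f$, since $f(c\otimes\xv)\simeq f(c)\otimes f(\xv)\simeq f(c)\otimes f(x)^{\vee}$ naturally in $c$. Passing to mates, the induced natural transformation between the right adjoints is an equivalence, and a direct unit--counit check shows that this transformation is exactly the projection morphism above; it is therefore an equivalence for every dualizable $x$.

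To pass to an arbitrary object I would invoke ind-rigidity of $C$ and write $x\simeq\colim_i x_i$ as a filtered colimit of dualizable objects $x_i\in C^{\rig}$. The source $x\mapsto g(a)\otimes x$ preserves this colimit because the tensor product of a presentable symmetric monoidal $\infty$-category preserves colimits separately in each variable. For the target I would use that $f$, being a left adjoint, preserves the colimit, that $a\otimes\bullet$ does too, and that $g$ commutes with the resulting filtered colimit; granting this, the projection morphism for $x$ is the filtered colimit of the projection morphisms for the $x_i$, each of which is an equivalence by the previous step.

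The main obstacle is precisely that last point: $g$ is a right adjoint, so a priori it only preserves limits, and its compatibility with filtered colimits is exactly what forces the two sides to agree beyond the dualizable locus. I would justify it by observing that in an ind-rigid $C$ the objects of $C^{\rig}$ are compact generators, and that $f$ carries them to dualizable objects of $A$; provided these are compact in $A$ (as is the case in the module categories to which the proposition is applied, where dualizable, perfect and compact coincide), a left adjoint preserving compact objects has a right adjoint that preserves filtered colimits. One could alternatively test against the dualizable generators and reduce the statement to the unit object, but that reduction leans on the same continuity of $g$, so establishing that $f$ preserves compactness is the crux of the extension step.
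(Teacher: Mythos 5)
Your treatment of the dualizable case is correct, and it is really the paper's own argument in different clothing: the paper proves that case by the chain of equivalences $C(y,g(a)\otimes x)\simeq C(y\otimes\xv,g(a))\simeq A(f(y)\otimes f(\xv),a)\simeq A(f(y),a\otimes f(x))\simeq C(y,g(a\otimes f(x)))$, natural in $y$, which is precisely the Yoneda image of your mate computation; neither route buys much over the other here.

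The genuine divergence is at the extension to arbitrary $x$, and there your caution is not pedantry — it points at a real gap in the paper's own proof. The paper disposes of this step in one sentence ("since any object of $C$ is a colimit of dualizable objects and the demonstration is functorial in $x$, the result follows"), which tacitly assumes exactly what you flag: that $g$, and also $a\otimes\bullet$ on $A$, preserve the relevant filtered colimits. Neither follows from the stated hypotheses, and in fact the proposition as literally stated is false. Take $C=\Sp$, $A=\prod_{n\in\bb{N}}\Sp$ with the pointwise smash product, $f$ the diagonal functor (symmetric monoidal and colimit-preserving) and $g=\prod_n$ its right adjoint; with $a=(\bb{S}/p^n)_{n}$ and $x=H\bb{Q}$ one has $g(a\otimes f(x))\simeq\prod_n(\bb{S}/p^n\otimes H\bb{Q})\simeq 0$, while $\pi_0(g(a)\otimes x)\simeq(\prod_n\bb{Z}/p^n)\otimes\bb{Q}\neq 0$ because the element $(1,1,1,\ldots)$ has infinite order. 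So the continuity hypothesis you isolate — $f$ carries the dualizable (equivalently compact) generators of $C$ to compact objects of $A$, whence $g$ preserves filtered colimits — is not an optional patch but a necessary additional assumption, and it does hold in every application the paper makes of the proposition: there $C=\wh{T}\simeq\Ind(T)$ with $T$ rigid, $A=\Mod_R$ is presentable symmetric monoidal, and the fiber functor sends $T$ into $\Mod_R^{\rig}\simeq\Mod_R^{\tu{cpt}}$. With that hypothesis (and the cocontinuity of $a\otimes\bullet$ on $A$) made explicit, your proof is complete, and it is the argument the paper should be read as giving.
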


\begin{proof}
Let $x$ be a dualizable object of $C$ and $y$ be an arbitrary object of $C$.  Then $C(y,ga\otimes x)\cong C(y\otimes\xv,ga)\cong A(f(y\otimes\xv),a)\cong A(fy\otimes f\xv,a)\cong A(fy\otimes(fx)^\vee,a)\cong A(fy,a\otimes fx)\cong C(y,g(a\otimes fx))$.  Since any object in $C$ is given by a colimit of dualizable objects by assumption and the above demonstration is functorial in $x$, the result follows.
\end{proof}

The equivalence in Proposition~\ref{projectionformula} is often called the \textit{projection formula}.  Setting $a=1$, $f(x)=b$ and applying $f$ to the projection formula gives the equivalence $fg(1)\otimes b\simeq fg(b)$.

\begin{prop}\label{limitsthendualizable}
Let $R$ be an $\Einfty$-ring.  If the functor $\bullet\otimes_{R}M:\Mod_{R}\ra\Mod_{R}$ commutes with limits then $M$ is dualizable.
\end{prop}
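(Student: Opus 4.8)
The plan is to construct an explicit dual of $M$ out of the left adjoint of $\bullet\otimes_{R}M$. Write $F:=\bullet\otimes_{R}M\colon\Mod_{R}\ra\Mod_{R}$. Since the symmetric monoidal product on $\Mod_{R}$ preserves small colimits separately in each variable, $F$ preserves small colimits; by the adjoint functor theorem (Corollary 5.5.2.9 of \cite{Lu}) it therefore admits a right adjoint and is in particular accessible. Combining accessibility with the hypothesis that $F$ preserves small limits, the second half of the adjoint functor theorem supplies a left adjoint $L\dashv F$. Being a left adjoint with right adjoint $F$, the functor $L$ itself preserves small colimits.

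The key step is to show that $L$ is $\Mod_{R}$-linear, so that it is pinned down by its value on the unit $N:=L(R)$. The functor $F$ is manifestly $\Mod_{R}$-linear, as $F(X\otimes Y)=(X\otimes Y)\otimes M\simeq X\otimes F(Y)$; passing to left adjoints equips $L$ with colax structure maps $\lambda_{X,Y}\colon X\otimes L(Y)\ra L(X\otimes Y)$. I would first check that $\lambda_{X,Y}$ is an equivalence whenever $X$ is dualizable: for such $X$ the chain
\[ \Mod_{R}(X\otimes L(Y),Z)\simeq\Mod_{R}(L(Y),X^{\vee}\otimes Z)\simeq\Mod_{R}(Y,(X^{\vee}\otimes Z)\otimes M)\simeq\Mod_{R}(X\otimes Y,Z\otimes M)\simeq\Mod_{R}(L(X\otimes Y),Z) \]
is natural in $Z$ and is induced by $\lambda_{X,Y}$, the outer equivalences coming from $L\dashv F$ and the inner ones from the dualizability of $X$. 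Since $\Mod_{R}$ is ind-rigid, every object is a filtered colimit of dualizable objects, and both sides of $\lambda_{X,Y}$ carry such colimits in $X$ to colimits; hence $\lambda_{X,Y}$ is an equivalence for all $X$. Taking $Y=R$ then yields a natural equivalence $L(X)\simeq X\otimes N$, so $L\simeq\bullet\otimes_{R}N$.

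It remains to read a duality off the adjunction $(\bullet\otimes_{R}N)\dashv(\bullet\otimes_{R}M)$. The unit at the monoidal unit gives a coevaluation $\mathrm{coev}\colon R\ra N\otimes_{R}M$, while the counit at $R$, combined with the identification $L(M)=L(R\otimes M)\simeq M\otimes_{R}N$ from the previous paragraph, gives an evaluation $\mathrm{ev}\colon M\otimes_{R}N\ra R$. Transporting the two triangle identities of $L\dashv F$ along $L\simeq\bullet\otimes_{R}N$ turns them into exactly the two zig-zag identities of the definition preceding Definition~\ref{rigiddfn}, exhibiting $N$ as a dual of $M$ in $\h\Mod_{R}$. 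Hence $M$ is dualizable.

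I expect the $\Mod_{R}$-linearity of $L$ in the second paragraph to be the main obstacle: a priori a left adjoint respects neither the monoidal nor the module structure, and it is precisely the ind-rigidity of $\Mod_{R}$—the same mechanism underlying the projection formula of Proposition~\ref{projectionformula}—that forces the colax maps $\lambda_{X,Y}$ to be equivalences. Once $L\simeq\bullet\otimes_{R}N$ is established, the production of $\mathrm{ev}$, $\mathrm{coev}$ and the verification of the triangle identities are formal.
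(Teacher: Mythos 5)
Your proof is correct, but it takes a genuinely different route from the paper's. The paper argues by direct computation: it writes an arbitrary module $X$ as a filtered colimit of perfect modules $X_{\alpha}$ (ind-rigidity of $\Mod_{R}$), converts $\uHom(X,Y)$ into the limit $\lim_{\alpha}\uHom(X_{\alpha},Y)$, pushes $\bullet\otimes_{R}M$ through that limit using the hypothesis, rewrites each term via the dualizability of $X_{\alpha}$, and reassembles to obtain $\uHom(X,Y)\otimes_{R}M\simeq\uHom(X,Y\otimes_{R}M)$; setting $X=M$ and $Y=R$ it then concludes by the criterion of Proposition~\ref{rigidiff} in the form $M^{\vee}\otimes_{R}M\simeq\uHom(M,M)$. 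You instead feed the hypothesis into the adjoint functor theorem to produce a left adjoint $L$ of $\bullet\otimes_{R}M$, identify $L\simeq\bullet\otimes_{R}L(R)$ by a linearity-plus-ind-rigidity argument, and read the duality data off the unit, counit and triangle identities of the adjunction. Your route is longer and invokes more machinery, but it constructs the dual explicitly as $L(R)$ and isolates the underlying mechanism: $M$ is dualizable precisely when $\bullet\otimes_{R}M$ admits a $\Mod_{R}$-linear left adjoint, with ind-rigidity forcing the linearity --- the same mechanism behind the paper's Proposition~\ref{projectionformula}, as you note. Two small caveats, both at the level of rigor the paper itself tolerates: the canonical structure map obtained from the adjunction naturally goes in the oplax direction $L(X\otimes Y)\ra X\otimes L(Y)$, not the direction you wrote --- this is immaterial, since your Yoneda chain identifies the two objects naturally for dualizable $X$ and either direction of comparison extends along filtered colimits; and in the final step you need the adjunction itself (its unit and counit), not merely the two functors, to be $\Mod_{R}$-linear in order for the triangle identities to become the zig-zag identities, which follows from the same colimit argument.
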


\begin{proof}
Let $X$ and $Y$ be $R$-modules.  We can write any $R$-module as a colimit of perfect $R$-modules so we set $X=\colim_{\alpha}X_{\alpha}$.  Assume that the functor $\bullet\otimes_{R}M$ commutes with limits and recall that the $\infty$-category $\ModRperf$ is equivalent to $\ModRrig$.  We have that 
\begin{align*}
 \uHom(X,Y)\underset{R}{\otimes}M  &\simeq\underset{\alpha}{\lim}~\uHom(X_{\alpha},Y)\otimes M \\
                                            &\simeq\underset{\alpha}{\lim}~(\uHom(X_{\alpha},Y)\otimes M)  \\    
                                            &\simeq\underset{\alpha}{\lim}~(X_{\alpha}^{\vee}\otimes Y\otimes M) \\
                                            &\simeq\underset{\alpha}{\lim}~\uHom(X_{\alpha},Y\otimes M)\simeq\uHom(X,Y\otimes M).  
\end{align*}
Setting $X=M$ and $Y=R$ we see that $M^{\vee}\otimes_{R}M\simeq\uHom(M,M)$ so $M$ is dualizable by Proposition~\ref{rigidiff}.                                             
\end{proof}

\begin{lem}\label{indrigidlemma}
Let $C$ and $D$ be presentable symmetric monoidal $\infty$-categories.  Assume that $C$ is ind-rigid.  Then there exists an equivalence
\[    \uHom^{\otimes}(C,D)\ra\uHom^{\otimes}(C^{\rig},D^{\rig})     \]
of $\infty$-categories.
\end{lem}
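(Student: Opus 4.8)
The plan is to exhibit the restriction-to-rigid-objects functor as the asserted map and to build its inverse from the universal property of the symmetric monoidal $\Ind$-construction, so that the proof becomes a short chain of formal equivalences rather than an explicit computation.

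First I would record the two elementary facts that make the map well defined. A symmetric monoidal functor carries dualizable objects to dualizable objects, since it preserves the tensor product, the unit, and the triangle identities coherently; hence every symmetric monoidal functor $F\colon C\ra D$ restricts to a symmetric monoidal functor $F^{\rig}\colon C^{\rig}\ra D^{\rig}$, and this restriction is the map $\uHom^{\otimes}(C,D)\ra\uHom^{\otimes}(C^{\rig},D^{\rig})$ of the statement. For the same reason, since \emph{every} object of the rigid category $C^{\rig}$ is dualizable, an arbitrary symmetric monoidal functor $G\colon C^{\rig}\ra D$ lands objectwise in the full symmetric monoidal subcategory $D^{\rig}$; as $D^{\rig}\hookrightarrow D$ is fully faithful this yields an equivalence $\uHom^{\otimes}(C^{\rig},D^{\rig})\ra\uHom^{\otimes}(C^{\rig},D)$. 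It therefore suffices to produce an equivalence $\uHom^{\otimes}(C,D)\simeq\uHom^{\otimes}(C^{\rig},D)$ implemented by restriction along $C^{\rig}\hookrightarrow C$.

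Next I would invoke ind-rigidity to identify the source. Because $C$ is presentable symmetric monoidal, its tensor product preserves filtered colimits separately in each variable, so the monoidal $\Ind$-construction discussed before Definition~\ref{rigiddfn} endows $\Ind(C^{\rig})$ with a symmetric monoidal structure whose product preserves filtered colimits and for which the Yoneda embedding $C^{\rig}\ra\Ind(C^{\rig})$ is symmetric monoidal. The canonical comparison $\Ind(C^{\rig})\ra C$ is the filtered-colimit-preserving extension of the inclusion $C^{\rig}\hookrightarrow C$, hence is canonically symmetric monoidal; combined with the hypothesis that it is an equivalence, I may identify $C\simeq\Ind(C^{\rig})$ as symmetric monoidal $\infty$-categories. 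The heart of the argument is then the universal property of this construction (Proposition 6.3.1.10 of \cite{L1}): since the tensor product of the presentable category $D$ preserves filtered colimits, composition with the Yoneda embedding induces an equivalence between filtered-colimit-preserving symmetric monoidal functors $\Ind(C^{\rig})\ra D$ and symmetric monoidal functors $C^{\rig}\ra D$. Under $C\simeq\Ind(C^{\rig})$ this is exactly $\uHom^{\otimes}(C,D)\simeq\uHom^{\otimes}(C^{\rig},D)$, and by construction it is restriction along $C^{\rig}\hookrightarrow C$; composing with the equivalence of the previous paragraph identifies the total composite with the map of the statement, which is therefore an equivalence.

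The main obstacle I anticipate is structural rather than computational: confirming that the symmetric monoidal structure $C^{\rig}$ acquires as the full subcategory of dualizable objects of $C$ coincides with the one implicit in the monoidal $\Ind$-construction, so that the abstract universal property can legitimately be applied to the concrete category $C$. This is precisely the assertion that ind-rigidity promotes to a \emph{symmetric monoidal} equivalence $\Ind(C^{\rig})\simeq C$, and it is where the compatibility must be checked carefully; once it is in hand the remainder is the formal two-step comparison above. A secondary point to keep honest is the colimit-preservation decorating $\uHom^{\otimes}$ on the left-hand side: the universal property produces extensions preserving filtered colimits, and one should verify this matches the intended reading of $\uHom^{\otimes}(C,D)$ in the presentable setting.
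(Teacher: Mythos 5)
Your proof is correct and follows essentially the same route as the paper's: the paper's two-line argument invokes exactly the universal property of ind-objects to identify $\uHom^{\otimes}(C,D)\simeq\uHom^{\otimes}(C^{\rig},D)$ and then the fact that symmetric monoidal functors preserve dualizable objects to pass to $\uHom^{\otimes}(C^{\rig},D^{\rig})$. You simply make explicit the points the paper leaves implicit — that the equivalence $\Ind(C^{\rig})\simeq C$ is symmetric monoidal and that the left-hand $\uHom^{\otimes}$ carries the colimit-preservation condition needed for the universal property — which is a reasonable filling-in rather than a different argument.
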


\begin{proof}
By the universal property of ind-objects, the map $\uHom^{\otimes}(C,D)\ra\uHom^{\otimes}(C^{\rig},D)$ is an equivalence.  The result now follows from the fact that symmetric monoidal functors preserve rigid objects.
\end{proof}

Let $F:C\ra D$ be a functor between $\infty$-categories.  Then we denote by $\End(F)$ the mapping space $\Map(F,F)$ taken in the $\infty$-category $\RHom(C,D)$.  If $C$ and $D$ are symmetric monoidal $\infty$-categories we let $\End^{\otimes}(F)$ denote the mapping space $\Map(F,F)$ in $\RHom^{\otimes}_{\Gamma}(C,D)$.  

Let $R$ be an $\Einfty$-ring.  First recall that given two rigid $R$-modules $M$ and $N$, the mapping space $\Map_{\Mod_R}(M,N)$ as a functor on the $\infty$-category $\CAlg_R$ of $R$-algebras is given by
\[   \Map(M,N)(A):=\Map(M\otimes_{R}A,N\otimes_{R}A).   \]
This functor is representable by the chain of equivalences     
\[  \Map_{\Mod_{R}}(M\otimes_{R}A,N\otimes_{R}A)\simeq\Map_{\Mod_{R}}(M\otimes_{R}N^{\vee},A)\simeq\Map_{\CAlg_R}(\tu{Fr}(M\otimes_{R}N^{\vee}),A)  \]
where the second equivalence follows from Proposition~\ref{rigidiff} and the third follows from the equivalence 
\[  \CAlg_R(\tu{Fr}(M),\bullet)\simeq\Mod_{R}(M,\bullet)  \] 
arising from the adjunction $\CAlg_R\dashv\Mod_R$ (see Section~\ref{monoidal}).

\begin{lem}\label{graphlem}
Let $R$ be an $\Einfty$-ring, $C$ be a symmetric monoidal $\infty$-category and $F:C\ra\Mod_R^{\rig}$ a symmetric monoidal functor.  Then $\End^{\otimes}(F)$ is representable.
\end{lem}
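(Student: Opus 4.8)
The plan is to exhibit $\End^{\otimes}(F)$, regarded as a functor on the site $\Aff_R$ of $R$-algebras, as a small limit of representable prestacks and then to invoke the fact that the Yoneda embedding $\Spec$ preserves limits. Concretely, base change along $R\ra A$ produces for each $R$-algebra $A$ a symmetric monoidal functor $F_A:C\ra\Mod_A^{\rig}$, and $A\mapsto\End^{\otimes}(F_A)$ is the prestack on $\Aff_R$ whose representability must be established. The essential input is already recorded before the statement: for rigid $R$-modules $M$ and $N$ the mapping prestack $A\mapsto\Map_{\Mod_A}(M\otimes_R A,N\otimes_R A)$ is representable, namely it equals $\Spec(\tu{Fr}(M\otimes_R N^{\vee}))$. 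Since $F$ takes values in rigid modules, every mapping prestack of the form $\Map(F(x),F(y))$ is therefore representable, and this is the only place the hypothesis $F:C\ra\Mod_R^{\rig}$ is used.

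First I would treat the non-monoidal space $\End(F)=\Map_{\RHom(C,\Mod_R^{\rig})}(F,F)$. The space of natural transformations between two functors is computed by the end formula, that is, as the limit over the twisted arrow category $\tu{Tw}(C)$ of the assignment $(x\ra y)\mapsto\Map(F(x),F(y))$. Passing to prestacks on $\Aff_R$, this exhibits $\End(F)$ as the limit $\lim_{\tu{Tw}(C)}\Map(F(x),F(y))$ of representable prestacks.

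The main work is to promote this to the symmetric monoidal mapping space. I would use the cofibered description: $\RHom^{\otimes}_{\Gamma}(C,\Mod_R^{\rig})$ is a full subcategory of $\RHom_{\Gamma}(C,\Mod_R^{\rig})$, so the fully faithful inclusion does not change mapping spaces and $\End^{\otimes}(F)=\Map_{\RHom_{\Gamma}(C,\Mod_R^{\rig})}(F,F)$. The analogous end formula over $\Gamma$ writes this as a limit over the twisted arrow category of $C$ relative to $\Gamma$ of mapping spaces in the fibers of $\Mod_R^{\rig}\ra\Gamma$. By the Segal condition those fibers are finite powers of $\Mod_R^{\rig}$, and the naturality conditions imposed by the active maps of $\Gamma$ produce terms of the form $\Map(F(x_1)\otimes\cdots\otimes F(x_k),F(y))$; since $F$ is symmetric monoidal each such term is $\Map(F(x_1\otimes\cdots\otimes x_k),F(y))$, again a mapping prestack between rigid modules and hence representable. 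Thus $\End^{\otimes}(F)$ is again a small limit of representable prestacks on $\Aff_R$. I expect this bookkeeping of the monoidal coherences --- identifying the correct indexing category and checking that the limit diagram is the one computing symmetric monoidal natural endomorphisms --- to be the main obstacle; everything else is formal.

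Finally I would conclude by stability of representability under limits. The Yoneda embedding $\Spec:\Aff_R\ra(\Aff_R)^{\wedge}$ preserves small limits, and limits in $\Aff_R=\CAlg_R^{op}$ are colimits in the presentable, hence cocomplete, $\infty$-category $\CAlg_R$. Therefore the limit of a diagram of representable prestacks is itself representable, corepresented by the corresponding colimit of algebras. Applying this to the limit diagram of the previous paragraph shows that $\End^{\otimes}(F)$ is representable, completing the proof.
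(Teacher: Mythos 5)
Your proof is correct, but it takes a genuinely different route from the paper's. The paper disposes of the monoidal-coherence bookkeeping that you identify as the main obstacle by presenting $C$ as a homotopy colimit of free symmetric monoidal $\infty$-categories $C_\alpha$ on $\infty$-graphs $G_\alpha$: since $\RHom^{\otimes}(C_\alpha,\Mod_R^{\rig})\simeq\RHom^{\tu{grph}}(G_\alpha,\Mod_R^{\rig})$, the prestack $\End^{\otimes}(F)$ becomes a homotopy limit of endomorphism prestacks of plain graph morphisms, and for the two generating graphs (a single object $x$; two objects $x,y$ with a simplicial set $A$ of arrows) these are explicitly $\End(M_x)=\Spec\tu{Fr}(M_x\otimes_R(M_x)^{\vee})$ and the finite limit $\End(M_x)\times^h_{\Map(M_x,M_y)^{A}}\End(M_y)$, with no coherence conditions left to track. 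Your argument instead keeps the cofibered structure over $\Gamma$, using full faithfulness of $\RHom^{\otimes}_{\Gamma}\subseteq\RHom_{\Gamma}$, the twisted-arrow (end) formula for mapping spaces in relative functor categories, and the identification of fibers of mapping spaces over a cocartesian fibration with mapping spaces in the fibers. Both proofs rest on the same essential input (representability of the mapping prestack between rigid modules, via $\Spec\tu{Fr}(M\otimes_R N^{\vee})$) and the same closing step (representables are stable under small limits, since the Yoneda embedding preserves limits and limits in $\Aff_R$ are colimits in the cocomplete $\infty$-category $\CAlg_R$). What the paper's route buys is the elimination of all monoidal coherences at the cost of the nontrivial claim that free symmetric monoidal $\infty$-categories on graphs generate everything under colimits; what your route buys is that it needs no presentation of $C$ at all, at the cost of the relative end formula over $\Gamma$, which the paper never develops but which is standard. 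The two arguments are at a comparable level of rigor, and each fills in a step the other leaves implicit.
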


\begin{proof}
Any symmetric monoidal $\infty$-category is of the form $\hocolim_\alpha C_{\alpha}$ where $C_\alpha$ is the free symmetric monoidal $\infty$-category over an $\infty$-graph $G_\alpha$ defined through the following universal property~: for any symmetric monoidal $\infty$-category $D$, there exists an equivalence
\[  \RHom^{\otimes}(C_\alpha,D)\simeq\RHom^{\tu{grph}}(G_\alpha,D)  \]
where $\RHom^{\tu{grph}}$ denotes the $\infty$-category of functors between $\infty$-graphs.  Thus we have an equivalence
\begin{align*}
\RHom^{\otimes}(C,\Mod_R^{\rig}) &\ra\underset{\alpha}{\holim}~\RHom^{\tu{grph}}(G_\alpha,\Mod_R^\rig)\\
                                F &\mapsto F_\alpha    
\end{align*}                                
where $F_\alpha$ sends an object $x$ in $G_\alpha$ to a rigid module $M_x$ and the mapping space $G_\alpha(x,y)$ to the mapping space $\Mod_R^\rig(M_x,M_y)$.  

When $G_\alpha$ consists of a single object $x$, then $\End(F_\alpha)\simeq\End(M_x)=\Spec\tu{Fr}(M_x\otimes_R(M_x)^\vee)$.  When $G_\alpha$ consists of two objects $x$ and $y$ and the simplicial set $A$ of arrows between $x$ and $y$ then 
\[     \End(F_\alpha)=\End(M_x)\times^h_{\Map(M_x,M_y)^{A}}\End(M_y).  \]
Since representable objects are stable under homotopy limits and any $\infty$-graph is generated under homotopy colimits by the above two simple graphs, the functor $\holim_\alpha\End^{\otimes}(F_\alpha)\simeq\End^\otimes(F)$ is representable.
\end{proof}

\begin{prop}\label{endisagroupstack}
Let $R$ be an $\Einfty$-ring, $C$ be a presentable ind-rigid symmetric monoidal $\infty$-category and $F:C\ra\Mod_{R}$ a symmetric monoidal functor.  Then $\End^{\otimes}(F)$ is a representable $\Gp(\cS)$-valued prestack.  Hence it is an affine group stack with respect to any subcanonical topology.
\end{prop}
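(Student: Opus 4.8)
The plan is to reduce to the rigid case and then feed the result into the two facts already established for rigid targets: representability of endomorphisms of a functor into $\Mod_R^{\rig}$ (Lemma~\ref{graphlem}) and the automatic invertibility of natural transformations between symmetric monoidal functors of rigid $\infty$-categories (Proposition~\ref{rigidnaturaltransformation}). Throughout I regard $\End^{\otimes}(F)$ as a prestack on $\CAlg_R$ via $\End^{\otimes}(F)(A):=\End^{\otimes}(F_A)$, where $F_A\colon C\ra\Mod_A$ is the base change of $F$ along $R\ra A$, exactly as in the discussion preceding Lemma~\ref{graphlem}.

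First I would restrict $F$ to dualizable objects. Since a symmetric monoidal functor preserves duals, $F$ restricts to $F^{\rig}\colon C^{\rig}\ra\Mod_R^{\rig}$. Because $C$ is ind-rigid and each $\Mod_A$ is a presentable symmetric monoidal $\infty$-category, Lemma~\ref{indrigidlemma} gives an equivalence $\uHom^{\otimes}(C,\Mod_A)\ra\uHom^{\otimes}(C^{\rig},\Mod_A^{\rig})$ carrying $F_A$ to $F^{\rig}_A$, naturally in $A$. Evaluating mapping spaces through this equivalence produces an identification $\End^{\otimes}(F)\simeq\End^{\otimes}(F^{\rig})$ of prestacks on $\CAlg_R$. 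Applying Lemma~\ref{graphlem} with $C^{\rig}$ as the symmetric monoidal source and $F^{\rig}\colon C^{\rig}\ra\Mod_R^{\rig}$ as the functor then shows that $\End^{\otimes}(F^{\rig})$, and hence $\End^{\otimes}(F)$, is representable by an $R$-algebra. Next I would promote this representable prestack to a group object: the space $\End^{\otimes}(F^{\rig}_A)$ carries its natural monoid structure under composition of natural transformations, and by Proposition~\ref{rigidnaturaltransformation} applied to the rigid $\infty$-categories $C^{\rig}$ and $\Mod_A^{\rig}$, every map $F^{\rig}_A\ra F^{\rig}_A$ is an equivalence. Thus $\pi_0\End^{\otimes}(F^{\rig}_A)$ is a group, the monoid is grouplike, and since this holds naturally in $A$ the prestack $\End^{\otimes}(F)\simeq\End^{\otimes}(F^{\rig})$ is $\Gp(\cS)$-valued. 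Finally, for any subcanonical topology the representable prestack underlying $\End^{\otimes}(F)$ is a stack, so its representing $R$-algebra exhibits $\End^{\otimes}(F)$ as an affine stack; being a group object, it is an affine group stack, its representing object being a Hopf $R$-algebra in the sense of Proposition~\ref{specisfullyfaithful}.

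The main obstacle I anticipate is the bookkeeping of naturality in the base algebra $A$: one must verify that the equivalence of Lemma~\ref{indrigidlemma} and the composition monoid structure are both compatible with base change $R\ra A$, so that the identification $\End^{\otimes}(F)\simeq\End^{\otimes}(F^{\rig})$ transports representability and the grouplike structure to the prestack level simultaneously, rather than only fiberwise for each fixed $A$. Once this compatibility is secured, representability follows immediately from Lemma~\ref{graphlem} and the group structure follows immediately from Proposition~\ref{rigidnaturaltransformation}.
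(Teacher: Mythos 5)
Your proposal is correct and follows essentially the same route as the paper's own proof: reduce to the rigid case via Lemma~\ref{indrigidlemma}, obtain representability from Lemma~\ref{graphlem}, deduce the grouplike structure from Proposition~\ref{rigidnaturaltransformation}, and invoke subcanonicity for the stack property. Your added attention to naturality in the base algebra $A$ is a point the paper leaves implicit, but it does not change the argument.
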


\begin{proof}
Since the $\infty$-category $C$ is ind-rigid, the map $\RHom^{\otimes}(C,\Mod_{R})\ra\RHom^{\otimes}(C^{\rig},{\Mod}_{R}^{\rig})$ is an equivalence by Lemma~\ref{indrigidlemma}.  Thus we have an equivalence $\End^{\otimes}(F)\simeq\End^{\otimes}(F^{\rig})$. By Lemma~\ref{graphlem}, $\End(F^\rig)$ is representable.  Finally, $\End^{\otimes}(F)\simeq\RAut^{\otimes}(F)$ by Proposition~\ref{rigidnaturaltransformation} so $\End^{\otimes}(F)$ is in fact a representable $\Gp(\cS)$-valued prestack and hence an affine group stack for any subcanonical topology.
\end{proof}

\section{Neutralized Tannaka duality for $(\infty,1)$-categories}\label{tannakadualityforinftycategories}

We begin by constructing the $\infty$-category of $R$-linear, stable, presentable, symmetric monoidal $\infty$-categories which we will call $R$-tensor $\infty$-categories.  In Example~\ref{cartesianstructureoncatinfty} we saw an explicit construction of the cartesian monoidal structure on the $\infty$-category $\Catinf$ of $\infty$-categories.  Let ${\Cat}_{\infty}^{p,\perp}$ denote the full subcategory of ${\Cat}_{\infty}^{p}$ spanned by presentable $\infty$-categories which are moreover stable $\infty$-categories.  The projection ${\Cat}_{\infty}^{\perp,p}\ra \Gamma$ determines a symmetric monoidal structure on the $\infty$-category $\Catinf^{p,\perp}$ of stable, presentable $\infty$-categories (see Proposition 6.3.2.15 of \cite{L1}).  The unit object of $\Catinf^{p,\perp}$ with this symmetric monoidal structure is the $\infty$-category $\Sp$ of spectra.  

Let $\Catinf^{p,\perp,\otimes}$ denote the subcategory of $\Catinf^{\otimes}$ spanned by stable, presentable symmetric monoidal $\infty$-categories whose symmetric monoidal bifunctor preserves colimits seperately in each variable and whose morphisms are colimit preserving symmetric monoidal functors.  Then we have an equivalence
\[      \CMon({\Cat}_{\infty}^{p,\perp})\ra\Catinf^{p,\perp,\otimes}    \]
of $\infty$-categories.  Thus a symmetric monoidal $\infty$-category $C$ belongs to $\CMon({\Cat}_{\infty}^{p,\perp})$ if and only if $C$ is stable, presentable and the bifunctor $\otimes:C_{\<1\>}\times C_{\<1\>}\ra C_{\<1\>}$ preserves colimits separately in each variable.    

Let $R$ be an $\Einfty$-ring.  Then the stable, presentable symmetric monoidal $\infty$-category ${\Mod}_{R}$ of $R$-modules belongs to $\CMon({\Cat}_{\infty}^{p,\perp})$.  Applying this observation to the results of Section~\ref{monoidal} gives the equivalence
\[   \CMon({\Mod}_{\Mod_{R}}({\Cat}_{\infty}^{p,\perp}))\simeq\CMon({\Cat}_{\infty}^{p,\perp})_{\Mod_{R}/}. \]
of $\infty$-categories.  The term on the left hand side is the $\infty$-category of $R$-linear, stable, presentable symmetric monoidal $\infty$-categories and $R$-linear symmetric monoidal functors.   

\begin{dfn}
Let $R$ be an $\Einfty$-ring.  A symmetric monoidal $\infty$-category is said to be a \textit{tensor} $\infty$-category if it is stable and presentable.  It is said to be an \textit{$R$-tensor} $\infty$-category if it is $R$-linear, stable and presentable. 
\end{dfn}

We will denote the $\infty$-category of tensor $\infty$-categories by $\Tens^{\otimes}:=\Catinf^{p,\perp,\otimes}$ and the $\infty$-category of $R$-tensor $\infty$-categories and $R$-linear symmetric monoidal functors by 
\[  \Tens_R^{\otimes}:=(\Catinf^{p,\perp,\otimes})_{\Mod_{R}/}.  \]  

We now introduce the stack of fiber functors and state our duality theorems for neutralized higher Tannaka duality.  In the next section, we will describe the proofs.  Let $R$ be an $\Einfty$-ring and $\tau\in\{fl,fin\}$.  The stack $\Mod$ of modules of Proposition~\ref{modisastack} can naturally be extended to act on the $\infty$-category of stacks $\St(R,\tau)$ as follows.  The objects of $\St(R,\tau)$ can be considered as stacks associated to the functors
\[   F:\CAlg_{R}\ra\Catinf   \]
taking values in the $\infty$-category of $\infty$-categories using the inclusion of Remark~\ref{inclusionofstacks}.  The action of $\Mod$ on the $\infty$-category of stacks $\St(R,\tau)$ is then given by
\begin{align*}
\Mod:\St(R,\tau)^{op}   &\ra   \Catinf\\
   F &\ra \Mor(F,\Mod)
\end{align*}
which by abuse we also denote $\Mod$ where $\Mor$ is the morphism object of $\St_{\Catinf}(R,\tau)$ given by Proposition~\ref{xstacksareenrichedinx}.  

The $\infty$-category $\Mor(F,\Mod)$ is naturally endowed with the structure of an $R$-tensor $\infty$-category.  The tensor $\infty$-structure on $\Mor(F,\Mod)$ is induced from that on $\Mod$~: it is presentable, stable and is given the pointwise symmetric monoidal structure 
\[  {\Mor}(F,\Mod)_{\<n\>}:=\Mor(F,{\Mod})\times_{\Mor(F,\underline\Gamma)}\{\<n\>\}   \]
of Example~\ref{pointwisemonoidalstructure} where ${\Mod}_{\<n\>}(A):=({\Mod}_{A})_{\<n\>}$ for an $R$-algebra $A$ and $\underline\Gamma$ is the constant prestack.  The $R$-linear structure on $\Mor(F,\Mod)$ is also induced from that on $\Mod$ through the composition
\[   \Mod_{R}\xra{\psi}\Mor(F,\underline{\Mod}_{R})\ra\Mor(F,\Mod) \]
where $\underline{\Mod}_{R}$ is the constant prestack and $\psi$ is the natural constant map.  Thus we obtain a functor
\[     \Mod:\St(R,\tau)^{op}\ra\Tens_{R}^\otimes.  \]

\begin{dfn}
Let $R$ be an $\Einfty$-ring.  A dualizable object of $\Tens^\otimes_R$ will be called a \textit{rigid $R$-tensor} $\infty$-category.
\end{dfn}

Let $\Tens_{R}^{\rig}:=(\Tens^\otimes_R)^\rig$ denote the $\infty$-category of rigid $R$-tensor $\infty$-categories.  Restricting the functor $\Mod$ to rigid objects we obtain a natural functor
\begin{align*}
\Perf:\St(R,\tau)^{op}  &\ra \Tens_{R}^{\rig}\\
                        F            &\mapsto\Mor(F,\Perf)               
\end{align*}
where the stack $\Perf:\CAlg_{R}\ra\Tens_{R}^{\rig}$ on the right hand side sends a commutative $R$-algebra $A$ to the $\infty$-category $\Mod_{A}^{\rig}$ of rigid $A$-modules.

\begin{lem}\label{leftadjointtoperf}
The functor $\Perf$ admits a left adjoint.
\end{lem}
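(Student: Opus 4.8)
The plan is to construct the left adjoint, which we denote $\Fib$, as a ``stack of fiber functors'' and to establish the adjunction $\Fib\dashv\Perf$ by a representability argument rather than by invoking the adjoint functor theorem directly (the latter is awkward here, since $\St^{\tau}(R)^{op}$ is not presentable). The two structural inputs I would use are the following. First, by Proposition~\ref{xstacksareenrichedinx} the $\infty$-category $\St_{\Catinf}^{\tau}(R)$ is tensored and enriched with the tensoring preserving colimits separately in each variable; consequently the morphism object $\Mor(-,\Perf)$, and hence $\Perf$ itself, carries colimits in the first variable to limits. Second, $\tau\in\{fl,fin\}$ is subcanonical (Proposition~\ref{subcanonical}), so each representable $\Spec A$ is a stack, and every object of the presentable $\infty$-category $\St^{\tau}(R)$ is a colimit of such representables.

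First I would fix $T\in\Tens_R^{\rig}$ and consider the functor
\[ G_T:\St^{\tau}(R)^{op}\ra\cS,\qquad F\mapsto\Map_{\Tens_R^{\rig}}(T,\Perf(F)). \]
Since $\Perf(F)=\Mor(F,\Perf)$ converts colimits in $F$ into limits, and since any mapping-space functor $\Map_{\Tens_R^{\rig}}(T,-)$ preserves all limits, the composite $G_T$ sends colimits in $\St^{\tau}(R)$ to limits in $\cS$; that is, $G_T$ preserves small limits as a functor on $\St^{\tau}(R)^{op}$. As $\St^{\tau}(R)$ is presentable, the representability criterion (Proposition 5.5.2.2 of \cite{Lu}) then yields an object $\Fib(T)\in\St^{\tau}(R)$ and a natural equivalence
\[ \Map_{\Tens_R^{\rig}}(T,\Perf(F))\simeq\Map_{\St^{\tau}(R)}(F,\Fib(T)). \]
Evaluating at $F=\Spec A$ and using the Yoneda identification $\Perf(\Spec A)=\Mor(\Spec A,\Perf)\simeq\Mod_A^{\rig}$ exhibits the representing stack explicitly as the stack of fiber functors, $\Fib(T):A\mapsto\Map_{\Tens_R^{\rig}}(T,\Mod_A^{\rig})$. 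The displayed equivalence is natural in $T$, so $T\mapsto\Fib(T)$ is functorial and $\Fib$ is left adjoint to $\Perf$.

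The step I expect to be the main obstacle is justifying that $\Mor(-,\Perf)$ sends colimits to limits \emph{computed in} $\Tens_R^{\rig}$, and not merely in $\Catinf$. This rests on the tensored and enriched structure of Proposition~\ref{xstacksareenrichedinx} together with the fact that $A\mapsto\Mod_A^{\rig}$ is itself a stack, which follows from the descent for $\Mod$ proved in Proposition~\ref{modisastack} and Proposition~\ref{subcanonical} once one checks that rigidity is stable under the relevant base change and under the limits occurring in descent. The same descent also shows directly that each $\Fib(T)$ is a stack: for a $\tau$-covering $A\ra B$ one has $\Mod_A^{\rig}\simeq\lim_{\Delta}\Mod_{B_*}^{\rig}$, and applying the limit-preserving functor $\Map_{\Tens_R^{\rig}}(T,-)$ gives $\Fib(T)(A)\simeq\lim_{\Delta}\Fib(T)(B_*)$. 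Once this descent-compatibility of the enriched Hom is in place, the remainder of the argument is formal.
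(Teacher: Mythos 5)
Your proof is correct, but it takes a different route from the paper's. The paper's proof is a three-line explicit construction: regarding $C$ as a constant prestack, it sets $\Fib(C):=\uHom(C,\Perf)$ and obtains the adjunction from the tensor-hom equivalences $\Map_{\Tens_R^{\rig}}(C,\Perf(F))\simeq\Map_{\St^{\tau}(R)}(C\times F,\Perf)\simeq\Map_{\St^{\tau}(R)}(F,\uHom(C,\Perf))$ supplied by the tensored/enriched structure of Proposition~\ref{xstacksareenrichedinx}, with the fact that $\uHom(C,\Perf)$ is a stack coming from Proposition~\ref{homstack} (using that $\Perf$ is a stack). You instead run an abstract representability argument: the corepresented functor $G_T=\Map_{\Tens_R^{\rig}}(T,\Perf(-))$ sends colimits of stacks to limits of spaces, and since $\St^{\tau}(R)$ is presentable, Proposition 5.5.2.2 of \cite{Lu} produces the representing object. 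The two mechanisms are genuinely different even though both rest on the same enriched structure: the paper's approach buys an immediate explicit formula for $\Fib$ as the internal Hom stack of fiber functors and never needs the representability criterion, while yours buys stackness of $\Fib(T)$ for free (it is produced as an object of $\St^{\tau}(R)$) at the cost of invoking presentability and of the extra bookkeeping you correctly flag, namely that the limits $\lim_i\Perf(F_i)$ occurring are limits in $\Tens_R^{\rig}$ and not merely in $\Catinf$ (which holds because limits of rigid symmetric monoidal $\infty$-categories along symmetric monoidal functors are computed on underlying $\infty$-categories and remain rigid, duals being preserved by symmetric monoidal functors). Note that the paper's chain of equivalences quietly uses the same compatibility when passing between $\Map_{\Tens_R^{\rig}}$ and the enriched mapping spaces of $\St_{\Catinf}^{\tau}(R)$, and both arguments presuppose that $A\mapsto\Mod_A^{\rig}$ satisfies descent, so neither route is more rigorous than the other on these points; your identification of $\Fib(T)$ at representables recovers exactly the paper's formula $\Fib(T)(A)\simeq\Map_{\Tens_R^{\rig}}(T,\Mod_A^{\rig})$.
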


\begin{proof}
Let $C$ be a rigid $R$-tensor $\infty$-category.  We have the following chain of equivalences 
\[   \Map_{\Tens_{R}^{\rig}}(C,\Perf(F))\simeq\Map_{\St(R,\tau)}(C\times F,\Perf)\simeq\Map_{\St(R,\tau)}(F,\uHom(C,\Perf)). \] 
Here $\uHom(C,\Perf)$ is a stack by Proposition~\ref{homstack}, since $\Perf$ is a stack, where we regard $C$ as a constant prestack.  
\end{proof}

The left adjoint to $\Perf$ of Lemma~\ref{leftadjointtoperf} will be denoted
\begin{align*}
   \Fib: \Tens_{R}^{\rig} &\ra\St(R,\tau)^{op}  \\    
                                C     &\mapsto\uHom(C,\Perf)   
\end{align*}   
where $\Fib(C)(A):=\Map_{\Tens_{R}^{\rig}}(C,\Mod_{A}^{\rig})$ for a commutative $R$-algebra $A$.  

We would now like to consider conditions on rigid $R$-tensor $\infty$-categories and stacks on certain sites of $R$-algebras for which the adjunction $\Fib\dashv\Perf$ is an equivalence.  We begin with some preliminary definitions and results.

\begin{dfn}\label{secomod}
Let $R$ be an $\Einfty$-ring.  The $\infty$-category of \textit{Segal comodules} over a Hopf $R$-algebra $B$ is given by the following limit
\[  \SeComod_{B}:=\underset{n\in\Delta}{\lim}~\Mod_{B_n} \]            
of $\infty$-categories.
\end{dfn}

We will often abuse terminology by calling a Segal comodule over a Hopf $R$-algebra simply a comodule over a Hopf $R$-algebra.  This is justified since by Theorem 6.2.4.2 of \cite{L1}, there exists an equivalence
\[     \SeComod_B\ra\Comod_B  \]
of $\infty$-categories.  Restricting to rigid objects we have an identification $\SeComod^{\rig}_{B}:=\lim_n\Mod^{\rig}_{B_n}$.  The forgetful functor $\SeComod\ra\Mod_{R}$ is given by the evaluation map $ev_{0}:\lim_{n}\Mod_{B_n}\ra\Mod_{B_{0}}=\Mod_{R}$.

\begin{ex}
The object $\Dec_+(B)$ is a $B$-comodule which is just $B$ thought of as a comodule over itself.  More precisely, a $B$-comodule, by definition, consists of objects $M_n\in\Mod_{B_n}$ for all $[n]\in\Delta$ and for every arrow $[n]\ra[m]$ in $\Delta$, an equivalence $M_n\otimes_{B_n}B_m\xras M_m$ in $\Mod_{B_m}$.  We have that $\Dec_+(B)_n=B_{n+1}$ and $B_{n+1}\otimes_{B_n}B_m\simeq B^{\otimes_{R}n+1}\otimes_{B^{\otimes n}}B^{\otimes m}\simeq B^{\otimes m+1}$ where the first equivalence follows from Segal maps of the Hopf $R$-algebra structure on $B$.  Thus $\Dec_+(B)$ is a $B$-comodule.
\end{ex}

\begin{prop}\label{secomodmodbg}
Let $R$ be an $\Einfty$-ring and $B$ a Hopf $R$-algebra.  Then there exists an equivalence
\[  \SeComod_{B}\simeq\Mod(\widetilde{\B}G) \]
of $\infty$-categories for an affine group stack $G=\Spec B$.  
\end{prop}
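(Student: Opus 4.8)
The plan is to present $\widetilde{\B}G$ as the geometric realization of the bar construction $[n]\mapsto\Spec B_n$, and then push it through the contravariant functor $\Mor(-,\Mod)$, which converts this colimit of representables into the limit defining $\SeComod_B$.

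First I would identify the group stack $G=\Spec B$ and its classifying stack. Since $B$ is a Hopf $R$-algebra, Proposition~\ref{specisfullyfaithful} shows $\Spec B$ is a group object in $\St^{\tau}(R)$; as $\Spec$ is the Yoneda embedding it preserves products, and products in $\Aff_R$ are tensor products over $R$ by Example~\ref{cmonsymmetricmonoidal}, so the underlying simplicial object of $G$ is $[n]\mapsto\Spec(B^{\otimes_R n})\simeq\Spec B_n$. In the $\infty$-topos $\St^{\tau}(R)$ the classifying stack of a group object is its delooping, recovered as the realization of this simplicial object: the canonical point $*\simeq\Spec R\to\widetilde{\B}G$ is an effective epimorphism with {\v C}ech nerve $[n]\mapsto *^{\times_{\widetilde{\B}G}(n+1)}$, and since $\widetilde{\Omega}$ is right adjoint to $\widetilde{\B}$ we have $*\times_{\widetilde{\B}G}*\simeq\widetilde{\Omega}\widetilde{\B}G\simeq G$. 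Hence the {\v C}ech nerve is $[n]\mapsto G^{\times n}\simeq\Spec B_n$ and
\[ \widetilde{\B}G\simeq\underset{[n]\in\Delta^{op}}{\colim}~\Spec B_n. \]
Equivalently, one computes $\overline{\B}G$ pointwise as the bar construction $\colim_{\Delta^{op}}G(x)^n$ in $\cS$ and uses that stackification, a left adjoint, preserves this colimit.

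Next I apply $\Mor(-,\Mod)$, computed in the $\Catinf$-valued stacks $\St_{\Catinf}^{\tau}(R)$ where $\Mod$ lives; the inclusion $\St^{\tau}(R)\to\St_{\Catinf}^{\tau}(R)$ of Remark~\ref{inclusionofstacks} preserves colimits, so the displayed realization persists. Because $\St_{\Catinf}^{\tau}(R)$ is tensored and enriched over $\Catinf$ with $C\otimes(-)$ colimit preserving (Proposition~\ref{xstacksareenrichedinx}), the functor $\Mor(-,\Mod)$ sends colimits to limits: for any $C\in\Catinf$ one has $\Map(C,\Mor(\colim_i F_i,\Mod))\simeq\Map(\colim_i(C\otimes F_i),\Mod)\simeq\lim_i\Map(C,\Mor(F_i,\Mod))$, so $\Mor(\colim_i F_i,\Mod)\simeq\lim_i\Mor(F_i,\Mod)$ by Yoneda in $\Catinf$. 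Combining this with the $\Catinf$-enriched Yoneda lemma $\Mor(\Spec A,\Mod)\simeq\Mod_A$ (valid since $\tau$ is subcanonical, so $\Spec A$ and $\Mod$ are stacks) yields
\[ \Mor(\widetilde{\B}G,\Mod)\simeq\underset{[n]\in\Delta}{\lim}~\Mor(\Spec B_n,\Mod)\simeq\underset{[n]\in\Delta}{\lim}~\Mod_{B_n}=\SeComod_B, \]
with $G=\Spec B$ affine and a group stack by Proposition~\ref{specisfullyfaithful}.

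The main obstacle is the first step: rigorously matching the stackification-based definition of $\widetilde{\B}G$ with the bar/{\v C}ech realization and identifying its simplicial terms with $\Spec B_n$. This is where the $\infty$-topos descent machinery (effective epimorphisms and their {\v C}ech nerves, together with left-exactness of stackification and the loop-delooping adjunction $\widetilde{\B}\dashv\widetilde{\Omega}$) is genuinely used; once $\widetilde{\B}G$ is written as $\colim_{\Delta^{op}}\Spec B_n$, the remaining steps are formal consequences of the $\Catinf$-enrichment and the Yoneda lemma.
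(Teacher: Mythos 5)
Your proof is correct and takes essentially the same route as the paper's: write $\widetilde{\B}G\simeq\colim_{[n]\in\Delta^{op}}\Spec B_n$, then apply $\Mor(-,\Mod)$, which converts this colimit into the limit $\lim_{n\in\Delta}\Mod_{B_n}=\SeComod_B$ via the Yoneda lemma. The paper simply asserts the bar presentation of $\widetilde{\B}G$ and the colimit-to-limit conversion; your \v{C}ech-nerve/delooping argument and the enrichment argument via Proposition~\ref{xstacksareenrichedinx} just fill in those asserted steps.
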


\begin{proof}
Firstly, we have that $\wtB G=|\wtB G_\bullet|$ where $\wtB G_{\bullet}:\Dop\ra\St(C,\tau)$ is the functor that sends $[n]$ to $G_n$.  Therefore 
\[   \widetilde{\B}\Spec B=\colim_{n\in\Delta}\Spec B_n.  \]
We thus have an equivalence
\[ \Mod(\wt{\B}G):=\Mor(\wt{\B}\Spec B,\Mod)=\Mor(\colim_{n\in\Delta}\Spec B_{n},\Mod)\simeq\lim_{n\in\Delta}\Mod_{B_n}=:\SeComod_{B} \]
given by the Yoneda Lemma.
\end{proof}

Let $R$ be an $\Einfty$-ring and $B$ a rigid Hopf $R$-algebra.  A corollary of Proposition~\ref{secomodmodbg} is that there exists an equivalence
\[  \SeComod^{\tu{rig}}_{B}\simeq\Perf(\widetilde{\B}G) \]
of $\infty$-categories for an affine group stack $G=\Spec B$.  We call $\Perf(\widetilde{\B}G)$ the $\infty$-category of \textit{representations} of $G$ and denote it by
\[ \Rep(G):=\Perf(\widetilde{\B}G).   \]     
for $G=\Spec B$ where $B$ is a Hopf $R$-algebra in the $\infty$-category $\Sp$ of spectra. 

\begin{dfn}\label{pointednotation}
The $\infty$-category of rigid $R$-tensor $\infty$-categories over $\Mod_R^\rig$ will be denoted 
\[   (\Tens_R^{\rig})_{*}:=(\Tens_{R}^{\rig})_{/\Mod_{R}^{\rig}}.  \]
The objects of $(\Tens_{R}^{\rig})_{*}$ will be described as pairs $(T,\omega)$ where $T$ is a rigid $R$-tensor $\infty$-category and $\omega:T\ra\Mod_{R}^{\rig}$ is an $R$-tensor functor.  They will be called \textit{pointed rigid $R$-tensor $\infty$-categories}.  Let  
\[  \Fib_{*}:(\Tens_{R}^{\rig})_{*}\ra\Gp(R,\tau)^{op}  \] 
be the functor defined by $\Fib_{*}(T,\omega):=\End^{\otimes}(\omega)$.  Let 
\[  \Perf_{*}:\Gp(R,\tau)^{op}\ra(\Tens_{R}^{\rig})_{*}  \] 
be the functor defined by $\Perf_{*}(G):=(\Rep(G),\nu)$ where $\nu:=f^{*}:\Rep(G)\ra\Mod_{R}^{\rig}$ is the functor induced by the natural map $f:*\ra\wt{\B}G$.
\end{dfn}

\begin{lem}\label{pointedadjunction}
The maps of Definition~\ref{pointednotation} induce an adjunction
\[       \Fib_{*}:(\Tens_{R}^{\rig})_{*}\rightleftarrows\Gp(R,\tau)^{op}:\Perf_{*}   \]
of $\infty$-categories.
\end{lem}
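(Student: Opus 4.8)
The plan is to produce, naturally in $G \in \Gp^{\tau}(R)$ and $(T,\omega) \in (\Tens_R^{\rig})_*$, an equivalence
\[ \Map_{(\Tens_R^{\rig})_*}\big((T,\omega),\,\Perf_*(G)\big) \;\simeq\; \Map_{\Gp^{\tau}(R)}\big(G,\,\Fib_*(T,\omega)\big), \]
which, once the opposite on the group side is accounted for, is exactly the asserted adjunction $\Fib_* \dashv \Perf_*$ (recall $\Fib_*(T,\omega) = \End^{\otimes}(\omega)$, an affine group stack by Proposition~\ref{endisagroupstack}). I would construct this equivalence as a chain, decomposing it through the unpointed adjunction $\Fib \dashv \Perf$ of Lemma~\ref{leftadjointtoperf} and the pointed classifying-stack adjunction extracted from the proof of Proposition~\ref{gerbecharacterisation}.

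First, unwinding the slice $(\Tens_R^{\rig})_* = (\Tens_R^{\rig})_{/\Mod_R^{\rig}}$ together with the definition $\Perf_*(G) = (\Rep(G),\nu)$, where $\Rep(G) = \Perf(\widetilde{\B}G)$ and $\nu = f^*$ for the canonical point $f : * \to \widetilde{\B}G$, the left-hand mapping space is the fibre over $\omega$ of the map
\[ \Map_{\Tens_R^{\rig}}(T,\Perf(\widetilde{\B}G)) \ra \Map_{\Tens_R^{\rig}}(T,\Perf(*)) \]
given by postcomposition with $\nu=f^*$, where $\Perf(*) = \Mod_R^{\rig}$ for the terminal stack $* = \Spec R$. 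Applying $\Fib \dashv \Perf$ to both terms converts this into the fibre over the point $s_\omega$ classified by $\omega$ of the restriction map
\[ \Map_{\St^{\tau}(R)}(\widetilde{\B}G,\Fib(T)) \ra \Map_{\St^{\tau}(R)}(*,\Fib(T)) \]
along $f$, which by definition is the pointed mapping space $\Map_{\St^{\tau}(R)_{*/}}\big((\widetilde{\B}G,f),(\Fib(T),s_\omega)\big)$. The pointed refinement of the classifying-stack adjunction, namely the equivalence $\Map_{\Pr(C)_*}(\overline{\B}G,F) \simeq \Map_{\Gp(\Pr(C))}(G,\overline{\Omega}F)$ with $\overline{\Omega}F = \Aut(s)$ from the proof of Proposition~\ref{gerbecharacterisation}, stackified, then identifies this with $\Map_{\Gp^{\tau}(R)}\big(G,\widetilde{\Omega}(\Fib(T),s_\omega)\big)$, where $\widetilde{\Omega}$ denotes the loop group at the basepoint.

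It remains to identify $\widetilde{\Omega}(\Fib(T),s_\omega)$ with $\Fib_*(T,\omega) = \End^{\otimes}(\omega)$, and this is the step I expect to be the crux. Evaluating on an $R$-algebra $A$, since $\Fib(T)(A) = \Map_{\Tens_R^{\rig}}(T,\Mod_A^{\rig})$ and $s_\omega$ restricts along $R \ra A$ to the base change $\omega_A$ of $\omega$, the loop group evaluates to $\widetilde{\Omega}(\Fib(T),s_\omega)(A) \simeq \Omega_{\omega_A}\Map_{\Tens_R^{\rig}}(T,\Mod_A^{\rig}) = \Aut^{\otimes}(\omega_A)$, the space of invertible monoidal self-transformations. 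On the other hand $\End^{\otimes}(\omega)(A) = \End^{\otimes}(\omega_A)$, and Proposition~\ref{rigidnaturaltransformation} guarantees that every monoidal natural transformation out of the rigid $\infty$-category $T$ is already an equivalence, whence $\End^{\otimes}(\omega_A) \simeq \Aut^{\otimes}(\omega_A)$. These equivalences are natural in $A$ and intertwine the group structures, composition of monoidal automorphisms matching concatenation of loops, so they assemble into an equivalence of affine group stacks $\widetilde{\Omega}(\Fib(T),s_\omega) \simeq \End^{\otimes}(\omega)$. Substituting into the chain yields the desired natural equivalence of mapping spaces, hence the adjunction. The delicate points to verify are the naturality of the entire chain in both variables and the compatibility of the loop-space product with the composition product on $\End^{\otimes}$; everything else follows directly from the cited results.
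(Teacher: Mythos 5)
Your proof is correct and follows essentially the same route as the paper's: both compare the two homotopy pullback squares defining the pointed mapping spaces, exchange them via the adjunction $\Fib\dashv\Perf$ of Lemma~\ref{leftadjointtoperf}, and then invoke the pointed classifying-stack adjunction $\wt{\B}\dashv\wt{\Omega}$. The only difference is that you spell out the identification $\wt{\Omega}(\Fib(T),s_\omega)\simeq\End^{\otimes}(\omega)$ via Proposition~\ref{rigidnaturaltransformation}, a step the paper leaves implicit in its use of $\wt{\Omega}_{*}$.
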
   

\begin{proof}
Consider the homotopy pullback diagram
\begin{diagram}
\Map_{*}(\wt{\B}G,\Fib(T))   &\rTo  &\Map_{\St(R,\tau)}(\wt{\B}G,\Fib(T))\\
\dTo      &        &\dTo\\
*            &\rTo^{\omega}       &\Map_{\St(R,\tau)}(*,\Fib(T))
\end{diagram}
and its corresponding adjoint diagram
\begin{diagram}
\Map_{*}(T,\Perf(\wt{\B}G))   &\rTo  &\Map_{\Tens_{R}^{\rig}}(T,\Perf(\wt{\B}G))\\
\dTo     &        &\dTo\\
*           &\rTo^{\omega}       &\Map_{\Tens_{R}^{\rig}}(T,\Mod_R^\rig).
\end{diagram}
using Lemma~\ref{leftadjointtoperf}.  Since the two diagrams are equivalent, the homotopy pullbacks are equivalent.  Thus we have a chain of equivalences
\[  \Map(G,\Fib_{*}(T))\simeq\Map_{*}(\wt{\B}G,\Fib(T))\simeq\Map_*(T,\Perf(\wt{\B}G))  \]
where the first map arises from the adjunction $\wt{\B}\dashv\wt{\Omega}_{*}$.
\end{proof}

We now state the main results of the paper.  They will be proven in the next section.  We will begin with the pointed case, otherwise known as \textit{neutralized} Tannaka duality for $\infty$-categories.  We would like to study conditions on pointed rigid $R$-tensor $\infty$-categories and group stacks for which the adjunction of Lemma~\ref{pointedadjunction} is an equivalence of $\infty$-categories.  We make use of the positive, flat and finite topologies introduced in Definition~\ref{flattopologies}.  

We begin by defining the appropriate subcategory of pointed rigid $R$-tensor $\infty$-categories which we call \textit{Tannakian}.

\begin{dfn}\label{fiberfunctors}
Let $R$ be an $\Einfty$-ring, $C$ a rigid $R$-linear symmetric monoidal $\infty$-category and $\omega:C\ra\Mod_{R}^{\rig}$ an $R$-linear symmetric monoidal functor.  Denote $\Ind(\omega)$ by $\who$.  Then $\omega$ is said to be~:
\begin{enumerate}
\item A \textit{finite fiber functor} if $\who$ is conservative and preserves (small) limits.

Let $R$ be a connective $\Einfty$-ring.  Then $\omega$ is said to be~:

\item A \textit{flat fiber functor} if $\who$ is conservative, creates a t-structure on $C$, is exact and whose right adjoint is t-exact.

Let $R$ be a connective bounded $\Einfty$-ring.  Then $\omega$ is said to be~:

\item A \textit{positive fiber functor} if $\who$ is conservative, creates a t-structure on $C$ and is exact.
\end{enumerate}
\end{dfn}

Note that since $\who$ is a presentable symmetric monoidal functor it commutes with colimits and hence by the adjoint functor theorem admits a right adjoint $\whp$ which is a lax symmetric monoidal functor.  Also, we remark that since positive and flat fiber functors are conservative, t-exact and defined over a connective bounded base $\Einfty$-ring, the t-structures created are non-degenerate.

\begin{dfn}\label{pointeddfn}
Let $R$ be an $\Einfty$-ring.  A \textit{pointed $R$-Tannakian $\infty$-category} with respect to $\tau$ is a pair $(T,\omega)$ where $T$ is a rigid $R$-tensor $\infty$-category and $\omega:T\ra\Mod_{R}^{\rig}$ is a $\tau$-fiber functor.
\end{dfn}

Let $(\Tan_{R}^{\tau})_{*}$ denote the full subcategory of $(\Tens_{R}^{\rig})_{*}$ spanned by pointed $\tau$-$R$-Tannakian $\infty$-categories.  We will often abuse terminology by referring to a pointed $R$-Tannakian $\infty$-category $(T,\omega)$ as simply $T$.  

A notion of rigidity manifests itself on the opposite side of the duality in the following sense.  Let 
\[  \who_G:\Mod(\wtB G)\ra\Mod_R  \] 
and $\omega_G:\Perf(\wtB G)\ra\Mod_R^\rig$ be the forgetful functors.
 
\begin{dfn}
Let $R$ be an $\Einfty$-ring.  An affine group stack $G=\Spec B$ in $\Gp(R,\tau)$ is said to be 
\begin{enumerate}
\item \textit{Weakly rigid} if $\End^{\otimes}(\who_G)\ra\End^{\otimes}(\omega_G)$ is an equivalence.
\item \textit{Rigid} if the map $\SeComod_{B}\ra\Ind(\SeComod_{B}^{\rig})$ is an equivalence of $\infty$-categories.
\end{enumerate}
\end{dfn}

\begin{lem}\label{rigidthenweaklyrigid}
Let $R$ be an $\Einfty$-ring and $G=\Spec B$ be a rigid affine group stack.  Then $G$ is weakly rigid.
\end{lem}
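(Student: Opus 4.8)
The plan is to reduce the statement to a single application of Lemma~\ref{indrigidlemma}, exploiting that the rigidity hypothesis on $G$ is precisely the ind-rigidity of the $\infty$-category $\SeComod_B$. First I would unwind the two forgetful functors. By Proposition~\ref{secomodmodbg} and its corollary there are identifications $\Mod(\wt{\B}G)\simeq\SeComod_B$ and $\Perf(\wt{\B}G)\simeq\SeComod_B^{\rig}$, under which $\who_G$ becomes the symmetric monoidal forgetful functor $ev_0:\SeComod_B\ra\Mod_R$ and $\omega_G$ becomes its restriction to dualizable objects $\SeComod_B^{\rig}\ra\Mod_R^{\rig}$. Thus weak rigidity is exactly the assertion that the restriction-to-rigid-objects map on endomorphism spaces $\End^{\otimes}(\who_G)\ra\End^{\otimes}(\omega_G)$ is an equivalence.

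Next I would record that $\SeComod_B=\lim_{n}\Mod_{B_n}$ is a presentable symmetric monoidal $\infty$-category (a limit in $\Catinf^{p,\perp}$ of the presentable symmetric monoidal $\infty$-categories $\Mod_{B_n}$ along the base change functors, carrying the induced limit symmetric monoidal structure), and that the hypothesis that $G$ is rigid says precisely that $\SeComod_B\ra\Ind(\SeComod_B^{\rig})$ is an equivalence, i.e. that $\SeComod_B$ is ind-rigid. I may therefore apply Lemma~\ref{indrigidlemma} with $C=\SeComod_B$ and $D=\Mod_R$, obtaining an equivalence
\[ \uHom^{\otimes}(\SeComod_B,\Mod_R)\ra\uHom^{\otimes}(\SeComod_B^{\rig},\Mod_R^{\rig}) \]
of $\infty$-categories. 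This equivalence is implemented by precomposition with the inclusion $\SeComod_B^{\rig}\hookrightarrow\SeComod_B$ (the target lands automatically in $\Mod_R^{\rig}$ since symmetric monoidal functors preserve duals), so it carries the object $\who_G$ to the object $\omega_G$.

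To conclude I would use that an equivalence of $\infty$-categories induces an equivalence on every mapping space; applying this to the object $\who_G$ and its image $\omega_G$ yields
\[ \End^{\otimes}(\who_G)=\Map(\who_G,\who_G)\xras\Map(\omega_G,\omega_G)=\End^{\otimes}(\omega_G), \]
which is exactly the claimed weak rigidity of $G$.

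The main obstacle I anticipate is not the formal deduction but the bookkeeping needed to justify that Lemma~\ref{indrigidlemma} genuinely applies: one must confirm that $\SeComod_B^{\rig}$, defined in Definition~\ref{secomod} as $\lim_n\Mod_{B_n}^{\rig}$, really coincides with the full subcategory $(\SeComod_B)^{\rig}$ of dualizable objects that enters the rigidification in the Lemma, so that the rigidity hypothesis is ind-rigidity in the exact sense of the definition. This holds because each base change functor $\Mod_{B_n}\ra\Mod_{B_m}$ is symmetric monoidal and hence preserves dualizability, so dualizability in the limit is detected componentwise; together with presentability of the limit, this makes all hypotheses of Lemma~\ref{indrigidlemma} available and the argument goes through.
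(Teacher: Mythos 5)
Your proof is correct and follows essentially the same route as the paper: the paper's own argument is precisely to note that rigidity of $G$ means $\SeComod_B$ is ind-rigid, apply Lemma~\ref{indrigidlemma} with $C=\SeComod_B$ and $D=\Mod_R$, and then invoke Proposition~\ref{secomodmodbg} to identify the resulting equivalence of functor categories with the comparison map $\End^{\otimes}(\who_G)\ra\End^{\otimes}(\omega_G)$. Your additional bookkeeping (presentability of the limit, the identification of $\lim_n\Mod_{B_n}^{\rig}$ with the dualizable objects of $\SeComod_B$, and passing from the equivalence of $\uHom^{\otimes}$-categories to an equivalence of endomorphism spaces) simply makes explicit what the paper leaves implicit.
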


\begin{proof}
Since $G$ is rigid, the $\infty$-category $\SeComod_B$ is ind-rigid and by Lemma~\ref{indrigidlemma} there exists an equivalence 
\[   \uHom^\otimes(\SeComod_B,\Mod_R)\simeq\uHom^\otimes(\SeComod_B^\rig,\Mod_R^\rig)   \] 
of $\infty$-categories.  The result now follows from Proposition~\ref{secomodmodbg}.
\end{proof}  

We remark that these rigidity conditions on an affine group stack (or on a Hopf $R$-algebra) are a new feature of the higher categorical approach~: in the classical case, an affine group scheme over a field is automatically rigid.  We will now define the objects on the group side of the correspondence.  

\begin{dfn}\label{tannakiangroupstack}
Let $R$ be an $\Einfty$-ring.  A group stack $G$ in $\Gp(\Pr(\Aff_R))$ is said to be \textit{$R$-Tannakian} if it is of the form $\Spec B$ for $B$ a weakly rigid Hopf $R$-algebra.  It is said to be \textit{$\tau$-$R$-Tannakian} for a topology $\tau$ if it is $R$-Tannakian where $B$ is a $\tau$-Hopf $R$-algebra.
\end{dfn}

Let $\TGp(R,\tau)$ denote the full subcategory of $\Gp(R,\tau)$ spanned by $\tau$-$R$-Tannakian group prestacks.  

Our first main theorem is the following generalization of neutralized Tannaka duality to the $\infty$-categorical setting.

\begin{thm}[Neutralized $\infty$-Tannaka duality]\label{pointedinftytannakaduality}
Let $\tau$ be a subcanonical topology.  Then the map
\[    \Perf_{*}:\TGp(R,\tau)^{op}\ra(\Tens^{\rig}_R)_*   \]
is fully faithful.  Moreover, the adjunction $\Fib_*\dashv\Perf_*$ induces the following~:
\begin{enumerate}
\item Let $R$ be an $\Einfty$-ring.  Then $(T,\omega)$ is a pointed finite $R$-Tannakian $\infty$-category if and only if it is of the form $\Perf_*(G)$ for $G$ a finite $R$-Tannakian group stack.
\item Let $R$ be a connective $\Einfty$-ring.  Then $(T,\omega)$ is a pointed flat $R$-Tannakian $\infty$-category if it is of the form $\Perf_*(G)$ for $G$ a flat $R$-Tannakian group stack.
\item Let $R$ be a bounded connective $\Einfty$-ring.  Then $(T,\omega)$ is a pointed positive $R$-Tannakian $\infty$-category if it is of the form $\Perf_*(G)$ for $G$ a positive $R$-Tannakian group stack.
\end{enumerate}
\end{thm}

\section{Proof of the neutralized theorem}\label{dualitytheorems}

We will now embark on the proof of the higher Tannaka duality statement described at the end of the last section.  For an $R$-linear tensor functor $f:C\ra\Mod_R$, we will denote by $f_A$ the composition
\[      C\xra{f}\Mod_R\xra{\bullet\otimes_RA}\Mod_A    \]
given by composing $f$ with the base change functor.

\begin{prop}\label{specbendomegah}
Let $R$ be an $\Einfty$-ring, $B$ a $R$-bialgebra and $\wh{\omega}:\SeComod_{B}\ra\Mod_{R}$ the forgetful functor.  Then there exists an equivalence
\[      \Spec B\ra\End^{\otimes}(\who).   \]
\end{prop}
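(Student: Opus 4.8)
The plan is to compute the prestack $\End^{\otimes}(\who)$ on the site $\Aff_R$ pointwise and to identify it naturally with $\Spec B$, that is with the functor $A\mapsto\Map_{\CAlg_R}(B,A)$. First I would invoke Proposition~\ref{secomodmodbg} to replace $\SeComod_B$ by $\Mod(\wt{\B}G)$ for the monoid stack $G=\Spec B$. Under the bar presentation $\wt{\B}G=\colim_{[n]\in\Dop}\Spec B_n$, and using that $\Mod=\Mor(-,\Mod)$ converts this colimit into the limit $\lim_{[n]\in\Delta}\Mod_{B_n}$, the forgetful functor $\who$ is identified with the projection $ev_0\colon\SeComod_B\ra\Mod_{B_0}=\Mod_R$ onto the $[0]$-component.

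Next I would write down the comparison map. An $A$-point $\phi\colon B\ra A$ acts on a comodule $M=(M_n)$ through its structural coaction $\kappa_M\colon M_0\ra M_0\otimes_R B$: composing $\kappa_M\otimes_R A$ with the map induced by $\phi$ and the multiplication $A\otimes_R A\ra A$ produces an $A$-linear self-map of $\who(M)\otimes_R A$. The comodule identities and the bialgebra structure on $B$ guarantee that this assignment is symmetric monoidal, compatible with composition, and natural in $M$, so it assembles into a map $\Spec B\ra\End^{\otimes}(\who)$ of monoid prestacks; it then remains to check that it is an equivalence after evaluation at each $A\in\CAlg_R$.

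For the fibrewise statement I would evaluate $\who$ on the regular comodule $\Dec_+(B)$, whose underlying $R$-module is $B$, and exploit the representability of mapping spaces of rigid modules recorded before Lemma~\ref{graphlem}, namely $\End(M)\simeq\Spec\tu{Fr}(M\otimes_R M^{\vee})$. Combined with the limit presentation $\SeComod_B=\lim_n\Mod_{B_n}$, this expresses $\End^{\otimes}(\who)$ as the totalization of the cobar-type cosimplicial diagram assembled from the algebras $B^{\otimes_R(n+1)}$. The main obstacle is to prove that this totalization collapses to $\Map_{\CAlg_R}(B,A)$ without losing higher coherence data; here I expect Proposition~\ref{hopflimit}, which gives $R\simeq\holim_{n}B^{\otimes_R(n+1)}$, together with the splitting and homotopy-equivalence arguments of Proposition~\ref{illusie}, Proposition~\ref{splitislimit} and Lemma~\ref{hetoe}, to control the cosimplicial limit and reduce the homotopy-coherent comparison to the strict bialgebra identities, thereby yielding the desired equivalence.
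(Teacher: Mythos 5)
Your strategy---write down the classical Hopf-algebraic comparison map pointwise and prove it is an equivalence by evaluating on the regular comodule and a cobar totalization---is genuinely different from the paper's proof, and as it stands it has two gaps which are exactly the $\infty$-categorical difficulties the paper's argument is built to avoid. The first is the construction of the map itself. You define, for $\phi\in\Map_{\CAlg_R}(B,A)$ and a comodule $M$, a self-map of $\who(M)\otimes_RA$ from the coaction and the multiplication of $A$, and then assert that the comodule identities and the bialgebra axioms ``guarantee'' symmetric monoidality, compatibility with composition and naturality in $M$. In an $\infty$-category this kind of verification is not available: a point of $\End^{\otimes}(\who)(A)$ is not a family of maps satisfying identities but an infinite hierarchy of coherence data (naturality over all of $\SeComod_B$ and monoidality over all of $\Gamma$), and no finite list of checks produces it. The paper never constructs the map by hand; it obtains the equivalence from a chain of natural identifications, $\End^{\otimes}(\who)\simeq\Map_{\CMon(\Mod(\wtB G))}(u_*R,u_*R)\simeq\Map_{\CAlg_R}(u^*u_*R,R)\simeq\Map_{\CAlg_R}(B,R)$, where the first step is the fully faithful functor $\rho$ of Corollary 6.3.5.18 of \cite{L1}, the second is the adjunction $u^*\dashv u_*$, and the third is the base-change formula $u^*u_*\simeq v_*v^*$ (Proposition 2.5.14 of \cite{LVIII}) applied to the pullback square exhibiting $G=\Spec B$ as the loops on $\wtB G$; all coherences are absorbed into those two cited results.

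The second gap is the fibrewise identification. You claim that the presentation $\SeComod_B=\lim_n\Mod_{B_n}$ expresses $\End^{\otimes}(\who)$ as the totalization of a cobar-type diagram built from the $B^{\otimes_R(n+1)}$. This conflates functors \emph{into} a limit with functors \emph{out of} one: the limit presentation is what defines $\SeComod_B$, but the endomorphism space of a functor whose \emph{source} is that limit admits no such automatic cosimplicial decomposition. Moreover, the representability statement $\End(M)\simeq\Spec\tu{Fr}(M\otimes_RM^{\vee})$ and Lemma~\ref{graphlem} concern functors landing in $\Mod_R^{\rig}$; the functor $\who$ lands in all of $\Mod_R$, the proposition makes no rigidity hypothesis, and the regular comodule $\Dec_+(B)$ you evaluate on has underlying module $B$, which is in general not dualizable, so these results simply do not apply to it. Finally, Propositions~\ref{hopflimit}, \ref{illusie}, \ref{splitislimit} and Lemma~\ref{hetoe} control limits of cosimplicial objects of $\CAlg_R$---the paper does use them this way, but in Proposition~\ref{locallyequivalent} and Theorem~\ref{pointedinftytannakaduality}, not here---and they do not yield the collapse of an endomorphism-space totalization. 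If you want a coherent construction of the comparison map, a cleaner route is to use $G\simeq\wt{\Omega}\wtB G$, so that a point of $G(A)$ is an automorphism of the base point $u:*\ra\wtB G$ over $A$ and acts on $\who=u^*$ by functoriality of $\Mor(\bullet,\Mod)$; but proving that this map is an equivalence still requires an algebraic identification of the kind the paper carries out.
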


\begin{proof}
Let $G=\Spec B$ and consider the homotopy pullback diagram
\begin{diagram}
*=\Spec R   &\rTo^{u}  & \wtB G\\
\uTo^{v}      &        &\uTo_{u}\\
G           &\rTo^{v}  & *=\Spec R.
\end{diagram}
The induced functors between stable, presentable, $R$-linear symmetric monoidal $\infty$-categories are given by
\begin{align*}
\whp=u_{*} &:\Mod_{R}\ra\Mod(\wt{\B}G) \\  
\who=u^{*} &:\Mod(\wt{\B}G)\ra\Mod_{R} \\  
v^{*} &:\Mod_{R}\ra\Mod_{B}  \\   
v_{*} &:\Mod_{B}\ra\Mod_{R}.   
\end{align*}
By Proposition 2.5.14 of \cite{LVIII}, we have the base change formula $u^{*}u_{*}\simeq v_{*}v^{*}$.  Therefore the composition $\who\circ\whp$ is given by $B\otimes_{R}\bullet$.  

Observe now that the object $u_*(R)$ in $\Mod(\wtB G)$ is endowed with the structure of a commutative monoid object inherited from $R$.  From Corollary 6.3.5.18 of \cite{L1} (see also Section~\ref{monoidal}) we deduce the existence of a fully faithful functor
\[   \rho:\CMon(\Mod(\wtB G))\ra(\Tens^\otimes)_{\Mod(\wtB G)/}   \]
which takes $u_*R$ to $\rho(u_*R)=u^*:\Mod(\wtB G)\ra\Mod_R$.  Thus we have the following chain of equivalences
\[  \End^\otimes(\who)\simeq\Map_{\CMon(\Mod(\wtB G))}(u_*R,u_*R)\simeq\Map_{\CAlg_R}(u^*u_*R,R)\simeq\Map_{\CAlg_R}(B,R)  \]
which completes the proof.
\end{proof}

\begin{cor}\label{counit}
Let $R$ be an $\Einfty$-ring.  Then the counit map 
\[     G\ra\Fib_{*}(\Perf_{*}(G)) \] 
is an equivalence in $\Gp(R,\tau)$ when $G$ is of the form $\Spec B$ for $B$ a Hopf $R$-algebra and is weakly rigid.
\end{cor}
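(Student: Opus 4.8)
The plan is to unwind both functors at $G=\Spec B$ and reduce the statement to two facts already at hand: the computation of the tensor endomorphisms of the forgetful functor on all comodules in Proposition~\ref{specbendomegah}, together with the weak rigidity hypothesis on $G$.

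First I would identify the target of the counit. By Notation~\ref{pointednotation} we have $\Perf_*(G)=(\Perf(\wtB G),\nu)$ with $\nu=\omega_G:\Perf(\wtB G)\ra\Mod_R^{\rig}$ the forgetful functor, so that $\Fib_*(\Perf_*(G))=\End^\otimes(\omega_G)$; thus the counit is a map $G\ra\End^\otimes(\omega_G)$ in $\Gp^\tau(R)$. Next I would introduce the ``large'' endomorphism object. Under the equivalence $\SeComod_B\simeq\Mod(\wtB G)$ of Proposition~\ref{secomodmodbg}, the forgetful functor $\who_G:\Mod(\wtB G)\ra\Mod_R$ restricts on rigid objects to $\omega_G$ on $\Perf(\wtB G)\simeq\SeComod_B^{\rig}$, and this restriction induces exactly the comparison map $\End^\otimes(\who_G)\ra\End^\otimes(\omega_G)$ occurring in the definition of weak rigidity. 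Proposition~\ref{specbendomegah} supplies an equivalence $G=\Spec B\xras\End^\otimes(\who_G)$, coming from the action of $G$ on the fiber functor, and the weak rigidity of $G$ makes the comparison map an equivalence. Composing,
\[  G\xras\End^\otimes(\who_G)\xras\End^\otimes(\omega_G)=\Fib_*(\Perf_*(G))  \]
is an equivalence in $\Gp^\tau(R)$.

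The step requiring the most care, and the main obstacle, is to verify that this composite is genuinely the counit $\epsilon_G$ of the adjunction $\Fib_*\dashv\Perf_*$ of Lemma~\ref{pointedadjunction}, and not merely some a priori unrelated natural equivalence $G\simeq\Fib_*(\Perf_*(G))$. I would establish this by tracing the chain of equivalences $\Map(G,\Fib_*(T))\simeq\Map_*(\wtB G,\Fib(T))\simeq\Map_*(T,\Perf(\wtB G))$ that defines the adjunction: since $\Fib_*$ is the left adjoint, $\epsilon_G$ is the image of $\id_{\Perf_*(G)}$ under the composite bijection obtained by setting $T=\Perf(\wtB G)$, and unwinding this image through the $\wtB\dashv\wt\Omega_*$ equivalence should yield precisely the $G$-action on $\omega_G$, which is the map constructed above via Proposition~\ref{specbendomegah} followed by restriction to rigid objects. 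Once this naturality identification is in place, the conclusion is immediate from the displayed composition.
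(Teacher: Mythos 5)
Your proposal is correct and follows essentially the same route as the paper's proof: identify $\Fib_*(\Perf_*(G))$ with $\End^{\otimes}(\omega_G)$, invoke Proposition~\ref{specbendomegah} to get $\Spec B\simeq\End^{\otimes}(\who_G)$, and then use weak rigidity for the equivalence $\End^{\otimes}(\who_G)\simeq\End^{\otimes}(\omega_G)$. The only difference is your explicit concern that the composite agrees with the actual counit of the adjunction of Lemma~\ref{pointedadjunction} -- a compatibility the paper's proof leaves implicit -- and your sketch of how to trace it through the adjunction equivalences is the right way to close that point.
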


\begin{proof}
We need to show that $\Spec B\ra\End^{\otimes}(\omega)$ is an equivalence where $\omega:\Perf(\wtB G)\ra\Mod_{R}^{\rig}$ for $G$ weakly rigid.  We have a diagram 
\[    G=\Spec B\xras\End^\otimes(\who)\xras\End^\otimes(\omega)   \]
where the first equivalence follows from Proposition~\ref{specbendomegah} and the second equivalence follows from the assumption that $G$ is weakly rigid.  Thus $\Spec B$ is equivalent to $\End^{\otimes}(\omega)$.
\end{proof}

Corollary~\ref{counit} states that we have a full embedding of the $\infty$-category of $R$-Tannakian group stacks into the $\infty$-category of pointed rigid $R$-tensor $\infty$-categories given by the rule $\Perf_*:G\mapsto(\Perf(\wtB G),\omega)$.  This is the first statement of Theorem~\ref{pointedinftytannakaduality}.  We now prove the remainder of the neutralized $\infty$-Tannaka duality theorem. 

\begin{proof}[Proof of Theorem~\ref{pointedinftytannakaduality}]
Let $\tau\in\{\geq 0,fl,fin\}$ be a topology.  We will first show that if $\omega$ is a $\tau$-fiber functor then the $R$-algebra $B:=\who\whp(R)$ is a $\tau$-$R$-algebra.   For all three cases we consider the projection formula $\who\whp(M)\simeq B\otimes_{R}M$ of Proposition~\ref{projectionformula} for an $R$-module $M$.  For the positive case, the functor $\who$ is t-exact by definition so by Lemma~\ref{texactlemma}, the right adjoint $\whp$ is left t-exact and so $B$ is positive.  For the flat case, $R$ is connective and the functors $\who$ and $\whp$ are t-exact so $B=\who\whp(R)$ is a connective $R$-module.  Also, the functor $\who\whp\simeq B\otimes_{R}\bullet$ is left t-exact by Lemma~\ref{texactlemma}.   It then follows from Theorem 7.2.2.15 of \cite{L1} that $B$ is flat over the connective $\Einfty$-ring $R$.  The finite case follows simply since $\who$ preserves limits by definition and $\whp$ preserves limits since it is a right adjoint.  Thus the composition $\who\whp$ preserves limits and by the projection formula we are done.

Let $(T,\omega)$ be a $\tau$-$R$-Tannakian $\infty$-category.  We will now show that the unit $(T,\omega)\ra\Perf_{*}(\Fib_*(T,\omega))$ of the adjunction $\Fib_*\dashv\Perf_*$ is an equivalence when restricted to the subcategory $(\Tan_R^{\tau})_*$, ie. $(T,\omega)$ is equivalent to $(\Perf(\wtB G),\omega_G)$ for $G=\End^{\otimes}(\omega)$ and $\omega_G:\Perf(\wtB G)\ra\Mod_{R}^{\rig}$ the forgetful functor.  We consider the map $\whphi:\wh{T}\ra\Mod(\wtB G)$.  We have the commutativity $\who\simeq\who_{G}\circ\whphi$ where $\who_{G}:\Mod(\wtB G)\ra\Mod_{R}$ is the forgetful functor and we consider the following diagram
\begin{diagram}
\wh{T}& &\pile{\rTo^{\whphi}  \\  \lTo_{\whpsi}} & & \Mod(\wtB G)\\
   &\rdTo_{\who\dashv\whp}&         &\ldTo_{\who_{G}\dashv\whp_G}&\\
                  && \Mod_{R} &&
\end{diagram}
where $\whpsi$ is the right adjoint to $\whphi$ owing to the fact that $\whphi$ commutes with colimits (it is a map between presentable $\infty$-categories in $\Tan_{R}^\otimes$).  Now observe that $\whphi$ is conservative since $\who$ is conservative (by definition of a $\tau$-fiber functor), $\who_{G}$ is the conservative forgetful functor and $\who\simeq\who_{G}\circ\whphi$.  Therefore, we have the following~:
\begin{itemize}
\item[(*)] The map $\whphi$ is an equivalence if and only if $\whphi\circ\whpsi\ra\id$ is an equivalence.  
\end{itemize}
We treat the three different topology cases separately.\\

1. \textit{Finite case}~: In other words, we assume that $\who$ is conservative and preserves limits.  We begin by proving that $\whphi\circ\whpsi(E)\ra E$ is an equivalence when $E$ is of the form $\whp_G(M)$ for $M\in\Mod_{R}$.  We have that $\whp_G(M)\simeq B\otimes_{R}M$.  Furthermore, by the commutativity of the diagram we have 
\[  \who_{G}\circ\whphi\circ\whpsi(E)\simeq\who\circ\whpsi\circ\whp_G(M)\simeq\who\circ\whp(M). \] 
By the projection formula of Proposition~\ref{projectionformula}, we have the equivalence $\who\circ\whp(M)\simeq M\otimes_{R}\who\whp(R)$.  Therefore $\who_{G}\circ\whphi\circ\whpsi(E)\simeq\who_G(B\otimes_R M)\simeq\who_{G}(E)$ and the result follows from the conservativity of $\who_{G}$.  

We now prove that $\whphi\circ\whpsi(E)\ra E$ is an equivalence for a general $B$-comodule $E$.  The functor $\who_G$ is conservative and preserves $\who_G$-split limits by Proposition 6.2.4.1 of \cite{L1}.  Thus by the dual of Proposition 6.2.2.11 of \cite{L1} we have that for all $E$ in $\SeComod_{B}$, there exists an augmented cosimplicial object $E_\bullet:\Dplus\ra\Comod_{B}$ given by $E_n\simeq(\whp_G\circ\who_G)^{n+1}E$ which is $\who_G$-split.  It follows from Proposition~\ref{splitislimit} that there exists an equivalence
\[      E\ra\underset{n\in\Delta}{\holim}~((E_0\otimes_R B^{\otimes_R n})\otimes_R B)       \]
where $E_0=\who_G(E)$.  Hence we can consider $E$ to be of the form $\holim_n\whp_G(M_n)$ for $M_n=(E_0\otimes_R B^{\otimes_R n})\in\Mod_{R}$.  

We have a diagram
\begin{diagram}
\whphi\circ\whpsi(E)   &\rTo  & E\\
\dTo                         &        &\dTo\\
\underset{n}{\holim}~\whphi\circ\whpsi(\whp_G(M_n))    &\rTo  &\underset{n}{\holim}~\whp_G(M_n).
\end{diagram}
The lower horizontal arrow is an equivalence from the above case of $E=\whp_G(M)$.  Also, the left vertical arrow is an equivalence from the fact that $\whphi\circ\whpsi$ commutes with limits~: since $\whpsi$ is a right adjoint functor it preserves limits and $\whphi$ preserves limits since $\who$ preserves limits (being a finite fiber functor), $\who_{G}$ preserves limits (because its the forgetful functor from $\Mod(\wtB G)$ where $G$ is finite) and $\who\simeq\who_G\circ\whphi$.  Thus the upper horizontal arrow is an equivalence.  Thus $\wh{T}\ra\Mod(\wtB G)$ is an equivalence.

The equivalence $\wh{T}\ra\Mod(\wtB G)$ induces an equivalence $T\ra\Perf(\wtB G)$ on rigid objects since symmetric monoidal functors preserve rigid objects.  Thus $\SeComod_B$ is ind-rigid (using Proposition~\ref{secomodmodbg}) and hence by Lemma~\ref{rigidthenweaklyrigid} the affine group stack $G$ is weakly rigid.\\

2. \textit{Positive case}~: In other words, we assume that $\who$ is conservative, creates a non-degenerate t-structure and is t-exact.  We also assume that the base $\Einfty$-ring $R$ is bounded and connective.  Let $E$ be an object in $\Mod(\wtB G)_{\leq n}$, where $\Mod(\wtB G)_{\leq n}$ denotes the full subcategory of $\Mod(\wtB G)$ spanned by objects which get mapped to $(\Mod_{R})_{\leq n}$ (with the t-structure of Example~\ref{modrtstructure}) under $\who_G$.  We have that  $E\xras\holim_n((E_0\otimes_R B^{\otimes_R n})\otimes_R B)$ is an equivalence in $\Mod(\wtB G)_{\leq n}$ as above with $E_0=\who_GE$ in $(\Mod_R)_{\leq n}$.  

Let $M_n:=(E_0\otimes_R B^{\otimes_R n})\in(\Mod_{R})_{\leq n}$ and consider the diagram
\begin{diagram}
\who\circ\whpsi(E)   &\rTo  & \who_G(E)\\
\dTo                         &        &\dTo\\
\underset{n}{\holim}~\who\circ\whpsi(M_n\otimes_R B)    &\rTo  &\underset{n}{\holim}~\who_G(M_n\otimes_R B).
\end{diagram}

The bottom horizontal arrow is an equivalence since 
\[   \who\circ\whpsi(M_n\otimes_R B)\simeq\who\circ\whpsi\circ\whp_G(M_n)\simeq\who\circ\whp(M_n)\simeq\omega_{G}(M_n\otimes_R B).  \]
The right vertical arrow is an equivalence since $\omega_{G}$ preserves $\omega_{G}$-split limits by Proposition 6.2.4.1 of \cite{L1}.  The left vertical arrow is also an equivalence by the following.  Note firstly that $\whpsi(M_n\otimes_R B)\simeq\whpsi\circ\whp_G(M_n)\simeq\whp(M_n)$ and so $\whpsi(M_n\otimes_RB)$ is in $T_{\leq n}$ since $\whp$ is left t-exact.  Therefore, the cosimplicial object 
\[     [k]\mapsto\whpsi(M_k\otimes B)  \]
in $T$ satisfies the property that $\pi_i^t(\whpsi(M_k\otimes_R B))=0$ for all $i>n$ and all $k$.  

We can then apply Lemma~\ref{tstructurelimitlemma} to the t-exact fiber functor $\who:\wh{T}\ra\Mod_{R}$ to deduce that 
\[    \underset{n}{\holim}~\who\circ\whpsi(M_n\otimes_R B)\xras\who\circ\underset{n}{\holim}~\whpsi(M_n\otimes_R B)   \]
is an equivalence.  Secondly, the functor $\whpsi$ preserves limits (it is a right adjoint) so the left vertical arrow is an equivalence.  We then have that the map $\who_G\circ\whphi\circ\whpsi(E)\simeq\who\circ\whpsi(E)\ra\omega_{G}(E)$ is an equivalence.  Thus by the conservativity of $\who_G$, we obtain the equivalence 
\[   \wh{T}_{\leq n}\ra\Mod(\wtB G)_{\leq n}.  \]

We deduce that the map $\cup_n\wh{T}_{\leq n}\ra\cup_n\Mod(\wtB G)_{\leq n}$ between the unions over all $n$ is an equivalence.  Rigid modules over a bounded $\Einfty$-ring $R$ are bounded and so $T$ is contained in $\cup_n\wh{T}_{\leq n}$ (in fact $\wh{T}=\cup_n\wh{T}_{\leq n}$ since $\wh{T}$ is left t-complete owing to its non-degenerate t-structure) and $\Perf(\wtB G)$ is contained in $\cup_n\Mod(\wtB G)_{\leq n}$.  Since $\whphi$ is a symmetric monoidal functor it preserves rigid objects and so we obtain an induced fully faithful functor $\whphi:T\ra\Perf(\wtB G)$.  

However, for any $E,F\in\Perf(\wtB G)$, we have a diagram
\begin{diagram}
\who(\whpsi(E)\otimes\whpsi(F))   &\rTo^{\sim}  & \who_G(E)\otimes\who_G(F)\\
\dTo                         &        &\dTo_{\sim}\\
\who\circ\whpsi(E\otimes F)    &\rTo^{\sim}  & \who_G(E\otimes F)
\end{diagram}
using the symmetric monoidal structures on $\who$ and $\who_G$.  Therefore the left vertical arrow of the diagram is an equivalence.  Conseqently, the map $\whpsi:\Perf(\wtB G)\ra T$ is symmetric monoidal and thus preserves rigid objects leading to the equivalence
\[ T\xras\Perf(\wtB G)  \]
of $\infty$-categories.  Finally, $G$ is weakly rigid by combining this equivalence with Proposition~\ref{specbendomegah}.\\
 
3. \textit{Flat case}: In other words, we assume that $\who$ is conservative, creates a non-degenerate t-structure, is t-exact and whose right adjoint is t-exact.  The first part of the proof can be deduced directly from the positive case~: it follows that $\wh{T}_{\leq n}\ra\Mod(\wtB G)_{\leq n}$ is an equivalence.  However, here the base $\Einfty$-ring $R$ is merely connective.  In the flat case though, we actually have an equivalence
\[     \wh{T}\simeq\underset{n}{\holim}~\wh{T}_{\leq n}\ra\underset{n}{\holim}~\Mod(\wtB G)_{\leq n}\simeq\Mod(\wtB G)  \]
of $\infty$-categories where the identification on the left hand side follows from the left t-exactness of $\wh{T}$ and the identification on the right hand side follows from the fact that a non-degenerate t-structure is created on $\Mod(\wtB G)$.  Since monoidal functors preserve rigid objects we have an equivalence $T\ra\Perf(\wtB G)$ of $\infty$-categories.  Combining this equivalence with Proposition~\ref{specbendomegah} we find that $G$ is rigid and hence weakly rigid.
\end{proof}

\section{Neutral Tannaka duality for $(\infty,1)$-categories}\label{neutral}

We now consider the case where there simply exists a $\tau$-fiber functor.  This is the non-pointed case, otherwise known as \textit{neutral} Tannaka duality for $\infty$-categories.

\begin{dfn}\label{nonpointeddfn}
Let $R$ be an $\Einfty$-ring and $\tau\in\{fin,fl,\geq 0\}$ a topology.  A rigid $R$-tensor $\infty$-category $T$ is said to be a \textit{$R$-Tannakian $\infty$-category} with respect to $\tau$ if there exists a $\tau$-fiber functor $w:T\ra\Mod_{R}^{\rig}$.
\end{dfn}

We denote the $\infty$-category of $R$-Tannakian $\infty$-categories with respect to $\tau$ by $\Tan_{R}^{\tau}$.  The objects on the other side of the correspondence are described as follows.

\begin{dfn}\label{tannakiangerbe}
Let $R$ be an $\Einfty$-ring.  A stack $F$ in $\St(R,\tau)$ is said to be a \textit{$\tau$-$R$-Tannakian gerbe} if it is locally equivalent to $\widetilde{\B}G$ for $G$ a $\tau$-$R$-Tannakian group stack.  It is said to be a \textit{neutral} $\tau$-$R$-Tannakian gerbe if there moreover exists a morphism of stacks $*\ra F$.   
\end{dfn}  

Let $\TGer(R,\tau)$ denote the $\infty$-category of neutral $\tau$-$R$-Tannakian gerbe's.  We have a natural inclusion $\TGer(R,\tau)\subseteq\Ger(R,\tau)$.  

We now state the Tannaka duality theorem for $\infty$-categories in the neutral setting.  Note that we have a weaker statement in positive case owing to the fact that the positive topology is not subcanonical.  

\begin{thm}[Neutral $\infty$-Tannaka duality]\label{inftytannakaduality}
Let $\tau$ be a subcanonical topology.  Then the map
\[    \Perf:\TGer(R,\tau)^{op}\ra\Tens^{\rig}_R  \]
is fully faithful.  Moreover, the adjunction $\Fib\dashv\Perf$ induces the following~:
\begin{enumerate}
\item Let $R$ be an $\Einfty$-ring.  Then $T$ is a finite $R$-Tannakian $\infty$-category if and only if it is of the form $\Perf(G)$ for $G$ a neutral finite $R$-Tannakian gerbe.
\item Let $R$ be a connective $\Einfty$-ring.  Then $T$ is a flat $R$-Tannakian $\infty$-category if it is of the form $\Perf(G)$ for $G$ a neutral flat $R$-Tannakian gerbe.
\item Let $R$ be a bounded connective $\Einfty$-ring.  If $(T,\omega_1)$ and $(T,\omega_2)$ are two pointed positive $R$-Tannakian $\infty$-categories then there exists a positive cover $R\ra Q$ such that
\[      \omega_1\otimes_{R}Q\ra\omega_2\otimes_{R}Q   \]
is an equivalence.
\end{enumerate}
\end{thm}

To prove the neutral $\infty$-Tannaka duality statement of Theorem~\ref{inftytannakaduality} it suffices to combine the neutralized statement of Theorem~\ref{pointedinftytannakaduality} with the demonstration that two fiber functors are equivalent after base change.

\begin{prop}\label{locallyequivalent}
Let $R$ be an $\Einfty$-ring and $\tau\in\{\geq 0,fl,fin\}$.  Given two $\tau$-fiber functors $\omega$ and $\nu$ over $R$, there exists a $\tau$-cover $R\ra Q$ in $\fE$ such that $\omega$ and $\nu$ are equivalent over $Q$. 
\end{prop}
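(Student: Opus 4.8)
The plan is to reduce to the already-established neutralized theorem, then attach to the pair $(\omega,\nu)$ the affine ``scheme of tensor isomorphisms'' between them, identify its coordinate algebra explicitly, and finally show that the structure map $R\ra Q$ is a $\tau$-cover carrying a tautological trivialization.

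First I would apply Theorem~\ref{pointedinftytannakaduality} to $(T,\omega)$. Since $\omega$ is a $\tau$-fiber functor, this gives an equivalence $\wh{T}\simeq\Mod(\wtB G)$ with $G=\End^{\otimes}(\omega)=\Spec B$ a $\lambda$-$R$-Tannakian group stack, under which $\who$ is identified with the forgetful functor $u^{*}$ and its right adjoint $\whp$ with $u_{*}$, where $u:\ast=\Spec R\ra\wtB G$ is the canonical atlas. In particular $R\ra B$ is a $\tau$-cover, hence $\lambda$ and conservative, and the base change $u^{*}u_{*}\simeq v_{*}v^{*}$ of Proposition~\ref{specbendomegah} gives $\who\whp\simeq B\otimes_{R}\bullet$.

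Next I would consider the prestack $P:\CAlg_{R}\ra\cS$, $A\mapsto\Map^{\otimes}(\omega_{A},\nu_{A})$, noting that by Proposition~\ref{rigidnaturaltransformation} every such map is automatically an equivalence, so $P$ really is a prestack of tensor isomorphisms. Generalizing Proposition~\ref{specbendomegah} from $\End^{\otimes}(\who)$ to the mixed case, I would show $P$ is corepresented by the commutative $R$-algebra $Q:=\wh{\nu}\,\whp(R)$, which is a commutative monoid object of $\Mod_{R}$ since $\whp(R)=u_{*}R$ is a commutative monoid in $\wh{T}$ and $\wh{\nu}$ is lax symmetric monoidal. Concretely, writing $\wh{\nu}\simeq\rho(A_{\nu})$ for the algebra $A_{\nu}$ corresponding to $\nu$ under the fully faithful $\rho$ of Corollary 6.3.5.18 of \cite{L1}, the chain
\[ \Map^{\otimes}(\who,\wh{\nu})\simeq\Map_{\CMon(\Mod(\wtB G))}(u_{*}R,A_{\nu})\simeq\Map_{\CAlg_{R}}(\wh{\nu}(u_{*}R),R)=\Map_{\CAlg_{R}}(Q,R) \]
computes the corepresenting object, and the same computation with variable target gives $P\simeq\Spec Q$. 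The tautological point $\id_{Q}\in\Map_{\CAlg_{R}}(Q,Q)=P(Q)$ then supplies an equivalence $\omega_{Q}\simeq\nu_{Q}$, so it remains only to verify that $R\ra Q$ is a $\tau$-cover.

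For this I would first record the projection formula $Q\otimes_{R}M\simeq\wh{\nu}\whp(M)$: Proposition~\ref{projectionformula} gives $u_{*}R\otimes x\simeq u_{*}u^{*}x$, whence $\whp(M)\simeq u_{*}R\otimes_{R}M$ using $u^{*}(M\otimes_{R}1)\simeq M$, and applying the $R$-linear symmetric monoidal $\wh{\nu}$ yields $\wh{\nu}\whp(M)\simeq M\otimes_{R}\wh{\nu}(u_{*}R)=Q\otimes_{R}M$. The covering property now transcribes the computation for $B=\who\whp(R)$ in the proof of Theorem~\ref{pointedinftytannakaduality}: for $\lambda=\;\geq 0$ the composite $\wh{\nu}\whp$ is left t-exact (by Lemma~\ref{texactlemma} $\whp$ is left t-exact and $\wh{\nu}$ is t-exact), so $Q$ is positive; for $\lambda=fl$ both $\wh{\nu}$ and $\whp$ are t-exact, so $Q$ is connective and $\wh{\nu}\whp$ is left t-exact, whence $Q$ is flat by Theorem 7.2.2.15 of \cite{L1}; and for $\lambda=fin$ both functors preserve limits, so $Q\otimes_{R}\bullet\simeq\wh{\nu}\whp$ does and $Q$ is finite. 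Conservativity of $R\ra Q$ is immediate from the same formula, since $\whp=u_{*}$ is conservative ($\who\whp\simeq B\otimes_{R}\bullet$ is conservative as $R\ra B$ is a covering) and $\wh{\nu}$ is conservative by definition of a $\tau$-fiber functor. Thus $R\ra Q$ is $\lambda$ and conservative, i.e. a $\tau$-cover over which $\omega$ and $\nu$ agree. I expect the main obstacle to be the corepresentability step for two \emph{distinct} fiber functors: one must transport $\nu$ across the equivalence $\wh{T}\simeq\Mod(\wtB G)$ and check that the $\rho$-calculus of Proposition~\ref{specbendomegah} still applies with $\who$ and $\wh{\nu}$ in asymmetric roles, after which the verification that $R\ra Q$ is a $\tau$-cover is a direct copy of the argument for $B$.
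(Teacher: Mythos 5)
Your proposal is correct, and for the core construction it follows the paper's own route: both proofs reduce, via Proposition~\ref{rigidnaturaltransformation}, to corepresenting the tensor-map prestack, apply Theorem~\ref{pointedinftytannakaduality} to \emph{both} fiber functors (this is exactly how the paper handles the ``asymmetric roles'' worry you flag at the end), and identify $\uHom^{\otimes}(\who,\wh{\nu})\simeq\Spec Q$ with $Q=\wh{\nu}\whp(R)$ (written $\wh{\nu}(B)$ in the paper) using the $\rho$-correspondence of Corollary 6.3.5.18 of \cite{L1} and the adjunction $\wh{\nu}\dashv\whq$; the verification that $R\ra Q$ has the $\lambda$-property is also the same projection-formula/t-exactness bookkeeping in both. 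Where you genuinely diverge is the conservativity of $R\ra Q$. The paper never factors $Q\otimes_{R}\bullet$ through conservative functors; instead it runs a descent-style computation: using Proposition~\ref{hopflimit} and the fact that $\nu$ is a $\tau$-fiber functor it establishes that $R'\ra\holim_{n\in\Delta}(\wh{\nu}(B')^{\otimes_{R}(n+1)})$ is an equivalence for every $R$-algebra $R'$, then takes $R'=\Sym_{R}(M)$: if $Q\otimes_{R}M\simeq 0$, every summand of $\Sym_{R}(M)=\coprod_{p\geq 0}M^{\otimes_{R}p}/\Sigma_{p}$ with $p\geq 1$ dies after tensoring with $Q^{\otimes_{R}(n+1)}$, forcing $\Sym_{R}(M)\simeq R$ and hence $M\simeq 0$. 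Your argument --- $Q\otimes_{R}\bullet\simeq\wh{\nu}\circ\whp$, with $\wh{\nu}$ conservative by definition of a $\tau$-fiber functor and $\whp$ conservative because $\who\whp\simeq B\otimes_{R}\bullet$ and $R\ra B$ is conservative --- is shorter, uniform in $\tau$, and avoids the delicate point in the paper's route, namely that $\wh{\nu}$ preserves the cobar limit (which in the flat and positive cases silently requires the t-structure argument of Lemma~\ref{tstructurelimitlemma}). Conversely, your route needs conservativity of $R\ra B$; citing the theorem for this is legitimate, but it is worth recording that it is in fact automatic: the counit $B\ra R$ of the Hopf structure splits the unit, so $B\otimes_{R}M\simeq 0$ exhibits $M$ as a retract of $0$. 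With that remark in place, your proof is complete and, on this step, arguably cleaner than the paper's.
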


\begin{proof}
Let $(T,\omega)$ and $(T,\nu)$ be two pointed $R$-Tannakian $\infty$-categories with respect to $\tau$.  By Proposition~\ref{rigidnaturaltransformation} this amounts to showing that there exists a $\tau$-cover $R\ra Q$ such that $\uHom(\omega,\nu)(Q)=\Map(\omega_Q,\nu_Q)\neq\emptyset$.  It suffices to prove that $\uHom(\who,\wh{\nu})(Q)\neq\emptyset$.  Let $\whp$ and $\whq$ denote the right adjoints to $\who$ and $\wh{\nu}$ respectively.  We have equivalences 
\[  \uHom(\who,\wh{\nu})\simeq\uHom(\who_G,\wh{\nu}_{G'})\simeq\Map(\whp_GR,\whq_{G'}R)\simeq\Map(\wh{\nu}_{G'}\whp_GR,R)  \]
where the first equivalence follows from Theorem~\ref{pointedinftytannakaduality} and the second equivalence from Corollary 6.3.5.18 of \cite{L1} (see the proof of Proposition~\ref{specbendomegah}).  Therefore, $\uHom(\who,\wh{\nu})(Q)\simeq\Map_{\CAlg_R}(\wh{\nu}(B),Q)$ and we set $Q:=\wh{\nu}(B)$ to consider the identity map.  It remains to show that there exists a $\tau$-cover $R\ra\wh{\nu}(B)$.  Since $\wh{\nu}$ is $R$-linear, $\wh{\nu}(B)\otimes_{R}\bullet\simeq\wh{\nu}(B\otimes_{R}\bullet)$ so $R\ra\wh{\nu}(B)$ is a $\tau$-$R$-algebra since $B$ is a $\tau$-Hopf $R$-algebra and $\nu$ is a $\tau$-fiber functor.

We will now show that $R\ra\wh{\nu}(B)$ is conservative, ie. given $M\in\Mod_{R}$ such that $\wh{\nu}(B)\otimes_{R}M\simeq 0$ then $M\simeq 0$.   Let $B'=B\otimes_RR'$.  By the projection formula, we have the following statement~:
\begin{itemize}
\item[(*)] For all $R\ra R'$, the map $\wh{\nu}(B')\ra\wh{\nu}(B)\otimes_RR'$ is an equivalence.
\end{itemize}
By Proposition~\ref{hopflimit}, the map $R'\ra\holim_{n\in\Delta}((B')^{\otimes_{R}(n+1)})$ is an equivalence and since $\nu$ is a $\tau$-fiber functor we have the following statement~:
\begin{itemize}
\item[(**)] The map $R'\ra\holim_{n\in\Delta}(\wh{\nu}(B')^{\otimes_{R}(n+1)})$ is an equivalence.
\end{itemize}
Set $R'=\Sym_{R}(M):=\coprod_{p\geq 0}M^{\otimes_{R}p}/\Sigma_{p}$ and assume that $\wh{\nu}(B)\otimes_{R}M\simeq 0$.  Therefore
\begin{align*}
R' &\simeq\underset{n\in\Delta}{\holim}(\wh{\nu}(B)^{\otimes_{R}n+1}\underset{R}{\otimes}\Sym_{R}(M))\\
            &\simeq\underset{n\in\Delta}{\holim}\coprod_{p\geq 0}(\wh{\nu}(B)^{\otimes n+1}\underset{R}{\otimes}M^{\otimes_{R}p}/\Sigma_{p})\\
            &\simeq R.
\end{align*}                        
The first line is an equivalence in $\CAlg_{R}$ and follows from $(**)$ and $(*)$.  The second line is an equivalence in $\Mod_{R}$ and follows from the fact that the tensor product commutes with coproducts and the forgetful functor $\CAlg_{R}\ra\Mod_{R}$ is conservative and commutes with limits (it is a right adjoint).   The third line is a result of the only non-zero term being $p=0$ by assumption followed by $(**)$ applied to $R$.  Thus $M\simeq 0$.
\end{proof}

\begin{proof}[Proof of Theorem~\ref{inftytannakaduality}]
The first part of the proof follows from Corollary~\ref{counit}.  The remainder follows directly from Theorem~\ref{pointedinftytannakaduality} and Proposition~\ref{locallyequivalent}.
\end{proof}

\section{Comparison with the classical theory}\label{classicalcomparison}

The following series of comparison results shows that the Tannaka duality theorem for $\infty$-categories of Section~\ref{neutral} naturally generalises the classical theory.  Let $k$ be a field.  In \cite{Sa}, Saavedra defined the notion of a neutral Tannakian category over $k$.  That is, a rigid abelian $k$-linear symmetric monoidal category $T$ for which there exists an exact $k$-linear symmetric monoidal functor $T\ra\Vect_{k}$ (the fiber functor) taking values in the category of finite rank projective $k$-modules.  

The collection of ordinary fiber functors form a stack $\Fib(T)$ over $k$ in the faithfully flat quasi-compact topology, denoted $ffqc$.  Recall that a finite family $\{A_{i}\ra A\}_{i\in I}$ of arrows in $\Aff_{k}$ is an $ffqc$ cover if the morphism $\prod_{i\in I}A_{i}\ra A$ is faithfully flat (ie. exact and conservative).  Let $\St(k,ffqc)$ denote the $\infty$-category of stacks on the classical site $(\Aff_{k},ffqc)$.  

\begin{prop}\label{fullyfaithfuli}
Let $k$ be a commutative ring and $Hk$ its corresponding Eilenberg-Mac Lane ring spectrum.  The inclusion 
\[       i:\St(k,ffqc)\ra\St(Hk,fl)        \]
of $\infty$-categories is fully faithful.
\end{prop}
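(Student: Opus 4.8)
The plan is to present $i$ as the functor induced by the fully faithful map of sites
\[ H\colon(\Aff_k,ffqc)\ra(\Aff_{Hk},fl) \]
sending a classical affine $\Spec A$ to the derived affine $\Spec HA$; on algebras this is the Eilenberg--Mac Lane functor $H\colon\CAlg_k\ra\CAlg_{Hk}$, which is fully faithful and identifies $\CAlg_k$ with the discrete objects of $\CAlg_{Hk}$ via $\pi_0^t$. At the level of prestacks it induces a left Kan extension $H_!$ that is left adjoint to restriction $H^{*}$, and $i$ is the composite $a\circ H_!$ of $H_!$ with the stackification $a:=L_S$ of Proposition~\ref{stacklocalisation}; the restriction $H^{*}$ carries $fl$-stacks to $ffqc$-stacks and furnishes a right adjoint $\rho$ to $i$. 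Since $i$ is a left adjoint, it is fully faithful precisely when the unit $\id\ra\rho i$ is an equivalence, so the whole statement reduces to a natural equivalence $H^{*}(a\,H_!F)\simeq F$ for every $ffqc$-stack $F$.

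The geometric heart is a comparison of the two topologies on the subsite of discrete algebras, which I would isolate as a lemma. First, a map $A\ra B$ in $\CAlg_{Hk}$ that is flat in the sense of Definition~\ref{flattopologies} satisfies $\pi_nB\simeq\pi_nA\otimes_{\pi_0A}\pi_0B$, so if $A$ is discrete then $B$ is discrete; hence every flat cover of a discrete algebra consists of discrete algebras and never leaves $\Aff_k$. Second, flatness makes the \v{C}ech nerve commute with truncation, $\pi_0(B^{\otimes_A\bullet})\simeq(\pi_0B)^{\otimes_{\pi_0A}\bullet}$, and a flat conservative map of discrete algebras is exactly a faithfully flat one; thus the $fl$-topology restricts to the $ffqc$-topology on $\Aff_k$. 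From this it follows directly that $H^{*}$ sends $fl$-stacks to $ffqc$-stacks, since its descent diagrams over discrete algebras are classical $ffqc$-\v{C}ech diagrams, and, dually, that $H_!F$ already satisfies flat descent for every cover of a discrete object, because on discrete algebras $H_!F$ restricts to the $ffqc$-stack $F$.

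It remains to show that stackification does not alter $H_!F$ on discrete algebras. As $H$ is fully faithful, the unit $\id\ra H^{*}H_!$ is an equivalence, so $(H_!F)(HA)\simeq F(A)$ for all classical $A$; in particular $H_!F$ takes the value of an $ffqc$-stack on every discrete algebra and, by the previous paragraph, satisfies descent for all flat covers of such algebras. Since every flat cover of a discrete algebra lies in $\Aff_k$, computing the stackification through the covering-sieve localisation of Proposition~\ref{stacklocalisation} introduces over a discrete base only covers by discrete algebras, over all of which $H_!F$ already has the descent property; hence the localisation map $H_!F\ra a\,H_!F$ restricts to an equivalence on $\Aff_k$, giving $H^{*}(a\,H_!F)\simeq H^{*}H_!F\simeq F$. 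This establishes the unit equivalence and therefore the full faithfulness of $i$.

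The step I expect to be the main obstacle is the topology comparison of the second paragraph together with its use in the third: precisely, that flat descent over a discrete algebra is computed entirely inside the subsite of discrete algebras, so that stackification in $\St^{fl}(Hk)$ commutes with restriction to $\Aff_k$. Once this is secured — that the flat topology never forces one outside the discrete algebras when covering a discrete one, and that flat-plus-conservative matches classical faithful flatness — the remainder is formal manipulation of the adjunction $i\dashv\rho$.
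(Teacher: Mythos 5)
Your strategy is at bottom the same as the paper's: exhibit $i$ as stackification composed with left Kan extension along the fully faithful Eilenberg--Mac\,Lane functor, take restriction as its right adjoint, and reduce everything to the geometric fact that the flat topology never leaves the discrete algebras when covering a discrete one. The paper differs only in bookkeeping: it factors the inclusion through the site of connective $Hk$-algebras, $\St^{ffqc}(k)\hookrightarrow\St^{fl}_{c}(Hk)\hookrightarrow\St^{fl}(Hk)$, and then invokes Lemma 2.2.4.1 of \cite{TVII} for the site-theoretic full faithfulness instead of proving it. Your second paragraph (a flat map out of a discrete algebra has discrete target; flat plus conservative between discrete algebras is exactly classical faithful flatness; the derived \v{C}ech nerve of such a cover is the classical one) is correct, and it is precisely the input that makes the cited lemma applicable.

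The gap is in your third paragraph. The sentence ``computing the stackification through the covering-sieve localisation introduces over a discrete base only covers by discrete algebras'' is not an argument: $a=L_{S}$ is a Bousfield localisation, characterised by a universal property in the whole category of prestacks, and its value at $\Spec k'$ is not a priori computed from covers of $\Spec k'$ at all; moreover a covering sieve of a discrete algebra genuinely contains non-discrete algebras (anything factoring through a member of the cover). Knowing that $H_{!}F$ satisfies descent for every flat cover of every discrete algebra does not by itself imply that $H_{!}F\ra a H_{!}F$ restricts to an equivalence on $\Aff_k$: sheafification also corrects values on the members of covers, their fiber products, their covers, and so on, and these corrections feed back into the value at the original object. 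What saves the argument here is that all of this feedback stays inside the discrete algebras, but turning that observation into a proof requires knowing \emph{how} $a$ is computed --- for instance via a (transfinitely iterated) plus construction, where each step at a discrete algebra involves only values at discrete algebras because the limit over a covering sieve agrees with the limit over the \v{C}ech nerve (the cofinality underlying Proposition~\ref{stacklocalisation}); one then concludes that $H^{*}$ commutes with stackification and hence $H^{*}(aH_{!}F)\simeq F$. This statement, that restriction along a fully faithful inclusion of sites which is continuous and cocontinuous commutes with stackification, is exactly the content of the lemma the paper cites, and in your write-up it is asserted rather than proved. Note also that the tempting shortcut --- checking on generators that $H^{*}$ sends $fl$-local equivalences to $ffqc$-local equivalences --- does not obviously work, since the generators include \v{C}ech maps $u_{*}\ra\Spec A$ for non-discrete $A$, and $H^{*}$ of these is not visibly an $ffqc$-local equivalence. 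So the step you yourself flagged as the main obstacle is a genuine gap, not a formality; everything else in the proposal is sound.
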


\begin{proof}
We will show that there exists a composition of fully faithful maps
\[  \St(k,ffqc)\hookrightarrow\St_{c}(Hk,fl)\hookrightarrow\St(Hk,fl)  \]
where $\St_{c}(Hk,fl)$ is the $\infty$-category of stacks on the site of connective $Hk$-algebras.  Firstly, there exist adjunctions
\begin{diagram}
\CAlg_{k}  &\pile{ \rTo^{i}  \\  \lTo_{\tau_{\leq 0}} }  &\CAlg_{Hk}^{c}  &\pile{ \rTo^{j}  \\  \lTo_{(\bullet)^{c}} }  &\CAlg_{Hk}
\end{diagram}
of $\infty$-categories (see Section~\ref{spectralalgebra}).  We then left Kan extend these adjunctions to obtain the adjunctions
\begin{diagram}
\RHom(\CAlg_{k},\cS)  &\pile{ \rTo^{i_{!}}  \\  \lTo_{\tau_{\leq 0}^{*}} }  &\RHom(\CAlg_{Hk}^{c},\cS)  &\pile{ \rTo^{j_{!}}  \\  \lTo_{((\bullet)^{c})^{*}} }  &\RHom(\CAlg_{Hk},\cS)
\end{diagram}
This chain of adjunctions induces adjunctions on the subcategories of stacks by a property of Bousfield localisations since $i$ preserves $ffqc$-covers.   It follows from Lemma 2.2.4.1 of \cite{TVII} that $i_!:\St(k,ffqc)\rightarrow\St_{c}(Hk,fl)$ is fully faithful.  A similar argument holds for $j_!:\St_{c}(Hk,fl)\hookrightarrow\St(Hk,fl)$.
\end{proof}

The right adjoint to the functor $i$ of Proposition~\ref{fullyfaithfuli}   
\[       \tau_{\leq 0}:\St(Hk,fl)\ra\St(k,ffqc)       \]
is explicitly given by $\tau_{\leq 0}(F)(k'):=F(Hk')$ for a stack $F$ in $\St(Hk,fl)$.  If $F$ is a neutral affine gerbe in the sense of \cite{D2} then $i(F)$ in $\St(Hk,fl)$ is a neutral flat $Hk$-Tannakian gerbe.  Thus neutral affine gerbes as defined in the classical Tannakian theory form a full subcategory of neutral flat $Hk$-Tannakian gerbes in our sense.  

For the following comparison results, we will need to introduce a finiteness condition on some of our categories in order for the statements to make sense.   The notion is that of finite cohomological dimension of an arbitrary abelian category.  

\begin{dfn}\label{finitecd}
Let $C$ be an abelian category.  Then $C$ is said to be of \textit{finite cohomological dimension} if there exists $n$ such that for every object $x$ in $C$, the group $\Ext^{i}(y,x)=0$ for all $i>n$ and $y\in C$. 
\end{dfn}

Let $T$ be an abelian category and $C(T)$ (resp. $C^{b}(T)$) be the category of unbounded (resp. bounded) complexes in $T$.  We denote by $LC^{b}(T)$ the $\infty$-category given by localising $C^{b}(T)$ at the set of quasi-isomorphisms.  If $T$ is a $R$-Tannakian $\infty$-category (resp. $k$-Tannakian category) we will denote $\Fib^{\tau}(T)$ the neutral $R$-Tannakian gerbe (resp. neutral affine gerbe) of fiber functors on $C$ with respect to the topology $\tau$.

\begin{prop}\label{fiberfunctorequivalence}
Let $k$ be a field and $T$ a $k$-Tannakian category.  Assume that $T$ is of finite cohomological dimension.  Then the $\infty$-category $LC^{b}(T)$ is a flat $Hk$-Tannakian $\infty$-category and there exists an equivalence
\[       i(\Fib^{ffqc}(T))\ra\Fib^{fl}(LC^{b}(T))          \]
of stacks in $\St(Hk,fl)$.
\end{prop}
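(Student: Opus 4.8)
The plan is to establish the two assertions separately, reducing the equivalence of gerbes to an equivalence of the underlying Tannakian group stacks by means of the neutralized duality of Theorem~\ref{pointedinftytannakaduality}.

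First I would verify that $LC^{b}(T)$ is a flat $Hk$-Tannakian $\infty$-category in the sense of Definition~\ref{nonpointeddfn}. Stability is immediate from Example~\ref{derivedinftycategory}, since $LC^{b}(T)$ is the bounded derived $\infty$-category of the abelian category $T$, and the $Hk$-linear structure comes from the $k$-linearity of $T$ via Example~\ref{klinear}. Rigidity (Definition~\ref{rigiddfn}) holds because every object of the rigid abelian category $T$ admits a dual, hence so does every bounded complex, with a presentable ambient model furnished by $\Ind(LC^{b}(T))$. The classical fiber functor $\omega\colon T\ra\Vect_{k}$ induces a symmetric monoidal functor $LC^{b}(\omega)\colon LC^{b}(T)\ra LC^{b}(\Vect_{k})\simeq\Mod_{Hk}^{\rig}$, and I would check this is a flat fiber functor. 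Writing $\who$ for its $\Ind$-extension, $\who$ is exact as a colimit-preserving functor of stable $\infty$-categories; conservativity follows from faithfulness of $\omega$ (a quasi-isomorphism is detected on cohomology, with which $\omega$ commutes by exactness), while exactness of $\omega$ shows $\who$ is t-exact for the standard t-structure with heart $T$, so $\who$ creates that t-structure. The finite cohomological dimension hypothesis enters here: it guarantees the t-structure is bounded and that the right adjoint $\whp$ takes bounded values, and together with the fact that over the field $k$ every module is flat (so that $\who\whp(Hk)$ is discrete), it yields t-exactness of $\whp$ through Lemma~\ref{texactlemma}.

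Next I would identify both gerbes with classifying stacks. On the classical side, Deligne's neutral duality gives $\Fib^{ffqc}(T)\simeq\widetilde{\B}G$ for the affine $k$-group scheme $G=\Aut^{\otimes}(\omega)=\Spec B$, with $B$ the classical coend presenting the Tannakian fundamental coalgebra. On the derived side, Proposition~\ref{gerbecharacterisation} together with Theorem~\ref{pointedinftytannakaduality} gives $\Fib^{fl}(LC^{b}(T))\simeq\widetilde{\B}G'$, where $G'=\End^{\otimes}(\who)=\Spec B'$ and, by Proposition~\ref{specbendomegah}, the Hopf $Hk$-algebra is $B'=\who\whp(Hk)$. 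Since the inclusion $i$ of Proposition~\ref{fullyfaithfuli} is a left adjoint it commutes with the colimits defining the classifying-stack functor, so $i(\widetilde{\B}G)\simeq\widetilde{\B}(i(G))$ and $i(G)=i(\Spec B)=\Spec(HB)$. As $i$ is moreover fully faithful, it therefore suffices to produce an equivalence $HB\simeq B'$ of Hopf $Hk$-algebras; the comparison morphism itself is the one induced on fiber functors by the assignment $LC^{b}(-)$.

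The main obstacle, and the step I expect to demand genuine work, is this identification $B'\simeq HB$. Unwinding Proposition~\ref{specbendomegah}, the object $B'=\who\whp(Hk)$ is a derived coend, whereas $B=\omega\omega_{*}(k)$ is the classical one. I would show that the derived computation collapses onto the discrete classical answer: using the comonadic (cobar) resolution of Proposition~\ref{hopflimit} to present $\whp(Hk)$ as a limit over $\Delta$, I would apply Lemma~\ref{tstructurelimitlemma} to the t-exact functor $\who$ to commute it past this limit, reducing the computation of $B'$ to the heart, where $\pi_{0}^{t}\who$ recovers the classical exact fiber functor and hence $\pi_{0}^{t}B'\simeq B$. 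The finite cohomological dimension of $T$ is exactly what bounds the cohomological amplitude appearing in the resolution, ensuring $B'$ is discrete so that $B'\simeq H\pi_{0}^{t}B'\simeq HB$. Feeding this equivalence back through $\widetilde{\B}$ and $i$ then yields the required equivalence $i(\Fib^{ffqc}(T))\simeq\Fib^{fl}(LC^{b}(T))$.
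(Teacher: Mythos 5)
Your proof follows the same skeleton as the paper's: verify that the induced functor $LC^{b}(T)\ra\Mod_{Hk}^{\rig}$ is a flat fiber functor, use classical Tannaka duality together with Theorem~\ref{pointedinftytannakaduality} to write both $i(\Fib^{ffqc}(T))$ and $\Fib^{fl}(LC^{b}(T))$ as classifying stacks $\wtB G$ and $\wtB G'$, and conclude via $i(\wtB G)\simeq\wtB(iG)$, full faithfulness of $i$ (Proposition~\ref{fullyfaithfuli}) and Proposition~\ref{specbendomegah}; the identification of Hopf algebras $B'\simeq HB$ that you isolate as the crux is exactly what the paper compresses into its one-line citation of Proposition~\ref{specbendomegah}. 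The gap is in the mechanism you propose for that crux. Proposition~\ref{hopflimit} states that $R\ra\holim_{n\in\Delta}B^{\otimes_{R}(n+1)}$ is an equivalence for a Hopf $R$-algebra $B$ that is already given; it does not produce a presentation of $\whp(Hk)$ as a limit over $\Delta$, and invoking it for $B'$ presupposes precisely the identification of $B'$ as a (discrete) Hopf algebra that you are trying to prove, so the step is circular. The circularity also feeds back into your first part: there you justified t-exactness of the right adjoint $\whp$ by asserting that $\who\whp(Hk)$ is discrete, which is the very statement your third part was supposed to establish.

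The repair uses only classical input, and it makes the ``main obstacle'' evaporate. Classically $\Ind(T)$ is equivalent to the category of comodules over the ordinary Hopf algebra $B$, and the right adjoint $\omega_{*}$ of $\Ind(\omega)$ is the cofree comodule functor $V\mapsto V\otimes_{k}B$, which is exact because $B$ is flat (indeed free) over the field $k$. The finite cohomological dimension hypothesis is what identifies $\Ind(LC^{b}(T))$ with the unbounded derived $\infty$-category of $\Ind(T)$, so that $\who$ and $\whp$ are the derived functors of the exact adjoint pair $(\Ind(\omega),\omega_{*})$; in particular both are t-exact, which gives a non-circular proof of flatness. Discreteness of $B'=\who\whp(Hk)$ is then immediate, since $\who\whp$ is t-exact and $Hk$ lies in the heart; and because a t-exact adjunction restricts to an adjunction between hearts, where it recovers $(\Ind(\omega),\omega_{*})$, one obtains $\pi_{0}^{t}B'\simeq\omega\omega_{*}(k)=B$, hence $B'\simeq HB$. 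No cobar resolution and no appeal to Lemma~\ref{tstructurelimitlemma} are needed.
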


\begin{proof}
Since $T$ is a $k$-Tannakian category there exists a natural $k$-tensor functor $\Mod_{k}^{\rig}\ra T$ between categories defined by $M\mapsto M\otimes 1_{T}$.  This exists since for any $x,y\in T$, the $k$-linear structure on $T$ induces an equivalence 
\[    \Hom_{\Mod_{k}^{\rig}}(M,\Hom_{T}(x,y))\simeq\Hom_{T}(M\otimes x,y).  \]  
Using the equivalence $LC^{b}(\Mod_{k}^{\rig}(\Ab))\simeq\Mod_{Hk}^{\rig}(\Sp)$ (see Example~\ref{cdgaca}) we obtain a map
\[    \Mod_{Hk}^{\rig}\ra LC^{b}(T)  \]
of symmetric monoidal $\infty$-categories.  This map commutes with colimits and induces the $Hk$-linear structure on $LC^b(T)$.  Furthermore, the $\infty$-category $LC^{b}(T)$ is stable by Example~\ref{derivedinftycategory} which makes $LC^{b}(T)$ a rigid $Hk$-tensor $\infty$-category.  The fiber functor $\omega':LC^{b}(T)\ra\Mod_{Hk}^{\rig}$ induced from the fiber functor $\omega:T\ra\Mod^\rig_k$ on $T$ is clearly flat~: the forgetful functor $\who':LC(T)\ra LC(k)$ is conservative and creates a t-structure for which $\who'$ and its right adjoint are t-exact. 

For the second part of the proposition, consider the classical Tannaka duality theorem which states that $\Fib^{ffqc}(T)$ is naturally equivalent to a neutral gerbe $H$ in the \textit{ffqc} topology which is locally equivalent to $\wtB G$ for an affine group scheme $G$.  We also have that $\Fib^{fl}(LC^{b}(T))$ is equivalent to a neutral flat $Hk$-Tannakian gerbe $H'$ locally equivalent to $\wtB G'$ for $G'$ a flat $Hk$-Tannakian group stack.   From Proposition~\ref{specbendomegah}, the gerbe $H'$ is locally equivalent to $\wtB(iG)$.  From the equivalence $i(\wtB G)\simeq\wtB (iG)$, where $i$ is the fully faithful functor of Proposition~\ref{fullyfaithfuli}, we obtain the desired result.  
\end{proof}

Let $\Tan^{ffqc,!}_{k}$ denote the category of $k$-Tannakian categories of finite cohomological dimension with respect to the $ffqc$ topology.

\begin{cor}\label{tanffqctanfl}
Let $k$ be a field.  Then the functor
\[     LC^{b}:\Tan^{ffqc,!}_{k}\ra\Tan^{fl}_{Hk}   \]
is an equivalence of $\infty$-categories.
\end{cor}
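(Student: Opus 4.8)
The plan is to exhibit $LC^{b}$ as the restriction of the fully faithful inclusion $i$ of Proposition~\ref{fullyfaithfuli} under the two Tannaka dualities, and then to show that on the relevant subcategories this inclusion becomes essentially surjective. Concretely, Proposition~\ref{fiberfunctorequivalence} provides a natural equivalence $i\circ\Fib^{ffqc}\simeq\Fib^{fl}\circ LC^{b}$, so that we have a square
\begin{diagram}
\Tan^{ffqc,!}_{k}  & \rTo^{LC^{b}}  & \Tan^{fl}_{Hk} \\
\dTo^{\Fib^{ffqc}}  &   &\dTo_{\Fib^{fl}}\\
\Ger^{ffqc}(k)^{op}  & \rTo^{i}   & \TGer^{fl}(Hk)^{op}
\end{diagram}
commuting up to equivalence. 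The left vertical arrow is fully faithful by the classical Tannaka duality of \cite{D2} (restricted to categories of finite cohomological dimension), and the right vertical arrow is an equivalence by the flat case of Theorem~\ref{inftytannakaduality} together with Theorem~\ref{pointedinftytannakaduality}(2), whose proof shows that every flat $Hk$-Tannakian $\infty$-category is recovered as $\Perf$ of its gerbe of fiber functors. Thus $LC^{b}$ is an equivalence if and only if $i$ carries the essential image of $\Fib^{ffqc}$ onto $\TGer^{fl}(Hk)^{op}$.

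First I would verify that $LC^{b}$ is fully faithful. Since $\Fib^{fl}$ is an equivalence, $\Fib^{ffqc}$ is fully faithful, and $i$ is fully faithful by Proposition~\ref{fullyfaithfuli}, the commuting square yields, for $T_{1},T_{2}\in\Tan^{ffqc,!}_{k}$, the chain
\[ \Map_{\Tan^{fl}_{Hk}}(LC^{b}(T_{1}),LC^{b}(T_{2})) \simeq \Map(\Fib^{fl}LC^{b}(T_{2}),\Fib^{fl}LC^{b}(T_{1})) \simeq \Map(i\Fib^{ffqc}(T_{2}),i\Fib^{ffqc}(T_{1})) \simeq \Map(\Fib^{ffqc}(T_{2}),\Fib^{ffqc}(T_{1})) \simeq \Map_{\Tan^{ffqc}_{k}}(T_{1},T_{2}), \]
where the opposite-category variance is absorbed into the dualities. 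Hence $LC^{b}$ is fully faithful with no further work.

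It remains to prove essential surjectivity, which is the main obstacle. Given a flat $Hk$-Tannakian $\infty$-category $T'$, put $G'=\Fib^{fl}(T')$, a neutral flat $Hk$-Tannakian gerbe, locally equivalent to $\wtB\Spec B$ for $B$ a flat Hopf $Hk$-algebra. The key point is that over the field $k$ such a $B$ is forced to be discrete: flatness over the discrete ring $Hk$ gives $\pi_{0}B$ flat (hence free) over $k$, while $\pi_{n}B\cong\pi_{n}(Hk)\otimes_{k}\pi_{0}B=0$ for $n\neq0$, so $B\simeq HB_{0}$ for an ordinary commutative Hopf $k$-algebra $B_{0}$. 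Using the identity $i(\wtB G_{0})\simeq\wtB(iG_{0})$ from the proof of Proposition~\ref{fiberfunctorequivalence} together with the full faithfulness of $i$ and its preservation of local equivalences, it follows that $G'\simeq i(H)$ for a classical neutral affine gerbe $H$, which by \cite{D2} is $\Fib^{ffqc}(T)$ for a $k$-Tannakian category $T$. Because $T'$ is rigid over the discrete ring $Hk$, all its objects are bounded (rigid $Hk$-modules are perfect, hence bounded), which forces $\Ext^{i}_{T}(x,y)=0$ for $i$ large; that is, $T$ has finite cohomological dimension and so lies in $\Tan^{ffqc,!}_{k}$. Finally Proposition~\ref{fiberfunctorequivalence} gives $\Fib^{fl}(LC^{b}(T))\simeq i(\Fib^{ffqc}(T))=i(H)\simeq G'=\Fib^{fl}(T')$, and since $\Fib^{fl}$ is an equivalence we conclude $T'\simeq LC^{b}(T)$. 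The delicate steps are the discreteness of flat Hopf $Hk$-algebras over a field and the verification that finite cohomological dimension matches exactly the boundedness built into the rigid $Hk$-tensor structure; granting these, the two dualities transport the fully faithful $i$ into the asserted equivalence.
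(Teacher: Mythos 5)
Your overall architecture matches the paper's (whose proof is a one-line citation): you combine Proposition~\ref{fullyfaithfuli}, Proposition~\ref{fiberfunctorequivalence} and the flat case of Theorem~\ref{pointedinftytannakaduality}, and both your full faithfulness argument and your observation that a flat Hopf $Hk$-algebra over a field is discrete (since $\pi_{n}B\simeq\pi_{n}(Hk)\otimes_{k}\pi_{0}B=0$ for $n\neq 0$) are correct and are exactly the mechanisms the paper intends.

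There is, however, a genuine gap in your essential surjectivity step. The inference ``every object of $T'$ has perfect, hence bounded, underlying $Hk$-module, which forces $\Ext^{i}_{T}(x,y)=0$ for $i$ large'' is a non sequitur, and as an implication it is false. Boundedness of underlying complexes carries no information about Ext-vanishing: for \emph{any} classical $k$-Tannakian category $T_{0}$, every object of $LC^{b}(T_{0})$ has a bounded, degreewise finite-dimensional underlying complex, yet if $T_{0}=\Rep_{k}(\bb{Z}/p)$ with $k$ of characteristic $p$ then $\Ext^{i}_{T_{0}}(1,1)\simeq H^{i}(\bb{Z}/p,k)\neq 0$ for all $i\geq 0$, so $T_{0}$ has infinite cohomological dimension. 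What actually excludes such examples is not boundedness but the flat fiber functor axioms imposed on $\who=\Ind(\omega):\Ind(T')\ra\Mod_{Hk}$ (conservativity, creation of a non-degenerate t-structure, t-exactness of $\who$ and of its right adjoint $\whp$), which by the proof of Theorem~\ref{pointedinftytannakaduality}(2) force $\Ind(T')\simeq\SeComod_{B}$ and force this category to be left t-complete. In the example above these axioms fail: the colimit-preserving comparison $\Ind(LC^{b}(\Rep_{k}(\bb{Z}/p)))\ra\Mod_{k[\bb{Z}/p]}$ is not fully faithful because the unit is dualizable but not compact in $\Mod_{k[\bb{Z}/p]}$, and the cofiber of a map witnessing this failure is a nonzero object killed by the forgetful functor, so $\who$ is not conservative there. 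Consequently, the finite cohomological dimension of your classical category $T$ --- which you must have in order to invoke Proposition~\ref{fiberfunctorequivalence} in your last step --- has to be extracted from these axioms on $\Ind(T')$; your proof as written never uses them, and this is precisely where the remaining work lies.
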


\begin{proof}
This follows directly from Proposition~\ref{fiberfunctorequivalence}, Proposition~\ref{fullyfaithfuli} and the flat $\infty$-Tannaka duality Theorem~\ref{pointedinftytannakaduality}.
\end{proof}

\section{Perfect complexes and schematization}\label{overfields}

Tannakian $\infty$-categories arise in a number of mathematical applications.  We have seen in Section~\ref{classicalcomparison} that the Tannaka duality statement for $\infty$-categories with the flat topology subsumes the classical statement.  It furthermore extends the 1-categorical duality by allowing the theorem to be defined over arbitrary commutative rings as opposed to just fields.  In this section we will discuss the example of the $\infty$-category of perfect complexes on a topological space $X$.  The $\infty$-category of perfect complexes on $X$ is a $k$-Tannakian $\infty$-category with respect to the positive topology for $k$ a field.  When $k$ is a field of characteristic zero, then the Tannakian dual of the $\infty$-category of perfect complexes on $X$ is precisely the schematization of $X$ introduced by To\"en in \cite{T4}.  

In Section~\ref{tannakadualityforinftycategories} we introduced the notion of a Tannakian $\infty$-category over a commutative ring spectrum $R$.  In fact one can define a whole family of Tannaka duality theorems based on the topology chosen on the site of $R$-algebras.  In Section~\ref{topologies} we concerned ourselves with three examples called the flat, finite and positive topologies.  The flat topology is the topology which links us to the classical theory of Tannaka duality over fields for the \textit{ffqc} topology (see Section 10 of \textit{loc.cit.})  and the use of the finite topology is rather limited.  The positive topology is however something new and in this section we would like to explore some uses of the positive topology and hence to the pointed positive Tannaka duality theorem for $\infty$-categories.  

We will fix a field $k$ throughout this section.  In \cite{T4}, To\"en introduced the notion of a schematic homotopy type.  Given two pointed stacks $F$ and $G$, let $\Map_*(F,G)$ denote the $\inftyz$-category of pointed maps between them.  

\begin{dfn}
Let $F$ and $G$ be two pointed stacks in $\St(k,ffqc)_{*}$.  A map $F\ra G$ is said to be a \textit{P-equivalence} if the induced map
\[    \Perf(F)\ra\Perf(G)  \]
is an equivalence of $\infty$-categories.  A pointed stack $F$ in $\St(k,ffqc)_{*}$ is said to be \textit{P-local} if for any $P$-equivalence $H_1\ra H_2$, the induced map
\[    \Map_{*}(H_2,F)\ra\Map_{*}(H_1,F)       \]
is an equivalence.
\end{dfn}

\begin{lem}
Let $\tau\in\{\geq 0,fl,fin\}$ and $T$ be a pointed $k$-Tannakian $\infty$-category with respect to $\tau$.  Then $\wtB\Fib_{*}(T)$ is $P$-local.
\end{lem}

\begin{proof}
Let $H_1\ra H_2$ be a $P$-equivalence.  From the pointed adjunction of Lemma~\ref{pointedadjunction} we have a chain of equivalences
\[  \Map_*(H_2,\wtB\Fib_*(T))\simeq\Map_*(T,\Perf(H_2))\simeq\Map_*(T,\Perf(H_1))\simeq\Map_*(H_1,\wtB\Fib_*(T))  \]
of $\inftyz$-categories.
\end{proof}

Let $A\in c\tu{CAlg}_{k}$ be a cosimplicial $k$-algebra in the category $\Ab$ of abelian groups.  We define the following prestack
\begin{align*}
\Spec A:\CAlg_{k}  &\ra\cS\\
           B &\mapsto\Map(A,B)
\end{align*}
where $\Map(A,B)_{n}:=\Hom(A_n,B)$.  This prestack can be shown to be a stack for the \textit{ffqc}-topology and thus defines a natural functor 
\[    \Spec: c\tu{CAlg}_{k}\ra\St(k,ffqc)  \]
between $\infty$-categories.  We are interested in objects which lie in the essential image of the $\Spec$ functor.

\begin{dfn}
Let $k$ be a commutative ring.  An \textit{affine stack} over $k$ is a stack in $\St(k,ffqc)$ which is equivalent to an object of the form $\Spec A$ for $A$ a cosimplicial $k$-algebra.
\end{dfn}

Let $s:*\ra F$ be a pointed stack in $\St(k,ffqc)$.  We define the prestack $\Omega_{*}F$ of loops at $s$ by the formula
\begin{align*}
\Omega_{*}F:\CAlg_{k} &\ra\cS\\
           x &\mapsto\Omega_{s(*)}F(x)
\end{align*}
where $\Omega_{s(*)}F(x)$ is the subsimplicial set of $\Map(\Delta^{1},F(x))$ which sends the endpoints $\{0,1\}$ of $\Delta^{1}$ to $s(*)$.

\begin{dfn}
Let $k$ be a field.  A pointed stack $F$ on the site $(\Aff_{k},ffqc)$ is said to be a \textit{pointed schematic homotopy type} over $k$ if it is $P$-local, connected and $\Omega_{*}F$ is an affine stack over $k$.
\end{dfn}

Let $\SHT_{k}$ denote the full subcategory of $\St(k,ffqc)$ spanned by the pointed schematic homotopy types.  Every pointed connected affine stack is naturally a pointed schematic homotopy type.  Furthermore, a stack $F$ is a pointed schematic homotopy type if and only if $F$ is a pointed connected stack in $\St(k,ffqc)$ such that the sheaf $\pi_{1}(F,x)$ is represented by an affine group scheme and for any $i > 1$, the sheaf $\pi_{i}(F,x)$ is represented by a unipotent affine group scheme (see Section 3.2 of \cite{T4}).

Let $k$ be a field and $F$ a pointed schematic homotopy type over $k$.  We will say that $F$ is of finite cohomological dimension if the abelian category $\Perf(i(F))^\heartsuit$ is of finite cohomological dimension.

\begin{prop}
Let $k$ be a field and $F$ a pointed schematic homotopy type over $k$.  Assume that $F$ is of finite cohomological dimension.  Then $\Perf_*(i(F))$ is a pointed positive $Hk$-Tannakian $\infty$-category.
\end{prop}

\begin{proof}
The stack $iF$ is clearly a positive $Hk$-Tannakian group stack in $\TGp^{fl}(Hk)$.  The result then follows from the positive $\infty$-Tannaka duality Theorem~\ref{pointedinftytannakaduality}.
\end{proof}

The schematization is an important schematic homotopy type which we define through the following universal property.

\begin{dfn}\label{schematization}
Let $k$ be a field and $X$ a fibrant simplicial set considered as a constant prestack.  Then the \textit{schematization} of $X$ over $k$ is a schematic homotopy type $\schx$ over $k$ together with a map $f:X\ra\schx$ satisfying the following universal property~: for any stack $F$ in $\St(k,ffqc)$, composition with $f$ induces an equivalence
\[    \Map_{\St(k,ffqc)}(\schx,F)\ra\Map_{\St(k,ffqc)}(X,F)  \]
of $\inftyz$-categories.
\end{dfn}

The main result in the theory of schematizations is Theorem 3.3.4 of \cite{T4} which states that for a pointed, connected simplicial set $(X,x)$, a schematization always exists.  The schematization of a pointed space admits a number of fundamental properties which follow from Definition~\ref{schematization} together with this existence result.  Firstly, the affine group scheme $\pi_1(\schx,x)$ is naturally isomorphic to the pro-algebraic completion of the discrete group $\pi_1(X,k)$ (thought of as a constant group scheme over $k$).  When $X$ is finite and simply connected, then for $i>1$, the group scheme $\pi_i(\schx,x)$ is naturally isomorphic to the pro-unipotent completion of the discrete group $\pi_{i}(X,x)$.  Finally, if $V$ is a local system of finite dimensional $k$-vector spaces on $X$, then $V$ corresponds to a linear representation of the affine group scheme $\pi_1(\schx,x)$.  This induces a local system $\mathcal{V}$ on the schematization $\schx$ (a sheaf of abelian groups $\mathcal{V}$ on $(\Aff_{k},ffqc)$ together with an action of $\pi_{1}(\schx)$ on $\mathcal{V}$) such that the map $X\ra\schx$ furnishes an isomorphism
\[    H^*(\schx,\mathcal{V})\ra H^*(X,V)  \]
in cohomology with local coefficients.  See \cite{T4} for further discussion.

Let $k$ be a commutative ring.  The category $C(k)$ of (unbounded) complexes of $k$-modules admits a cofibrantly generated model structure where the fibrations are degree-wise surjective morphisms and the equivalences are the quasi-isomorphisms (those maps inducing isomorphisms on homology groups) (see Theorem 2.3.11 of \cite{H1}).  Then for a topological space $X$, the category $C(X,k)$ of complexes of presheaves of $k$-modules on $X$ is a $C(k)$-enriched model category.  Here $C(k)$ is endowed with its usual monoidal structure and $C(X,k)$ is naturally tensored over $C(k)$ since the category of presheaves of $k$-modules is naturally tensored over the category of $k$-modules (if $F$ is a presheaf of $k$-modules on $X$ and $M$ is a $k$-module then $M\otimes F$ is defined to be the presheaf $U\mapsto M\otimes F(U)$).  The model category structure on $C(X,k)$ arises from a more general result of a model structure on the category $C(A)$ of complexes in any Grothendieck category $A$ (see \cite{H2}).

We define $\Perf(X,k)$ to be the subcategory of $LC(X,k)$ consisting of complexes of presheaves of $k$-modules which have locally constant cohomology sheaves.  The objects of $\Perf(X,k)$ are complexes of presheaves of $k$-modules which are, locally on $X$, quasi-isomorphic to a constant complex of presheaves associated with a bounded complex of finite rank projective $k$-modules.  Let $x:*\ra X$ be a point of $X$ and define $\Perf(k):=\Perf(*,k)$.  There exists a $Hk$-linear structure on the symmetric monoidal $\infty$-category $\Perf(X,k)$ from the symmetric monoidal functor $\Perf(k)\ra\Perf(X,k)$ and a $Hk$-linear symmetric monoidal functor 
\[   \omega_{x}:\Perf(X,k)\ra\Perf(k)    \]
induced by the point $x$.

\begin{thm}\label{perfistannakian}
Let $k$ be a field and $X$ a finite CW complex.  Then $(\Perf(X,k),\omega_{x})$ is a pointed $Hk$-Tannakian $\infty$-category with respect to the positive topology.  When $k$ is a field of characteristic zero, the dual of this pointed positive $Hk$-Tannakian $\infty$-category is the schematization of $X$. 
\end{thm}

\begin{proof}
We need to show that $\Perf(X,k)$ is a rigid $Hk$-tensor $\infty$-category and $\omega_{x}$ a positive fiber functor.  The first part is clear by the above discussion and by Example~\ref{derivedinftycategory}~: $\Perf(X,k)$ is stable since it is a stable subcategory of the derived $\infty$-category of an additive category.  Secondly, $\who_{x}:\Ind(\Perf(X,k))\ra\Mod(k)$ is the conservative forgetful functor and clearly creates a t-structure using the canonical t-structure on $\Perf(k)$ whilst preserving finite limits and colimits.

For the second part of the proof, let $k$ be of characteristic zero and consider the fully faithful classifying prestack functor
\[   \wtB:\Gp(k,ffqc)\ra\St(k,ffqc)_*.  \]  
We will say that a group stack $G$ in $\Gp(k,ffqc)$ is $P$-local if $\wtB G$ is $P$-local.  Let $L_P(\Gp(k,ffqc))$ be the localization of $\Gp(k,ffqc)$ with respect to the $P$-equivalences.  We can identify this $\infty$-category with the full subcategory of $\Gp(k,ffqc)$ spanned by the $P$-local objects.  

From Theorem 3.14 of \cite{KPT2}, the functor $\wtB$ induces an equivalence
\[    L_P(\Gp(k,ffqc))\ra\SHT_k    \]
of $\infty$-categories.  Thus by Corollary~3.19 of \textit{loc. cit.} there exists a Hopf $k$-algebra $B^P$ in the category $\Ab$ of abelian groups such that the map $\schx\ra\wtB\Spec B^P$ is an equivalence in $\St(k,ffqc)_*$.  Therefore, there exists an equivalence
\[    i\schx\ra i\wtB\Spec B^P  \]
in $\St(Hk,fl)_*$ where $i$ is the fully faithful inclusion of Proposition~\ref{fullyfaithfuli}. 

Now $\Fib_*(\Perf(X,k))=\End^\otimes(\omega_x)=\Spec B'$ for the Hopf $Hk$-algebra $B'\simeq \wh{\omega_x}\circ\wh{p_x}(k)$.  It remains to show that $B'$ is equivalent to $i\wtB\Spec B^P$.  However, this is clear from the explicit form of $B^P$ \cite{T4}.
\end{proof}

\newpage


\end{document}